\title{Notes on stochastic integration theory with respect to \cadlag semimartingales and a brief introduction to L\'evy processes} %Gaining Insight\\ into   Mackey--Glass Type Equations}
\author{M. van den Bosch$^{\rm a, }$\footnote{Email address: \url{vandenboschm@math.leidenuniv.nl}.}}
\renewcommandx{\mark}[2][1=]{\todo[linecolor=black,backgroundcolor=white!25,bordercolor=black,#1]{#2}}
 \definecolor{mygreen}{RGB}{28,172,0} % color values Red, Green, Blue
 \definecolor{mylilas}{RGB}{170,55,241}
\numberwithin{equation}{section}
\newcommand{\citeb}[2]{\cite[\textcolor{black}{#1}]{#2}}
\definecolor{mygreen}{RGB}{28,172,0} % color values Red, Green, Blue
\definecolor{mylilas}{RGB}{170,55,241}
\titlespacing{\section}{0pt}{3.75ex}{1ex}
\titlespacing{\subsection}{0pt}{3ex}{1ex}
\titlespacing{\subsubsection}{0pt}{3ex}{1ex}
\def\name#1{\def\tempa{#1}\futurelet\next\name@i}% Save first argument
\def\name@i{\ifx\next\bgroup\expandafter\name@ii\else\expandafter\name@end\fi}%Check brace
\def\name@ii#1{\textnormal{\small\textbf{\tempa}}\index{#1}}%Two args
\def\name@end{\textnormal{\small\textbf{\tempa}}\index{\tempa}}%Single args
\newcommand{\namethrm}[1]{\textnormal{\small\textbf{(#1)}}\index{#1}}
\newcommand{\namethrms}[2]{\textnormal{\small\textbf{(#1)}}\index{#2}}
\newcommand{\namemath}[1]{#1\index{#1}}
\newtheorem{theorem}{Theorem}[section]
\newtheorem{lemma}[theorem]{Lemma}
\newtheorem{proposition}[theorem]{Proposition}
\newtheorem{corollary}[theorem]{Corollary}
\theoremstyle{definition}
\newtheorem{definition}[theorem]{Definition}
\newtheorem{remark}[theorem]{Remark}
\newenvironment{example}
{\pushQED{\qed}\examplex}
{\popQED\endexamplex}
\newcommand{\R}{\mathbb{R}}
\newcommand{\Rplus}{[0,\infty)}
\newcommand{\N}{\mathbb{N}}
\renewcommand{\epsilon}{\varepsilon}
\renewcommand{\phi}{\varphi}
\newcommand{\dint}[3]{\int_{#1}^{#2}{#3} \, \mathrm{d}}
\newcommand{\cadlag}{càdlàg\xspace}
\newcommand{\caglad}{càglàd\xspace}
\renewcommand{\geq}{\geqslant}
\renewcommand{\leq}{\leqslant}
\renewcommand{\subset}{\subseteq}
\renewcommand{\emptyset}{\varnothing}
\newcommand{\regulated}{$\mathscr{L}_{\textnormal{reg}}$\xspace}
\begin{document}

\maketitle

\begin{center}\small
    \textsc{
    $^{\mathrm a}$Mathematical Institute,  Leiden University,\\ P.O. Box 9512, 2300 RA Leiden, The Netherlands}
    \end{center}

\ 

\begin{abstract}
The purpose of these notes is to distribute, mostly without proofs,    fundamental  definitions and  results concerning the theory of semimartingales and stochastic integration.
% , required to subsequently work with stochastic delay differential equations driven by Lévy processes of a certain type. 
% These notes are adapted from the preliminaries of the author's master's thesis and its purpose is to share this with newcomers to the field. to \cadlag semimartingale theory and to discuss the pros and cons. 
% The purpose of these notes is to provide a concise overview of fundamental definitions and key results in the theory of semimartingales and stochastic integration, mostly without detailed proofs.
The material serves as a foundational guide for those interested in applying these concepts,  particularly in the study of stochastic (functional) differential equations driven by Lévy processes. These notes are adapted from the preliminary chapter of the author's master's thesis (with only minor changes) and are intended to introduce newcomers to the essentials of càdlàg semimartingale theory while also discussing the advantages, limitations, and subtleties as compared to stochastic integration  in the continuous setting.

% Sections \ref{Sec1.1} and \ref{Sec1.3} deal with this general theory, whereas section  \ref{Sec1.2} succinctly touches upon the notions of Brownian motion and  Lévy processes.

% Sections \ref{Sec1.1} and \ref{Sec1.3} focus on the general theory of semimartingales and stochastic integration, offering a structured entry point for readers. Section \ref{Sec1.2}, on the other hand, provides a brief introduction to Brownian motion and Lévy processes, setting the stage for their role in stochastic analysis. Overall, these notes aim to be an accessible resource for those beginning their journey in this area of probability theory.

% (which are semimartingales). In short, we will  present how to integrate with respect to right-continuous semimartingales.
\end{abstract}

 % \textsc{Keywords:} {\footnotesize{semimartingales;  existence and uniqueness of solutions;  stochastic functional differential equation (SFDE); bounded in probability; stationary solutions; invariant measures;  tightness;  Krylov--Bogoliubov method.}}

% In  particular, we initiate the concept of regulated Lévy martingale processes in subsection \ref{Sec1.2.2}. 
%. We will however discuss  Lévy processes and their properties   in more depth.

\section{Stochastic integration}\label{Sec1.1}
\noindent This  section is mainly based on \cite{book:protter}, an excellent   source for a lesser succinct overview as provided below. We take a slightly different route in {\S}\ref{Sec1.1.4},  restricting   to the $L^2$-case  as   in \cite{book:chung,book:jacod,book:kallenberg,unpublished:timo}. 
   % Take note   that \cite{book:kallenberg},   \cite{book:jacod}, and \cite{unpublished:timo} are good references too. 
   The reader is  assumed to be a bit familiar with the theoretical background of stochastic integration with respect to a \textit{continuous} martingale, or in a less general setting, with respect to a Brownian motion. We refer for instance to, but not limited to,  \cite{da2014stochastic,book:Kurtz,evans2012stoch,book:kallenberg,book:karatzas,book:kloeden,book:mao,book:revuz,unpublished:peter,twardowska1996wong} for  a complete course on this subject. The latter also motivates the reason why we   emphasise often  on the differences between the continuous and the discontinuous  setting.

\subsection{Prerequisites regarding
the definition of a  semimartingale}\label{Sec1.1.1}
\noindent Let us consider   a probability space $(\Omega, \mathcal{F}, \mathbb{P})$  and a filtration $\mathbb F=(\mathcal{F}_t)_{t \geqslant 0}$, i.e. $\mathcal F_s\subset \mathcal F_t$ holds for $0\leq s\leq t$. We simply say $(\Omega,\mathcal F,\mathbb F, \mathbb P)$ is a \name{filtered probability space}. Recall that $\mathbb F$ is said to be \name{right-continuous}{right-continuous filtration} when $\mathcal F_t=\bigcap_{s>t}\mathcal F_s$ holds for all $t\geq 0.$ From now on, we will always assume  the   filtered probability space satisfies the \name{usual conditions}, i.e., we require the $\sigma$-algebra $\mathcal F_0$ to contain all $\mathbb P$-null sets and the filtration to be right-continuous.   Take $\mathcal F_{\infty}:=\sigma(\bigcup_{t\geq 0}\mathcal F_t)$.

Recall   a stochastic process $X= (X_{t} )_{t \in I}$ on $(\Omega, \mathcal{F},\mathbb F, \mathbb{P})$ is a family of random variables, indexed by $t$, on some index set $I$.  Within this chapter we consider $\mathbb R^d$-valued   processes, thus random variables of the form $X_t:\Omega\to \mathbb R^d$ for all $t\in I$, and    take $I=[0,\infty)$, unless specified otherwise. The   process $X$ is said to be  \name{adapted}\index{process!adapted} when $X_t$ is $\mathcal F_t$-measurable for all $t\geq 0.$ 

%\mark{15102020 Is het prima als we hier andere notatie gebruiken dan volgend hoofdstuk, dit zullen we toelichten. De reden is dat $X(t)$ heel vervelend is binnen de preliminaries context.}

A \name{stopping time} $T$ with respect to the filtration $\mathbb F$ is a random variable $T:\Omega\to [0,\infty]$ such that $\{T \leq t\}=\{\omega \in \Omega: T(\omega) \leq t\} \in \mathcal{F}_{t}$ for every $t \geq 0.$ Define for any random variable $T$ and stochastic process $X$ the function $X_T$ on the measurable set $\{T<\infty\}$ by $X_T(\omega)=X_{T(\omega)}(\omega)$. Subsequently, we call $X^T$ defined by
$
	X^T_t=X_{t\wedge T}=X_t\mathds{1}_{\{t<T\}}+X_T\mathds{1}_{\{t\geq T\}}
$
the \name{stopped process} of $X$ at $T$. Consider $\mathcal F_T=\{F\in \mathcal F_\infty:F\cap \{T\leq t\}\in \mathcal F_t\quad \forall t\geq 0\},
$ which is the $\sigma$-algebra associated to  the stopping time $T$.  For $S,T$ stopping times,   $S\leq T$ $\mathbb P$-a.s., one has $\mathcal F_S\subset \mathcal F_T$.
%In addition define the $\sigma$-algebra
%\begin{equation}
%	\mathcal F_T:=\{A\in \mathcal F_\infty: A\cap\{T\leq t\}\in \mathcal F_t\quad\text{for all }t\geq 0\}.
%\end{equation}
%By Theorem 6 of \citeb[p. 5]{book:protter}, we have that $\mathcal F_T$ is the smallest $\sigma$-algebra 

Processes $X$ and $Y$ are called \name{indistinguishable} if the set $\{\omega\in\Omega:X_t(\omega)=Y_t(\omega)\quad\forall t\in I\}$ contains a set of probability one. Hence, sample paths of indistinguishable processes are $\mathbb P$-a.s. equal, and one should think of indistinguishable processes as   the same process. Moreover, a process $Y$ is said to be a \name{version} of $X$ if    $X_t=Y_t$ holds $\mathbb P$-a.s. for all $t\in I$. This basically means     two versions of one another are stochastically equivalent. In practice, it will suffice to consider a particular version; see Remark \ref{remark:1}. More precisely, we will regard  a representative of each equivalence class under the version-relation.  This is   justified by  the following crucial fact: if $X$ and $Y$ are indistinguishable, then  $Y$ is a version of $X$, and the converse implication holds when processes $X$ and $Y$ have sufficiently regular paths.
%\mark{mentioning anywhere the word equivalence class!!!}
 
\begin{proposition}[Theorem I.2 of \cite{book:protter}]\label{prop:regulatity}
	Suppose $X=(X_t)_{t\geq 0}$ and $Y=(Y_t)_{t\geq 0}$ are two stochastic processes, with $X$ a version of $Y.$ If $X$ and $Y$ have right-continuous paths $\mathbb P$-a.s., or if $X$ and $Y$ have left-continuous paths $\mathbb P$-a.s., then $X$ and $Y$ are indistinguishable.  
\end{proposition}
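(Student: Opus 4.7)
The plan is to exploit the fact that two right-continuous (or left-continuous) functions on $[0,\infty)$ that agree on a countable dense subset must agree everywhere, combined with the countable subadditivity of $\mathbb{P}$ to merge the various null sets into a single one.

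First I would fix a countable dense subset $D\subset [0,\infty)$ containing $0$, say $D=\mathbb{Q}\cap [0,\infty)$. Since $Y$ is a version of $X$, for every $q\in D$ the set $N_q:=\{\omega:X_q(\omega)\neq Y_q(\omega)\}$ satisfies $\mathbb{P}(N_q)=0$. Let $N_X$ and $N_Y$ be the $\mathbb{P}$-null sets off which the paths of $X$, respectively $Y$, fail to have the claimed regularity (right- or left-continuous). Define
\[
N \;:=\; N_X \cup N_Y \cup \bigcup_{q\in D} N_q,
\]
which is a countable union of $\mathbb{P}$-null sets and hence $\mathbb{P}(N)=0$. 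It therefore suffices to show that for every $\omega\notin N$ and every $t\geq 0$ one has $X_t(\omega)=Y_t(\omega)$; this will place $\Omega\setminus N$ inside the set appearing in the definition of indistinguishability, whence that set contains an event of probability one.

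Next I would fix $\omega\notin N$ and argue pointwise. In the right-continuous case, for each $t\geq 0$ pick a sequence $q_n\in D$ with $q_n\downarrow t$ (possible since $D$ is dense). Since $\omega\notin N_X\cup N_Y$, both sample paths are right-continuous at $t$, and since $\omega\notin N_{q_n}$ for every $n$, we have
\[
X_t(\omega)=\lim_{n\to\infty}X_{q_n}(\omega)=\lim_{n\to\infty}Y_{q_n}(\omega)=Y_t(\omega).
\]
The left-continuous case is identical, approximating $t>0$ by rationals $q_n\uparrow t$ from below and handling $t=0$ via $0\in D$, so that $X_0(\omega)=Y_0(\omega)$ directly.

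No deep obstacle is expected; the only point to be attentive to is the bookkeeping of exceptional sets—one must remember to absorb the null sets on which path regularity fails into $N$, otherwise the pointwise limit argument is not licit. The proof relies crucially on countability of $D$ so that $N$ remains null; if $D$ were uncountable, the same approach would collapse.
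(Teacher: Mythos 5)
Your argument is correct and is essentially the standard proof that the paper delegates to Protter's Theorem I.2: agree on a countable dense set off a single null set obtained by countable subadditivity, then propagate equality to all times by one-sided continuity. The handling of $t=0$ in the left-continuous case via $0\in D$ is the right touch, and no gap remains.
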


%
%
%With regards to the sufficient regularity, one of the following notions up to and including Definition \ref{def:first} is meant. 
 A   stochastic process $X=(X_t)_{t\geq 0}$ is said to be \name{continuous}{process!continuous}, if for   every $\omega\in \Omega$ the sample path $t\mapsto X_t(\omega)$ is continuous.\footnote{Importantly, note   that \cite{book:protter}, and also  \cite{book:mao} for instance, defines a continuous process   with $\mathbb P$-a.s. continuous paths. Despite the difference with our definition, conform \cite{book:karatzas}, \cite{book:kallenberg} and \citeb{p. 9}{book:chung} for example, such a process would be indistinguishable
 	from one with all paths continuous. Indeed, it is trivial to define a version on the $\mathbb P$-null set to make it
 	continuous on all of $\Omega$. Such a version is unique up to indistinguishability, due to Proposition \ref{prop:regulatity}.
 	The same is true if we replace continuous by, e.g., left- and right-continuous processes. Another valid reason of why we take regularity everywhere, per definition, is to be found in Example \ref{ex:regularity}. } Analogously, we define left- and right-continuous processes. 
%  \mark{Check: KS91 definieert het helemaal niet zo, die defineert het overal. Zie hoofdstuk 1.5.}
% Similarly, we define \cadlag and \caglad stochastic processes.   \mark{a.s. of all.... KAllenberg also does all and not only a.s.}
 
 \begin{definition}\label{def:first}
 	A function $f:E\subset \mathbb R \rightarrow \mathbb{R}^{d}$ is called \name{càdlàg}  (resp., \name{càglàd}) when $f$ is right (resp., left)-continuous  and attains   left (resp., right) limits everywhere. Likewise, a stochastic process $X$ is called \name{\cadlag}{process!càdlàg} (resp., \name{\caglad}{process!càglàd}) if   every sample path is \cadlag (resp., \caglad). %\mark{f(u) cadlag makes f(u-) caglad}
 \end{definition} 

Recall, any left- or right-continuous function $f:E\to\R^d$ is Borel-measurable.
 For a \cadlag function $f$ we     denote its left limit at $t\in E$ as  $f(t-)=\lim _{s \uparrow t} f(s)$. For $X$ \cadlag one similarly defines the \caglad process $X_-=(X_{t-})_{t\geq 0}$, with $(X_-)_0=X_{0-}=0$ by convention. Finally, we have  the   process  $\Delta X=(\Delta X_t)_{t\geq 0}  $,  where $\Delta X_t:=X_t-X_{t-}$ is called the \name{jump}{jump process} at $t$. We say that $X$ has \name{bounded jumps} if $\sup_{t\geq 0}\|\Delta X_t\|\leq C<\infty$ holds $\mathbb P$-a.s. for some $C>0,$ where we write $\|\cdot\|$  for the Euclidean norm.
 
 %%%%%%%%%%%%%%%%%%%%%%%%
 %
 %
 %    It is just easier to think of $X_-$ when all paths are cadlag :) Moreover, so we can also circumvent measurability problems of the delay thingy. BUT OF COURSE; the main problem was the inclusions!
 %
 %
 %%%%%%%%%%%%%%%%%%%%%%%%
 
If $f:E\to\R^d$ is continuous, then $f$ is bounded when $E$ is compact; $\sup_{t\in E}\|f(t)\|\leq M$ for some $M\geq 0$.  Now, suppose  $f$ is   \cadlag.  Then it is easy to   see, assuming $E$ is   compact, that $f$ is bounded too and   the total amount of jumps larger than  $\epsilon>0$, i.e.,  $t\in E:\|f(t)-f(t-)\|\geq \epsilon$, is finite  \citeb{p. 122}{book:billingsley}.
 Therefore,    sample paths of a \cadlag   process $X$ have at most
 countable discontinuities. 
For   \caglad $f$ and $X$ similar results hold.
 
Ultimately, let us recall the definition of martingales, submartingales and supermartingales in continuous time.
\begin{definition}
	A real-valued process $M=(M_t)_{t\geq 0}$ on $(\Omega,\mathcal F,\mathbb F, \mathbb P)$ is   a \name{submartingale}\index{martingale!submartingale} if it is adapted, when we have that the $M_t$ are integrable for all $t\geq 0$, and
	\begin{equation}
		\mathbb E[M_t|\mathcal F_s] \geq M_s \quad\mathbb P\text{-a.s.}
	\end{equation}
holds for all $t\geq s\geq 0.$ If $-M$ is a submartingale, we call $M$ a \name{supermartingale}\index{martingale!supermartingale}. When $M$ is both a submartingale and supermartingale,  it is called a \name{martingale}{martingale!definition of}.
\end{definition}
Note, integrability of the random variable $M_t$ means $\mathbb E|M_t|<\infty.$ We say $M$ is a \name{square integrable martingale}{martingale!square integrable}, an $L^2$-martingale in short, whenever $\mathbb E M_t^2<\infty$ holds for all $t\geq 0$.

\begin{remark}\label{remark:1}
Due to the fact   we assume our filtered probability space satisfies the usual conditions, we may assume without loss of generality that a martingale is \cadlag. \textit{That is, we may---and therefore we will always---take  the unique (up to indistinguishability) \cadlag version without special mention.} It is a     consequence of Doob's regularisation principle  \citeb{p. 134}{book:kallenberg}.
\end{remark}

Before we are able to define a semimartingale, we need to introduce     a local martingale and a finite variation process first. We want to address the fact  that there are several---not necessarily equivalent---definitions of a local martingale, as is also remarked in \citeb{p. 25}{unpublished:peter}.

\begin{definition}
	A \name{fundamental sequence}  is an increasing sequence $(T_{n})_{n \in \mathbb{N}}$\index{$\mathbb N$, natural numbers}  of stopping times, i.e., for any $n\in \N$ we have $T_{n} \leq T_{n+1}$ almost surely, such that $\lim _{n \rightarrow \infty} T_{n}=\infty$ holds almost surely. A stochastic process $M$ is a \name{local martingale}\index{martingale!local} if there exists a fundamental sequence such that for every $n\in \mathbb N,$ the  stopped process $M^{T_{n}}\mathds{1}_{\{T_n>0\}}$ is a martingale.
\end{definition}
A local martingale as defined above is not conform \citeb{p. 330}{book:kallenberg} nor \citeb{p. 12}{book:mao} for example; there one requires $M^{T_n}-M_0$ to be a martingale. Sometimes, within the definition of a local martingale, the    $M^{T_n}$ is not multiplied by $\mathds{1}_{\{T_n>0\}}$; see \citeb{p. 25}{unpublished:peter} for instance. Doing this multiplication however relaxes the integrability condition on $M_0$. This is, in particular, useful in the consideration of stochastic   differential equations with a non-integrable initial condition \citeb{p. 37}{book:protter}. Nonetheless, we want to point out that  one is  mainly concerned with local martingales satisfying $M_0 = 0$ almost surely (see Definition \ref{def:semimart}), and for that case
the differences between the several definitions  disappear. Finally note, the uniform integrability assumption within the definition of a local martingale is actually not necessary \citeb{p. 123}{book:revuz}.  

We say $M$ is a \name{locally square integrable martingale}{martingale!locally square integrable}  when there is an increasing sequence  of stopping times $(T_{n})_{n \in \mathbb{N}}$ tending to infinity almost surely such that $M^{T_{n}}\mathds{1}_{\{T_n>0\}}$ are all square integrable martingales. Obviously a locally square integrable martingale is a local martingale, and the converse holds if and only if $M$ is continuous \citeb{p. 26}{unpublished:peter}.

In these notes we will omit further detail on local martingales and it suffices to know that any martingale is  also a local martingale, but not conversely.

\begin{definition}\label{def:increasing}
	Let $A=(A_t)_{t\geq 0}$ be a real-valued \cadlag process on  $(\Omega, \mathcal{F},\mathbb F, \mathbb{P})$. We call $A$   an \name{increasing process} if it is adapted, and when  $\mathbb P$-almost every sample path $t\mapsto A_t(\omega)$ is non-decreasing. The stochastic process $A$ is said to be a  \name{finite variation process}{finite variation!process} if $A$ can be written as the difference of two increasing processes.
\end{definition}

The equivalent definition of a finite variation process is as follows. Let $[a, b]$  be a compact interval and consider some partition $\pi=\left\{x_{1},x_2, \ldots, x_{n(\pi)},x_{n(\pi)+1}\right\}$, i.e., $a=x_{1}<x_2<\cdots<x_{n(\pi)}<x_{n(\pi)+1}=b .$ The mesh  of a partition is denoted  by $|\pi|=\max _{i \in\{1, \ldots, n(\pi)\}}\left|x_{i+1}-x_{i}\right|.$

\begin{definition}
	 Let $f:[a, b] \rightarrow \mathbb{R}$  be a function  and let ${\Pi}$ be the family of all partitions of the compact interval $[a, b] .$ Then the function $f$ is of \name{finite variation}{finite variation!definition of} if
	\begin{equation}
	F V_{[a, b]}(f):=\sup _{\pi \in \Pi} \sum_{k=1}^{n(\pi)}\left|f(x_{k+1})-f(x_{k})\right|<
	\infty
	\end{equation}
	Similarly, a \cadlag   process $A=(A_t)_{t\geq 0}$ is   a \name{finite variation process}{finite variation!process} if it is adapted, and when   $FV_{[0,t]}(s\mapsto A_s(\omega))<
	\infty$ holds for all compact intervals $[0,t]$, $t\geq 0$, for ($\mathbb P$-almost) all $\omega\in \Omega$.
\end{definition}

Note that both definitions are of a pathwise level, thus the equivalence of the two different definitions follows from \citeb{p. 33}{book:carter} for instance. In addition, recall that a Brownian motion is not a finite variation process (see Example \ref{ex:Brownian}). 
% \mark{Sub and supermartingales are semimartingales. This follows from the Famous Doob--Meyer decomposition \citeb{p. 116}{book:protter}}

\begin{definition}\label{def:semimart} An adapted, \cadlag process $X=(X_t)_{t\geq 0}$ is  said to be a \name{semimartingale}\index{martingale!semimartingale} if it admits a decomposition
\begin{equation}
X=X_{0}+M+A, \label{eq:decomp}
\end{equation}
where $X_0$ is $\mathcal F_0$-measurable, $M=(M_t)_{t\geq 0}$ is a local martingale with $M_0=0$, and $A=(A_t)_{t\geq 0}$ is a finite variation process  with $A_0=0$.  \end{definition}

The introduced notion above is called a \name{classical semimartingale}{semimartingale!classical}	in \citeb{p. 102}{book:protter}, yet is proven to be equivalent with their notion of a semimartingale. This equivalence particularly shows us that the class of    ``good'' integrators coincides with the class of semimartingales; see also \cite{article:good}. We  want to point out that the definition above suffices. Additionally, by the \name{Doob--Meyer decomposition} \citeb{Thm. III.13}{book:protter}, we deduce that submartingales and supermartingales are semimartingales.\footnote{Note that the result in \citeb{Thm. III.8}{book:protter} assumes the sub- or supermartingale $X$ to be of class D, causing $M$ to be a  (uniform integrable) martingale, but without this assumption $M$ is simply a local martingale \citeb{Thm. III.13}{book:protter}. Also compare these theorems with \cite[Thm. 2.16]{unpublished:peter}.} One  often encounters $\R^d$-valued  stochastic processes $X=(X^1,...,X^d)$ being  $d$-dimensional vectors of semimartingales. 

Finally, recall that in our more general setting, we thus assume a semimartingale not to be necessarily continuous but at least \cadlag. In particular, the components of a \cadlag semimartingale are again \cadlag (by construction). In the continuous case, the following holds. 
%\mark{Anders verwoorden; dit is niet helemaal lekker!}
%\mark{Then there exists a unique decomposition (1.3) with $M$ and $A$ continuous processes.}
\begin{theorem}\label{thm:cont}
	Let $X=(X_t)_{t\geq 0}$ be a semimartingale  and suppose it has continuous paths. Then $X$ admits a decomposition as in \eqref{eq:decomp} where both $M$ and $A$ are continuous processes. Such a decomposition  is moreover unique. 
\end{theorem}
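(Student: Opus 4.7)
The plan is to dispatch uniqueness first. Given two such decompositions $X=X_0+M_1+A_1=X_0+M_2+A_2$ with continuous components, the process $N:=M_1-M_2=A_2-A_1$ will be simultaneously a continuous local martingale with $N_0=0$ and a continuous finite variation process. Appealing to the classical fact that any continuous local martingale of finite variation starting at $0$ vanishes identically (its quadratic variation is zero along any refining sequence of partitions) then yields $M_1=M_2$ and $A_1=A_2$.

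For existence, I would begin with an arbitrary decomposition $X=X_0+M+A$ from Definition \ref{def:semimart} and migrate the discontinuous part of $M$ into the finite variation term. Continuity of $X$ gives $\Delta M_s=-\Delta A_s$ for every $s>0$, and since $A$ has finite variation the pure jump process $S_t:=\sum_{0<s\leq t}\Delta M_s$ is well defined and has locally integrable variation. It therefore admits a predictable compensator $\tilde S$ for which $S-\tilde S$ is a local martingale. Setting $\tilde M:=M-(S-\tilde S)$ and $\tilde A:=A+(S-\tilde S)$, we obtain $X=X_0+\tilde M+\tilde A$ with $\tilde M$ a local martingale and $\tilde A$ of finite variation, both vanishing at zero. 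A direct jump computation yields $\Delta\tilde M=\Delta\tilde S=-\Delta\tilde A$; since $\tilde S$ is predictable, $\tilde M$ is a local martingale whose jump process is predictable, which will force $\tilde M$ to be continuous, and then $\tilde A$ inherits continuity.

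The hardest part will be importing two pieces of machinery from the general theory of processes: the existence of a predictable compensator for a finite variation process of locally integrable variation (a consequence of the Doob--Meyer decomposition), and the fact that a local martingale with predictable jump process must be continuous (which follows by noting that $\mathbb E[\Delta \tilde M_T\mid\mathcal F_{T-}]=0$ at every predictable stopping time $T$ while predictability makes $\Delta\tilde M_T$ itself $\mathcal F_{T-}$-measurable, so the jumps vanish on the predictable thin set that supports them). With these two inputs granted, the remainder of the existence argument is essentially jump-chasing bookkeeping, and the uniqueness step then completes the proof.
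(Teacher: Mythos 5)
Your strategy is sound and, unlike the paper---whose proof of this theorem simply defers to Protter's Theorem III.30 together with the corollary on p.~130 of that book---actually reconstructs the underlying argument. The uniqueness step is correct and is exactly the mechanism recorded in Remark \ref{remark:2}: the difference $N:=M_1-M_2=A_2-A_1$ is a continuous local martingale of finite variation with $N_0=0$, its quadratic variation vanishes along refining partitions, so $N^2=N^2-[N]$ is a non-negative local martingale (hence a supermartingale) started at $0$, forcing $N\equiv 0$. Your existence step, which compensates the jump part of $M$ by hand and then invokes the continuity of a local martingale with predictable jumps, is a legitimate alternative to Protter's packaging: he instead observes that a continuous semimartingale has bounded (indeed zero) jumps, hence is special by Theorem III.34, and then kills the jumps of the predictable finite variation part of the canonical decomposition via $\mathbb E[\Delta X_T\mid\mathcal F_{T-}]=\Delta A_T$ at predictable times $T$. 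Both routes rest on the same machinery (Doob--Meyer/Rao plus the predictable-jump criterion), so the difference is one of bookkeeping rather than substance.

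There is one step that does not follow as you state it: ``since $A$ has finite variation the pure jump process $S_t=\sum_{0<s\leq t}\Delta M_s$ \dots\ has locally integrable variation.'' Finite variation alone does \emph{not} imply locally integrable variation: take $A_t=\xi\,\mathds 1_{\{t\geq 1\}}$ with $\xi$ an $\mathcal F_1$-measurable random variable satisfying $\mathbb E|\xi|=\infty$; then $\mathbb E|A|_{T}=\mathbb E\big[|\xi|\mathds 1_{\{T\geq 1\}}\big]=\infty$ for every stopping time $T$ in the tail of any fundamental sequence, so no compensator is available. Your $S$ \emph{is} of locally integrable variation, but to see this you must exploit the identity $\Delta A=-\Delta M$ rather than only the finite variation of $A$: the jumps of a local martingale are locally integrable in the sense that $\sup_{s\leq t}|\Delta M_s|$ is locally integrable (stop when $|M|$ first exceeds $n$, intersect with a fundamental sequence, and apply optional stopping to bound $\sup_{s}|M_s|$ by $n$ plus an integrable terminal value), and then stopping additionally at $U_n=\inf\{t:|A|_t\geq n\}$ gives $|S|_{U_n}\leq |A|_{U_n}\leq n+|\Delta A_{U_n}|=n+|\Delta M_{U_n}|$, which is integrable after this joint localisation. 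With that repair Rao's theorem (as in Definition \ref{def:compensator}) applies, the compensator $\tilde S$ exists, and the remainder of your jump-chasing---$\Delta\tilde M=\Delta\tilde S=-\Delta\tilde A$, predictability of $\Delta\tilde M$, hence continuity of $\tilde M$ and then of $\tilde A$---goes through as written.
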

\begin{proof}
	Combining  \citeb{Thm. III.30}{book:protter}  and the corollary in \citeb{p. 130}{book:protter}  yields the assertion.
\end{proof}

Uniqueness of  decomposition \eqref{eq:decomp} needs to be understood as being unique up to {indistinguishability}. When $X$ is continuous, it may as well admit decompositions with $M$  and $A$ not   continuous; see Example \ref{ex:protter130}. In other words,  the uniqueness follows from requiring continuity of $A$ (and $M$) within the decomposition. 

Continuous processes are ``predictable'',  
% \mark{Definition of \name{canonical decomposition}} 
  a significant property in order to obtain a unique decomposition (as well will see).
  Let us now state a more formal definition of predictability.

\begin{definition}
	The \name{predictable $\sigma$-algebra}{predictable!process} $\mathcal P$\index{$\mathcal P$} on $[0,\infty)\times \Omega$   \label{def:predict}is the  $\sigma$-algebra generated by all the adapted \caglad processes. A stochastic process which is  \name{predictably measurable}, that is, $\mathcal P$-measurable, is called a \name{predictable process}{process!predictable}\index{predictable!process}.
\end{definition}

We often implicitly exploit the   fact that any $\R^d$-valued stochastic process $X=(X_t)_{t\geq 0}$ can be interpreted interchangeably as the associated map $X:[0,\infty)\times \Omega\to\R^d, (t,\omega)\mapsto X_t(\omega)$. A   process $X=(X_t)_{t\geq 0}$ is  called \name{measurable}{process!measurable} when the associated map   is jointly measurable, i.e., $X^{-1}(A)\in \mathcal B([0,\infty))\times \mathcal F$ for all $A\in \mathcal B(\R^d)$. Observe $\mathcal P\subset \mathcal B([0,\infty))\times \mathcal F$. See   {\S}\ref{Sec1.1.4} for more on predictable processes and other measurability types.
%\mark{And other measurability types}
% \mark{Thrm 30 protter misschien beter!}
\begin{theorem}[Theorem III.30 of \cite{book:protter}]\label{thm:predictable}
	Suppose $X$ is a semimartingale whose finite  variation process $A$ as in \eqref{eq:decomp} is predictably measurable. Decomposition   \eqref{eq:decomp} is then unique.
\end{theorem}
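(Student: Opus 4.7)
The plan is to reduce the uniqueness assertion to the single claim that any local martingale $N$ with $N_0 = 0$ which is simultaneously a predictable finite variation process is indistinguishable from the zero process. Granting this, suppose $X$ admits two decompositions $X = X_0 + M + A = X_0 + M' + A'$ satisfying the hypotheses, with both $A$ and $A'$ predictable. Rearranging yields $N := M - M' = A' - A$; the left-hand side exhibits $N$ as a local martingale starting from $0$, while the right-hand one exhibits $N$ as a predictable finite variation process. The claim then forces $M = M'$ and $A = A'$ up to indistinguishability.

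To prove the claim I would first localise: combining a fundamental sequence $(T_n)$ for $M$ with the stopping times $R_n := \inf\{t \geq 0 : FV_{[0,t]}(N) \geq n\}$, the stopped processes $N^{T_n \wedge R_n}$ are uniformly integrable martingales whose paths have integrable total variation. After relabelling it therefore suffices to treat the case in which $N$ itself enjoys these properties. The next step is to show that $N$ is continuous. Since every adapted \caglad process is predictable by Definition~\ref{def:predict}, the process $N_-$ is predictable, and hence so is $\Delta N = N - N_-$. For any predictable stopping time $S$, optional sampling together with $\mathcal F_{S-}$-measurability of $N_{S-}$ gives
\begin{equation}
\mathbb E\bigl[\Delta N_S \, \mathds 1_{\{S<\infty\}} \,\big|\, \mathcal F_{S-}\bigr] = 0,
\end{equation}
while predictability of $\Delta N$ makes $\Delta N_S$ itself $\mathcal F_{S-}$-measurable on $\{S<\infty\}$; hence $\Delta N_S = 0$ almost surely on that set. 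Since the jump times of the \cadlag predictable process $N$ can be exhausted by a countable family of predictable stopping times, this forces $\Delta N \equiv 0$, so $N$ is continuous.

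With $N$ now a continuous local martingale of finite variation starting at $0$, the conclusion is standard. For a partition $\pi$ of $[0,t]$ one has
\begin{equation}
\sum_{k=1}^{n(\pi)} (N_{x_{k+1}} - N_{x_k})^2 \leq \Bigl(\max_{k} |N_{x_{k+1}} - N_{x_k}|\Bigr) \cdot FV_{[0,t]}(N),
\end{equation}
which tends to $0$ as $|\pi| \to 0$ by uniform continuity of the continuous paths on $[0,t]$. Combined with the martingale identity $\mathbb E\sum_k (N_{x_{k+1}} - N_{x_k})^2 = \mathbb E N_t^2$ (valid after a further bounded localisation), this yields $N_t = 0$ almost surely for every $t$, and hence $N \equiv 0$ by \cadlag regularity together with Proposition~\ref{prop:regulatity}.

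The hard part, I expect, is justifying the exhaustion of the jumps of a \cadlag predictable process by predictable stopping times. This rests on the predictable section theorem from the general theory of optional/predictable projections, which we would simply cite rather than reprove; the rest of the argument is a fairly routine assembly of localisation, optional sampling, and the quadratic-variation argument for the continuous case.
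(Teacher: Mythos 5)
Your proposal is correct and follows exactly the route the paper indicates: Remark~\ref{remark:2} reduces the uniqueness to the single fact that a local martingale vanishing at $0$ which is also a predictable finite variation process is indistinguishable from zero, and the paper (like its source) simply cites Protter for both the theorem and this lemma. You additionally write out the standard proof of that lemma --- jumps at predictable times vanish by optional sampling plus $\mathcal F_{S-}$-measurability, so $N$ is continuous, and a continuous finite variation local martingale null at $0$ is zero by the quadratic-variation argument --- which is essentially the argument in the cited reference, so there is no substantive divergence.
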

We henceforth use the terminology that a semimartingale $X$ is a \name{special semimartingale}{semimartingale!special}  whenever there is a (unique) decomposition $X=X_0+M+A$ with $A$ being predictable. This decomposition is said to be the \name{canonical decomposition} of $X$ (if it exists). For example, in case $X$ is a process with bounded jumps, i.e., $\sup_{t\geq 0}|\Delta X_t|\leq C<\infty$, it admits a canonical decomposition and is hence a special semimartingale; see \citeb{Thm. III.34}{book:protter}.
 
\begin{remark}\label{remark:2}
The uniqueness follows, both in the continuous and \cadlag setting, quite immediately from the fact that if a local martingale $M$, with $M_0=0$, is a  predictable finite variation process as well, then $M$ is indistinguishable from the zero process \citeb{p. 115}{book:protter}. 
%\mark{Change remark into text, hence we can make formula below big again...}
\end{remark}

Many examples of semimartingales will arise in   {\S}\ref{Sec1.2} when discussing Lévy processes. 

%We will even show that Lévy processes with bounded jumps is a class of examples that are in fact {special semimartingales}, see Proposition \ref{prop:bndjump}. \mark{Ik weet nu wel beter; ff aanpassen naar iets moois: gewoon referen naar protter met bounded jumps. Daarna over levy praten :D}

\subsection{Stochastic integrals with respect to  semimartingales}\label{Sec1.1.2}
\noindent First of all,  the construction of stochastic integrals with respect to \cadlag semimartingales is not really that  different from when we would consider  continuous semimartingales. There are     a few subtleties, which we try to address as much as possible. Throughout these notes, we will let  \index{$\mathbb D\Rplus$}{$\mathbb D\Rplus$} (resp., \namemath{$\mathbb L\Rplus $}) denote the space of \textit{adapted} \cadlag (resp., \caglad) processes. 

Before we discuss this construction, recall  the notion of Lebesgue--Stieltjes integrability in the setting of stochastic processes.
  The following   is an essential ingredient. 

\begin{proposition}
	Let $A$ be a finite variation process. Then for every $\omega\in\Omega$ for which the sample path $t\mapsto A_t(\omega)$ is   of finite variation on compacts,   there exists a unique signed Borel measure $\mu_A(\,\cdot\,,\omega)$ on $[0,\infty)$ that satisfies $\mu_A(\{0\},\omega)=A_0(\omega)$ and $\mu_A((s,t],\omega)=A_t(\omega)-A_s(\omega)$, for all $0\leq s<t.$ 
%	\mark{ONNO!!!}
\end{proposition}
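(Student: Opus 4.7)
The plan is to work pathwise: fix $\omega\in\Omega$ for which $t\mapsto A_t(\omega)$ is \cadlag and has $FV_{[0,t]}<\infty$ for every $t\geq 0$, and construct $\mu_A(\,\cdot\,,\omega)$ as a signed Borel measure on $[0,\infty)$ that is finite on compact sets. Throughout, drop $\omega$ from the notation. The first step is to produce, out of $A$, two non-decreasing \cadlag functions whose difference is $A$: for each $t\geq 0$ set $A^+_t:=\tfrac12(FV_{[0,t]}(A)+A_t-A_0)$ and $A^-_t:=\tfrac12(FV_{[0,t]}(A)-A_t+A_0)$, so that $A_t=A_0+A^+_t-A^-_t$ with $A^\pm$ non-decreasing, \cadlag, and vanishing at $0$. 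This is just the classical Jordan-type decomposition and is valid pathwise because, by assumption, $t\mapsto FV_{[0,t]}(A)$ is finite and non-decreasing.

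Next, I would invoke the standard Lebesgue--Stieltjes construction for each $A^\pm$ separately. Given a non-decreasing \cadlag $f:[0,\infty)\to\mathbb R$, define on the semi-ring of half-open intervals the set function $\nu((s,t]):=f(t)-f(s)$ and $\nu(\{0\}):=f(0)$. The non-trivial point is $\sigma$-additivity of $\nu$ on the generated algebra: this is precisely where right-continuity of $f$ enters, because if $(s_n,t_n]\downarrow\emptyset$ then $f(t_n)-f(s_n)\downarrow 0$ by right-continuity and monotonicity. Carathéodory's extension theorem then yields a unique Borel measure $\mu_f$ on $[0,\infty)$ with the prescribed values, finite on compacts (since $f$ is bounded on compacts as $f$ is \cadlag). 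Applying this separately to $A^+$ and $A^-$, and to the constant $A_0$ supported at $\{0\}$, I would set
\[
\mu_A:=A_0\,\delta_0+\mu_{A^+}-\mu_{A^-}.
\]
Since $\mu_{A^\pm}$ are finite on each compact, their difference is a well-defined signed Borel measure of locally finite total variation, and the prescribed identities $\mu_A(\{0\})=A_0$ and $\mu_A((s,t])=A_t-A_s$ follow by direct computation.

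For uniqueness, suppose $\tilde\mu_A$ is another signed Borel measure on $[0,\infty)$, of locally finite variation, satisfying the same two identities. Fix $T>0$: both $\mu_A$ and $\tilde\mu_A$ restrict to finite signed measures on $[0,T]$, and they agree on the $\pi$-system $\mathcal C:=\{\{0\}\}\cup\{(s,t]:0\leq s<t\leq T\}$. Since $\sigma(\mathcal C)=\mathcal B([0,T])$, the Hahn--Jordan decompositions of both measures are uniquely determined by their values on $\mathcal C$ (e.g., apply Dynkin's $\pi$-$\lambda$ theorem to each of the four non-negative finite measures arising from the Jordan decompositions, or appeal to the uniqueness clause in Carathéodory's theorem applied to the total variation). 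Hence $\mu_A=\tilde\mu_A$ on $[0,T]$, and letting $T\uparrow\infty$ gives equality on all Borel subsets of $[0,\infty)$.

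The only genuine technical obstacle is the $\sigma$-additivity of the pre-measure on the algebra of finite unions of half-open intervals, which hinges crucially on the \cadlag regularity of the sample paths; the rest of the argument consists of bookkeeping and a routine appeal to the Carathéodory/Dynkin machinery.
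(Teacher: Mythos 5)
Your proof is correct and follows essentially the route the paper itself indicates: the paper's primary ``proof'' is a citation to Kallenberg's Proposition 2.20, and its stated alternative is exactly your argument---Carath\'eodory's extension theorem applied to the increasing pieces of a Jordan-type decomposition of the path, with the signed measure obtained as a difference that is locally finite. The only step you gloss over is that the variation function $t\mapsto FV_{[0,t]}(A)$ (hence $A^{\pm}$) is itself \cadlag, which is the standard fact needed before right-continuity can be invoked in the pre-measure construction.
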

\begin{proof}
	This follows from   \citeb{Prop. 2.20}{book:kallenberg} and the fact that a finite variation process is almost surely of finite variation on compacts. Alternatively, if $A$ is increasing, one  can     invoke Carathéodory's extension theorem. Subsequently, one  then follows  the lines below this proof.
\end{proof}
Observe that if we write $A=A^1-A^2$ for increasing processes $A^1$ and $A^2$, then $\mu_A(\,\cdot\,,\omega)=\mu_{A^1}(\,\cdot\,,\omega)-
\mu_{A^2}(\,\cdot\,,\omega)$ where $\mu_{A^1}(\,\cdot\,,\omega)$ and $
\mu_{A^2}(\,\cdot\,,\omega)$ are simply Borel measures on $\Rplus$, for those  elements $\omega\in\Omega$ where the sample paths $\omega\to A_t(\omega)$ are   of finite variation on compacts, or, equivalently, where the sample paths  $\omega\mapsto A^1_t(\omega)$ and $\omega\mapsto A^2_t(\omega)$ are indeed non-decreasing.

  \begin{definition} 
Suppose $A$ is a finite variation process,\label{def:Stieltjes} and  $H:\Rplus\times \Omega\to\R$  jointly measurable such that  $H(\,\cdot\,,\omega)|_{[0,t]}:[0,t]\to\R$ is $\mu_A(\,\cdot\,,\omega)$-integrable for $\mathbb P$-almost every $\omega\in\Omega$, for any $t\geq 0$.  Introduce the associated process $H=(H_t)_{t\geq 0}$, $H_t:\Omega\to\R,\omega\mapsto H(t,\omega)$.  Then   
 	\begin{equation}
 		%H\cdot A=((H\cdot A)_t)_{t\in I}\quad\text{and}\quad
 		\dint0\cdot  {H_s}A_s=\left(\dint0t  {H_s}A_s\right)_{t\geq 0}
 	\end{equation}
 	denotes the \name{integral of $H$ with respect to $A$}{Lebesgue--Stieltjes integral}, which is a stochastic process defined pathwise in the \name{Lebesgue--Stieltjes}{Lebesgue--Stieltjes integral} sense.  That is, for $\mathbb P$-almost every $\omega\in\Omega$ 
 	we let 
 	\begin{equation}
 		%\left( H\cdot A\right)_t(\omega)=%
 		\left(\dint0t {H_s}A_s\right)(\omega)=\dint0t {H_s(\omega)}A_s(\omega)=\int_{\Rplus} \mathds{1}_{[0,t]}(s){H(s,\omega)}\mu_A(\mathrm{d}s,\omega), \quad t\geq 0. \label{eq:Leb-Stiel}
 	\end{equation} 
 	For the (possibly) remaining $\omega \in \Omega$, 
% 	 \textcolor{red}{whose sample paths $\omega\to A_t(\omega)$ are not   of finite variation on compacts}
 	 we    simply set $\dint0\cdot  {H_s(\omega)}A_s(\omega)=0$. %$(H\cdot A)_t(\omega)=0$ for all $t\in I$.
 \end{definition}

%  \begin{definition}\label{def:Stieltjes}
%  	Suppose $A$ is a finite variation process and $H:\Rplus\times \Omega\to\R$  jointly measurable. Write $H_t:\Omega\to\R,\omega\mapsto H(t,\omega)$ for all $t\in \Rplus$. Also assume $H=(H_t)_{t\geq 0}$ is locally bounded\footnote{This ensures us that $H(\,\cdot\,,\omega):\Rplus\to\R$ is $\mu_A(\,\cdot\,,\omega)$-integrable, for ($\mathbb P$-almost) every $\omega\in\Omega$.}. Then   
%  	\begin{equation}
%  		  %H\cdot A=((H\cdot A)_t)_{t\in I}\quad\text{and}\quad
%  		  \dint0\cdot  {H_s}A_s=\left(\dint0t  {H_s}A_s\right)_{t\geq 0}
%  	\end{equation}
%    denotes the \name{integral of $H$ with respect to $A$}, which is a stochastic process defined pathwise in the \name{Lebesgue--Stieltjes} sense.  That is, for ($\mathbb P$-almost) every $\omega\in\Omega$ 
%    we let 
%   \begin{equation}
%   	 %\left( H\cdot A\right)_t(\omega)=%
%   	 \left(\dint0t {H_s}A_s\right)(\omega)=\dint0t {H_s(\omega)}A_s(\omega)=\int_0^t {H(s,\omega)}\mu_A(\mathrm{d}s,\omega), \quad t\geq 0. \label{eq:Leb-Stiel}
%   \end{equation} 
%For the (possibly) remaining $\omega \in \Omega$ whose sample paths $\omega\to A_t(\omega)$ are not   of finite variation on compacts, we  may simply set $\dint0\cdot  {H_s(\omega)}A_s(\omega)=0$. %$(H\cdot A)_t(\omega)=0$ for all $t\in I$.
%  \end{definition}

%The definition above yields us  a stochastic process $H\cdot A=((H\cdot A)_t)_{t\in I}.$%

To ensure  the  integrability as in the above, 
%take $H$ for example   locally bounded. 
 we require $\dint0t{|H_s|}|A|_s<\infty$ $\mathbb P$-a.s., for all $t\geq 0,$ where $(|X|_t)_{t\geq 0}$ denotes the \name{total variation process} of a finite variation process $X$.  That is, an increasing process such that for $\mathbb P$-almost every $\omega\in\Omega$ we have \begin{equation}
  |X|_t(\omega)=FV_{[0,t]}(s\mapsto X_s(\omega)),\end{equation} for all $ t\geq 0.$
  As usual, we trivially extend $|X|$ onto the whole sample space $\Omega.$  Of course, when we write $X=X^1-X^2$ for increasing processes $X^1,X^2$, then $|X|\leq X^1+X^2$ holds\vspace{-.05cm} $\mathbb P$-a.s.. 
%  
%  
%  
%  see equation \eqref{eq:totalvar} and beyond for information  on\vspace{-.075cm} the total variation   $|A|$.
A sufficient integrability condition for $H$ is:    $\mathbb E\dint0t{|H_s|}|A|_s<\infty,$ for all $ t\geq 0$. 
% \mark{\color{red} Impliceert de een de ander??? Of staan ze los van elkaar, volgens mij niet......}

 C\`adl\`ag and \caglad processes   are   measurable (in case   $\mathbb P$-a.s. regularity is assumed, this may no longer be true according to Example \ref{ex:regularity}).  Suppose $H$ is such a process, then any sample  path is $\mu_A(\,\cdot\,,\omega)$-integrable,  as well, since  $s\mapsto H_s(\omega)$ restricted to any compact interval $[0,t]$ is known  to be bounded, and therefore Lebesgue--Stieltjes integrable. 
 
 As a matter of fact, suppose   $H=(H_t)_{t\geq 0}$ is measurable   process such that $\mathbb P$-almost  every sample path is continuous, e.g., whenever $H$ is a continuous process (i.e., has continuous paths everywhere). Then for every sequence   $(\pi^m)_{m\in \N}$  of partitions of $[0,t]$ with mesh tending to zero,  that is, $\lim_{m\to\infty} |\pi_m|=0$,  we obtain 
%\mark{Controleer deze tekst!}
\begin{equation}
\dint 0t{H_s}A_s= H_0A_0+	\lim_{m\to\infty} \sum_{k=1}^{n(\pi^m)}H_{\tau_k^m}(A_{t^m_{k+1}}-A_{t^m_k})\quad \mathbb P\text{-a.s.}, \label{eq:Riemann}
\end{equation}
for    $\tau_k^m$ such that $t_k^m\leq \tau_k^m\leq t_{k+1}^m$. Indeed, it is  commonly known that if the \name{Riemann--Stieltjes integral}{Riemann--Stieltjes integral!classical definition} exists, it coincides with the Lebesgue--Stieltjes integral, and that the existence is as sured for the class of continuous integrands \citeb{p. 552}{article:horst}. In the classical sense of Riemann--Stieltjes integration, i.e., as in the above, the integrand cannot share points of
discontinuity with the integrator function. Nevertheless, if one considers the  generalised \name{Darboux}{Riemann--Stieltjes integral!Darboux's definition} definition of Riemann--Stieltjes integrals, thus  requiring every sequence $(\pi^m)_{m\in\N}$ of partitions   to become finer, i.e., $\pi^1\subset \pi^2\subset ...$,    then it is (only) necessary  that the integrand and integrator  are not simultaneously discontinuous from the left or
from the right  \citeb{p. 553}{article:horst}; see also \citeb{ p. 160}{book:apostol}.    Theorem C in \citeb{p. 553}{article:horst}---both a necessary and sufficient \vspace{-.05cm}statement---enables us to conclude that the Lebesgue--Stieltjes integral $\int_0^tH_s\,\mathrm dA_s$, with $H\in\mathbb L\Rplus$  and $A\in\mathbb D\Rplus$ by   convention, coincides with the existing Riemann--Stieltjes integral   (in the Darboux sense).

For an elaborate discussion on Lebesgue--Stieltjes integration, many elementary textbooks on real analysis suffice due to the pathwise character in Definition \ref{def:Stieltjes}. We   refer    to \cite{book:apostol,book:carter,book:kallenberg,article:horst} and the references therein.

 Take note that the limits in the above are way stronger than we actually need.
% Equation \eqref{eq:Riemann} tells us that the  Riemann-Stieltjes
%  interpretation   The times can be stopping times, see   \citeb{p. 41}{book:protter}.
%In conclusion, for  suitable integrands,  \mark{ONNO!!! \url{https://en.wikipedia.org/wiki/Lebesgue\%E2\%80\%93Stieltjes_integration}} Lebesgue--Stieltjes integrals coincide with limits of approximating sums. 
For instance, a  relatively easy computation  with   help of the dominated convergence theorem  shows that
\begin{equation}\lim _{m \rightarrow \infty} \sup _{0 \leq u \leq t}\left|H_0A_0+\sum_{k=1}^{n(\pi^m)} H_{t_k^m}(A_{t^m_{k+1}\wedge u}-A_{t^m_k\wedge u})-\int_{0}^u H_s \,\mathrm d A_s\right|=0,\label{eqref:similar}
\end{equation} holds for $H\in\mathbb  L\Rplus$; see \citeb{p. 16}{unpublished:timo}. We typically take $\tau_k^m=t_k^m$ in \eqref{eq:Riemann} as specific choice.
In words, we are able to approximate Lebesgue--Stieltjes integrals   to $[0,t]$ by   left Riemann sums, where the  convergence can even be in the  \textit{ucp-sense} (this  type of convergence will be introduced soon, prior to Definition \ref{def:integral}).  We merely exploit this in Proposition \ref{thm:import_form!}.

 Now, let us continue with defining integration with respect to a general semimartingale $X$. Recall that  if one aims for a pathwise definition of
 such an integral, one  finds themselves in a quite  hopeless position. A definition as in \eqref{eq:Leb-Stiel} is no option and if we   consider the Riemann--Stieltjes integral as in \eqref{eq:Riemann} with $\tau_k^m=t_k^m$ only and $A$ replaced by $X$, then this pointwise limit converges $\mathbb P$-a.s. for every  continuous process $H$  if and only if $X$ is a finite variation process; see \citeb{Sec. I.8}{book:protter} or \citeb{Sec. 6}{unpublished:peter}.  If one considers a limit in probability instead,  it will be of no help either   because then via   similar  reasoning $X$ still needs to be a finite variation process \citeb{p. 44}{book:protter}.
% However,  the sample paths of a  Brownian motion are known to be of infinite variation, see Example \ref{ex:Brownian}, and therefore the definition of a stochastic integral in a pointwise sense will not be useful: the class of functions which we can integrate would not even include the continuous functions. Thus, naive stochastic integration is impossible.
% 
% Now, before we continue to define integration with respect to a semimartingale, recall  that for a process of infinite variation, like a Brownian motion, we are not able to define such an integral in the Lebesgue--Stieltjes sense \eqref{eq:Leb-Stiel}.  Furthermore, such an integral cannot be defined  as a limit of approximating sums either, for an arbitrary   continuous integrand, see 
% 
  The  key   to overcome this problem is to restrict ourselves to those integrands  that cannot see  into the future (of the integrator), that is, the integrands need to be adapted processes.
% \mark{\url{https://math.stackexchange.com/questions/2055402/why-do-we-unavoidably-or-not-use-riemann-integral-to-define-it\%C3\%B4-integral}}
% \mark{A bounded function on a compact interval [a, b] is Riemann integrable if and only if it is continuous almost everywhere (the set of its points of discontinuity has measure zero, in the sense of Lebesgue measure). This is the Lebesgue-Vitali theorem}
 
 \begin{definition}
 	A process $H$ is called \name{simple predictable}{simple predictable process}\index{process!simple predictable} whenever it can be written as
 	\begin{equation}
 		H_t=H_0\mathds{1}_{\{0\}}(t)+\sum_{k=1}^nH_k\mathds{1}_{(T_{k},T_{k+1}]}(t),
 	\end{equation}
 where $0=T_0\leq T_1\leq T_2\leq ...\leq T_{n+1}<\infty$ is a finite sequence of stopping times, such that the real-valued random variables $H_k$ are $\mathcal F_{T_{k}}$-measurable. Usually, we set $T_1=T_0=0$.
% , and $|H_k|<\infty$ holds $\mathbb P$-almost surely.
 \end{definition}
Observe, when allowing $H_k$ to be $[-\infty,\infty]$-random variables, one imposes the additional condition   that $|H_k|<\infty$ holds $\mathbb P$-almost surely. In that setting, a simple predictable process $H$ is adapted, and  has $\mathbb P$-a.s.\ \caglad paths. Since we  interpret processes in $\mathbb D \Rplus$  with  regularity everywhere, it follows that $H$ is indistinguishable from a $\mathbb D\Rplus$ process. Because we consider random variables that may not attain the values $\{-\infty,\infty\}$, we circumvent indistinguishability and obtain $H\in \mathbb D\Rplus$ immediately.
\begin{definition}
	Let $H$ be a simple  predictable process and  suppose $X$ is  a semimartingale. Then the \name{stochastic integral of $H$ with respect to $X$}{stochastic integral}   is defined by the $\mathbb D\Rplus$-process
	 \begin{equation}
	 	%H\cdot X=
	 	\label{eq:si_simple}\dint0\cdot {H_s}X_s=\left(\dint0t {H_s}X_s\right)_{t\geq 0}:=H_0X_0+\sum_{k=1}^nH_k(X^{T_{k+1}}-X^{T_k}).
	 \end{equation}
\end{definition}
 Notice that $\dint0t {H_s}X_s$ is   often written as \begin{equation}\dint{[0,t]}{} {H_s}X_s,\end{equation}  and we refer to  {\S}\ref{Sec1.1.5} for more information regarding this notation. Integration in the above sense     acts linearly on both integrands and integrators, i.e.,
 \begin{equation} 
\label{eq:linear1}  	\dint0\cdot {(\alpha H_s+\beta K_s)}X_s=\alpha\dint0\cdot{H_s}X_s+\beta\dint0\cdot{K_s}X_s\end{equation}
and
 \begin{equation}\label{eq:linear2}  \dint0\cdot {H_s}(\alpha X+\beta Y)_s=\alpha \dint0\cdot{H_s}X_s+\beta \dint0\cdot{H_s}Y_s
 \end{equation}
hold for every $\alpha,\beta\in\R$, semimartingales $X$ and $Y$, and simple predictable processes $H$ and $K.$
  Additionally observe, when $X$ is continuous,  the expression in \eqref{eq:si_simple} also is. Next, we generalise the integrand from being a simple predictable process $H$ to an  adapted \caglad process $H$.  
 
 Recall  a sequence of processes $(Y^n)_{n\in \N}$ converges to some stochastic process $Y$ \name{uniformly on compacts in probability}, abbreviated as \name{ucp}{uniformly on compacts in probability!\textit{abbreviated as} ucp}, whenever for all time $t$ fixed we have that $\sup_{s\leq t}|Y^n_s-Y_s|$ converges to 0 in probability.  It is tacitly   understood that the supremum is measurable when the processes   are either \cadlag or \caglad (because then the supremum can be restricted to the countable set of rational times, which will be clearly measurable). Moreover, let us  endow $\mathbb D\Rplus$ and $\mathbb L\Rplus$ with the  (compatible) metric  \begin{equation}\index{$d_{\textnormal{ucp}}$}d_{\textnormal{ucp}}(X,Y):=\sum_{n\in\N}2^{-n}( 1\wedge \sup_{s\leq n}|X_s-Y_s|).\end{equation}
 We implicitly use    the identification that indistinguishable processes are the same process. Also,   the desired   property holds:  $d_{\text{ucp}}(X^n,X)\to 0$ if and only if $X^n$ converges to $X$ in the ucp-sense. Finally, observe $(\mathbb D\Rplus,d_{\text{ucp}})$ and  $(\mathbb L\Rplus,d_{\text{ucp}})$ are complete metric spaces \citeb{p. 57}{book:protter}.
 
 One can show, as an intermediate result, that if $(H^n)_{n\in\N}$ is a sequence of simple predictable processes that converges to 0 in the ucp sense, then the sequence $(\dint0\cdot {H^n_s}X_s)_{n\in\N}$  converges to 0 in the ucp sense too \citeb{p. 58}{book:protter}.  This result makes a stochastic integral of an $\mathbb L\Rplus$-process  independent of the  chosen approximating sequence of simple predictable  processes.

 \begin{definition} \label{def:integral}
 	Take $H\in \mathbb L\Rplus$ and $X$ a semimartingale. Then we define the \name{stochastic integral of $H$ with respect to $X$}{stochastic integral}, denoted by $\dint0\cdot {H_s}X_s$, as the  $\mathbb D\Rplus$-process satisfying
 	\begin{equation}
 	\dint0\cdot {H_s^n}X_s\stackrel{\text{ucp}}{\longrightarrow}\dint0\cdot {H_s}X_s,
 	\end{equation}
 	 where $(H^n)_{n\in \N}$  is an arbitrary sequence of simple predictable processes such that $H^n\stackrel{\text{ucp}}{\longrightarrow} H$.
 \end{definition} 
Acknowledge that a stochastic integral is only determined $\mathbb P$-a.s.. As it will become usual, we   are notably using   the identification that indistinguishable processes are the same process. 
  Moreover,   stochastic integrals  as in Definition \ref{def:integral} are well-defined due to the fact that the metric space $(\mathbb D\Rplus,d_{\text{ucp}})$ is complete, and because the space of simple predictable processes is dense in the space of \caglad processes $\mathbb L\Rplus$ under the ucp-metric \citeb{p. 57}{book:protter}. %\mark{Pas dit aan.}
 %  \footnote{By definition, the \name{\scriptsize{ucp-topology}}{ucp-topology} on the space of adapted continuous/\cadlag/\caglad processes is induced  by a compatible metric  $d_{\textnormal{ucp}}(X,Y)=\|X-Y\|_{\textnormal{ucp}}$ with a norm such as $\|X\|_{\textnormal{ucp}}=\sum_{n\in\N}2^{-n}( 1\wedge \sup_{s\leq n}|X_s|)$. Note it satisfies the desired   property  $d(X^n,X)\to 0$ if and only if $X^n$ converges to $X$ in the ucp sense.}
One can extend the notion of a stochastic integral, with respect to any semimartingale, by considering a proper class of predictably measurable integrands, see  {\S}\ref{Sec1.1.4}, and thus we are not just restricted to adapted \caglad processes only. Before we will  dig deeper into this extension, we  state a few significant results.
 
 First of all,    a finite variation process $A$, alone, is of course a semimartingale too. This implies however that there are two different meanings of $\dint0\cdot {H_s}A_s$ when $H$ is an adapted \caglad process.  Fortunately, these   definitions coincide---hence, there is no ambiguity---thanks to the next result. 
 
 \begin{proposition}[Theorem II.17 of \cite{book:protter}]\label{prop:stieltjes}
 	Suppose $H\in \mathbb L\Rplus $ and let $A$ be a finite variation process. The Lebesgue--Stieltjes integral of $H$ with respect to $A$  is indistinguishable from the stochastic integral $\dint0\cdot {H_s}A_s$.
 \end{proposition}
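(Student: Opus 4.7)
The proof proceeds by a density argument: first establish the identity on simple predictable integrands, where it follows by direct computation, and then extend by approximation using the fact that the stochastic integral is built as a ucp-limit of integrals of simple predictable processes.

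\textit{Step 1 (simple predictable case).} Take $H=H_0\mathds{1}_{\{0\}}+\sum_{k=1}^n H_k\mathds{1}_{(T_k,T_{k+1}]}$. By the basic properties of $\mu_A(\,\cdot\,,\omega)$, one has pathwise
$$\int_0^t \mathds{1}_{(T_k,T_{k+1}]}(s)\,\mathrm dA_s(\omega)=A_{T_{k+1}\wedge t}(\omega)-A_{T_k\wedge t}(\omega)=(A^{T_{k+1}}-A^{T_k})_t(\omega),$$
together with $\int_0^t\mathds{1}_{\{0\}}(s)\,\mathrm dA_s(\omega)=A_0(\omega)$. Multiplying by the $\mathcal F_{T_k}$-measurable factors $H_k$ (which do not depend on $s$) and summing reproduces the right-hand side of \eqref{eq:si_simple}, so the two notions of integral agree on simple predictable processes.

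\textit{Step 2 (extension to $\mathbb L\Rplus$).} Since simple predictable processes are dense in $(\mathbb L\Rplus,d_{\textnormal{ucp}})$, pick simple predictable $H^n$ with $H^n\to H$ in ucp. By Definition \ref{def:integral}, the stochastic integrals $\int_0^\cdot H^n_s\,\mathrm dA_s$ converge to $\int_0^\cdot H_s\,\mathrm dA_s$ in ucp, and by Step 1 each of these stochastic integrals coincides with its Lebesgue--Stieltjes counterpart. It therefore suffices to prove that the Lebesgue--Stieltjes integrals of $H^n$ converge, in ucp, to the Lebesgue--Stieltjes integral of $H$. Passing to a subsequence $(n_k)$ so that $\sup_{s\leq t}|H^{n_k}_s-H_s|\to 0$ almost surely for every rational $t>0$, one obtains, for a.e.\ $\omega$,
$$\sup_{u\leq t}\left|\int_0^u H^{n_k}_s(\omega)\,\mathrm dA_s(\omega)-\int_0^u H_s(\omega)\,\mathrm dA_s(\omega)\right|\leq \sup_{s\leq t}|H^{n_k}_s(\omega)-H_s(\omega)|\cdot |A|_t(\omega)\longrightarrow 0,$$
where finiteness of the total variation $|A|_t(\omega)$ on the compact $[0,t]$ is used. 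This is uniform convergence on compacts a.s., hence ucp convergence. Combining with Step 1 and uniqueness of ucp-limits in $(\mathbb D\Rplus,d_{\textnormal{ucp}})$ forces the Lebesgue--Stieltjes integral of $H$ against $A$ to be indistinguishable from $\int_0^\cdot H_s\,\mathrm dA_s$.

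\textit{Main obstacle.} The delicate point is reconciling the two convergence modes: the stochastic integral is defined through a ucp-limit while the Lebesgue--Stieltjes integral is an intrinsically pathwise object. The standard subsequence extraction from convergence in probability, together with the pathwise finiteness of $|A|$ on compacts, is what allows a dominated-convergence-type estimate to bridge the two and guarantees that the two well-defined limits must coincide up to indistinguishability.
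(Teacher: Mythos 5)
Your proof is correct and is essentially the argument behind the cited Theorem II.17 of Protter, which the paper itself invokes without reproducing: verify the identity pathwise on simple predictable integrands, then pass to the ucp limit using density of simple predictable processes in $\mathbb L\Rplus$ and the pathwise bound by the total variation $|A|_t$ on compacts. The only cosmetic point is that your estimate should read $\sup_{s\leq t}|H^{n_k}_s-H_s|\cdot(|A_0|+|A|_t)$ since $\mu_A(\{0\})=A_0$ is not counted in $FV_{[0,t]}$, which changes nothing in the convergence.
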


Therefore, for $X$ a semimartingale, we   usually separate the stochastic integral as
\begin{equation}
\label{eq:seperation}	\dint0t{H_s}X_s=H_0X_0+\dint0t{H_s}A_s+\dint0t{H_s}M_s,
\end{equation}
where  we use that the   linearity as in \eqref{eq:linear1} and  \eqref{eq:linear2} clearly holds for adapted \caglad processes also. Usually $\dint0\cdot {H_s}A_s$ is to   be understood as a  Lebesgue--Stieltjes integral, and in particular,    equation   \eqref{eq:seperation} consequently suggests that it would have sufficed to define stochastic integrals with respect to local  martingales only.  

Secondly, one can   deduce that a Lebesgue--Stieltjes integral with respect to a finite variation process $A$ is again a finite variation process. Differently put,   being of  finite variation  is  preserved by stochastic integration.
This is an immediate consequence of Proposition \ref{prop:stieltjes} and the fact%\mark{Zie pagina 69 total variation measure.}  
\begin{equation}\label{eq:totalvar}
	\left|\dint0\cdot {H_s}A_s\right|_t=\dint0t{|H_s|}{|A|_s},\quad t\geq 0.
\end{equation}
As a matter of fact, there are multiple properties being preserved under stochastic integration. A few of  them are listed below.

\begin{theorem}
	Suppose $H\in \mathbb L\Rplus$. \label{thm:preserve}Then the following properties hold.
	\begin{enumerate}[\normalfont(i)]
		\item Suppose $X$ is a semimartingale, then $\dint0\cdot {H_s}X_s$ is a semimartingale too. In addition, if  $K$ is an adapted \caglad process, we have $\dint0\cdot {K_s}Y_s=\dint0\cdot {K_sH_s}X_s$ with $Y=\dint0\cdot {H_s}X_s$;
		\item Suppose $X$ is a finite variation process, then $\dint0\cdot {H_s}X_s$ is a finite variation process too;
			\item Suppose $X$ is a local martingale, then $\dint0\cdot {H_s}X_s$ is a local martingale too;
			\item Suppose   $X$ is a locally square integrable martingale, then $\dint0\cdot {H_s}X_s$ is  a locally square integrable  martingale too; 
			\item Suppose $X$ is a continuous semimartingale, then  $\dint0\cdot {H_s}X_s$ is also continuous.
%			Moreover, whenever $X$ is a true square integrable martingale\footnote{With a \name{\scriptsize{true martingale}}{martingale!true} is meant that the process is a martingale and not merely a local martingale.}, then the stochastic integral $\dint0\cdot {H_s}X_s$ is a true square integrable martingale too. 
	\end{enumerate}
\end{theorem}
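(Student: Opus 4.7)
The plan is to exploit the defining ucp-approximation: writing $\int_0^\cdot H_s\,\mathrm dX_s$ as the ucp-limit of integrals of simple predictable processes $H^n$, each of the five structural properties is first verified in the simple case directly from \eqref{eq:si_simple} and then transferred to the limit by an argument tailored to that property. In the simple case, the integral is a finite linear combination of stopped increments $X^{T_{k+1}}-X^{T_k}$ weighted by $\mathcal F_{T_k}$-measurable coefficients, which makes each claim amenable to direct verification.

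For (ii), Proposition~\ref{prop:stieltjes} identifies $\int_0^\cdot H_s\,\mathrm dX_s$ with the pathwise Lebesgue--Stieltjes integral, whose total variation is computed by \eqref{eq:totalvar} and hence finite on compacts. Assertion (v) is obtained by noting that the right-hand side of \eqref{eq:si_simple} is manifestly continuous when $X$ is continuous; extracting a subsequence along which the approximating integrals converge uniformly on compacts $\mathbb P$-a.s., continuity of the paths passes to the limit, and Proposition~\ref{prop:regulatity} ensures this coincides (up to indistinguishability) with the \cadlag version defining the integral. With (ii) and (iii) available, assertion (i) follows from the decomposition $X=X_0+M+A$ of Definition~\ref{def:semimart} together with the splitting \eqref{eq:seperation} and the linearity \eqref{eq:linear2}. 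The associativity identity $\int_0^\cdot K_s\,\mathrm dY_s=\int_0^\cdot K_sH_s\,\mathrm dX_s$ with $Y=\int_0^\cdot H_s\,\mathrm dX_s$ is first verified for simple predictable $K$ by a telescoping computation using the stopped increments of $Y$, and then extended to arbitrary $K\in\mathbb L\Rplus$ via simultaneous ucp-approximation of $K$ and $KH$.

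The main difficulty concerns (iii) and (iv). When $H$ is simple predictable and $M$ is a bounded martingale, each term $H_k(M^{T_{k+1}}-M^{T_k})$ in \eqref{eq:si_simple} is a martingale by optional stopping combined with the fact that $H_k$ is bounded and $\mathcal F_{T_k}$-measurable; uniform boundedness then lets us take ucp-limits and stay in the martingale class via dominated convergence for conditional expectations. The passage to a general local martingale $M$ and a general $H\in\mathbb L\Rplus$ proceeds by a double localisation: combine a fundamental sequence $(S_n)$ reducing $M$ with $R_n=\inf\{t\geq 0:|H_t|\geq n\}$ to form stopping times $T_n=S_n\wedge R_n$ that render both $M^{T_n}$ and $H^{T_n}$ bounded. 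The stopping identity $\bigl(\int_0^\cdot H_s\,\mathrm dM_s\bigr)^{T_n}=\int_0^\cdot H_s\,\mathrm dM_s^{T_n}$---itself established for simple $H$ and then propagated by ucp-continuity---reduces everything to the bounded case. The subtle point is verifying that this stopping identity survives the ucp-limit and that the $L^2$-bounds required for (iv) are preserved under the localisation; once both are in place, the desired preservation properties follow at once.
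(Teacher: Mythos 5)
Most of your outline is sound and matches what the paper (which simply cites \cite{book:protter}) has in mind: (ii) via Proposition~\ref{prop:stieltjes} and \eqref{eq:totalvar}, (v) via an a.s.-uniformly-convergent subsequence (equivalently, completeness of the continuous adapted processes under $d_{\textnormal{ucp}}$, which is the paper's phrasing), and (i) from (ii), (iii) and \eqref{eq:seperation}. The associativity claim and the treatment of (iv) are also essentially right: for a locally square integrable $M$ the double localisation works because $M^{S_n}\mathds 1_{\{S_n>0\}}\in\mathscr M^2$, boundedness of $H$ up to $R_n$ gives a uniform $L^2$ bound $\mathbb E(\dint0t{H^n_s}M_s)^2\leq C^2\,\mathbb E[M]_t$ on the approximating integrals, and $L^2$-boundedness supplies the uniform integrability needed to carry the martingale property through the ucp limit.

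The genuine gap is in (iii), and it sits exactly where the paper warns that the proof is ``far from trivial.'' Your reduction assumes that stopping at $T_n=S_n\wedge R_n$ renders $M^{T_n}$ a \emph{bounded} (or at least square integrable) martingale, so that a domination or $L^2$ argument lets the martingale property survive the ucp limit. For \cadlag local martingales this reduction is unavailable: stopping controls $M$ only up to the jump at the stopping time, and that jump need not be bounded or square integrable. Indeed the paper records that a local martingale is locally square integrable if and only if it is continuous, so no choice of fundamental sequence can push a general \cadlag local martingale into the class where your uniform-integrability argument applies; and without some uniform integrability, ucp convergence of $\dint0\cdot{H^n_s}M_s$ does not preserve martingality (``dominated convergence for conditional expectations'' has no dominating function here). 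The standard repair is the fundamental theorem of local martingales (\citeb{Thm.~III.25}{book:protter}): decompose $M=N+A$ with $N$ a local martingale with bounded jumps (hence locally square integrable, so (iv) applies) and $A$ a local martingale of locally integrable finite variation, for which the Stieltjes integral must be shown separately to be a local martingale. This extra decomposition is the missing idea; without it your localisation scheme does not close.
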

\begin{proof}
	For (i), see   \citeb{Thm. II.19}{book:protter}; for (ii), we refer to equation \eqref{eq:totalvar} and the discussion preceding it; for (iii), see  \citeb{Thm. III.29}{book:protter}; and, furthermore,   for (iv),
%	part 1,
	 see   \citeb{Thm. II.20}{book:protter}. The proof of (iii) is far from trivial and it turns out easier to show (iv) first \cite{book:protter}. 
     % It should be noted that, in the proof of  \citeb{Thm. II.20}{book:protter}, a locally square integrable martingale $M=(M_t)_{t\geq 0}$ satisfies an additional assumption: there exists a random variable $M_\infty$ such that $M_t=\mathbb E[M_\infty|\mathcal F_t]$ holds for all $t\geq 0$ and $ \sup _{t\geq 0} \mathbb EM_{t}^{2}=\mathbb EM_{\infty}^{2}<\infty$, see also \citeb{p. 12}{book:protter}; this assumption is not needed when restricting ourselves to finite time horizons $[0,T]$ for some $T>0.$ 
     Finally, for (v)  we  exploit the fact that   the space of adapted continuous processes endowed with  $d_\text{ucp}$ is   complete as well.
%	 \mark{Pas dit aan.}\mark{https://almostsuremath.com/2009/12/22/u-c-p-convergence/}
%	 Ultimately, for (iv) part 2, we refer to Section \ref{Sec1.1.3}  (or see  \citeb{p. 38}{book:chung}).
\end{proof}

Observe   that stochastic integration does not preserve the martingale property in general. That is, if $X$ is a (true) martingale
%\footnote{With a \name{\scriptsize{true martingale}}{martingale!true} is meant that the process is a martingale and not merely a local martingale.}
  without additional integrability assumptions,  then this implies that  the stochastic integral $\dint0\cdot {H_s}X_s$ is not necessarily   a (true) martingale as well and therefore a local martingale only (by Theorem \ref{thm:preserve}). Corollary \ref{cor:truemart}, for instance, shows us an integrability condition to ensure $\dint0\cdot {H_s}X_s$ is a (true) square integrable martingale whenever $X$ is.

Moreover,  the theorem above together with \eqref{eq:seperation} yields the  immediate result that   $\dint0\cdot {H_s}X_s$ is a semimartingale with $\dint0\cdot {H_s}A_s$ being the finite variation part and $\dint0\cdot{H_s}M_s$ the local martingale part. 
This decomposition is unique (up to indistinguishability) when   the stochastic integral $\dint0\cdot {H_s}A_s$ is predictably measurable; e.g., take $A$ to be continuous.

%  
%
%\begin{figure}[!ht]
%	\centering
%	\includegraphics[width=0.7\linewidth]{Figures/FILL/fsjahjfa}
%	\caption{}
%	\label{fig:fsjahjfa}
%\end{figure}

% \begin{figure}
% 	\centering
% 	\includegraphics[width=0.7\linewidth]{Figures/FILL/fsafsas1}
% 	\caption{}
% 	\label{fig:fsafsas1}
% \end{figure}
% 
%\begin{figure}[!ht]
%	\centering
%	\includegraphics[width=0.7\linewidth]{Figures/FILL/fsafsas}
%	\caption{}
%	\label{fig:fsafsas}
%\end{figure}

 Lastly, we want to point out   the following. Suppose $Y$ is some stochastic process and let $\pi$ be a   \name{random partition} of finite stopping times, i.e., $0=T_1\leq T_2\leq ...\leq T_{n(\pi)}<T_{n(\pi)+1}<\infty,$ where $n(\pi)\in \N$ is a   positive integer. Subsequently, we define the \name{sampled process} $Y$ at $\pi$ by
 \begin{equation}
 	Y^\pi:=Y_0\mathds{1}_{\{0\}}+\sum_{k=1}^{n(\pi)}Y_{T_k}\mathds1_{(T_k,T_{k+1}]}.
 \end{equation}
 For $X$ a semimartingale, one easily verifies  that $\dint0\cdot {Y^\pi_s}X_s=Y_0X_0+\sum_{k=1}^{\pi(n)}Y_{T_k}(X^{T_{k+1}}-X^{T_k})$ holds for any adapted process $Y$ that is either a \caglad or \cadlag process. 
 
 \begin{theorem}[Theorem II.21 of \cite{book:protter}]
 	Let $X$ be a semimartingale and assume we either have\label{thm:approx} $Y\in \mathbb L\Rplus$ or $Y\in\mathbb D\Rplus$. Suppose  that $(\pi_n)_{n\in \N}$ is a  sequence of random partitions,   $0=T_0^n\leq T_1^n\leq ...\leq T_{k_n}^n\leq T_{k_n+1}^n<\infty$ where the $T^n_k$ are stopping times, such that
 	\begin{enumerate}[\normalfont(i)]
 		\item $\lim_{n\to\infty}T^n_{k_n}=\infty$ holds $\mathbb P$-\textnormal{a.s.}, and
 		\item $|\pi_n|=\sup_k|T^n_{k+1}-T^n_k|\to 0$ holds $\mathbb P$-\textnormal{a.s.} when taking $n\to\infty.$
 	\end{enumerate}
 Then $\dint0\cdot {Y^{\pi_n}_s}X_s=Y_0X_0+\sum_{k=1}^{k_n}Y_{T^n_k}(X^{T^n_{k+1}}-X^{T^n_k})\stackrel{\textnormal{ucp}}{\longrightarrow} Y_0X_0+\dint0\cdot {Y_{s-}}X_s.$
 \end{theorem}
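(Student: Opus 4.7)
First, I observe that $Y^{\pi_n}$ is a simple predictable process: since $Y$ is adapted and the $T_k^n$ are stopping times, each $Y_{T_k^n}$ is $\mathcal F_{T_k^n}$-measurable. The claimed identity $\dint0\cdot{Y^{\pi_n}_s}X_s = Y_0 X_0 + \sum_{k=1}^{k_n} Y_{T_k^n}(X^{T_{k+1}^n}-X^{T_k^n})$ is then a direct application of \eqref{eq:si_simple}, so the substance of the theorem lies in the ucp convergence. Removing the common $Y_0 X_0$ via \eqref{eq:linear1}, it suffices to show $\dint0\cdot{\tilde Y^{\pi_n}_s}X_s \stackrel{\textnormal{ucp}}{\longrightarrow} \dint0\cdot{Y_{s-}}X_s$, where $\tilde Y^{\pi_n} := \sum_{k=1}^{k_n} Y_{T_k^n}\mathds 1_{(T_k^n, T_{k+1}^n]}$.

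The crucial pointwise observation is this: on the $\mathbb P$-full set where (i) and (ii) hold, for every fixed $t>0$ and all $n$ large enough there is a unique $k^{*}=k^{*}(n,t,\omega)$ with $t\in(T_{k^*}^n,T_{k^*+1}^n]$. Condition (ii) forces $T_{k^*}^n \uparrow t$ strictly from the left, and since $Y$ has left limits we obtain $\tilde Y^{\pi_n}_t(\omega) = Y_{T_{k^*}^n}(\omega) \to Y_{t-}(\omega)$. Note this convergence is \emph{not} uniform on compacts in $t$: near a jump of $Y$ the supremum $\sup_{s\leq t}|\tilde Y^{\pi_n}_s - Y_{s-}|$ is bounded below by a jump size of $Y$. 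So ucp-continuity of the stochastic integral in the integrand cannot be invoked directly, which identifies the principal obstacle of the argument.

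Invoking Definition~\ref{def:semimart} I decompose $X = X_0 + M + A$ and split the integral as in \eqref{eq:seperation}. The $X_0$-contributions vanish from both sides because $\tilde Y^{\pi_n}_0 = 0 = Y_{0-}$, so it suffices to treat the finite-variation and local-martingale parts separately. For the finite-variation part, Proposition~\ref{prop:stieltjes} reduces the claim to a pathwise Lebesgue–Stieltjes statement. On each compact $[0,t]$, $|\tilde Y^{\pi_n}|$ is bounded uniformly in $n$ by $\sup_{s\leq t}|Y_s(\omega)|<\infty$ (a standard consequence of $Y$ being càdlàg or càglàd, as noted after Definition~\ref{def:first}), so the dominated convergence theorem applied to $\mu_A(\,\cdot\,,\omega)$ gives $\dint0t{\tilde Y^{\pi_n}_s}A_s \to \dint0t{Y_{s-}}A_s$ $\mathbb P$-a.s., and the argument underlying \eqref{eqref:similar} upgrades this to the ucp sense.

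For the local-martingale part I localize: choose stopping times $R_m\uparrow\infty$ for which $M^{R_m}$ is a square-integrable martingale and $|Y_s|\leq m$ on $[0,R_m]$. On $[0, T\wedge R_m]$, Doob's maximal inequality combined with the $L^2$-isometry for integrals against a square-integrable martingale yields a bound of the form $\mathbb E\sup_{u\leq T\wedge R_m}\bigl|\dint0u{(\tilde Y^{\pi_n}_s - Y_{s-})}M_s\bigr|^2 \leq C\,\mathbb E \dint0{T\wedge R_m}{|\tilde Y^{\pi_n}_s - Y_{s-}|^2}[M,M]_s$. The right-hand side vanishes as $n\to\infty$ by dominated convergence on the finite measure $\mathrm d\mathbb P\otimes \mathrm d[M,M]$, using the pointwise convergence from the second paragraph together with the uniform bound $2m$. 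Letting $m\to\infty$ and $T\to\infty$ then extends this to ucp convergence on $[0,\infty)$, and combining with the finite-variation part finishes the proof. The heart of the argument is this passage through a quadratic-variation isometry, which converts the merely pointwise convergence of the integrands into ucp convergence of the integrals by weighting the jump-time errors with the vanishing $\mathrm d[M,M]$-mass they carry, rather than with the $L^\infty$-norm.
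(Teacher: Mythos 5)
Your overall architecture --- reduce to the simple-predictable identity, isolate the pointwise-but-not-uniform convergence $\tilde Y^{\pi_n}_t \to Y_{t-}$, split $X = X_0 + M + A$, handle $A$ pathwise by dominated convergence and $M$ by an $L^2$ estimate --- is sound, and your diagnosis of where the real difficulty lies (the failure of $Y^{\pi_n}\to Y_-$ in the ucp sense, so that Definition~\ref{def:integral} cannot be invoked directly) is exactly right. The paper itself gives no proof of this theorem, only the citation to Protter, so there is nothing to compare line by line; judged on its own terms your argument contains one genuine gap.

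The gap is the localization step for the martingale part: you ``choose stopping times $R_m \uparrow \infty$ for which $M^{R_m}$ is a square-integrable martingale.'' For a general \cadlag local martingale no such sequence need exist --- a local martingale is not in general locally square integrable (the paper records precisely this, noting that the equivalence with local square integrability holds only in the continuous case; a compensated single jump with finite first but infinite second moment is the standard counterexample, and stopping cannot remove the offending jump because $M^{R}$ still carries $\Delta M_{R}$). Everything downstream --- the application of Corollary~\ref{cor:truemart}, Doob's inequality, and the dominated convergence on $\mathrm d\mathbb P \otimes \mathrm d[M,M]$ --- is conditioned on this false premise. The standard repair is the Fundamental Theorem of Local Martingales: write $M = N + B$ with $N$ a local martingale whose jumps are bounded by a fixed constant (hence $N$ is locally bounded and therefore genuinely locally square integrable) and $B$ a local martingale with paths of finite variation; fold $B$ into the finite-variation part $A$, which your pathwise Lebesgue--Stieltjes argument already handles, and run your $L^2$ argument on $N$ alone. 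With that single insertion the proof closes; without it, the martingale half of your argument does not apply to an arbitrary semimartingale.
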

    Theorem \ref{thm:approx} gives rise to an additional and simultaneously intuitive understanding of stochastic integrals; the approximating sums converge to the stochastic integrals for appropriate processes $Y$. This result is particularly enlightening regarding simulations; see Appendix \ref{B}. 
    
    Moreover, we want to point out   this result is very special because, in general, the sampled   processes $Y^{\pi_n}$ do not convergence in the ucp sense to the process $Y_-$  \citeb{p. 30}{unpublished:chris}. If it were the case that $Y^{\pi_n}\stackrel{\text{ucp}}{\longrightarrow}Y_-$ holds, then $\dint0\cdot {Y^{\pi_n}_{s}}X_s\stackrel{\text{ucp}}{\longrightarrow}Y_0X_0+\dint0\cdot {Y_{s-}}X_s$ were to be  true by definition.

\subsection{Quadratic variation processes and compensators}\label{Sec1.1.3}
 
% Obviously, the two approaches coincide for $L^2$-martingales, and both

\noindent Before we are able to extend the definition of a stochastic integral with a more general integrand, we    require  the concept of \textit{quadratic variation} of a (semi)martingale.

\begin{definition}
	Let $X$ and $Y$ be semimartingales. The \name{quadratic covariation process} of $X$ and $Y$, denoted   $[X,Y]=([X,Y]_t)_{t\geq 0}$, is defined as the $\mathbb D\Rplus$-process
	\begin{equation}
	\label{eq:intbyparts}	[X,Y]=XY-\dint0\cdot {X_{s-}}Y_s-\dint0\cdot {Y_{s-}}X_s.
	\end{equation}
The \name{quadratic variation process} of $X$ is  $[X]=[X,X].$ 
\end{definition}

Clearly, the operation $(X,Y)\mapsto [X,Y]$ defines a symmetric bilinear map, and therefore     the polarisation identity holds, i.e.  $ [X,Y]=\frac{1}{2}([X+Y,X+Y]-[X,X]-[Y,Y]).$ As a matter of fact,  the quadratic variation $[X]$ is an increasing process \citeb{p. 66}{book:protter}. By polarisation, we have     $[X,Y]$ is a finite variation process.
Observe that the definition above is conform \cite{book:jacod,book:kallenberg,book:protter}. Other  literature, such as  \citeb{p. 49}{unpublished:timo}, defines the quadratic covariation process $ [X,Y] $ as the unique process satisfying the intuitive  limit property in equation \eqref{eq:limit}.

\begin{theorem}[Theorem II.23 of \cite{book:protter}]
	Suppose $(\pi_n)_{n\in\N}$ is a sequence of random partitions as in Theorem \ref{thm:approx}. For $X$ and $Y$ semimartingales, we  \label{thm:ucpquad} have\begin{equation}
	\label{eq:limit}	X_0Y_0+\sum_{k=1}^{k_n}(X^{T_{k+1}^n}-X^{T_k^n})(Y^{T_{k+1}^n}-Y^{T_k^n})\stackrel{\textnormal{ucp}}{\longrightarrow}[X,Y].
	\end{equation}
\end{theorem}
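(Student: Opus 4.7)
The plan is to derive the statement from the definition $[X,Y]=XY-\int_0^\cdot X_{s-}\,\mathrm dY_s-\int_0^\cdot Y_{s-}\,\mathrm dX_s$ by combining a telescoping product identity with Theorem \ref{thm:approx}. First I would fix $t\geq 0$ and, writing $a_k=X^{T_{k+1}^n}_t-X^{T_k^n}_t$ and $b_k=Y^{T_{k+1}^n}_t-Y^{T_k^n}_t$, use the elementary identity
\begin{equation}
X^{T_{k+1}^n}_t Y^{T_{k+1}^n}_t-X^{T_k^n}_t Y^{T_k^n}_t=X^{T_k^n}_t\,b_k+Y^{T_k^n}_t\,a_k+a_k b_k.
\end{equation}
Summing over $k=1,\dots,k_n$ telescopes the left-hand side to $X^{T^n_{k_n+1}}_tY^{T^n_{k_n+1}}_t-X_0Y_0$, so after rearranging
\begin{equation}
\sum_{k=1}^{k_n}a_kb_k = X^{T^n_{k_n+1}}_tY^{T^n_{k_n+1}}_t-X_0Y_0-\sum_{k=1}^{k_n}X^{T_k^n}_t\,b_k-\sum_{k=1}^{k_n}Y^{T_k^n}_t\,a_k.
\end{equation}

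Next I would identify the two sums on the right as the approximations from Theorem \ref{thm:approx}. Since $(X^{T_{k+1}^n}-X^{T_k^n})_t=0$ whenever $T_k^n>t$, the factor $X^{T_k^n}_t=X_{T_k^n\wedge t}$ in $\sum_k X^{T_k^n}_t b_k$ can be replaced by $X_{T_k^n}$ without changing the sum. Hence, setting $X^{\pi_n}:=X_0\mathds 1_{\{0\}}+\sum_k X_{T_k^n}\mathds 1_{(T_k^n,T_{k+1}^n]}$ (and similarly $Y^{\pi_n}$), equation \eqref{eq:si_simple} yields
\begin{equation}
X_0Y_0+\sum_{k=1}^{k_n}X^{T_k^n}_t\,b_k=\dint0t{X^{\pi_n}_s}Y_s,\qquad X_0Y_0+\sum_{k=1}^{k_n}Y^{T_k^n}_t\,a_k=\dint0t{Y^{\pi_n}_s}X_s,
\end{equation}
and Theorem \ref{thm:approx} delivers ucp convergence of both expressions to $X_0Y_0+\int_0^\cdot X_{s-}\,\mathrm dY_s$ and $X_0Y_0+\int_0^\cdot Y_{s-}\,\mathrm dX_s$ respectively.

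It remains to handle the telescoped boundary term $X^{T^n_{k_n+1}}Y^{T^n_{k_n+1}}$. For $\omega\in\Omega$ outside a null set and any $T>0$, the hypothesis $T_{k_n}^n(\omega)\to\infty$ forces $T^n_{k_n+1}(\omega)\geq T$ for all $n$ sufficiently large, so $\sup_{t\leq T}|X^{T^n_{k_n+1}}_t(\omega)Y^{T^n_{k_n+1}}_t(\omega)-X_t(\omega)Y_t(\omega)|$ vanishes for $n$ large, hence the convergence $X^{T^n_{k_n+1}}Y^{T^n_{k_n+1}}\to XY$ holds in the ucp-sense. Combining the three convergences on the right-hand side of the displayed identity, and adding $X_0Y_0$ on the left, gives
\begin{equation}
X_0Y_0+\sum_{k=1}^{k_n}(X^{T_{k+1}^n}-X^{T_k^n})(Y^{T_{k+1}^n}-Y^{T_k^n})\stackrel{\text{ucp}}{\longrightarrow}XY-\dint0\cdot{X_{s-}}Y_s-\dint0\cdot{Y_{s-}}X_s=[X,Y],
\end{equation}
as desired.

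The main obstacle is really just the bookkeeping between the stopped values $X^{T_k^n}_t=X_{T_k^n\wedge t}$ that appear naturally from the telescoping and the sampled values $X_{T_k^n}$ that Theorem \ref{thm:approx} requires; this is resolved by the observation that the mismatched terms carry a vanishing increment factor. The boundary contribution $X^{T^n_{k_n+1}}Y^{T^n_{k_n+1}}$ is painless thanks to the \cadlag (hence locally bounded) sample paths and the divergence of $T_{k_n}^n$.
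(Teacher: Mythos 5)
Your proof is correct and is essentially the argument the paper points to: the paper gives no proof of its own but cites Protter's Theorem II.23, whose proof is exactly this combination of the integration-by-parts definition \eqref{eq:intbyparts}, the telescoping identity $(x+a)(y+b)-xy=xb+ya+ab$, and the Riemann-sum approximation of Theorem \ref{thm:approx}. Your handling of the two bookkeeping points (replacing $X_{T_k^n\wedge t}$ by $X_{T_k^n}$ via the vanishing increment, and the boundary term via $T^n_{k_n}\to\infty$) is sound.
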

We remark that $ [X,Y]_0=X_0Y_0 $ and $ \Delta[X,Y]=\Delta X\Delta Y $ hold. In particular, since $[X]$ is an increasing process and $\Delta[X]_t=(\Delta X_t)^2$ for all $t$, we can decompose $[X]$ pathwise into its continuous part, for which we use the notation $ [X]_t^c, $ and its pure jump part:
\begin{equation}
\label{eq:contpure}	[X]_t=[X]_t^c+X_0^2+\sum_{0<s\leq t}(\Delta X_s)^2,\quad t\geq 0.
\end{equation}
Note that $\Delta X$ itself is not \cadlag, though it is adapted. As a consequence of an earlier observation, we have $\mathbb P$-a.s.\ that $t\mapsto \Delta X_t$ equals zero except for at most countably many $t$, making the sum   in  \eqref{eq:contpure} well-defined over a countable subset of $ (0,t] $.  
A semimartingale $X$ is said to be a \name{quadratic pure jump} process if $ [X]^c=0 $. A finite variation process is a quadratic pure jump process \citeb{p. 71}{book:protter}; in particular, see Examples \ref{ex:poisson} and \ref{ex:compound}. 
Analogously, we denote $[X,Y]^c$ for the pathwise continuous part of  $[X,Y]$. A more elaborate discussion on the path-by-path  continuous part can be found in  Chapter I.4 of \citeb{p. 38}{book:jacod}. 
%\mark{Dit gaat over decomposition... Beter verwoorden waarschijnlijk...}

\begin{proposition}
	Suppose $X$ is a semimartingale and let\label{prop:quadvarFV} $A$ be   a  finite variation process. Their quadratic covariation is given by
	\begin{equation}
	[X, A]_{t}=X_0A_0+\int_{0}^{t} \Delta X_s d A_s=X_0A_0+\sum_{0<s \leq t} \Delta X_{s} \Delta A_{s},\quad t\geq 0.
	\end{equation}
	In particular, if either  $X$ or $A$ is   continuous, then $[X, A]=X_0A_0 .$
\end{proposition}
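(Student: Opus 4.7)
The plan is to identify $[X,A]_t$ pathwise by feeding the ucp approximation of Theorem \ref{thm:ucpquad} into a dominated convergence argument against the Lebesgue--Stieltjes measure $\mu_A$; the finite variation of $A$ is exactly what guarantees that this measure is finite on compacts, which is the crucial ingredient. Fix $t \geq 0$ and let $(\pi_n)_{n\in\N}$ be any sequence of random partitions meeting the hypotheses of Theorem \ref{thm:ucpquad}. Writing $S_k^n := T_k^n \wedge t$ and
\[
\phi_n(s) := \sum_{k=1}^{k_n} (X_{S_{k+1}^n} - X_{S_k^n})\, \mathds{1}_{(S_k^n,\, S_{k+1}^n]}(s),
\]
the $n$-th approximating sum in \eqref{eq:limit} evaluated at time $t$ is nothing but the pathwise Lebesgue--Stieltjes integral $\int_{(0,t]} \phi_n(s)\, \mathrm{d}A_s$.

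The key pointwise claim is that $\phi_n(s,\omega) \to \Delta X_s(\omega)$ for every $s \in (0,t]$ on the full-probability set where $X(\omega)$ is \cadlag and $A(\omega)$ is of finite variation on $[0,t]$: for each such $s$, the unique index $k$ with $s \in (S_k^n, S_{k+1}^n]$ satisfies $S_k^n < s \leq S_{k+1}^n$ and $S_{k+1}^n - S_k^n \leq |\pi_n| \to 0$, so \cadlag regularity gives $X_{S_k^n} \to X_{s-}$ and $X_{S_{k+1}^n} \to X_s$, hence $\phi_n(s,\omega) \to \Delta X_s(\omega)$. Since $|\phi_n| \leq 2\sup_{r\leq t}|X_r(\omega)| < \infty$ and $|\mu_A|((0,t],\omega) < \infty$, dominated convergence yields $\int_0^t \phi_n\, \mathrm{d}A_s \to \int_0^t \Delta X_s\, \mathrm{d}A_s$ almost surely. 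Extracting a $\mathbb{P}$-a.s.\ convergent subsequence from the ucp limit supplied by Theorem \ref{thm:ucpquad} then identifies this pathwise limit with $[X,A]_t - X_0 A_0$, establishing the first equality.

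For the second equality I would decompose $\mu_A = \mu_A^c + \sum_s \Delta A_s\, \delta_s$ into its non-atomic and purely atomic parts; since $s \mapsto \Delta X_s$ vanishes outside a countable set, it integrates to zero against $\mu_A^c$, whereas integration against the atomic part produces exactly $\sum_{0<s\leq t} \Delta X_s \Delta A_s$. The \emph{in particular} clause is then immediate: continuity of either $X$ or $A$ forces $\Delta X \equiv 0$ or $\Delta A \equiv 0$, leaving only $X_0 A_0$.

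The only step requiring genuine care, in my view, is the subsequence-extraction reconciling the pathwise dominated-convergence limit with the ucp limit of Theorem \ref{thm:ucpquad}; once a single exceptional null set is fixed controlling both the \cadlag property of $X$ and the finite-variation property of $A$ (so that \emph{all} $\omega$ outside it behave well for every $s$ simultaneously), the argument is essentially measure-theoretic and highlights transparently why finite variation of $A$ is the structural reason the identity holds.
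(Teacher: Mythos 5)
Your proof is correct, but it is not the route the paper takes: the paper's own ``proof'' is a citation, deferring to Protter's Theorem II.28 together with the fact that a finite variation process is quadratic pure jump (Protter's Theorem II.26), so no self-contained argument appears in the text. What you supply instead is a direct derivation from Theorem \ref{thm:ucpquad}: you recognise the approximating sums, evaluated at a fixed $t$, as pathwise Lebesgue--Stieltjes integrals of the step functions $\phi_n$ against $\mu_A$, prove the pointwise convergence $\phi_n(s)\to\Delta X_s$ from the \cadlag regularity of $X$, dominate by $2\sup_{r\leq t}|X_r|$ (finite on compacts) against the finite total variation of $A$, and reconcile the resulting a.s.\ pathwise limit with the ucp limit along a subsequence; the split of $\mu_A$ into non-atomic and atomic parts then yields the sum formula. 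This buys a transparent proof whose only inputs are Theorem \ref{thm:ucpquad} and elementary measure theory, and it exhibits precisely where finite variation of $A$ enters (it is the domination), whereas the citation route is shorter on the page but hides the mechanism inside Protter's development of quadratic pure jump processes. Two small points to tidy: for small $n$ the random partition need not yet cover $(0,t]$, so $\phi_n$ vanishes beyond the last partition point; this is harmless since hypothesis (i) of Theorem \ref{thm:approx} guarantees coverage for all large $n$ almost surely, but it should be said. Also, your limit identifies $[X,A]_t-X_0A_0$ with $\int_{(0,t]}\Delta X_s\,\mathrm dA_s$, which is the correct reading of the displayed formula: under the paper's convention that $\int_0^t=\int_{[0,t]}$ carries the mass $A_0$ at $0$ and that $\Delta X_0=X_0$, the term $X_0A_0$ would otherwise be counted twice.
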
 
\begin{proof}
	This is a special case of \citeb{Thm. II.28}{book:protter}, where one uses the fact that any finite variation process is a quadratic pure jump process; see  \citeb{Thm. II.26}{book:protter}.
\end{proof}
If $X, Y$ are semimartingales and $V, W$ are continuous finite variation processes, with initial data $V_0=W_0=0$, then
\begin{equation}
[X+V, Y+W]=[X, Y].
\end{equation}
 That is,  when calculating a covariation, we can simply disregard any continuous   terms of finite variation   added
to the processes $X,Y$.  Often,  in the  continuous setting, this property is taken into account within the definition of covariation; see \citeb{p. 42}{unpublished:peter} for instance.

Now let us state a crucial fact. Suppose $M$ is a locally square integrable martingale, then
\begin{equation}
\label{eq:square}	M_t^2-[M]_t=2\dint0t{M_{s-}}M_s
\end{equation}
is also a locally square integrable martingale by Theorem \ref{thm:preserve}. Consequently,   the following holds.

\begin{proposition}[Corollary II.3 of \cite{book:protter}]\label{prop:intermed}
	Let $M=(M_t)_{t\geq 0}$ be a local martingale. Then $M$ is a square integrable martingale if and only if $ \mathbb E[M]_t<\infty $ for all $t\geq 0.$ In either case, we have the identity $ \mathbb E M_t^2=\mathbb E[M]_t $ for all $t\geq 0.$
\end{proposition}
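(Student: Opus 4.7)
The plan is to derive the proposition from identity \eqref{eq:square}, which realises $M^2-[M]$ as a (locally square integrable) martingale whenever $M$ itself is locally square integrable. Taking expectations along a suitable fundamental sequence and then passing to the limit will produce both the equivalence and the identity; the forward implication is essentially immediate, while the converse requires first upgrading $M$ from a general local martingale to a locally square integrable martingale.

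For the forward direction, assume $M$ is a square integrable martingale. Then $M$ is in particular locally square integrable, so by \eqref{eq:square} together with Theorem \ref{thm:preserve}(iv) one can choose a fundamental sequence $(T_n)_{n\in\N}$ for which both $M^{T_n}$ and $(M^{T_n})^2-[M^{T_n}]$ are (genuine) martingales. Since $[M]_0=M_0^2$, the latter vanishes at time $0$, and taking expectations at time $t$ gives
\begin{equation*}
\mathbb E\big[M_{t\wedge T_n}^2\big]=\mathbb E\big[[M]_{t\wedge T_n}\big]\quad\text{for every }n\in\N.
\end{equation*}
The right-hand side converges to $\mathbb E[M]_t$ by monotone convergence (as $[M]$ is increasing); for the left-hand side, Jensen's inequality makes $M^2$ a non-negative submartingale, and Doob's $L^2$-inequality delivers the integrable dominant $\mathbb E\sup_{s\leq t}M_s^2\leq 4\mathbb E M_t^2<\infty$, so dominated convergence yields $\mathbb E M_{t\wedge T_n}^2\to \mathbb E M_t^2$. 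Hence $\mathbb E[M]_t=\mathbb E M_t^2<\infty$, establishing both the finiteness claim and the identity in this direction.

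For the converse, assume $\mathbb E[M]_t<\infty$ for all $t\geq 0$. Then $\mathbb E M_0^2=\mathbb E[M]_0\leq \mathbb E[M]_t<\infty$. The decisive step is to produce a localizing sequence $(R_n)_{n\in\N}$ with $R_n\uparrow\infty$ almost surely and each $M^{R_n}$ a square integrable martingale. A natural candidate is
\begin{equation*}
R_n=T_n\wedge \inf\{s\geq 0:|M_s|\geq n\ \text{or}\ [M]_s\geq n\},
\end{equation*}
where $(T_n)$ is a fundamental sequence for $M$ as a local martingale; that $R_n\to\infty$ a.s.\ uses $[M]_t<\infty$ a.s.\ and the \cadlag regularity of $M$ and $[M]$, while the square integrability of $M^{R_n}$ is engineered via the pathwise inequality $(\Delta M_{R_n})^2=\Delta[M]_{R_n}\leq [M]_{R_n}$ together with the finiteness of $\mathbb E[M]_{t\wedge R_n}$. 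Once local square integrability is secured, the forward step applied to $M^{R_n}$ gives $\mathbb E(M^{R_n}_t)^2=\mathbb E[M]_{t\wedge R_n}\leq \mathbb E[M]_t$, and Doob's $L^2$-inequality upgrades this to $\mathbb E\sup_{s\leq t}(M^{R_n}_s)^2\leq 4\mathbb E[M]_t$ uniformly in $n$. Fatou's lemma then produces $\mathbb E\sup_{s\leq t}M_s^2\leq 4\mathbb E[M]_t<\infty$, making $(M_s)_{s\leq t}$ uniformly integrable, so that passing to the limit in $M^{R_n}_s=\mathbb E[M^{R_n}_t\mid \mathcal F_s]$ identifies $M$ as a genuine square integrable martingale. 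The identity $\mathbb E M_t^2=\mathbb E[M]_t$ then follows from the forward part already established.

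The principal obstacle lies in the localization in the converse: as recalled in the preceding discussion, a general \cadlag local martingale is \emph{not} automatically locally square integrable, and a single large jump at the stopping time $R_n$ could in principle destroy the $L^2$-control of $M^{R_n}$. The crux is therefore to verify that the combined truncation by $|M|$ and $[M]$ really yields an $L^2$-bounded stopped process; the decisive ingredient is the pathwise bound $(\Delta M_t)^2\leq [M]_t$, which transports the integrability hypothesis on $[M]_t$ into control of the terminal jump and so closes the argument.
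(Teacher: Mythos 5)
Your argument is correct and follows the route the paper itself indicates: it rests on the identity \eqref{eq:square} realising $M^2-[M]$ as a local martingale, combined with localization, monotone/dominated convergence, and Doob's $L^2$-inequality; the paper merely delegates these details to Corollary II.3 of \cite{book:protter}. Your stopping times $R_n$ together with the pathwise bound $(\Delta M_{R_n})^2=\Delta[M]_{R_n}\leq [M]_{R_n}$ are precisely the standard device for upgrading the general local martingale to a locally square integrable one in the converse, so there is no gap.
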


 To complete the   overview, we state the following sort of substitution rule. 
 
% Particularly, this will give rise to useful corollaries.

\begin{proposition}[Theorem    II.29 of \cite{book:protter}]\label{thm:import_form}
	Suppose $X$ and $Y$ are semimartingales, and let $H,K\in \mathbb L\Rplus$. Then we have
	\begin{equation}
		\left[\dint0\cdot {H_s}X_s,\dint0\cdot {K_s}Y_s\right]=\dint0\cdot {H_sK_s}[X,Y]_s.
	\end{equation}
\end{proposition}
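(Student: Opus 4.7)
I would first reduce the two-variable identity to the one-variable version
\begin{equation*}
\Bigl[\int_0^\cdot H_s \, \mathrm{d}X_s, Y\Bigr] = \int_0^\cdot H_s \, \mathrm{d}[X, Y]_s,
\end{equation*}
valid for any semimartingales $X, Y$ and any $H \in \mathbb{L}[0, \infty)$. Granting this, set $Z := \int_0^\cdot H_s \, \mathrm{d}X_s$, which is a semimartingale by Theorem \ref{thm:preserve}(i). Applying the one-variable identity first to $[Z, Y]$ and then to $[\int K \,\mathrm{d}Y, Z]$, and using the symmetry $[U, V] = [V, U]$ together with the associativity from Theorem \ref{thm:preserve}(i), one gets
\begin{equation*}
\Bigl[Z, \int_0^\cdot K_s \, \mathrm{d}Y_s\Bigr] = \int_0^\cdot K_s \, \mathrm{d}[Y, Z]_s = \int_0^\cdot K_s \, \mathrm{d}\Bigl(\int_0^\cdot H_r \, \mathrm{d}[X, Y]_r\Bigr)_{\!s} = \int_0^\cdot H_s K_s \, \mathrm{d}[X, Y]_s,
\end{equation*}
which is the desired formula.

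For the one-variable identity I would first treat simple predictable $H$. By bilinearity this reduces to $H = K\mathds{1}_{(S, T]}$, with $S \leq T$ stopping times and $K$ a finite $\mathcal{F}_S$-measurable random variable (the boundary term $H_0 \mathds{1}_{\{0\}}$ contributes $H_0 X_0 Y_0$ to each side, since $[X, Y]_0 = X_0 Y_0$). For such $H$ one has $\int_0^\cdot H \, \mathrm{d}X = K(X^T - X^S)$, and Theorem \ref{thm:ucpquad}, applied along any random partition sequence whose members contain both $S$ and $T$, delivers
\begin{equation*}
[K(X^T - X^S), Y] = K[X^T - X^S, Y] = K\bigl([X, Y]^T - [X, Y]^S\bigr) = \int_0^\cdot H_s \, \mathrm{d}[X, Y]_s.
\end{equation*}
Here $K$ pulls outside the ucp limit because multiplying a ucp-convergent sequence by a finite random variable preserves the mode of convergence; the stopping identity $[X^T, Y] = [X, Y]^T$ is another standard consequence of Theorem \ref{thm:ucpquad}; and the final equality is the pathwise Lebesgue--Stieltjes evaluation of $\int K\mathds{1}_{(S, T]} \, \mathrm{d}[X, Y]$, identified with the stochastic integral via Proposition \ref{prop:stieltjes}.

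To extend to arbitrary $H \in \mathbb{L}[0,\infty)$ I would argue that both sides are ucp-continuous in $H$. The right-hand side is so by construction of the stochastic integral (with integrator $[X, Y]$, a finite variation process, hence a semimartingale). For the left-hand side, setting $Z := \int_0^\cdot H_s \, \mathrm{d}X_s$, the definition \eqref{eq:intbyparts} together with Theorem \ref{thm:preserve}(i) gives
\begin{equation*}
[Z, Y] = ZY - \int_0^\cdot Z_{s-} \, \mathrm{d}Y_s - \int_0^\cdot H_s Y_{s-} \, \mathrm{d}X_s,
\end{equation*}
and each of the three summands depends ucp-continuously on $H$ (using ucp-continuity of stochastic integration in its integrand, the fact that multiplication by the fixed \cadlag process $Y$ or by $Y_-$ preserves ucp convergence on compacts, and that the map $U \mapsto U_-$ does likewise). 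Density of simple predictable processes in $(\mathbb{L}[0, \infty), d_{\text{ucp}})$ then closes the argument.

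The main obstacle is the simple predictable case, and in particular the factoring of the $\mathcal{F}_S$-measurable random variable $K$ out of $[K(X^T - X^S), Y]$; the cleanest route is via the ucp limit characterisation of $[\cdot, \cdot]$ in Theorem \ref{thm:ucpquad}, specialised to random partitions refining $\{S, T\}$.
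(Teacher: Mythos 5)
Your proof is correct, but note that the paper itself supplies no proof of this proposition: it is imported verbatim as Theorem II.29 of Protter. What you have written is, in substance, a reconstruction of Protter's own argument — reduce to the one-variable identity $[\int_0^\cdot H_s\,\mathrm dX_s,Y]=\int_0^\cdot H_s\,\mathrm d[X,Y]_s$, check it for simple predictable $H$ by direct computation, pass to general $H\in\mathbb L\Rplus$ by ucp-continuity of both sides, and finish by symmetry plus the associativity in Theorem \ref{thm:preserve}(i) — so there is nothing to compare against in the text, and your route is the standard one. Two steps deserve one more sentence each if written out in full. First, the stopping identity $[X^T,Y]=[X,Y]^T$ is standard (it is part of Protter's Thm.\ II.23, not restated in these notes); deriving it from Theorem \ref{thm:ucpquad} does require choosing random partitions containing $S$ and $T$, which is legitimate because inserting finitely many stopping times preserves hypotheses (i)--(ii) of Theorem \ref{thm:approx} and the ucp limit does not depend on the sequence. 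By contrast, factoring the finite $\mathcal F_S$-measurable random variable out of the bracket needs no refinement at all: it factors exactly out of every approximating sum, since $V_0=0$ for $V=X^T-X^S$. Second, the limiting step uses ucp-continuity of $H\mapsto\int_0^\cdot H_s\,\mathrm dX_s$ on all of $\mathbb L\Rplus$, whereas the paper only quotes the statement for simple predictable integrands converging to $0$; the general continuity follows from linearity and translation invariance of $d_{\textnormal{ucp}}$ (it is Protter's Thm.\ II.11). Neither point is a gap — both are routine — but they are exactly where your argument leans on facts these notes do not state.
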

  
% \begin{corollary}
% 	hi
% \end{corollary}
 
\begin{corollary}
 Suppose $X$ is a  square integrable martingale \label{cor:truemart}	and let $H\in\mathbb L\Rplus$  be such that $\mathbb E\dint0t {H_s^2}[X]_s<\infty$ holds for all $t\geq 0$. Then the stochastic integral $\dint0\cdot {H_s}X_s$ is a square integrable martingale. In particular, we obtain
 \begin{equation}
 \label{eq:formula}	\mathbb E\left(\dint0t{H_s}X_s\right)^2=\mathbb E \dint0t{H_s^2}[X]_s,
 \end{equation}
for all $t\geq 0$.
\end{corollary}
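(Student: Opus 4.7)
The plan is to combine Theorem \ref{thm:preserve}, Proposition \ref{thm:import_form}, and Proposition \ref{prop:intermed} in a direct way, using the hypothesis on $\mathbb{E}\int_0^t H_s^2\,\mathrm{d}[X]_s$ to upgrade a local martingale to a genuine square integrable one.

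First, I would set $N:=\int_0^\cdot H_s\,\mathrm{d}X_s$. Since $X$ is a square integrable martingale, it is in particular a local martingale, so by Theorem \ref{thm:preserve}(iii) the process $N$ is itself a local martingale. (Equivalently, one may note that a square integrable martingale is locally square integrable via the constant sequence $T_n=n$, and invoke Theorem \ref{thm:preserve}(iv); either way, $N$ is at least a local martingale.)

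Next, I would apply Proposition \ref{thm:import_form} with $Y=X$ and $K=H$ to compute the quadratic variation of $N$ explicitly:
\begin{equation}
[N]_t=[N,N]_t=\left[\int_0^\cdot H_s\,\mathrm{d}X_s,\int_0^\cdot H_s\,\mathrm{d}X_s\right]_t=\int_0^t H_s^2\,\mathrm{d}[X]_s,\quad t\geq 0.
\end{equation}
Combined with the hypothesis, this immediately yields $\mathbb{E}[N]_t<\infty$ for every $t\geq 0$.

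Finally, Proposition \ref{prop:intermed} provides the characterisation that a local martingale $N$ with $\mathbb{E}[N]_t<\infty$ for all $t\geq 0$ is in fact a square integrable martingale, with the identity $\mathbb{E}N_t^2=\mathbb{E}[N]_t$. Substituting the expression for $[N]_t$ from the previous step produces \eqref{eq:formula}. I do not foresee a genuine obstacle here: the only care needed is to verify that Proposition \ref{thm:import_form} indeed yields the stated equality for $[N]$ in the semimartingale sense (both integrands $H$ lie in $\mathbb{L}[0,\infty)$ and both integrators equal $X$, a semimartingale, so all hypotheses are met), and to recognise that local martingality of $N$—not some stronger property—is precisely what is demanded to invoke Proposition \ref{prop:intermed}.
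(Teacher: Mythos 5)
Your proposal is correct and is precisely the argument the paper has in mind: the text states the corollary is ``a direct corollary of the previous two results in combination with Theorem \ref{thm:preserve}'', i.e.\ local martingality of $\int_0^\cdot H_s\,\mathrm{d}X_s$ from Theorem \ref{thm:preserve}, the identity $[N]=\int_0^\cdot H_s^2\,\mathrm{d}[X]_s$ from Proposition \ref{thm:import_form}, and the characterisation in Proposition \ref{prop:intermed}. You have simply written out the steps the paper leaves implicit.
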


The latter result is fundamental,  yet an direct corollary of the previous two results in combination with Theorem \ref{thm:preserve}. Recall   that \eqref{eq:formula}  is strongly connected to Itô's original treatment of stochastic integration (as is also pointed out in {\S}\ref{Sec1.2}).  
%\mark{Ga na}

Throughout  this section, we\footnote{Warning,  several authors like \citeb{p. 12}{book:protter} and \citeb{p. 11}{book:jacod} define $L^2$-martingales / square integrable martingales by   additionally assuming that the uniform integrability condition $\sup_{t\geq 0}\mathbb E M_t^2<\infty$ holds, implying the existence of a closing variable $M_\infty$. We avoid  exploiting such an  assumption in our notes.}    will   denote \index{$\mathscr M^2$ and $\mathscr M^2_{\textnormal{loc}}$}{$\mathscr M^2$} (resp., {$\mathscr M^2_{\textnormal{loc}}$}) for the class of  $L^2$-martingales  (resp., locally square integrable martingales)  $M$ with $M_0=0$.  Note $\mathscr M^2\subset \mathscr M^2_{\textnormal{loc}}$. 
We   point out that in addition to the quadratic variation  process $[M]$, there is another   increasing process with similar bracketing notation.
%\mark{These processes do not exist in general; Protter!}

\begin{definition}
	Suppose we have $M\in \mathscr{M}^2$ (resp. $M\in \mathscr M^2_{\textnormal{loc}}$). Then there exists a unique predictable increasing process $\langle M\rangle$, the so-called \name{predictable quadratic variation}{predictable!quadratic variation}, such that $M^{2}-\langle M\rangle$ is a martingale (resp. local martingale).
\end{definition}
The existence and uniqueness   follows from a special case of
the Doob--Meyer decomposition; see also \cite[Sec. 3]{unpublished:peter}  and \citeb{Sec. I.4}{book:jacod}.
% \citeb{Thm. III.8}{book:protter}, 
Once more, recall that uniqueness is to be understood as being unique up to indistinguishability. For general semimartingales, the predictable quadratic variation may not exist  \citeb{p. 123}{book:protter}.
It turns out that the  processes  $[M]$ and $\langle M\rangle$ coincide in the continuous setting; see Proposition \ref{prop:coincide}. In general, however, this is no longer true (as expected). A concrete example where this  fails  is to be found in {\S}\ref{Sec1.2.2}.  
  
  \begin{proposition}\label{prop:coincide}
  	Suppose $ M \in \mathscr M^2_{\textnormal{loc}}$ is   continuous. Then both the processes $[M] $ and $\langle M\rangle$ are continuous. Moreover, $\langle M\rangle =
  	[M]$ holds \textnormal{(}up to indistinguishability\textnormal{)}.
  \end{proposition}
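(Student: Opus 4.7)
The plan is to first establish continuity of $[M]$ by direct inspection, then show $\langle M\rangle=[M]$ using the ``local martingale plus predictable finite variation starting at zero must vanish'' principle (Remark \ref{remark:2}), from which continuity of $\langle M\rangle$ follows for free.

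First I would verify that $[M]$ is continuous. Since $M\in\mathscr{M}^2_{\textnormal{loc}}$ carries the convention $M_0=0$, the pathwise decomposition \eqref{eq:contpure} reduces to
\begin{equation}
[M]_t = [M]_t^c + M_0^2 + \sum_{0<s\leq t}(\Delta M_s)^2 = [M]_t^c,
\end{equation}
because $M$ continuous forces $\Delta M_s \equiv 0$. As $[M]^c$ is continuous by its very construction, $[M]$ is continuous, and in particular it is adapted and \caglad, hence predictably measurable.

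Next I would observe that both $\langle M\rangle$ and $[M]$ compensate $M^2$ to a local martingale. For $\langle M\rangle$ this is the defining property. For $[M]$, formula \eqref{eq:square} (valid because $M\in\mathscr{M}^2_{\textnormal{loc}}$) gives
\begin{equation}
M_t^2 - [M]_t = 2\int_0^t M_{s-}\,\mathrm{d}M_s,
\end{equation}
and the right-hand side is a local martingale by Theorem \ref{thm:preserve}(iii) (it is in fact locally square integrable as stated before Proposition \ref{prop:intermed}). Subtracting the two decompositions, the process $N := \langle M\rangle - [M]$ is a local martingale with $N_0=0$.

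Finally I would argue that $N$ is simultaneously a predictable finite variation process. Predictability: $\langle M\rangle$ is predictable by construction, while $[M]$ is continuous and adapted (Step 1), hence predictable. Finite variation: both $\langle M\rangle$ and $[M]$ are increasing, so their difference has finite variation. Therefore Remark \ref{remark:2} applies and forces $N$ to be indistinguishable from zero, giving $\langle M\rangle = [M]$. Continuity of $\langle M\rangle$ is then inherited from that of $[M]$. The only nontrivial input is the uniqueness principle in Remark \ref{remark:2}; the rest is bookkeeping with the conventions $M_0=0$ and $\Delta M \equiv 0$.
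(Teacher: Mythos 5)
Your proof is correct and follows essentially the same route as the paper's: establish that $[M]$ is continuous (hence predictable and increasing), note via \eqref{eq:square} that $M^2-[M]$ is a local martingale, and conclude by uniqueness of the predictable quadratic variation --- which you simply unfold into the explicit ``predictable finite variation local martingale vanishes'' argument of Remark \ref{remark:2}. The only minor difference is that you obtain continuity of $[M]$ from the jump identity $\Delta[M]_t=(\Delta M_t)^2$ underlying \eqref{eq:contpure}, whereas the paper derives it from the integration-by-parts formula \eqref{eq:intbyparts} together with the preservation of continuity under stochastic integration (Theorem \ref{thm:preserve}, parts (iii) and (v)); both are legitimate.
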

\begin{proof}
 	 Theorem \ref{thm:preserve} parts (iii) and  (v)  give us that the quadratic variation process $[M]$ is continuous, namely recall the formula in \eqref{eq:intbyparts}, and consequently $[M]$ is clearly predictable. By equation \eqref{eq:square} we can conclude $[M]$ is a predictable increasing process making $M^2-[M]$ a (continuous) local martingale. The uniqueness yields $\langle M\rangle =
	[M]$   up to indistinguishability.
%	; in fact the ucp convergence in Theorem \ref{thm:ucpquad} becomes $L^1$-convergence when we restrict to $L^2$-martingales (and deterministic meshes)  \citeb{p. 67}{book:kurtz}.
\end{proof}

%Furthermore, for $M$ and $N$ (locally) square integrable martingales, one usually defines the \name{predictable quadratic covariation} by 
%$
%	\langle M,N\rangle =\frac{1}{2}(\langle M+N\rangle -\langle M\rangle - \langle N\rangle)
%$. It immediately follows that $MN-\langle M,N\rangle$ is a (local) martingale, which could have been an equivalent definition for $\langle M, N\rangle$.
%
%In the continuous case, one often defines the quadratic 
%\begin{remark}
%	hi
%	
%	
%	Uniqueness above means uniqueness up to indistinguishability. $\langle M\rangle$ is called the predictable quadratic variation. For two such processes we can define the predictable covariation by $\langle M, N\rangle=\frac{1}{4}\langle M+N\rangle-\frac{1}{4}\langle M-N\rangle$
%	From Theorem 3.29 and the uniqueness of $\langle M\rangle$ it follows that if $[M]$ is predictable then $[M]=\langle M\rangle$.
%	
%	which is in the discontinuous setting not necessarily equivalent with the angle bracket process $ \langle X,Y\rangle  $
%\end{remark}

Suppose $M$ is   an $L^2$-martingale, we now only know $M^2-[M]$ is  a local martingale. In fact, we have   $M^2-[M]$ is a true martingale. See \citeb{p. 79}{book:Kurtz}  or \citeb{p. 102}{unpublished:timo} for a complete proof; it   basically follows from the fact that    ucp-convergence in equation \eqref{eq:limit} can be replaced by $L^1$-convergence  for $L^2$-martingales \citeb{p. 67}{book:Kurtz}.
Furthermore, one can also  define the predictable  quadratic covariation, for instance by means of the polarisation formula, but we do not need that here.

%\url{https://math.stackexchange.com/questions/902886/angle-bracket-and-sharp-bracket-for-discontinuous-processes}
%\cite{book:kallenberg}

Often the   quadratic variation process $[M]$ is  referred to as the \name{bracket process} and $\langle M\rangle$ as the \name{angle bracket process}.  The reason for introducing both processes is not only because we want to raise awareness of the subtle difference between the two in the general discontinuous setting, but also because the following discussion yields   a useful result; see equation \eqref{eq:formula2}.

%
% defined below when the semimartingales $X$ and $Y$ are continuous (in particular recall \ref{thm:cont})
%
%Consequently, if we have decompositions $X=X_0+A+M$ and $Y=Y_0+B+N$, then 
%
%
%
%
%
%
%
%Now assume for the moment that $M$ is an $L^2$-martingale.

\begin{definition}
	A finite variation process $A$ is said to be of \name{locally integrable variation} if there exists a fundamental sequence of stopping times $ (T^n)_{n\in \N} $ such that $ \mathbb E|A|_{T^n}<\infty $, for each $n\in\N$.
\end{definition}

  The existence and uniqueness of the compensator (see next definition) is a consequence of \name{Rao's Theorem} \citeb{Thm. III.15}{book:protter},  which follows non-trivially from the    Doob--Meyer decomposition.

\begin{definition}\label{def:compensator}
	Let $A$ be a finite variation process with $A_0=0$, and assume $A$ is of locally integrable variation. The unique predictable finite variation process $\tilde A$ with $\tilde A_0=0$ such that $A-\tilde A$ is a local martingale, is called the \name{compensator} of $A$.
\end{definition}

 Proposition \ref{prop:intermed} implies that,  for every $M\in \mathscr{M}^2$,  we have that  $[M]$ is of locally integrable variation. Even more is true, namely the compensator of  $[M]$ is the angle bracket process $\langle M\rangle$. Observe $\mathbb E[M]_t=\mathbb E\langle M\rangle _t$ for all $t\geq 0$ (thanks to the law of total expectation and $ M_0=0$).

% We ultimately state an intermediate result.
 
\begin{lemma}\label{lem:compensator}
	Suppose $H\in \mathbb L\Rplus$  and let $A$   be an increasing process  of locally integrable variation with $A_0=0$. Then for $\tilde A$, the compensator of $A$, we have
	\begin{equation}
		\mathbb E\dint0t{H_s}A_s=		\mathbb E\dint0t{H_s}\tilde A_s,\quad t\geq 0,
	\end{equation} 
if either $\mathbb E\dint0t {H_s}A_s<\infty$   or   $\mathbb E\dint0t {H_s}\tilde A_s<\infty$ holds for all $t\geq 0$.
\end{lemma}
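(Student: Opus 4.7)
The key identity is that $M:=A-\tilde A$ is a local martingale by Definition \ref{def:compensator}. Invoking Proposition \ref{prop:stieltjes} to identify the Lebesgue--Stieltjes integrals in the statement with the stochastic integrals $\int_0^\cdot H_s\,\mathrm dA_s$ and $\int_0^\cdot H_s\,\mathrm d\tilde A_s$, linearity of stochastic integration yields
\begin{equation*}
N_t:=\int_0^t H_s\,\mathrm dA_s-\int_0^t H_s\,\mathrm d\tilde A_s=\int_0^t H_s\,\mathrm dM_s,
\end{equation*}
and by Theorem \ref{thm:preserve}(iii) the process $N$ is a local martingale with $N_0=0$. Choosing a fundamental sequence $(T_n)_{n\in\N}$ reducing $N$ (so that $N^{T_n}$ is a martingale), one obtains $\mathbb E N_{t\wedge T_n}=0$ for every $n$ and every $t$; the task thus reduces to passing to the limit $n\to\infty$ under the expectation.

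I would split into cases based on the sign of $H$. For $H\geq0$, both $\int_0^\cdot H_s\,\mathrm dA_s$ and $\int_0^\cdot H_s\,\mathrm d\tilde A_s$ are increasing in $t$---recall that the compensator of an increasing process is predictable \emph{increasing}, a consequence of the Doob--Meyer decomposition underpinning Rao's Theorem cited above. Hence $\int_0^{t\wedge T_n}H_s\,\mathrm dA_s\uparrow\int_0^t H_s\,\mathrm dA_s$ pathwise, and similarly for $\tilde A$, so monotone convergence applied to $\mathbb E\int_0^{t\wedge T_n}H_s\,\mathrm dA_s=\mathbb E\int_0^{t\wedge T_n}H_s\,\mathrm d\tilde A_s$ delivers the identity (possibly with both sides equal to $+\infty$).

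For general $H\in\mathbb L\Rplus$ I would decompose $H=H^+-H^-$; the maps $x\mapsto x^\pm$ are continuous, so $H^\pm$ remain adapted and \caglad and hence lie in $\mathbb L\Rplus$. Applying the nonnegative case to $H^+$ and $H^-$ separately yields
\begin{equation*}
\mathbb E\int_0^t H_s^\pm\,\mathrm dA_s=\mathbb E\int_0^t H_s^\pm\,\mathrm d\tilde A_s\in[0,\infty].
\end{equation*}
The hypothesis that at least one of $\mathbb E\int_0^t H_s\,\mathrm dA_s$ or $\mathbb E\int_0^t H_s\,\mathrm d\tilde A_s$ is finite rules out the indeterminate form $\infty-\infty$, and subtracting the two identities gives the desired equality.

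The main obstacle I anticipate is justifying that $\tilde A$ is increasing whenever $A$ is; everything else rests on standard localization and monotone convergence. If one prefers to sidestep the monotonicity of $\tilde A$, an alternative route is to take $(S_n)$ for which $|A|_{S_n}$ and $|\tilde A|_{S_n}$ are integrable and $N^{S_n}$ is uniformly integrable, and then invoke dominated convergence with majorant $|H|\cdot(|A|+|\tilde A|)$ via the integrability hypothesis.
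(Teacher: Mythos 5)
Your proof follows essentially the same route as the paper's: both hinge on $A-\tilde A$ being a local martingale, Theorem \ref{thm:preserve}(iii), linearity in the integrator, and taking expectations; in fact you are more careful than the paper, which compresses the entire localization-and-limit passage into the phrase ``by the martingale property and the law of total expectation.'' The only point to tighten is that deducing $\mathbb E\int_0^{t\wedge T_n}H_s\,\mathrm dA_s=\mathbb E\int_0^{t\wedge T_n}H_s\,\mathrm d\tilde A_s$ from $\mathbb E N_{t\wedge T_n}=0$ already presupposes that one of the two localized expectations is finite (integrability of a difference of nonnegative terms does not give integrability of each term); this is arranged by refining the fundamental sequence using that $H$, being \caglad, is locally bounded and that $A$ and $\tilde A$ are of locally integrable variation --- exactly the fix you sketch in your closing paragraph.
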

\begin{proof}(Inspired by \citeb{p. 118}{book:protter}.) Note $A-\tilde A$ is a local martingale, hence $\dint0\cdot{H_s}(A-\tilde A)_s$   is a local martingale also by Theorem \ref{thm:preserve} part (iii). Subsequently, by the martingale property and the law of total expectation, we have $\mathbb E\dint0t{H_s}(A-\tilde A)_s=0$ for all $t\geq 0.$ The assertion now follows due to either one of the assumptions and because stochastic integration acts linearly on integrators; recall  equation \eqref{eq:linear2} is valid for adapted \caglad processes.
\end{proof}

Observe $\mathbb E\dint0t {H_s}A_s<\infty$    for all $t\geq 0$ is thus equivalent to  $\mathbb E\dint0t {H_s}\tilde A_s<\infty$ for all $t\geq 0.$
Combining now Lemma \ref{lem:compensator} and Corollary \ref{cor:truemart} yields the following result.
\begin{corollary}
	 Suppose $M\in \mathscr{M}^2$  and\label{cor:handy} assume that  $H\in \mathbb L\Rplus$ satisfies the integrability assumption $\mathbb E\dint0t {H_s^2}\langle M\rangle _s<\infty$, for all $t\geq 0$. Then   $\dint0\cdot {H_s}M_s\in \mathscr{M}^2$  holds, and we have  
	\begin{equation}
		\label{eq:formula2}	\mathbb E\left(\dint0t{H_s}M_s\right)^2=\mathbb E \dint0t{H_s^2}\langle M\rangle _s,
	\end{equation}
	for all $t\geq 0$.
\end{corollary}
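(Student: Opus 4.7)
The plan is to bootstrap Corollary \ref{cor:truemart} through Lemma \ref{lem:compensator}. The hypothesis of the corollary to be proven is phrased in terms of the predictable quadratic variation $\langle M\rangle$, whereas Corollary \ref{cor:truemart} requires a finiteness condition phrased in terms of the (bracket) quadratic variation $[M]$. So the first task is simply to transfer the integrability condition from one to the other.

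To do this, recall from the paragraph directly preceding Lemma \ref{lem:compensator} that for $M\in\mathscr M^2$ the process $[M]$ is increasing, starts at zero, is of locally integrable variation, and has compensator exactly $\langle M\rangle$. Since $H\in\mathbb L\Rplus$, its pointwise square $H^2$ is also adapted and \caglad, hence $H^2\in\mathbb L\Rplus$ and is a legitimate integrand in Lemma \ref{lem:compensator}. By the standing hypothesis $\mathbb E\dint0t{H_s^2}\langle M\rangle_s<\infty$, the second alternative of Lemma \ref{lem:compensator} is satisfied, and we obtain
\begin{equation*}
\mathbb E\dint0t{H_s^2}[M]_s \;=\; \mathbb E\dint0t{H_s^2}\langle M\rangle_s \;<\;\infty,\qquad t\geq 0.
\end{equation*}

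With this finiteness in hand, Corollary \ref{cor:truemart} applies to the square integrable martingale $M$, yielding both that $\dint0\cdot{H_s}M_s$ is a square integrable martingale and the isometry
\begin{equation*}
\mathbb E\left(\dint0t{H_s}M_s\right)^{\!2} \;=\; \mathbb E\dint0t{H_s^2}[M]_s .
\end{equation*}
Chaining this with the previous display gives \eqref{eq:formula2}. Since $M_0=0$, the stochastic integral starts at $H_0M_0=0$, so it lies in $\mathscr M^2$ (and not merely in the larger class of square integrable martingales without the vanishing initial condition).

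Because the whole argument is a direct concatenation of the two preceding results, there is no real obstacle. The only point worth double-checking is that the compensator of $[M]$ indeed exists and coincides with $\langle M\rangle$---which is precisely the content of the remarks following Proposition \ref{prop:intermed}---so that Lemma \ref{lem:compensator} may be invoked with $A=[M]$ and $\tilde A=\langle M\rangle$. The rest is bookkeeping.
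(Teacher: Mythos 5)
Your proposal is correct and follows exactly the route the paper intends: the text preceding the corollary states that the result follows by "combining Lemma \ref{lem:compensator} and Corollary \ref{cor:truemart}", which is precisely your transfer of the integrability condition from $\langle M\rangle$ to $[M]$ via the compensator identity, followed by an application of the It\^o isometry of Corollary \ref{cor:truemart}. Nothing is missing.
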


Equation \eqref{eq:formula2}  turns out   useful, as claimed previously, because $\langle M\rangle $ has in  {\S}\ref{Sec1.2.2} a nicer appearance than $[M]$. Conform to the upcoming subsection, we  may write $H\in \mathcal P(M)$ if the stochastic process  $H$ satisfies the conditions in either Corollary \ref{cor:truemart} or Corollary \ref{cor:handy}. 
 
Ultimately, we state a predictable version of Proposition \ref{thm:import_form} for quadratic variations.
\begin{proposition} \label{thm:import_form!}
	Suppose $M\in \mathscr{M}^2_{\textnormal{loc}}$  and   $H\in \mathbb L\Rplus$. Then    $\dint0\cdot {H_s}M_s\in \mathscr{M}^2_{\textnormal{loc}}$  with 
	\begin{equation}
		\left\langle \dint0\cdot {H_s}M_s \right\rangle =\dint0\cdot {H_s^2}\langle M\rangle_s.\label{eq:concl}
	\end{equation}
\end{proposition}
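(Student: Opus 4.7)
The strategy is to leverage what has already been assembled: Theorem \ref{thm:preserve}(iv) for stability of $\mathscr M^2_{\textnormal{loc}}$ under stochastic integration, Proposition \ref{thm:import_form} for the quadratic variation of the integral, and the fact (recorded after Definition \ref{def:compensator}) that $\langle M\rangle$ is the compensator of $[M]$. By Theorem \ref{thm:preserve}(iv), the process $N := \int_0^\cdot H_s\,\mathrm dM_s$ lies in $\mathscr M^2_{\textnormal{loc}}$, so its predictable quadratic variation $\langle N\rangle$ exists and is characterised (uniquely up to indistinguishability) by two properties: predictability, and the fact that $N^2 - \langle N\rangle$ is a local martingale. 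Setting $F := \int_0^\cdot H_s^2\,\mathrm d\langle M\rangle_s$, it therefore suffices to verify these two properties for $F$, and then uniqueness gives \eqref{eq:concl}.

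For the local martingale property, I would first apply Proposition \ref{thm:import_form} with $X=Y=M$ and $K=H$ to obtain $[N] = \int_0^\cdot H_s^2\,\mathrm d[M]_s$. After a standard localisation that reduces $M$ to $\mathscr M^2$, the process $[M]-\langle M\rangle$ is a local martingale starting at zero. Since $H^2\in\mathbb L\Rplus$ (squaring an adapted \caglad process preserves both properties), Theorem \ref{thm:preserve}(iii), linearity \eqref{eq:linear2}, and Proposition \ref{prop:stieltjes} (which identifies the stochastic and Lebesgue--Stieltjes integrals against the finite variation processes $[M]$ and $\langle M\rangle$) together give that $[N] - F$ is a local martingale. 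Combined with \eqref{eq:square} applied to $N\in\mathscr M^2_{\textnormal{loc}}$, which says that $N^2-[N]$ is a local martingale, we conclude that
\begin{equation*}
N^2 - F \;=\; \bigl(N^2 - [N]\bigr) \;+\; \bigl([N] - F\bigr)
\end{equation*}
is a local martingale.

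For predictability of $F$, I would argue by approximation. Since $H$ is adapted and \caglad, so is $H^2$, so the Lebesgue--Stieltjes integral $F$ is well-defined pathwise. Using \eqref{eqref:similar}, $F$ is the uniform-on-compacts limit (hence a ucp limit, hence admits an a.s.\ convergent subsequence) of the left Riemann sums $\sum_k H_{t_k^m}^2\bigl(\langle M\rangle_{t_{k+1}^m\wedge\,\cdot} - \langle M\rangle_{t_k^m\wedge\,\cdot}\bigr)$. Each such sum is predictable: $H_{t_k^m}^2$ is $\mathcal F_{t_k^m}$-measurable, while the stopped process $\langle M\rangle^{t_k^m}$ is predictable (as a predictable process stopped at a deterministic time). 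Since predictability is preserved under pointwise a.s.\ limits, $F$ is predictable. It is also increasing (because $H^2\geq 0$ and $\langle M\rangle$ is non-decreasing), hence of finite variation, with $F_0=0$.

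\textbf{Main obstacle.} The genuinely delicate step is the predictability of $F$. The martingale identity is a fairly direct assembly of Proposition \ref{thm:import_form}, the compensator property of $\langle M\rangle$, and Theorem \ref{thm:preserve}, but predictability requires the approximation \eqref{eqref:similar} together with the (mild but non-trivial) fact that predictability is closed under the relevant limits. Once both properties of $F$ are in hand, the uniqueness built into the definition of $\langle N\rangle$ closes the proof.
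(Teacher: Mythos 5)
Your argument is correct and follows essentially the same route as the paper's own alternative proof: Theorem \ref{thm:preserve}(iv) for membership in $\mathscr M^2_{\textnormal{loc}}$, Proposition \ref{thm:import_form} for $[N]=\int_0^\cdot H_s^2\,\mathrm d[M]_s$, the local-martingale property of $\int_0^\cdot H_s^2\,\mathrm d([M]-\langle M\rangle)_s$ via Theorem \ref{thm:preserve}(iii), predictability of $\int_0^\cdot H_s^2\,\mathrm d\langle M\rangle_s$ by the same Riemann-sum approximation, and a final uniqueness appeal. The only cosmetic difference is that you invoke uniqueness of $\langle N\rangle$ through the decomposition $N^2-F=(N^2-[N])+([N]-F)$, whereas the paper phrases the last step as uniqueness of the compensator of $[N]$; these are interchangeable.
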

\begin{proof}

	For a direct reference, where it is actually stated in  more generality (see also {\S}\ref{Sec1.1.4}), we refer to \citeb{Thm. I.4.40}{book:jacod}.
	Alternatively, we can also prove it   with the knowledge above. Observe that
		$\dint0\cdot {H_s}M_s\in \mathscr{M}^2_{\text{loc}}$  holds by Theorem \ref{thm:preserve} part (iv). By Proposition \ref{thm:import_form} we deduce
		\begin{equation}
			\int_0^\cdot H_s^2\,\mathrm d[M]_s
		\end{equation}
	has $\left\langle \dint0\cdot {H_s}M_s \right\rangle$ as compensator. Simultaneously, we have that
	\begin{equation}
	\int_0^\cdot H_s^2\,\mathrm d[ M] _s-\int_0^\cdot H_s^2\,\mathrm d\langle M\rangle _s=	\int_0^\cdot H_s^2\,\mathrm d([M]-\langle M\rangle )_s
	\end{equation}
is a local martingale, since $[M]-\langle M\rangle $ is a local martingale (see Theorem \ref{thm:preserve} part (iii)).

 Due to the fact $\langle M\rangle $ is a predictable process, we obtain that the increasing stochastic integral $\int_0^\cdot H_s^2\,\mathrm d\langle M\rangle _s$ is     predictably measurable as well. Indeed, for $t\geq 0$ fixed, write $\int_0^t H_s^2\,\mathrm d\langle M\rangle _s$ as a limit of Riemann sums, each one of which is predictable on $[0,t]$, which convergences in ucp to $\int_0^t H_s^2\,\mathrm d\langle M\rangle _s$. Consequently, the limit is predictable, showing that the integral of interest is predictable too \citeb{p. 157}{book:protter}.
  By  uniqueness of the compensator, we derive the identity in \eqref{eq:concl}.  
\end{proof}

\subsection[Stochastic integrals  with respect to \texorpdfstring{$L^2$}{L²}-martingales:    feasible extensions on the class of integrands]{Stochastic integrals  with respect to $L^2$-martingales:   feasible extensions on the class of integrands}\label{Sec1.1.4}
\noindent For a general semimartingale, there is a very  good   reason why we restrict ourselves to    adapted \caglad integrands in the first place.      Example \ref{ex:generalise} and the subsequent remark demonstrate  for instance that, in general,  adapted \cadlag integrands    already fail  several  preservation properties of Theorem \ref{thm:preserve}.     Throughout these notes, we mostly work with integrands in $\mathbb L\Rplus,$ hence we keep this section brief.  Recall that stochastic integrals with respect to Brownian motion can have quite general integrands; we  will particularly demonstrate when this can be achieved.

In general, we can go beyond the class of adapted \caglad processes. Both   \cite[Ch. IV]{book:protter} and  \cite[Ch. I]{book:jacod} construct, for a general semimartingale $X$,   stochastic integrals with predictable integrands   satisfying the ``$X$-integrable'' property; see \citeb{p. 165}{book:protter}.  In these notes, we    consider    extending  the class of integrands for $L^2$-martingales only.  
 Notice that by equation \eqref{eq:seperation} we will subsequently obtain a proper definition for stochastic integrals   with respect to those semimartingales with a   square integrable martingale part.
%IMPORTANT PAGE 120, theorem 19 of protter.
%
%
%Consequently, equation \eqref{eq:formula} becomes
%\begin{equation}
%	\label{eq:formula2}	\mathbb E\left(\dint0t{H_s}X_s\right)^2=\mathbb E \dint0t{H_s^2}\langle X\rangle _s,
%\end{equation}
%which is quite useful in Section \ref{Sec1.2.2}.
The extensions are in line with, for example, \cite{book:chung} and \cite{unpublished:timo}. It is   beneficial to restrict ourselves to  (\cadlag) $L^2$-martingales, because then the constructions of stochastic integrals  are more or less analogous to the construction we know for  continuous martingales (e.g.,  a Brownian motion, see also \cite[Ch. 4]{unpublished:timo}, for instance).  

%A process $H=(H_t)_{t\geq 0}$ is said to be   \name{locally bounded}  whenever there exists a fundamental sequence of stopping times $(T^n)_{n\in\N}$ such that, for each $m\in \N$ fixed, $H^{T^m}\mathds{1}_{\{T^m>0\}}$ is bounded.\mark{L is loc bounded, maar D niet. Dit is namelijk ivm stoptijden! Kom hier in 1.1.4 nog op terug! ALTHOUGH D bounded voor alle t, pathwise, is het nog niet perse locally bounded, tenzij predictable! \url{https://almostsuremath.com/2009/12/23/localization/}
%	\url{https://math.stackexchange.com/questions/2078466/is-there-any-almost-surely-finite-cadlag-process-which-is-not-locally-bounded}
%%	SEE ALSO EXERCISE 19 PROTTER CH1}
%
%We first consider predictable integrands and then 

 \subsubsection*{The extension to   the class of predictable integrands}
 
\noindent  Our   framework is thus limited to $M\in\mathscr M^2$. For locally square integrable martingales $M\in \mathscr{M}^2_{\textnormal{loc}}$ we refer to \citeb{p. 43}{book:chung} or \citeb{p. 161}{unpublished:timo}.

For starters, we discuss some equivalent definitions for the \name{predictable $\sigma$-algebra}{predictable!$\sigma$-algebra} $\mathcal P$\index{$\mathcal P$}, and in particular recall Definition \ref{def:predict}.  More precisely, it means that $\mathcal{P}$ is generated by events of the form $\left\{(t, \omega)\in \Rplus\times \Omega: X_{t}(\omega) \in B\right\}$ where $X$ is an adapted \caglad process and $B \in \mathcal{B}(\R)$. 

%Importantly note that, since $\mathcal F$ is assumed to be complete (by usual conditions), the predictable $\sigma$-algebra $\mathcal P$ remains the same if a ``\cadlag process'' means \cadlag everywhere or \cadlag $\mathbb P$-a.s..\mark{Not sure this is true.}

\begin{definition}
	Subsets of $\Rplus\times \Omega$ which are of the type $(s, t] \times F_s$ where $0 \leq s<t<\infty$ and $F_s \in \mathcal{F}_{s},$ or  of the type $\{0\} \times F_{0}$ where $F_{0} \in \mathcal{F}_{0} ,$ are called  \name{predictable rectangles}{predictable!rectangles}. The set of all predictable rectangles is denoted by $\mathcal{R}$\index{$\mathcal{R}$}.
\end{definition}

\begin{proposition}[Theorems 3.1 and 3.2 of \cite{book:chung}]\label{prop:predict}\index{process!predictable}\index{predictable!process}
	The following $\sigma$-algebras equal the  predictable $\sigma$-algebra $\mathcal P$:
		\begin{enumerate}[\normalfont(i)]
		\item The  $\sigma$-algebra on $[0,\infty)\times \Omega$     generated by all the adapted continuous processes;
		\item The  $\sigma$-algebra on $[0,\infty)\times \Omega$     generated by all the adapted left-continuous processes;
		\item The  $\sigma$-algebra on $[0,\infty)\times \Omega$     generated by all predictable rectangles, i.e., $\sigma(\mathcal R)$. 
	\end{enumerate}
	
\end{proposition}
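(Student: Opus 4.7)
The plan is to establish the chain of inclusions
$$\mathcal{P}_3 \subseteq \mathcal{P}_1 \subseteq \mathcal{P} \subseteq \mathcal{P}_2 \subseteq \mathcal{P}_3,$$
writing $\mathcal{P}_i$ for the $\sigma$-algebra displayed in part (i) of the statement and $\mathcal{P}_3 := \sigma(\mathcal{R})$. Two of the inclusions are for free: every adapted continuous process is trivially \cadlag and \caglad (hence càglàd), and every adapted càglàd process is in particular left-continuous, so the nesting of the generating classes immediately gives $\mathcal{P}_1 \subseteq \mathcal{P}$ and $\mathcal{P} \subseteq \mathcal{P}_2$.

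For $\mathcal{P}_3 \subseteq \mathcal{P}_1$, I would show that every predictable rectangle has a $\mathcal{P}_1$-measurable indicator. Fix $R = (s,t] \times F_s$ with $F_s \in \mathcal{F}_s$ and introduce continuous tent functions $f_n \colon \Rplus \to [0,1]$ that vanish on $[0,s] \cup [t+1/n,\infty)$, equal $1$ on $[s+1/n,t]$, and interpolate linearly in between. The process $X^n_u(\omega) := \mathds{1}_{F_s}(\omega)\, f_n(u)$ is continuous in $u$ by construction and adapted, because $f_n$ vanishes for $u \leq s$, so the $\mathcal{F}_s$-measurability of $\mathds{1}_{F_s}$ suffices for $u>s$. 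Since $X^n \to \mathds{1}_R$ pointwise, $\mathds{1}_R$ is $\mathcal{P}_1$-measurable. Rectangles $\{0\} \times F_0$ with $F_0 \in \mathcal{F}_0$ are handled analogously via $X^n_u(\omega) := \mathds{1}_{F_0}(\omega)(1-nu)^+$. This yields $\mathcal{P}_3 \subseteq \mathcal{P}_1$.

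For $\mathcal{P}_2 \subseteq \mathcal{P}_3$, I would discretize any adapted left-continuous process $X$ by the dyadic approximation
$$X^n_t(\omega) := X_0(\omega)\mathds{1}_{\{0\}}(t) + \sum_{k \geq 1} X_{(k-1)/n}(\omega)\, \mathds{1}_{((k-1)/n,\, k/n]}(t).$$
Left-continuity forces $X^n_t(\omega) \to X_t(\omega)$ pointwise, since for $t \in ((k-1)/n, k/n]$ the sampling point $(k-1)/n$ approaches $t$ from below as $n \to \infty$. Each summand is jointly $\mathcal{P}_3$-measurable: for a Borel set $B$ not containing $0$ the preimage is the predictable rectangle $((k-1)/n,k/n] \times \{X_{(k-1)/n} \in B\}$, with $\{X_{(k-1)/n} \in B\} \in \mathcal{F}_{(k-1)/n}$, and the case $0 \in B$ is reached by complementation. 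Countable sums and pointwise limits of $\mathcal{P}_3$-measurable maps remain $\mathcal{P}_3$-measurable, so $X$ is $\mathcal{P}_3$-measurable, closing the chain.

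The main obstacle is making the joint measurability in the second step precise. Showing that a product $Y(\omega)\mathds{1}_I(t)$, with $Y$ an $\mathcal{F}_s$-measurable random variable and $I \subset (s,\infty)$ a left-open right-closed interval, is $\mathcal{P}_3$-measurable on $\Rplus \times \Omega$ reduces by a standard monotone-class or approximation-by-simple-functions argument to the case $Y = \mathds{1}_G$ with $G \in \mathcal{F}_s$, where the preimages of Borel sets are exactly the predictable rectangles $I \times G$ and their complements. Once this reduction is fixed the rest is routine bookkeeping of countable operations and pointwise limits.
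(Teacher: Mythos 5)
Your proof is correct. The paper gives no argument of its own for this proposition---it simply defers to Theorems 3.1 and 3.2 of the cited reference---and your chain of inclusions $\sigma(\mathcal R)\subseteq\mathcal P_1\subseteq\mathcal P\subseteq\mathcal P_2\subseteq\sigma(\mathcal R)$, with the continuous tent-function approximation of predictable rectangles and the dyadic left-endpoint discretization of adapted left-continuous processes, is exactly the standard argument used there, so you have merely (and correctly) supplied the details the paper outsources.
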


As the proposition above  already might suggest, not every
adapted \cadlag process is predictable.   Indeed, in Lemma \ref{lem:pred}  we show  that Poisson processes are not predictable (such a process jumps at ``completely random'' times).

%TOEVOEGEN!!!\footnote{Clearly, all \caglad process are predictable. Also, e.g, $X_t=\mathds 1_{\{t\geq 1\}},t\geq 0$ is predictable  .}

On the contrary, an arbitrary deterministic
process---i.e., one that does not depend on a particular sample $\omega\in\Omega$---is predictable.  
\begin{proposition}\label{prop:predictdet}
	Let $X$ be a deterministic $\R$-valued process, i.e., $X_t(\omega)=f(t)$ for all $\omega\in\Omega$, with $f:\Rplus\to\R$    some Borel-measurable function.
	 Then $X$ is predictably measurable.
% Also \mark{We moeten wel hebbe dat de integraal eindig is! DUs misschien apart van de colory zetten!}\begin{equation}
%		\dint0\cdot{H_s}X_s=\dint0\cdot {f(s)}X_s
%	\end{equation}
%	is well-defined for all semimartingales $X$.
\end{proposition}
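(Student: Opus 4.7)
The plan is to use the equivalent characterisation of the predictable $\sigma$-algebra $\mathcal{P}=\sigma(\mathcal{R})$ from Proposition \ref{prop:predict}(iii), and to reduce the measurability of $X:(t,\omega)\mapsto f(t)$ to the measurability of the time projection $\pi:\Rplus\times\Omega\to\Rplus$, $(t,\omega)\mapsto t$. Since $X=f\circ\pi$ with $f$ Borel-measurable, it suffices to show that $\pi$ is $\mathcal{P}/\mathcal{B}(\Rplus)$-measurable, i.e.\ that $B\times\Omega\in\mathcal{P}$ for every $B\in\mathcal{B}(\Rplus)$.

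First I would introduce the collection
\begin{equation}
    \mathcal{G}:=\{B\in\mathcal{B}(\Rplus):B\times\Omega\in\mathcal{P}\},
\end{equation}
and verify routinely that $\mathcal{G}$ is a $\sigma$-algebra on $\Rplus$ (closure under complements and countable unions transfers from $\mathcal{P}$ via the identities $(\Rplus\setminus B)\times\Omega=(\Rplus\times\Omega)\setminus(B\times\Omega)$ and $(\bigcup_n B_n)\times\Omega=\bigcup_n(B_n\times\Omega)$). Next, I would observe that $\mathcal{G}$ contains the generating family of intervals: for any $0\leq s<t$ the set $(s,t]\times\Omega$ is a predictable rectangle since $\Omega\in\mathcal{F}_s$, and likewise $\{0\}\times\Omega\in\mathcal{R}$ because $\Omega\in\mathcal{F}_0$; both therefore lie in $\mathcal{P}$. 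Since $\{\{0\}\}\cup\{(s,t]:0\leq s<t\}$ generates $\mathcal{B}(\Rplus)$, we conclude $\mathcal{G}=\mathcal{B}(\Rplus)$.

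Finally, for arbitrary $A\in\mathcal{B}(\R)$, one has
\begin{equation}
    X^{-1}(A)=\{(t,\omega)\in\Rplus\times\Omega:f(t)\in A\}=f^{-1}(A)\times\Omega,
\end{equation}
and Borel-measurability of $f$ yields $f^{-1}(A)\in\mathcal{B}(\Rplus)$, so by the previous step $X^{-1}(A)\in\mathcal{P}$. Hence $X$ is $\mathcal{P}$-measurable and therefore predictable.

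There is no serious obstacle here; the only conceptual point is to recognise that the naive strategy---writing $X$ directly as an adapted \caglad process---fails when $f$ is merely Borel-measurable (e.g.\ has right-jumps or is nowhere continuous), so one must argue at the level of the generating rectangles instead. Alternative routes through a monotone-class / simple-function approximation of $f$ would work but are strictly more cumbersome than the generator argument above.
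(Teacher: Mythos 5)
Your proof is correct and follows essentially the same route as the paper, which also invokes Proposition \ref{prop:predict}(iii) together with the fact that $\mathcal B(\Rplus)$ is generated by the half-open intervals; you have simply written out in full the generator/$\sigma$-algebra argument that the paper leaves as ``follows quite immediately.'' No gaps.
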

\begin{proof}
	(Inspired by \citeb{p. 136}{unpublished:timo}.) Clearly a deterministic process is adapted. The assertion that  $X$ is $\mathcal P$-measurable follows quite immediately from Proposition \ref{prop:predict} part (iii) and the commonly known fact that the Borel $\sigma$-algebra $\mathcal B(\Rplus)$ equals $\sigma(\{(a,b]:a,b\geq 0\})$.
%	The second part follows	 discussion above for all semimartingales $X$ with $L^2$-martingale part. For the more general case, we refer to Chapter IV of \citeb{p. 155}{book:protter}.
\end{proof}

The above provides us additional intuitive understanding of what it means to be predictable. Consequently, it also indicates that $\mathcal P$ is quite a natural $\sigma$-algebra to look at (in the first place). For other (non-trivial) predictably measurable processes, we refer to \citeb{p. 492}{book:kallenberg}.

A main  ingredient in  $L^2$-martingale integrator theory, are the so-called Doléans measures.  

\begin{definition}
	Given a   martingale $M\in\mathscr M^2$, we define its \name{Doléans measure} $\mu_{M}$\index{$\mu_{M}$} on the predictable $\sigma$-algebra $\mathcal{P}$ by
	\begin{equation}
		\mu_{M}(A)=\int_{\Omega} \int_{0}^{\infty} 1_{A}(s, \omega)\, \mathrm{d}[ M]_{s}(\omega) \mathbb{P}(\mathrm d \omega)=\mathbb{E} \int_{0}^{\infty} 1_{A} \,\mathrm{d}[ M],\quad A\in\mathcal P. \label{eq:doleans}
	\end{equation}
\end{definition}

It is not immediately clear why $\mu_M$ is well-defined on $\mathcal P$. Notice that, for every fixed $\omega\in\Omega$,  the function $t \mapsto \mathbf{1}_{A}(t, \omega)$   integrated over   $[0,\infty)$ with respect to the Lebesgue--Stieltjes measure $\mu_{[M]}(\,\cdot\,,\omega)$ results into a random variable $\Omega\to[-\infty,\infty]$, which in turn gets  averaged over the probability space $(\Omega, \mathcal{F}, \mathbb P)$. In more detail, the $\mathcal F$-measurability of
\begin{equation}
\Omega\to[-\infty,\infty],\quad  \omega\mapsto 	\int_{0}^{\infty} 1_{A}(s, \omega) \,\mathrm{d}[ M]_{s}(\omega) 
\end{equation} is clear when $A\in\mathcal R$ is a predictable rectangle. Applying the monotone class theorem   yields  the $\mathcal F$-measurability  for all $A\in\mathcal \sigma(R)=\mathcal P $ (see also Exercise 3.13 of \citeb{p. 112}{unpublished:timo}).  
 Since this random variable is non-negative, it makes sense to allow expectations to attain the value $\infty.$ One easily verifies   $\sigma$-additivity, which follows from the monotone convergence theorem (conform to the problem in Remark \ref{remark:clarify}), hence $\mu_M$ is indeed a measure.

 Moreover, by Proposition \ref{prop:intermed}, we have 
%\mark{Define $\nu_M$. Then same measure. Is (1.22) well-defined for $A\in \mathcal P$, or is it well defined for $A\in \R$ and then extend uniquely for $A\in \mathcal P$ for caratheodory. Zie ik iets over het hoofd, want dan is het tweede toch veel makkelijker?}
\begin{equation}
	\mu_{M}([0, t] \times \Omega)=\mathbb E([M]_{t})=\mathbb E(M_{t}^{2})<\infty,
\end{equation}
for all $t\geq 0,$ implying that the Doléans measure $\mu_{M}$ is $\sigma$-finite. This observation enables us to give an alternative interpretation of the Doléans measure: observing that $\mu_M(R)<\infty$ holds for every $R\in\mathcal R$, we may conclude  $\mu_M$ uniquely  extends to a measure on $\mathcal P$ by Carathéodory's extension theorem, where uniqueness is a consequence of  the $\sigma$-finiteness. The  explicitly constructed measure  above and the one obtained from Carathéodory yield the same measure (again  because of the $\sigma$-finite property).
 
  \begin{remark}\label{remark:continuous}
  	The formula   \eqref{eq:doleans}   makes sense for all  sets $A$ in $\mathcal{B}([0, \infty)) \times \mathcal{F}$ as well;   consider the rectangles $(s,t]\times F$ with $F\in\mathcal F$ (instead of in $\mathcal F_s)$. But,   in case we want to extend the class of integrands for stochastic integrals beyond the class of predictable processes in a useful manner---i.e., such that (semi)martingales  properties remain preserved---formula \eqref{eq:doleans} does not always provide the suitable extension  \citeb{p. 136}{unpublished:timo}.   Example \ref{ex:generalise} demonstrates this.
  	
  	In particular, whenever the integrator $M$ is assumed to be continuous, we can make use of the Doléans measure on $\mathcal{B}([0, \infty)) \times \mathcal{F}$, also denoted by $\mu_M$, to extend the notion of stochastic integrals on the class of suitably integrable \textit{progressively measurable processes} (defined soon). 
  \end{remark}

\begin{remark} Often, as is done in \citeb{p. 33}{book:chung}, one introduces the Doléans measure   as the unique extension of the pre-measure $\lambda_M:\mathcal R\to[0,\infty]$ to a measure on the $\sigma$-algebra $\mathcal P$, where
	\begin{equation}
		\lambda_M((s,t]\times F_s)=\mathbb E[\mathds 1_{F_s}(M_t-M_s)^2]=\mathbb E[\mathds 1_{F_s}(M_t^2-M_s^2)],\label{eq:secondeq}
	\end{equation}
for all $0\leq s<t<\infty$ and $F_s\in\mathcal F_s.$ The second equality in \eqref{eq:secondeq} is due to the martingale property. Both approaches result into the same measure, because   $M^2-[M]$ is a true martingale:
\begin{align*}
	\lambda_M((s,t]\times F_s)&=\mathbb E\big[\mathds 1 _{F_s}\big(M_t^2-[M]_t+[M]_t-M_s^2\big)\big]\\&=\mathbb E\big[\mathds 1 _{F_s}(M_s^2-[M]_s+[M]_t-M_s^2\big)\big]\\	&=\mathbb E\big[\mathds 1_{F_s}\big([M]_t-[M]_s\big)\big]\\&=\mu_M((s,t]\times F_s).
\end{align*}
Via analogous reasoning (using that $M^2-\langle M\rangle$ is a true martingale), we deduce that $\mu_M=\nu_M$ holds  with $\nu_M$ being  the $\sigma$-finite measure on the predictable $\sigma$-algebra $\mathcal P$  defined by 
	\begin{equation}
	\nu_{M}(A)=\int_{\Omega} \int_{0}^{\infty} 1_{A}(s, \omega) \mathrm{d}\langle M\rangle _{s}(\omega) \mathbb{P}(\mathrm d \omega)=\mathbb{E} \int_{0}^{\infty} 1_{A} \mathrm{d}\langle  M\rangle,\quad A\in\mathcal P. \label{eq:doleans2}
\end{equation}
In conclusion, there are   three (non-trivially) equivalent definitions of a Doléans measure.
	Surprisingly, the observation  $\mu_M=\nu_M$ is---to the best of our knowledge---nowhere   highlighted. For continuous $L^2$-martingales $M$, the definitions of $\mu_M$ and $\nu_M$ correspond trivially.
\end{remark}

We measure the size of square martingales in $\mathscr M^2$ via the quantity \begin{equation}\|M\|_{\mathscr M^2}:=\sum_{n\in\N}2^{-n}( 1\wedge \mathbb EM_n^2),\quad M\in \mathscr M^2.\end{equation}  
Warning, $\|\cdot\|_{\mathscr M^2}$ fails to be a norm, but it does satisfy the triangle inequality. Therefore, we are able to endow the space $\mathscr M^2$ with the metric $d_{\mathscr M^2}(M,N)=\|M-N\|_{\mathscr M^2}, M,N\in\mathscr M^2.$ As usual, we    use  the identification that indistinguishable processes are the same process.

For any measurable, adapted   process  $X$ (recall Remark \ref{remark:continuous}), and for all time $t\geq0$, we define the $L^2$-norm
\begin{equation}
\|X\|_{{M}, t}:=\left(\int_{[0,t] \times \Omega} X ^{2} \mathrm{d} \mu_{M}\right)^{1 / 2}
=\left(\mathbb E \int_0^t X _s ^{2} \mathrm{d}[M]_{s} \right)^{1 / 2}=\left(\mathbb E \int_0^t X _s ^{2} \mathrm{d}\langle M\rangle _{s} \right)^{1 / 2},
\end{equation}
and  we write  $\mathcal P(M)$ for the collection of all predictable processes $X$ satisfying $\|X\|_{{M}, t}<\infty$   for every $t\geq 0$.

 Finally, consider on $\mathcal P(M)$  the metric  $d_{\mu_M}(X, Y)=\|X-Y\|_{\mu_ M}, X,Y\in \mathcal P(M)$, where
\begin{equation}
\|X\|_{\mu_M}=\sum_{n\in \N} 2^{-n}(1 \wedge\|X\|_{{M}, n}),\quad X\in \mathcal P(M)
.\end{equation}
Again, $\|\cdot\|_{\mu_M}$ is no norm. In order to let $d_{\mu_ M}$ be   a well-defined metric on $\mathcal P(M)$, we   identify two processes $X$ and $Y$ in $\mathcal P(M)$ as the same processes if they are $\mu_M$\name{-equivalent}{equivalent!$\mu_M$-equivalent}, i.e., 
\begin{equation}
\mu_{M}\big((t,\omega)\in \Rplus\times \Omega:X_t(\omega)\neq Y_t(\omega)\big)=0.
\end{equation}
We are now able to extend the definition of a stochastic integral. We  will write $M_n\stackrel{\mathscr M^2}{\longrightarrow} M$ if $\|M_n-M\|_{\mathscr M^2 }\to 0$   as $n\to\infty$, for $(M_n)_{n\in\N},M\in  \mathscr M^2.$ Similarly, one defines a limit $H^n\stackrel{\mu_M}{\longrightarrow} H$.
\begin{definition}\label{def:integralM2}
	  Suppose $M\in \mathscr M^2$ and let $H\in \mathcal P(M).$ The \name{stochastic integral of $H$ with respect to $M$}{stochastic integral}, again denoted by $\dint0\cdot {H_s}M_s$, is  the square integrable martingale satisfying
	\begin{equation}\label{eq:integralM2}
	\dint0\cdot {H_s^n}M_s\stackrel{\mathscr M^2}{\longrightarrow}\dint0\cdot {H_s}M_s,
\end{equation}
where $(H^n)_{n\in \N}$  is an arbitrary sequence of simple predictable processes such that $H^n\stackrel{\mu_M}{\longrightarrow} H$.
%
%for every sequence $X_{n} \in \mathcal{S}_{2}$ of simple predictable processes such that $\| X-$ $X_{n} \|_{\mathcal{L}_{2}} \rightarrow 0 .$ The process $I(X)$ is unique up to indistinguishability. If $M$ is continuous, then so is $X \cdot M$.

\end{definition}
Analogous to the implicitly required facts for Definition \ref{def:integral}, one can show that  the space of simple predictable processes is dense in  $\mathcal P(M)$ under the $d_{\mu_M}$-metric, and that  the space of   $L^2$-martingales $\mathscr{M}^2$ endowed with the $d_{\mathscr M^2}$-metric is complete. We refer to  \citeb{  p. 144}{unpublished:timo} and \citeb{p. 108}{unpublished:timo}, respectively, for their proofs.
In addition, the   subspace of continuous square integrable martingales is closed under the $d_{\mathscr M^2}$-metric, thus complete  again.  This  gives us that the stochastic integral in Definition \ref{def:integralM2} preserves the continuity property as well, i.e.,  the stochastic integral is continuous whenever the integrator $M$ is. Lastly, the   $L^2$-convergence as in \eqref{eq:integralM2} is stronger than ucp-convergence (which follows from Doob's maximal inequality; see Theorem \ref{thm:Doob2}), hence Definition  \ref{def:integral} and Definition \ref{def:integralM2} yield the same limit in case our integrand  $H\in\mathbb L\Rplus\subset \mathcal P (M)$ is \caglad (which is desirable of course).  

We claim that all properties in the previous sections still hold with $\mathbb L\Rplus$ being  replaced by $\mathcal P(M).$ For instance, the linearity of integrands and integrators are again immediately clear. A somewhat less trivial result is that we can extend 
Corollaries \ref{cor:truemart}  and \ref{cor:handy}  to   the class of predictable integrands. Because we know the latter is true for simple predictable processes,  the result below  basically   follows from   taking $n\to\infty$. 
 
 \begin{theorem}\label{thm:square}Suppose $M\in \mathscr M^2$ and let $H\in \mathcal  P(M).$ 
 	Then  we have
 	 \begin{equation}
 		\label{eq:formula3}	\mathbb E\left(\dint0t{H_s}M_s\right)^2=\mathbb E \dint0t{H_s^2}[M]_s=\mathbb E \dint0t{H_s^2}\langle M\rangle_s,
 	\end{equation}
 	for all $t\geq 0$. Also, in case $H, K \in  \mathcal{P}(M)$ are $\mu_{M}$-equivalent, then
 	$\dint0\cdot {H_s}M_s$ and $\dint0\cdot {K_s}M_s$ are indistinguishable.
% 	
% 	 Then we have the isometries
%(5.12) $\quad E\left[(X \cdot M)_{t}^{2}\right]=\int_{[0, t] \times \Omega} X^{2} d \mu_{M} \quad$ for all $t \geq 0$
%and
%(5.13)
%$$
%\|X \cdot M\|_{\mathcal{M}_{2}}=\|X\|_{\mathcal{L}_{2}}
%$$
\end{theorem}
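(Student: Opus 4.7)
My strategy is to lift the isometry for simple predictable integrands, which is Corollary \ref{cor:truemart} (equivalently Corollary \ref{cor:handy}), to all of $\mathcal P(M)$ by passing to the limit in the approximation scheme used in Definition \ref{def:integralM2}. The equality $\mathbb E \dint0t{H_s^2}[M]_s = \mathbb E \dint0t{H_s^2}\langle M\rangle_s$ is essentially free, being the identity $\mu_M=\nu_M$ on $\mathcal P$ highlighted in the preceding remark, so the real content is in showing $\mathbb E\bigl(\dint0t{H_s}M_s\bigr)^2 = \|H\|_{M,t}^2$.

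Fix $H\in\mathcal P(M)$ and pick a sequence $(H^n)$ of simple predictable processes with $H^n\stackrel{\mu_M}{\longrightarrow} H$; set $N^n:=\dint0\cdot{H^n_s}M_s$ and $N:=\dint0\cdot{H_s}M_s$, so by definition $N^n\stackrel{\mathscr M^2}{\longrightarrow} N$. Corollary \ref{cor:truemart} applies to each $H^n$ (these are adapted \caglad, and $\|H^n\|_{M,t}$ is finite for $n$ large by the reverse triangle inequality), yielding $\mathbb E(N^n_t)^2 = \|H^n\|_{M,t}^2$. I then take $n\to\infty$ on both sides. For the right-hand side, $\|H^n-H\|_{M,t}\to 0$ together with the reverse triangle inequality in $L^2(\mu_M)$ gives $\|H^n\|_{M,t}^2\to \|H\|_{M,t}^2$, which is exactly the desired limit.

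The step that demands a bit of care, and which I expect to be the main obstacle, is the left-hand side: $d_{\mathscr M^2}$-convergence only \emph{a priori} controls $\mathbb E(N^n_k-N_k)^2$ at integer times $k$. To pass to arbitrary $t\geq 0$, I would apply Doob's $L^2$ maximal inequality (Theorem \ref{thm:Doob2}) to the $L^2$-martingale $N^n-N$: taking $k\in\N$ with $k\geq t$,
\begin{equation}
\mathbb E(N^n_t-N_t)^2 \leq \mathbb E\sup_{s\leq k}(N^n_s-N_s)^2 \leq 4\,\mathbb E(N^n_k-N_k)^2 \to 0.
\end{equation}
Combined with the right-hand side limit, this closes the first assertion.

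For the second part, I would observe that if $H$ and $K$ are $\mu_M$-equivalent, then $\|H^n-H\|_{M,t} = \|H^n-K\|_{M,t}$ for every simple predictable $H^n$, so a single approximating sequence serves both. By uniqueness of limits in $(\mathscr M^2, d_{\mathscr M^2})$, the stochastic integrals coincide as elements of $\mathscr M^2$, i.e.\ at each fixed $t$ they agree $\mathbb P$-a.s.; since both are \cadlag, Proposition \ref{prop:regulatity} upgrades this to indistinguishability. Everything outside the Doob step is routine bookkeeping.
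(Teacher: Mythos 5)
Your proposal is correct and follows essentially the same route as the paper's proof: the isometry for simple predictable integrands from Corollaries \ref{cor:truemart}/\ref{cor:handy}, the inverse triangle inequalities, and passage to the limit along the approximating sequence of Definition \ref{def:integralM2}. The only additions are details the paper leaves implicit — the Doob maximal-inequality step to upgrade convergence from integer times to arbitrary $t$ (one could equally note that $(N^n-N)^2$ is a submartingale, so $\mathbb E(N^n_t-N_t)^2\leq\mathbb E(N^n_k-N_k)^2$ for $k\geq t$), and phrasing the second assertion via uniqueness of limits rather than applying \eqref{eq:formula3} directly to $H-K$ — both of which are sound.
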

\begin{proof}	(Inspired by \citeb{p. 148}{unpublished:timo}). 
	See the discussion above.  In more detail, one applies the  inverse triangle inequalities  
	\begin{equation}
		\big|\|H\|_{\mu_M}-\|K\|_{\mu_M}\big|\leq \|H-K\|_{\mu_M}\quad\text{and}\quad \big|\|M\|_{\mathscr M^2}-\|N\|_{\mathscr M^2}\big|\leq \|M-N\|_{\mathscr M^2};
	\end{equation} uses the fact that equation   \eqref{eq:formula3} holds for simple predictable processes (which  follows from the previously mentioned corollaries); and   takes any    approximating sequence as in Definition \ref{def:integralM2}. The final assertion follows directly from the now proven result \eqref{eq:formula3}.
%
%Alternatively, see also the discussion in \citeb{p. 173}{book:protter}.
\end{proof}

Recall that there is no ambiguity between the two definitions of a stochastic integral when an integrand $H\in \mathbb L\Rplus\subset \mathcal P(M)$ is \cadlag. Suppose now that $M$ is also a finite variation process. Then $\dint0\cdot {H_s}M_s$ could either be understood as a Lebesgue--Stieltjes   or a stochastic integral. It would favourable if these two notions coincide, as in Proposition \ref{prop:stieltjes}. This is indeed the case.
 \begin{proposition}[Proposition 5.36 of \cite{unpublished:timo}]\label{prop:stieltjes2}
	Suppose $M\in\mathscr M^2$ is a square integrable martingale and of finite variation. Let $H\in \mathcal P(M) $. Then the Lebesgue--Stieltjes integral of $H$ with respect to $M$  is indistinguishable from the stochastic integral $\dint0\cdot {H_s}M_s$.
\end{proposition}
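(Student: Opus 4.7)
The plan is a density argument anchored at the trivial simple-predictable case. For $H=H_0\mathds{1}_{\{0\}}+\sum_{k=1}^nH_k\mathds{1}_{(T_k,T_{k+1}]}$ simple predictable, formula \eqref{eq:si_simple} for the stochastic integral and the pathwise formula \eqref{eq:Leb-Stiel} for the Lebesgue--Stieltjes integral both evaluate to $H_0M_0+\sum_kH_k(M^{T_{k+1}}-M^{T_k})$, so indistinguishability is immediate in this case. Moreover, by equation \eqref{eq:contpure} together with the fact that a finite variation process is a quadratic pure jump process, we have $M_0=0$ and $[M]_t=\sum_{0<s\le t}(\Delta M_s)^2$ pathwise; all Lebesgue--Stieltjes computations below therefore reduce to a combination of a pure-jump contribution and a continuous finite-variation contribution coming from $|M|$.

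Next, I would pick simple predictable $H^n\stackrel{\mu_M}{\longrightarrow}H$, so by Definition \ref{def:integralM2} the stochastic integrals $J^n:=\dint0\cdot {H_s^n}M_s$ converge to $J:=\dint0\cdot {H_s}M_s$ in $\mathscr M^2$. Doob's maximal inequality upgrades this to ucp-convergence, and after passing to a subsequence (relabelled) we may assume $J^n\to J$ uniformly on compacts $\mathbb P$-a.s., and also that $\int_0^t(H^n_s-H_s)^2\,\mathrm d[M]_s\to 0$ $\mathbb P$-a.s. By the base case each $J^n$ coincides with the pathwise Lebesgue--Stieltjes integral of $H^n$ against $M$. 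The task therefore reduces to proving that the pathwise Lebesgue--Stieltjes integral of $H$ against $M$ is $\mathbb P$-a.s.\ well-defined and is the pathwise limit of the $J^n$ viewed as Lebesgue--Stieltjes integrals. Once that is in hand, the stochastic-integral candidate and the Lebesgue--Stieltjes candidate agree $\mathbb P$-a.s.\ at each $t$, so càdlàg regularity of both processes combined with Proposition \ref{prop:regulatity} yields indistinguishability.

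The main obstacle is this pathwise-integrability step. Membership $H\in\mathcal P(M)$ provides $L^2$-control of $H$ against the $[M]$-measure, weighted by $(\Delta M_s)^2$ on the jumps of $M$, whereas pathwise Lebesgue--Stieltjes integrability against $M$ demands $L^1$-control against the total variation measure $\mathrm d|M|$, weighted by $|\Delta M_s|$ on jumps and potentially by a continuous component on top. To bridge the two I would localise with the stopping times $\tau_n=\inf\{t:|M|_t\ge n\}\wedge n$ so that $|M|$ is bounded; split the jump contribution into large jumps, where $|\Delta M_s|\le(\Delta M_s)^2/\epsilon$ can be controlled by $\mathrm d[M]$, and small jumps, which contribute at most $\epsilon|M|_{\tau_n}$-type remainders after Cauchy--Schwarz; and send first $n\to\infty$ and then $\epsilon\downarrow 0$ to obtain the required pathwise convergence. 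This truncation-and-localisation scheme is the technical heart of the proof; everything else is a routine density argument completely parallel to the construction of the stochastic integral on $\mathcal P(M)$ itself.
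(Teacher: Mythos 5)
The paper offers no proof of this proposition at all --- it defers entirely to Proposition 5.36 of \cite{unpublished:timo} --- so there is no in-paper argument to measure you against; your skeleton (agreement for simple predictable integrands, density in $d_{\mu_M}$, upgrading $\mathscr M^2$-convergence to $\mathbb P$-a.s.\ uniform convergence along a subsequence, then Proposition \ref{prop:regulatity}) is the standard and correct one. The difficulty sits exactly where you say it does, but the step you propose there cannot be carried out: the implication ``$H\in\mathcal P(M)$ $\Rightarrow$ $H$ is pathwise Lebesgue--Stieltjes integrable against $d|M|$'' is \emph{false}, so no truncation-and-localisation scheme will close the argument. Concretely, let $\Pi$ be a Poisson random measure on $[0,1)$ with intensity $\lambda(s)\,\mathrm ds$, $\lambda(s)=(1-s)^{-2}$, and set
\begin{equation*}
M_t=\sum_{s\in\Pi,\ s\leq t}(1-s)^2-(t\wedge 1),\qquad H_s=(1-s)^{-1}\mathds 1_{[0,1)}(s).
\end{equation*}
Then $M$ is a \cadlag martingale of finite variation with $\mathbb E M_t^2=\mathbb E[M]_t=\int_0^{t\wedge 1}(1-s)^2\,\mathrm ds<\infty$, so $M\in\mathscr M^2$; the deterministic process $H$ is predictable with $\|H\|_{M,t}^2=\int_0^{t\wedge 1}(1-s)^{-2}(1-s)^4\lambda(s)\,\mathrm ds=t\wedge 1<\infty$, so $H\in\mathcal P(M)$; yet $d|M|$ contains the absolutely continuous component $\mathrm ds$ and $\int_0^1|H_s|\,\mathrm ds=\infty$, so the pathwise Lebesgue--Stieltjes integral does not exist. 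Your large/small-jump split never touches this continuous component of $d|M|$ (already present for the compensated Poisson process, where $|M|_t=N_t+\lambda t$ while $[M]_t=N_t$), and even on the atoms the small-jump bound fails: $\sum_n c_n<\infty$ does not control $\sum_n\sqrt{c_n}$ with $c_n=H_{s_n}^2(\Delta M_{s_n})^2$.

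The correct reading is that pathwise integrability of $H$ against $d|M|$ is an additional hypothesis --- implicit here via Definition \ref{def:Stieltjes}, and explicit in the cited source --- not a consequence of $H\in\mathcal P(M)$. You should assume it, not derive it. Even then your density argument has a residual wrinkle: a $\mu_M$-a.e.\ convergent subsequence of the $H^n$ gives pointwise convergence only $d[M]\times\mathbb P$-a.e., not $d|M|\times\mathbb P$-a.e., so you cannot directly pass to the limit in the Lebesgue--Stieltjes integrals either. The cleaner route is to first establish the identity for bounded predictable $H$ by a monotone-class argument (both sides are linear and behave well under bounded pointwise limits, by dominated convergence for the Stieltjes side and by $\mathscr M^2$-continuity for the stochastic side), and then remove the boundedness by truncating $H$ at level $k$ and letting $k\to\infty$, using the assumed pathwise integrability to control the Stieltjes side.
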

In the continuous setting, one does not have to worry about the latter, simply due to the fact that  continuous martingales which are also of finite variation do not exist (with the zero process as exception); see Remark \ref{remark:2}.

 \subsubsection*{The extension to   the class of progressively measurable integrands}
 \noindent Let us first introduce some other  measurability types for stochastic processes.  We have already encountered the predictable $\sigma$-algebra $\mathcal P$ and the product  $\sigma$-algebra $ \mathcal B(\Rplus)\times \mathcal F$, corresponding to predictable and measurable processes. Likewise, one can define the \name{optional $\sigma$-algebra} $\mathcal O$ as the $\sigma$-algebra generated by all   adapted \cadlag processes. Equivalent notions  as in Proposition \ref{prop:predict} can be found in \citeb{p. 63}{book:chung}. Moreover, a  stochastic process $X=(X_t)_{t\geq 0}$ is said to be \name{progressive}{process!progressive} or \name{progressively measurable}  if the mappings $[0,t]\times \Omega, (s,\omega)\mapsto X(s,\omega)$ are measurable with respect to $\mathcal B([0,t])\times \mathcal F_t$, for all $t\geq 0$. We denote by $\mathcal M$  the smallest $\sigma$-algebra on $[0,\infty)\times \Omega$ making all the progressive processes measurable. A little warning though, note that $\mathcal M$-measurability does not mean progressive  measurability (just as for $\mathbb L\Rplus$ and $\mathcal P$); therefore, calling a stochastic process progressive would actually be better. Finally, denote by $\mathcal V$  the $\sigma$-algebra generated by all adapted  measurable processes. 
 
 One can easily verify that progressive processes are measurable and adapted. 
 Moreover, we have  the following relationships:
 \begin{equation}
 	\mathcal P\subset \mathcal O\subset \mathcal M\subset \mathcal  V\subset  \mathcal B(\Rplus)\times \mathcal F\label{eq:inclusions}.
 \end{equation}
 See \citeb{p. 63}{book:chung} or \citeb{p. 102}{book:protter}. In general, these inclusions are strict. It is important to observe that \eqref{eq:inclusions} is only valid in case regularity everywhere is assumed within the definitions of $\mathbb L\Rplus$ and $\mathbb D\Rplus$, hence in $\mathcal P$ and $\mathcal O$, respectively. 

 \begin{example}\label{ex:regularity}
 	Suppose $X$ is a  (left- or right-)continuous   process and $Y$ is a  version of $X$ with (resp., left- or right-)continuous paths $\mathbb P$-a.s., then $Y$ is indistinguishable from $X$, but     the stochastic process   $Y$ may fail  the property that any  section
 	\begin{equation}
 		Y(\omega):\Rplus\to\R^d,\,t\mapsto Y_t(\omega),
 	\end{equation}
 	is Borel measurable,   hence $Y$ may fail  to be product measurable, i.e., $\mathcal B(\Rplus)\times \mathcal F$-measurable.
 	
 	Indeed, let us define the stochastic process $Y=(Y_t)_{t\geq 0}$ by  \begin{equation}Y_t(\omega)=f(t)\mathds 1_A(\omega),\end{equation}  where $f:\Rplus\to\R^d$ is completely arbitrary  and $\mathbb P(A)=0$. Then $Y$ is indistinguishable from the zero process, because $\mathbb P(\Omega \backslash A)=1$ and $\Omega \backslash A\subset \{\omega\in\Omega:Y_t(\omega)=0\quad \forall t\geq 0\}.$ Nevertheless, for any $\omega\in A$, the section $t\mapsto Y_t(\omega)$ fails to be Borel measurable if the function $f$ is   not Borel measurable (and such functions exist,  e.g., $f(t)=\mathds 1_{V}(t)$ where $V\subset [0,1]$ is a Vitali set.)  	  
 \end{example}
  In particular, any   $\mathcal A$-measurable process with $\mathcal A$ one of the previously mentioned $\sigma$-algebras, satisfies the property that each section $t\mapsto Y_t(\omega)$ is Borel measurable.
 
 Note that progressively measurable processes   define a useful class. For example, the random variable $X_T$ is $\mathcal F_T$-measurable for $T$ a finite stopping time  if $X$ is progressive  \citeb{p. 122}{book:kallenberg}.  We   use this in Lemma \ref{prop:useful-id}. Also,  a Lebesgue--Stieltjes integral is progressive, hence adapted, if the integrand is assumed to be progressively measurable \citeb{p. 23}{book:karatzas}.
 Via a simple approximation, one   deduces that processes in  $\mathbb L\Rplus$  and $\mathbb D\Rplus$---under the assumption we have regular paths everywhere---are progressively measurable \citeb{p. 5}{book:karatzas}.   Many authors prefer to think of  \cadlag and \caglad processes as progressive  processes (and not only up to indistinguishability), which again motivates the regularity everywhere convention. 
  Lastly, if a stochastic process is measurable and adapted, then it has a progressively measurable version; we refer to  \citeb{p. 68}{book:meyer} for its rather demanding  proof. 
 
   As mentioned several times by now, Example \ref{ex:generalise} shows us that  in general we cannot go beyond the class of predictable integrands. Chapter 3 of \cite{book:chung} demonstrates that if
 	\begin{enumerate}
 		\item $M=(M_t)_{t\geq 0}$ is assumed to be a continuous $L^2$-martingale; or
 	\item  the Doléans measure $\mu_M$ is assumed to be absolutely continuous with respect to $\mathrm ds\times\mathbb P$,
 	\end{enumerate}
  then it is possible to extend the definition of a stochastic integral on a larger class of integrands. 
 Hypothesis  2.\ is a mild condition satisfied by numerous processes, according to \citeb{p. 57}{book:chung} and \citeb{p. 135}{book:karatzas}. In particular, a Brownian motion has Doléans measure $\mathrm ds\times \mathbb P$. The typical Lévy processes as introduced in  {\S}\ref{Sec1.2.2} also satisfy this mild condition.
  
   \subsubsection*{1. Continuous $L^2$-martingales}
\noindent Like we already briefly mentioned in Remark \ref{remark:continuous},  one can make   use   of the Doléans measure $\mu_M $ on $\mathcal B(\Rplus )\times \mathcal F$ to provide a suitable extension of the stochastic integral; thus,   preserving all the (semi)martingale properties. One may follow a direct method (as the reader is assumed to be familiar with), i.e.,  showing that the class of simple predictable processes lies dense   in the class of progressively measurable processes (with respect to the metric $d_{\mu_M}$) where  the usual integrability condition---$\|X\|_{M,t}<\infty$ for all $t\geq 0$---is satisfied. This is shown in, e.g., Proposition 2.8 of \citeb{p. 137}{book:karatzas}. One can then simply enhance Definition \ref{def:integralM2} by letting $\mathcal P(M)$ denote the class of progressively measurable processes $X$ with $\|X\|_{M,t}<\infty$ for all $t\geq 0$. 

Alternatively, the method  conform \citeb{p. 68}{book:chung} provides   an appropriate extension of the stochastic integral  
with as  class   suitably integrable processes that are $\mathcal P_{\text{cont.}}$-measurable:   define the augmented $\sigma$-algebra $\mathcal P_{\text{cont.}}:=\mathcal P\vee \mathcal N_{\text{cont.}}$, where $\mathcal N_{\text{cont.}}$ is the collection of all $\mu_M $-null sets. Consequently, due to the general result in \citeb{p. 59}{book:chung2}---altered in lines with the discussion in \citeb{p. 69}{book:chung}---a process $X$ is $\mathcal P _{\text{cont.}}$-measurable if and only if there exists a predictable process $\bar X$ which is $\mu_M$-equivalent with $X$. This enables us to set\index{stochastic integral}
\begin{equation}
	\int_0^\cdot X_s\,\mathrm dM_s:=\int_0^\cdot \bar X_s\,\mathrm dM_s.
\end{equation}
All properties remain valid, because we  do not really obtain genuinely new stochastic integrals.
 Besides, any progressively measurable process is $\mathcal P _{\text{cont.}}$-measurable, see  \citeb{Thm. 3.10}{book:chung}, and therefore both methods   yield the same notion of a stochastic integral  on the class of suitably integrable progressively measurable process (also denoted by $\mathcal P(M))$.

It is worth noting that the latter  method is  very similar to  the approach for 2., so for more information on the previous we refer to the discussion below.

\subsubsection*{2. The mild condition for \cadlag $L^2$-martingales}

 \noindent Throughout the remainder of this subsection, we suppose   that the Doléans  measure $\mu_M$ on $\mathcal P$ of $M=(M_t)_{t\geq 0}\in\mathscr M^2$   is absolutely continuous with respect to $\mathrm ds\times \mathbb P $, that is, if $(\mathrm ds\times \mathbb P)(A)=0$ for  some $A \in \mathcal{P}$, then $\mu_{M}(A)=0$. The absolute continuity is   abbreviated   by $\mu_M\ll \mathrm ds\times \mathbb P.$   
 \begin{definition}
 	We let \index{$\mathcal P^*$}$\mathcal{P}^{*}:=\mathcal P\vee \mathcal N^*$ denote the augmented $\sigma$-algebra, where $\mathcal N^*$ is the collection of $\mathrm ds\times \mathbb P$-null sets. Equivalently, we have
 \begin{equation}
 \mathcal{P}^{*}=\left\{A \in \mathcal{B}(\Rplus) \times \mathcal{F}: \text {there exists a } P \in \mathcal{P}\right.  \text { such that } (\mathrm ds \times \mathbb P)(A \,\triangle \,P)=0\}.\label{eq:equiv}
 \end{equation}
 \end{definition}
Equation \eqref{eq:equiv} gives us that any function $X$ is $\mathcal P^*$-measurable if and only if there exists a $\mathcal P$-measurable function $Z$ with $(\mathrm ds\times \mathbb P)(X\neq Z)=0.$ This is stated in  Lemma 3.5(ii) of \citeb{ p. 69}{book:chung}. In particular, the equivalent notions of $\mathcal P^*$ and the latter  result holds in a much more general setting, namely for any measurable space and any $\sigma$-algebra. This  follows from  \citeb{p. 59}{book:chung2}; see also \citeb{p. 35}{unpublished:timo}.  
 
 Since $\mu_{M}\ll\mathrm ds\times \mathbb P$ holds on   $\mathcal{P}$, we obtain by the Radon--Nikodym theorem that there exists a non-negative predictably measurable function $f_{M}: \Rplus \times \Omega\to\R$ such that
\begin{equation}
\mu_{M}(A)=\int_{A} f_{M} (\mathrm ds \times \mathbb P), \quad  A \in \mathcal{P}.
\end{equation}
This formula is key to our extension. Clearly,  whenever a function or a set is $\mathcal P$-measurable,   it is also $\mathcal P^*$- and $\mathcal B(\Rplus)\times \mathcal F$-measurable. This enables us to define the following measure.
 \begin{definition}
 We denote by $\mu_{M}^{*}$\index{$\mu_{M}^{*}$} the measure on the $\sigma$-algebra $\mathcal{P}^{*}$  defined by 
\begin{equation}
	\mu_{M}^*(A)=\int_{A} f_{M} (\mathrm ds \times \mathbb P), \quad  A \in \mathcal{P}^*.
\end{equation}
 The measure $\mu_{M}^{*}$ is an extension of $\mu_{M}$ from $\mathcal{P}$ to the larger $\sigma$-algebra $\mathcal{P}^{*}$, i.e., $\mu_M^*(A)=\mu_M(A)$ for all $A\in\mathcal P$.
 \end{definition}

For the continuous setting, recall  that any progressive process is $\mathcal P_{\text{cont.}}$-measurable. It turns out that  any  progressively measurable process is also $\mathcal P^*$-measurable, and even more is true.

% \begin{equation}
% 	\mathcal P\subset \mathcal O\subset \mathcal A\subset \mathcal M.
% \end{equation}
% In fact, $\mathcal P=\sigma(continuous)$, etc.
% Recall definition of predictable and thus give equivalent definitions.

%\begin{proposition}[Theorem 29 of \citeb{p. 75}{book:protter}]\label{thm:import_form2}
%Suppose $X$ and $Y$ are semimartingales, and let $H,K\in \mathbb L\Rplus$. Then we have
%\begin{equation}
%	\left[\dint0\cdot {H_s}X_s,\dint0\cdot {K_s}Y_s\right]=\dint0\cdot {H_sK_s}[X,Y]_s.
%\end{equation}
%\end{proposition} 
% 
 
% \begin{figure}[!ht]
% 	\centering
% 	\includegraphics[width=0.7\linewidth]{Figures/FILL/fdsfdfdsfds}
% 	\caption{Motivation why we took all paths to be continuous. Then these inclusions really hold, and do not hold up to indistinguishability, a bit enoiying. In a remark! :) Remember we already claimed this in section 1.1.2, but there is a tiny subtility here!}
% 	\label{fig:fdsfdfdsfds}
% \end{figure}
% 
% 
%  
% 
% 

%\begin{definition}
%	
%	
%	
%	For a semimartingale $X$, the process ${\langle X,X}\rangle^c$ denotes the \name{path-by-path continuous part} of $\langle X,X\rangle$. We hence can write
%	\begin{equation}
%		\langle X,X\rangle _t=\langle X,X\rangle_t^c+X_0^2+\sum_{0<s\leq t}(\Delta X_s)^2
%	\end{equation}
%where $\Delta X_t=X_t-X_{t-}$ by definition.
%\end{definition}
%See page 70 of Protter.

%\begin{figure}[!ht]
%	\centering
%	\includegraphics[width=0.7\linewidth]{Figures/FILL/fdsaa}
%	\caption{https://almostsuremath.com/tag/stochastic-integral/}
%	\label{fig:fdsaa}
%\end{figure}

\begin{theorem}\label{thm:equiv}
	  Suppose $X$ is an adapted  measurable stochastic process. Then there exists a $\mathcal{P}$-measurable process $\bar{X}$ such that
 \begin{equation} (\mathrm ds \times \mathbb P)\left((t, \omega) \in \Rplus \times \Omega: X(t, \omega) \neq \bar{X}(t, \omega)\right)=0.\end{equation}
Under the assumption $\mu_{M} \ll  \mathrm ds\times \mathbb P$, we also have
 \begin{equation} \mu_M^*\left((t, \omega) \in \Rplus \times \Omega: X(t, \omega) \neq \bar{X}(t, \omega)\right)=0.\label{equiv!}\end{equation}

\end{theorem}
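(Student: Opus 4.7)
The plan is to split the proof into two steps: first, construct a predictable process $\bar X$ that agrees with $X$ outside a $(\mathrm ds \times \mathbb P)$-null set; second, upgrade this almost-everywhere equality to $\mu_M^*$-almost-everywhere equality by invoking the Radon--Nikodym representation of $\mu_M^*$.

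For the first step, I would use the standard smoothing construction. After reducing to the case of bounded $X$ (e.g.\ by composing with $\arctan$, or by truncation $X^{(n)} = (X \wedge n) \vee (-n)$ and combining the results), I would define
\begin{equation}
Y^n_t(\omega) := n \int_{(t - 1/n)^+}^{t} X_s(\omega) \, \mathrm ds, \qquad n \in \mathbb N,\ t \geq 0,\ \omega \in \Omega,
\end{equation}
and set $\bar X_t(\omega) := \limsup_{n \to \infty} Y^n_t(\omega)$. Since $X$ is jointly measurable, Fubini shows that each $Y^n_t$ is a well-defined random variable; adaptedness of $X$ together with Fubini for the sub-$\sigma$-algebra $\mathcal F_t$ gives that $Y^n_t$ is $\mathcal F_t$-measurable, and boundedness of $X$ guarantees that the sample paths $t \mapsto Y^n_t(\omega)$ are continuous. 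Hence each $Y^n$ is an adapted continuous process, therefore predictable by Proposition \ref{prop:predict}(i), and the pointwise limsup $\bar X$ is $\mathcal P$-measurable. The Lebesgue differentiation theorem, applied pathwise, yields $Y^n_t(\omega) \to X_t(\omega)$ for Lebesgue-a.e.\ $t \geq 0$, for every $\omega \in \Omega$. Tonelli's theorem then produces
\begin{equation}
(\mathrm ds \times \mathbb P)\bigl(\{X \neq \bar X\}\bigr) = \int_\Omega \int_0^\infty \mathds 1_{\{X_t(\omega) \neq \bar X_t(\omega)\}} \, \mathrm ds \, \mathrm d\mathbb P(\omega) = 0.
\end{equation}

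For the second step, the hypothesis $\mu_M \ll \mathrm ds \times \mathbb P$ on $\mathcal P$ furnishes, via Radon--Nikodym, the predictable density $f_M$, and by construction $\mu_M^* = f_M \cdot (\mathrm ds \times \mathbb P)$ on $\mathcal P^*$. The set $N := \{X \neq \bar X\}$ belongs to $\mathcal B(\Rplus) \times \mathcal F$ and is $(\mathrm ds \times \mathbb P)$-null, so by the characterisation \eqref{eq:equiv} (taking $P = \emptyset$) it lies in $\mathcal P^*$. Therefore $\mu_M^*(N) = \int_N f_M \, \mathrm d(\mathrm ds \times \mathbb P) = 0$, proving \eqref{equiv!}.

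The main technical obstacle is the first step: one must carefully justify the joint measurability and adaptedness of $Y^n$ (which is why the reduction to bounded $X$ is convenient, ensuring integrability on compacts and continuity of the averaged paths), and one must acknowledge that Lebesgue differentiation provides a different exceptional time-set for each $\omega$, so that the pathwise convergence only yields equality on an $\omega$-dependent, Lebesgue-null set---this is exactly what Tonelli needs to conclude a $(\mathrm ds \times \mathbb P)$-null exceptional set. Once $\bar X$ is in hand, the passage to $\mu_M^*$ is essentially cosmetic, relying only on the absolute continuity hypothesis and the definition of $\mu_M^*$.
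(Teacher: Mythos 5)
Your overall strategy is the standard one (and is essentially the argument behind the references the paper cites in lieu of a proof): smooth $X$ by one-sided time averages to get adapted continuous, hence predictable, approximants, pass to the $\limsup$, and invoke Lebesgue differentiation plus Tonelli; the second assertion then follows from $\mu_M^* = f_M\cdot(\mathrm ds\times\mathbb P)$ exactly as you say, and the paper indeed dismisses that part as evident. The Lebesgue differentiation and Tonelli steps, the membership of $\{X\neq\bar X\}$ in $\mathcal P^*$ via \eqref{eq:equiv} with $P=\emptyset$, and the reduction to bounded $X$ are all fine.

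There is, however, one genuine gap, and it sits precisely at the crux of the theorem: your claim that ``adaptedness of $X$ together with Fubini for the sub-$\sigma$-algebra $\mathcal F_t$'' gives $\mathcal F_t$-measurability of $Y^n_t$. Applying Fubini relative to $\mathcal F_t$ requires the restriction of $X$ to $[0,t]\times\Omega$ to be $\mathcal B([0,t])\times\mathcal F_t$-measurable --- that is, it requires $X$ to be \emph{progressively} measurable, which is strictly stronger than adapted and (jointly) measurable; the whole difficulty of this theorem is that adaptedness is a statement about sections only and does not automatically upgrade to joint measurability with respect to $\mathcal B([0,t])\times\mathcal F_t$. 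The step is salvageable: either first replace $X$ by a progressively measurable version (the Meyer result the paper quotes, with its ``rather demanding proof''), or argue directly that $\int_0^t X_s\,\mathrm ds = \mathbb E\bigl[\int_0^t X_s\,\mathrm ds \mid \mathcal F_t\bigr]$ $\mathbb P$-a.s.\ (by conditional Fubini, using $\mathbb E[X_s\mid\mathcal F_t]=X_s$ a.s.\ for each $s\leq t$) and then use that $\mathcal F_t$ contains all $\mathbb P$-null sets under the usual conditions to conclude that $\int_0^t X_s\,\mathrm ds$, and hence $Y^n_t$, is $\mathcal F_t$-measurable. Without one of these repairs the predictability of $\bar X$ is not established, since Proposition \ref{prop:predict}(i) needs the $Y^n$ to be \emph{adapted} continuous processes.
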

\begin{proof}
The second assertion is clearly evident. For a proof of the first result, we refer to \citeb{p. 66}{book:chung} or \citeb{p. 190}{unpublished:timo}. The latter reference provides a relatively more direct approach.
\end{proof}

Stochastic processes $X$ and $\bar{X}$ are said to be $\mu_{M}^{*}$\name{-equivalent}{equivalent!$\mu_M^*$-equivalent} if they satisfy \eqref{equiv!}. Theorem \ref{thm:equiv} tells us that for any adapted  measurable process $X$ there is a $\mu_M^*$-equivalent predictably measurable process $\bar X$. Hence, in line with Theorem \ref{thm:square}, it is natural to define\index{stochastic integral}
\begin{equation}\label{eq:naam}
	\int_0^\cdot X_s\,\mathrm dM_s:=	\int_0^\cdot \bar X_s\,\mathrm dM_s,
\end{equation}
 for all  adapted measurable processes $X$ satisfying
\begin{equation}
\int_{[0, t] \times \Omega}|X|^{2} \,\mathrm  d \mu_{M}^{*}=\int_{[0, t] \times \Omega}|X|^{2}f_M  (\mathrm ds\times \mathbb P)<\infty, \quad \text { for all } t\geq 0, \label{eq:let}
\end{equation}
since $\int_{[0, t] \times \Omega}|X|^{2} \,\mathrm  d \mu_{M}^{*}=\int_{[0, t] \times \Omega}|\bar X|^{2} \,\mathrm  d \mu_{M}^{*}=\|\bar X\|_{M,t}^2.$  

Recall that  any adapted measurable process has a progressively measurable version. Hence, in line with the previous, we write  $\mathcal P^*(M)$ for  the collection of progressively measurable processes such that integrability condition \eqref{eq:let} holds. Observe that equation \eqref{eq:let} is equivalent with presuming
\begin{equation}
\mathbb E\int_{0}^t|X|^{2}\,\mathrm  ds  <\infty, \quad \text { for all } t\geq 0,
\end{equation}
in case, e.g., the Radon--Nikodym derivative $f_M$ is constant. It is worth noting that this holds for Lévy processes in {\S}\ref{Sec1.2.2}.
 
The stochastic integral  extended with $\mathcal P^*(M)$ as integrands enjoys all the desired properties we derived before (except for one, see Remark \ref{remark:counter}). This is due to the fact that the collection of processes which appear as stochastic integrals has not been expanded; this directly follows from the construction by \eqref{eq:naam}.  

 We end this section with two important remarks.

 \begin{remark} When $M=(M_t)_{t\geq 0}$ is a Brownian motion,  conditions 1. and 2. are both satisfied. In particular, we have $\mathcal{P}_{\text{cont.}}=\mathcal P^*$ and 
 	\begin{equation}
 		\mu_M=\mathrm ds\times \mathbb P=\mu_M^*.
 	\end{equation} Therefore, the extensions in 1. and 2. coincide if $M$ is a Brownian motion. In general, we have
 \begin{equation}
 \mathcal M\subset  \mathcal P_{\text{cont.}}\subset \mathcal B(\Rplus)\times \mathcal F\quad\text{and}\quad \mathcal M\subset \mathcal V\subset \mathcal P^*\subset \mathcal B(\Rplus)\times \mathcal F,
\end{equation}
but $\mathcal V \subset\mathcal P_{\text{cont.}}$ does not hold for all continuous $L^2$-martingales \citeb{p. 71}{book:chung}. The reader may also want to compare Proposition 2.6 in \citeb{p. 134}{book:karatzas}  and Proposition 2.8 in \citeb{p. 137}{book:karatzas}.

There is  no guarantee that if $X$ and $\bar X$ are $\mu_M$-equivalent, they are also $\mu_M^*$-equivalent, or vice  versa.  Therefore, for general continuous $L^2$-martingales that satisfy the absolute continuity condition  as well, there might be an ambiguity on the class of suitably integrable progressive measurably processes.   \mark{not sure}
\end{remark}
When  $M$ is continuous, the stochastic integral is to be understood in the usual sense, i.e., as discussed in 1.. The approach in 2.\ should be interpreted as a last resort method. Nevertheless, for all Lévy processes in {\S}\ref{Sec1.2.2}, both methods do coincide and there is no ambiguity.

\begin{remark}\label{remark:counter}
	In contrast to  Proposition \ref{prop:stieltjes2}, if the integrand is progressively measurable but not predictable, then the stochastic integral may not coincide with  the Lebesgue--Stieltjes integral  for integrators with paths of finite variation. We refer to Example \ref{ex:generalise} for an illustration of this claim. Even though this may seem unfavourable, the extension is still successful because integrals are martingales.
    % (which we need in Theorem \ref{thm-2} in Chapter \ref{Ch3}).
%	
%	As mentioned already after Proposition \ref{prop:stieltjes2}, this could not go wrong in the continuous setting because continuous martingales of finite variation do not exist.
\end{remark}

\subsection{Other relevant notions and machinery}\label{Sec1.1.5}
\noindent In this final part of this section, we     summarise the necessary facts from the theory of stochastic integration that we have not discussed yet. We try to keep things brief.

\begin{proposition}\label{prop:convex}
	Suppose $X$ is a martingale \textnormal{(}resp. submartingale\textnormal{)}, and let $\varphi:\R\to\R$ be  a convex \textnormal{(}resp. non-decreasing, convex\textnormal{)} function. Define the   process  $\varphi(X)=(\varphi(X_t))_{t\geq 0}$ and assume that $\varphi(X)_t$ is integrable for all $t\geq 0$. Then $\varphi(X)$ is a submartingale.
\end{proposition}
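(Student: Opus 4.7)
The plan is to verify the three defining properties of a submartingale for the process $\varphi(X) = (\varphi(X_t))_{t \geq 0}$: adaptedness, integrability, and the submartingale inequality. Adaptedness is immediate because a convex function $\varphi: \R \to \R$ is continuous on all of $\R$ (and hence Borel measurable), so $\varphi(X_t)$ is $\mathcal{F}_t$-measurable whenever $X_t$ is. Integrability of $\varphi(X_t)$ for every $t \geq 0$ is assumed in the statement.

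The heart of the argument will be the submartingale inequality $\mathbb{E}[\varphi(X_t) \mid \mathcal{F}_s] \geq \varphi(X_s)$ $\mathbb{P}$-a.s.\ for $0 \leq s \leq t$, which I would establish by invoking the conditional Jensen inequality: for any convex $\varphi$ and any integrable random variable $Y$ with $\varphi(Y)$ integrable,
\begin{equation}
\mathbb{E}[\varphi(Y) \mid \mathcal{G}] \geq \varphi\bigl(\mathbb{E}[Y \mid \mathcal{G}]\bigr) \quad \mathbb{P}\text{-a.s.}
\end{equation}
In the martingale case, applying this with $Y = X_t$ and $\mathcal{G} = \mathcal{F}_s$ gives
\begin{equation}
\mathbb{E}[\varphi(X_t) \mid \mathcal{F}_s] \geq \varphi\bigl(\mathbb{E}[X_t \mid \mathcal{F}_s]\bigr) = \varphi(X_s),
\end{equation}
using the martingale property in the final equality. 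In the submartingale case, the analogous chain yields
\begin{equation}
\mathbb{E}[\varphi(X_t) \mid \mathcal{F}_s] \geq \varphi\bigl(\mathbb{E}[X_t \mid \mathcal{F}_s]\bigr) \geq \varphi(X_s),
\end{equation}
where the last inequality uses that $\mathbb{E}[X_t \mid \mathcal{F}_s] \geq X_s$ combined with $\varphi$ being non-decreasing.

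There is no real obstacle here; the proof is a direct application of conditional Jensen. The only subtlety worth mentioning is why conditional Jensen applies without extra moment assumptions, namely that $X_t$ is integrable by virtue of $X$ being a (sub)martingale, and $\varphi(X_t)$ is integrable by hypothesis, which are precisely the hypotheses needed for the conditional Jensen inequality to hold in its standard form.
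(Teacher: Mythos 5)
Your proof is correct and follows exactly the route the paper takes: the paper's proof is a one-line appeal to Jensen's inequality for conditional expectations, which you have simply written out in full, including the correct use of monotonicity of $\varphi$ in the submartingale case. No issues.
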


\begin{proof}
	This follows immediately from Jensen's inequality for conditional expectations.
\end{proof}

Observe the processes $|X|$ and $X^+=X\vee 0$ correspond to the convex functions $\varphi(x)=|x|$ and $\varphi(x)=\max(0,x)$ respectively.

\begin{theorem}\namethrm{Doob's supremal inequalities}\label{thm:Doob}
Let $X$ be a submartingale. Then for every $ \lambda> 0$ and $0 \leq s \leq t$ one has 
	\begin{equation}
\label{eq:supremal}	\textstyle \lambda \mathbb{P}\left(\sup _{s \leq u \leq t} X_{u} \geq \lambda\right) \leq \mathbb{E} X_{t}^{+}
\quad\text{and}\quad \lambda \mathbb{P}\left(\inf _{s \leq u \leq t} X_{u} \leq-\lambda\right) \leq \mathbb{E} X_{t}^{+}-\mathbb{E} X_{s}.
	\end{equation}
	In particular, suppose that $X$ is a non-negative submartingale. Then for any  continuous, non-decreasing, convex function $\varphi:\Rplus \to[0,\infty)$, we have
\begin{equation}
	  \mathbb{P}\left(\sup  _{s \leq u \leq t} X_{u} \geq \label{eq:supremal2}\lambda\right)     \leq\frac{\mathbb{E} [\varphi(X_{t})]}{\varphi(\lambda) },\quad \text{if }\varphi(\lambda)>0. 
\end{equation}
\end{theorem}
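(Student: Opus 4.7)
The plan is to first establish the inequalities for a finite discrete index set via optional stopping, and then promote them to continuous time using the right-continuity of sample paths.

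For the finite case, fix any finite grid $s=t_0<t_1<\cdots<t_N=t$. For the supremum bound, introduce the stopping time
\begin{equation}
T=\min\{k:X_{t_k}\geq\lambda\}\wedge N,
\end{equation}
and let $A=\{\max_{0\leq k\leq N}X_{t_k}\geq\lambda\}\in\mathcal F_t$. On $A$ one has $X_{t_T}\geq\lambda$, while on $A^c$ one has $T=N$, so $X_{t_T}=X_t$ there. Since $T\leq N$ is a bounded stopping time, optional stopping for submartingales gives $\mathbb E X_{t_T}\leq\mathbb E X_t$, which rearranges to
\begin{equation}
\lambda\,\mathbb P(A)\leq\mathbb E[X_{t_T}\mathds 1_A]=\mathbb E X_{t_T}-\mathbb E[X_t\mathds 1_{A^c}]\leq \mathbb E[X_t\mathds 1_A]\leq\mathbb E X_t^+.
\end{equation}
For the infimum bound, set $T'=\min\{k:X_{t_k}\leq-\lambda\}\wedge N$ and $B=\{\min_{k} X_{t_k}\leq-\lambda\}$. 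Optional stopping applied between indices $0$ and $T'$ (using $s\leq T'$) yields $\mathbb E X_s\leq\mathbb E X_{t_{T'}}$; splitting over $B$ and $B^c$, with $X_{t_{T'}}\leq-\lambda$ on $B$ and $X_{t_{T'}}=X_t$ on $B^c$, one obtains $\mathbb E X_s\leq-\lambda\mathbb P(B)+\mathbb E[X_t\mathds 1_{B^c}]\leq-\lambda\mathbb P(B)+\mathbb E X_t^+$, which is the second inequality in \eqref{eq:supremal}.

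To pass to continuous time, I would take an increasing sequence of finite grids $\pi_n\subset[s,t]$ with $\bigcup_n\pi_n$ dense in $[s,t]$ and containing $s$ and $t$. Since (by Remark \ref{remark:1}) the submartingale $X$ admits a \cadlag modification, right-continuity implies that $\sup_{u\in[s,t]}X_u=\sup_{u\in D\cap[s,t]}X_u$ $\mathbb P$-a.s.\ for any dense $D\subset[s,t]$ containing $t$; analogously for the infimum. Applying the finite-grid inequalities on each $\pi_n$ to the events $\{\max_{u\in\pi_n}X_u\geq\lambda'\}$ for any $\lambda'<\lambda$, letting $n\to\infty$ by monotone convergence of the events and then $\lambda'\uparrow\lambda$, yields \eqref{eq:supremal}. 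The infimum bound is treated identically.

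For \eqref{eq:supremal2}, by Proposition \ref{prop:convex} the process $\varphi(X)=(\varphi(X_t))_{t\geq 0}$ is a submartingale (the integrability hypothesis on $\varphi(X_t)$ being implicit from the right-hand side of \eqref{eq:supremal2} being meaningful). Because $\varphi$ is non-decreasing, the inclusion
\begin{equation}
\bigl\{\textstyle\sup_{s\leq u\leq t}X_u\geq\lambda\bigr\}\subseteq\bigl\{\textstyle\sup_{s\leq u\leq t}\varphi(X_u)\geq\varphi(\lambda)\bigr\}
\end{equation}
holds, so applying the first inequality of \eqref{eq:supremal} to the non-negative submartingale $\varphi(X)$ with threshold $\varphi(\lambda)>0$ delivers \eqref{eq:supremal2}. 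The main technical obstacle is the continuous-time promotion: one must carefully exploit \cadlag regularity (and that the supremum over a dense countable set coincides a.s.\ with the supremum over $[s,t]$) to turn the discrete bounds into the claimed continuous-time bounds, which is where the right-continuity assumption on the filtration and paths is truly used.
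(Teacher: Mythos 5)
Your proof is correct and follows essentially the route the paper takes: for \eqref{eq:supremal} the paper simply cites the standard textbook argument, which is exactly the discrete optional-stopping bound plus the right-continuity passage to continuous time that you write out, and for \eqref{eq:supremal2} both you and the paper combine Proposition \ref{prop:convex} with the first inequality of \eqref{eq:supremal}, the paper phrasing your event inclusion as the identity $\varphi(\sup_{s\leq u\leq t}X_u)=\sup_{s\leq u\leq t}\varphi(X_u)$. The only point worth flagging is that this inclusion uses not just that $\varphi$ is non-decreasing but also its continuity (otherwise $\sup_u\varphi(X_u)$ could fall strictly below $\varphi(\sup_u X_u)$ when the supremum is not attained); since continuity is part of the hypothesis, this is harmless.
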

\begin{proof}
	The well-known results in equation \eqref{eq:supremal} can be found in any elementary textbook about martingales, see, e.g., \citeb{p. 13}{book:karatzas}. The result in equation \eqref{eq:supremal2} follows  from the     fact  
	\begin{equation}
\textstyle		\varphi(\sup_{s \leq u \leq t}X_u)=\sup_{s \leq u \leq t}\varphi(X_u)
	\end{equation}
holds for those function $\varphi$ satisfying the conditions posed,
and by subsequently applying Proposition \ref{prop:convex} together with the first part of equation   \eqref{eq:supremal}.
\end{proof}
\begin{corollary} \label{cor:Doob}
	For $X$ a martingale, we have $|X|$ is a non-negative submartingale.\footnote{In here, the absolute value of $X$, denoted by $|X|$, should not be confused with the total variation process.} Hence, in the setting of Theorem \ref{thm:Doob}, we get
	\begin{equation}
		\mathbb{P}\left(\sup  _{s \leq u \leq t} X_{u} \geq \lambda\right)  \leq \mathbb{P}\left(\sup  _{s \leq u \leq t} |X_{u}| \geq \lambda\right)  \leq\frac{\mathbb{E} [\varphi(|X_{t}|)]}{\varphi(\lambda) },\quad \text{if }\varphi(\lambda)>0. 
	\end{equation}
\end{corollary}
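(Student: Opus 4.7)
The plan is to split the statement into three essentially independent pieces and to verify each by invoking results already in place, so that there is no genuine calculation to perform at all. First, I would establish the preliminary claim that $|X|$ is a non-negative submartingale. Non-negativity is immediate from the pointwise definition. Submartingality is then obtained by applying Proposition \ref{prop:convex} with the convex function $\varphi(x)=|x|$; the required integrability hypothesis $\mathbb E|X_t|<\infty$ for all $t\geq 0$ is automatic, since by the very definition of a martingale every $X_t$ is integrable. Hence the convex-image proposition applies directly and yields the submartingale property of $|X|$.

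Next, I would deduce the second inequality of the corollary. Since $|X|$ is a non-negative submartingale, Theorem \ref{thm:Doob}, in particular the inequality \eqref{eq:supremal2}, applies verbatim to $|X|$ in place of $X$. This produces
\begin{equation}
\mathbb{P}\left(\sup_{s\leq u\leq t}|X_u|\geq \lambda\right)\leq \frac{\mathbb{E}[\varphi(|X_t|)]}{\varphi(\lambda)},
\end{equation}
whenever $\varphi:\Rplus\to[0,\infty)$ is continuous, non-decreasing, convex, and $\varphi(\lambda)>0$. This is exactly the right-most bound in the statement.

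Finally, for the first inequality I would simply note the pointwise estimate $X_u(\omega)\leq |X_u(\omega)|$, which gives the set inclusion
\begin{equation}
\left\{\omega\in\Omega:\sup_{s\leq u\leq t}X_u(\omega)\geq \lambda\right\}\subset \left\{\omega\in\Omega:\sup_{s\leq u\leq t}|X_u(\omega)|\geq \lambda\right\},
\end{equation}
and monotonicity of the probability measure $\mathbb P$ then yields the desired bound. Chaining the two inequalities completes the proof.

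Given this structure, there is no real obstacle to speak of; the only delicate point worth a sentence of commentary is that the suprema above must be measurable, which is guaranteed in our setting because (by Remark \ref{remark:1}) we are entitled to work with the \cadlag version of the martingale $X$, so the suprema can be realised as suprema over a countable dense set of times, and hence are $\mathcal F$-measurable random variables to which $\mathbb P$ may be unambiguously applied.
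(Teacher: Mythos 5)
Your proposal is correct and follows exactly the route the paper intends: apply Proposition \ref{prop:convex} with $\varphi(x)=|x|$ (integrability being automatic from the definition of a martingale) to get that $|X|$ is a non-negative submartingale, feed this into inequality \eqref{eq:supremal2} of Theorem \ref{thm:Doob}, and obtain the first inequality from the pointwise bound $X_u\leq |X_u|$ together with monotonicity of $\mathbb P$. The measurability remark about realising the suprema over a countable dense set of times via the \cadlag version is a sensible addition consistent with the paper's conventions.
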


Observe that the first inequality holds clearly. Moreover, note that this result is some kind of a generalisation of  Markov's inequality.

\begin{theorem}\namethrm{Doob's maximal inequality}\label{thm:Doob2}
	If $X$ is a non-negative submartingale, then one has  for all $p>1$ the estimate
	\begin{equation}
	\left(\mathbb E\left[\sup _{s \leq u \leq t} |X_{u}|^p\right]\right)^{1/p}=	\left(\mathbb E\left[\sup _{s \leq u \leq t} |X_{u}|\right]^p\right)^{1/p} \leq \frac{p}{p-1} (\mathbb E|X_{t}|^p)^{1/p}.
	\end{equation}
%	where for random variables $\xi$ we put $\|\xi\|_{p}=\left(\mathbb{E}|\xi|^{p}\right)^{1 / p} .$ In particular, if $M$ is a right-continuous martingale with $\mathbb{E} M_{t}^{2}<\infty$ for all $t>0,$ then
%	$$
%	\mathbb{E}\left(\sup _{s \leq u \leq t} M_{u}^{2}\right) \leq 4 \mathbb{E} M_{t}^{2}
%	$$
\end{theorem}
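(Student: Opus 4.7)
The plan is to reduce the $L^p$ bound to Doob's supremal inequality via the \emph{layer-cake formula} combined with Hölder's inequality. First, write $X^{*}=\sup_{s\leq u\leq t}X_u$ and observe that since $X\geq 0$ and $x\mapsto x^p$ is increasing on $[0,\infty)$, one has $\sup_{s\leq u\leq t}X_u^p=(X^*)^p$, justifying the left-hand equality in the statement.

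To control $\mathbb E[(X^*)^p]$, I would not work with the weak form $\lambda\mathbb P(X^*\geq\lambda)\leq\mathbb EX_t^+$ from \eqref{eq:supremal} directly (integrating this against $p\lambda^{p-2}\,\mathrm d\lambda$ diverges), but with the sharper pointwise bound
\begin{equation}
\lambda\,\mathbb P(X^*\geq \lambda)\leq \mathbb E\bigl[X_t\,\mathds{1}_{\{X^*\geq\lambda\}}\bigr],
\end{equation}
which can be obtained by exactly the same argument used to prove \eqref{eq:supremal}: set $T=\inf\{u\in[s,t]:X_u\geq\lambda\}\wedge t$, note $\{T<t\}\cup\{X_t\geq\lambda\}=\{X^*\geq\lambda\}$, and apply the optional sampling inequality for submartingales to $T\leq t$, yielding $\mathbb E[X_t\mathds{1}_{\{X^*\geq\lambda\}}]\geq \mathbb E[X_T\mathds{1}_{\{X^*\geq\lambda\}}]\geq\lambda\mathbb P(X^*\geq\lambda)$.

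The main estimate then proceeds by the layer-cake representation and Fubini--Tonelli:
\begin{equation}
\mathbb E[(X^*)^p]=\int_0^\infty p\lambda^{p-1}\mathbb P(X^*\geq\lambda)\,\mathrm d\lambda\leq \int_0^\infty p\lambda^{p-2}\mathbb E\bigl[X_t\mathds{1}_{\{X^*\geq\lambda\}}\bigr]\mathrm d\lambda=\frac{p}{p-1}\mathbb E\bigl[X_t(X^*)^{p-1}\bigr].
\end{equation}
Applying Hölder's inequality with conjugate exponents $p$ and $q=p/(p-1)$ (so that $(p-1)q=p$) gives
\begin{equation}
\mathbb E\bigl[X_t(X^*)^{p-1}\bigr]\leq (\mathbb EX_t^p)^{1/p}\bigl(\mathbb E(X^*)^p\bigr)^{(p-1)/p}.
\end{equation}

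The main obstacle is that to divide by $\bigl(\mathbb E(X^*)^p\bigr)^{(p-1)/p}$ and extract the claimed inequality, one needs this quantity to be finite, which is precisely what we are trying to prove. The standard remedy is a truncation argument: replace $X^*$ by $X^*\wedge N$ (which is bounded, so the a priori finiteness is automatic), run the chain of inequalities above using $\{X^*\wedge N\geq \lambda\}\subseteq\{X^*\geq\lambda\}$ and the fact that the set is empty for $\lambda>N$, divide legitimately, and then let $N\uparrow\infty$ by monotone convergence. If $\mathbb EX_t^p=\infty$ the inequality is trivial, so one may assume finiteness throughout. This yields the asserted bound $\|X^*\|_{L^p}\leq \tfrac{p}{p-1}\|X_t\|_{L^p}$.
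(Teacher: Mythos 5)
Your proof is correct and complete: the strengthened form $\lambda\,\mathbb P(X^*\geq\lambda)\leq \mathbb E[X_t\mathds 1_{\{X^*\geq\lambda\}}]$ of the first supremal inequality, followed by the layer-cake representation, H\"older with exponents $p$ and $p/(p-1)$, and the truncation $X^*\wedge N$ to justify dividing, is exactly the standard argument. The paper itself gives no proof and simply cites the textbook reference, where this identical chain of steps appears, so there is nothing to contrast.
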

\begin{proof}
	Again, this can be found in any textbook about martingales; see,	e.g., \citeb{p. 13}{book:karatzas}.
\end{proof}
Last but definitely not least, we state the famous Burkholder--Davis--Gundy inequalities.
%\mark{Nog afmaken!!! Herinner dat ik eigenlijk Doob gebruik, maar dat ik daarvoor ook de BDG kan gebruiken. Dit gebeurd in Hfdst 3; dit dus ook mentionen!}
\begin{theorem}\namethrm{Burkholder--Davis--Gundy inequalities}\label{thm:BDG}
For any $1 \leq p<\infty$ there exist  
constants $c_{p}, C_{p}>0$ such that, for all local martingales $X$ with $X_{0}=0$,
the following inequalities hold:
\begin{equation}
c_{p} \mathbb{E}\left[[X]_{t}^{p / 2}\right] \leq  \mathbb E\left[\sup _{0 \leq s \leq t} |X_{s}|^p\right] \leq C_{p} \mathbb{E}\left[[X]_{t}^{p / 2}\right],\quad t\geq 0.
\end{equation}
For continuous local martingales, this statement holds true for all $0<p<\infty$. 
Furthermore, the above is also valid with the instant $t$ replaced by a finite stopping time $T$.
\end{theorem}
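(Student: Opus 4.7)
The plan is to first reduce to the case of bounded martingales by localization, then handle $p=2$ directly, then bootstrap to general $p \geq 1$ via a good-$\lambda$ inequality, and finally address the continuous case $0 < p < \infty$ separately using Itô's formula. Throughout I would rely on the $L^2$-isometry of Proposition \ref{prop:intermed} together with Doob's inequalities (Theorems \ref{thm:Doob} and \ref{thm:Doob2}).

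\textbf{Step 1 (Localization and the $p=2$ case).} I would first introduce the fundamental sequence $T_n = \inf\{t : |X_t| \vee [X]_t \geq n\}$, reducing attention to the stopped martingales $X^{T_n}$, which are bounded and hence in $\mathscr M^2$. Once the inequalities are established on $X^{T_n}$, monotone/dominated convergence recover the claim for $X$ itself. For $p=2$, Proposition \ref{prop:intermed} gives $\mathbb E X_t^2 = \mathbb E[X]_t$, and Doob's maximal inequality (Theorem \ref{thm:Doob2}) with $p=2$ yields $\mathbb E[\sup_{s \leq t}|X_s|^2] \leq 4\,\mathbb E X_t^2 = 4\,\mathbb E[X]_t$, while the lower bound follows trivially from $X_t^2 \leq \sup_{s \leq t}|X_s|^2$.

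\textbf{Step 2 (General $p \geq 1$ via good-$\lambda$).} The standard Burkholder route is to prove, for suitable $\beta>1$, $\delta \in (0,1)$,
\begin{equation}
\mathbb P\!\left(\sup_{s\leq t}|X_s| > \beta\lambda,\; [X]_t^{1/2} \leq \delta\lambda\right) \leq \epsilon(\beta,\delta)\,\mathbb P\!\left(\sup_{s\leq t}|X_s| > \lambda\right),
\end{equation}
with $\epsilon(\beta,\delta) \to 0$ as $\delta \to 0$. The derivation uses the stopping time $\tau=\inf\{s : |X_s|>\lambda\}$ together with the $L^2$-inequality applied to the stopped increment $X^{t \wedge \sigma}-X^\tau$ where $\sigma = \inf\{s : [X]_s^{1/2} > \delta\lambda\}$, exploiting $\mathbb E(X_{t\wedge\sigma}-X_\tau)^2 \leq \mathbb E[X]_{t \wedge \sigma}$. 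Integrating the good-$\lambda$ inequality against $p\lambda^{p-1}d\lambda$ produces the upper BDG bound for $p\geq 1$ (a symmetric argument with the roles of $\sup|X|$ and $[X]^{1/2}$ swapped gives the lower bound). The jumps, bounded by $2\lambda$ at the relevant stopping times due to the localization, are what require $\beta>1$ large enough; this is the delicate point.

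\textbf{Step 3 (Continuous case, $0<p<\infty$).} When $X$ is continuous, a cleaner approach via Itô's formula applied to $\varphi_\epsilon(x)=(x^2+\epsilon)^{p/2}$ circumvents the good-$\lambda$ machinery and covers all $p>0$. One obtains a representation of $|X_t|^p$ as an integral against $d[X]_s$ plus a local-martingale term, and after localization and letting $\epsilon\downarrow 0$, Doob's inequality closes the upper bound; the lower bound follows analogously.

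\textbf{Main obstacle and extension to stopping times.} The hardest step is clearly Step 2: the good-$\lambda$ inequality for \cadlag integrators requires careful control of the jump at $\tau$, since $|X_\tau| = \lambda + |\Delta X_\tau|$ can be arbitrarily large \emph{a priori}. This is usually handled by a further decomposition of $X$ into a part with jumps bounded by $\lambda$ and a compensated part capturing large jumps, using Davis's decomposition or the truncation of jumps. Once the inequalities are proved up to a deterministic $t$, replacement by a finite stopping time $T$ is automatic by the optional stopping theorem: the stopped process $X^T$ is again a local martingale with $[X^T]_t = [X]_{t\wedge T}$ and $\sup_{s\leq t}|X^T_s| = \sup_{s \leq t \wedge T}|X_s|$; letting $t\to\infty$ with monotone convergence yields the statement with $T$ in place of $t$.
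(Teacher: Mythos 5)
The paper does not actually prove Theorem \ref{thm:BDG}; it only points to \cite{book:karatzas} for the continuous case and \cite{book:protter} for the general \cadlag case. Your proposal therefore goes well beyond the paper by sketching the argument those references carry out, and the overall architecture you describe --- localization, the $p=2$ case from Proposition \ref{prop:intermed} plus Doob's maximal inequality (Theorem \ref{thm:Doob2}), a good-$\lambda$ inequality for general $p\geq 1$, It\^o's formula applied to $\varphi_\epsilon(x)=(x^2+\epsilon)^{p/2}$ for continuous martingales and all $p>0$, and optional stopping for the extension to finite stopping times --- is exactly the standard route. Your Step 3 and the stopping-time extension are correct as stated, and your $p=2$ computation with constants $c_2=1$, $C_2=4$ matches the remark the paper makes after the theorem.

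Two points prevent this from being a complete proof. First, in Step 1 the claim that $X^{T_n}$ with $T_n=\inf\{t:|X_t|\vee[X]_t\geq n\}$ is \emph{bounded} is false for \cadlag local martingales: the overshoot $|\Delta X_{T_n}|$ at the stopping time is not controlled, so $|X^{T_n}|\leq n+|\Delta X_{T_n}|$ need not be bounded, and membership in $\mathscr M^2$ does not follow. This is not cosmetic --- it is the same jump phenomenon you flag later at $\tau$, and it already obstructs the reduction. Second, and more seriously, the good-$\lambda$ inequality with $\epsilon(\beta,\delta)\to 0$ as $\delta\to 0$ simply does not hold for discontinuous martingales without first splitting off the large jumps; you correctly name Davis's decomposition as the remedy but do not carry it out, and that decomposition (together with the compensator estimates it requires) is the actual content of the theorem for $1\leq p<\infty$ in the \cadlag setting --- it is also precisely where the restriction to $p\geq 1$ enters. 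As a strategy your proposal is sound and correctly identifies where the difficulty lives; as a proof it leaves the hardest lemma unproved, so it is at the same level of completeness as the paper's citation, though considerably more informative about what the cited proofs contain.
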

\begin{proof}
	For    the result on continuous local martingales, we refer to \citeb{p. 166}{book:karatzas}. For the more general setting; see \citeb{Thm. IV.48}{book:protter}.  
\end{proof}
Observe that $p=2$ is a special case, with  $c_2=1$ and $C_2=4$, which follows quite directly from Proposition \ref{prop:intermed} and Doob's maximal inequality in Theorem \ref{thm:Doob2}. The power of the Burkholder--Davis--Gundy inequalities lies within the other values of  $p\neq 2.$   
 
Now, let us conveniently introduce some   notation. Observe   stochastic integrals with respect to semimartingales, recall {\S}\ref{Sec1.1.2}, should be understood as integrals over the compact domains $[0,t]$ for all $t\geq0$. These   integrals take the initial value of a semimartingale $X$ into account  by construction, since  $X_0\neq 0$ in general. Hence, for any proper integrand $H$, we write
\begin{equation}
	\int_{[0,t]}H_s\mathrm{d}X_s=\int_0^tH_s\mathrm{d}X_s.
\end{equation}
Subsequently, we introduce the familiar integral notation 
\begin{equation}
\int_{(s, t]} H_u \mathrm d X_u:=\int_0^tH_u\mathrm{d}X_u -\int_0^sH_u\mathrm{d}X_u\quad \text { for all } 0 \leq s \leq t.
\end{equation}
Observe that the expression above yields  an integral over the domains $(0,t]$ for all $t\geq 0$.
In addition, we can make sense of integrals over $[0,T]$ and $(S,T]$ over finite stopping times $T,S$ such that $S\leq T$. We do this simply by setting
\begin{equation}
	\dint{[0,T]}{}{H_s}X_s:=\left(\dint0\cdot{H_s}X_s\right)_T\,\text{ and }\, \int_{(S, T]} H_u \mathrm d X_u:=\int_{[0,T]}H_u\mathrm{d}X_u -\int_{[0,S]}H_u\mathrm{d}X_u.
\end{equation}
Observe, the above coincides with the ordinary    when the stopping times $T$ and $S$ are assumed to be fixed times $t$ and $s$.

Finally, although we will not be needing it in further chapters, we introduce the following notations for the left limit of the previous integrals. Define pathwise 
\begin{equation}
\int_{[0, t)} H_u \mathrm d X_u:=\lim _{s \nearrow t} \int_{[0,s]} H_u \mathrm  d X_u \quad\text{and}\quad \int_{(0, t)} H_u \mathrm d X_u:=\lim _{s \nearrow t} \int_{(0, s]} H_u \mathrm d X_u.
\end{equation}
This clearly yields the identity $
\int_{[0, t]} H_s d Y_s=\int_{[0, t)} H_s d X_s+H_t \Delta X_t$, and a similar result holds for the integral over $(0,t]$. In particular, we obtain
\begin{equation}\label{eq:contint}
	\dint0t{H_u}X_u=	\int_{[0,t]}H_u\mathrm{d}X_u=		\int_{(0,t]}H_u\mathrm{d}X_u=\int_{[0,t)}H_u\mathrm{d}X_u=	\int_{(0,t)}H_u\mathrm{d}X_u,
\end{equation}
for all continuous semimartingales $X$, where $X_0=0.$
% \mark{In fact, for $s>0$ this is always true :)}
On the other hand, we   have to be   careful with the endpoints of a stochastic integral whenever we consider \cadlag semimartingales. The result in \eqref{eq:contint} can be extended to more general intervals with $0$ replaced by any $s\geq 0$.

Now let us recall the definition of a quadratic covariation process; see equation \eqref{eq:intbyparts}. Then for any semimartingales $X$ and $Y$, we have the integration by parts formula
\begin{equation}
\label{eq:intbyparts2}	X_tY_t=\dint0t {X_{s-}}Y_s+\dint0t {Y_{s-}}X_s+[X,Y]_t,
\end{equation}
for all $t\geq 0$. Note, we use the convention $X_{0-}=(X_-)_0=0$ here. Implementing the introduced notations, we can also write equation \eqref{eq:intbyparts2} as
\begin{equation}
		X_tY_t=\dint{(0,t]}{} {X_{s-}}Y_s+\dint{(0,t]}{} {Y_{s-}}X_s+[X,Y]_t,
\end{equation}
which might be a more pleasant formula to look at. Observe   the integration by parts formula is a special case of  It\^o's formula stated below.

\begin{theorem}\namethrm{Itô's formula} Suppose\label{thm:ito} $X=(X^1,...,X^d)$ is a $d$-dimensional vector of semimartingales, and let $f:\R^d\to\R$ have continuous second order partial derivatives. Then the process $f(X)=(f(X_t))_{t\geq 0}$ is a semimartingale, and satisfies the equivalent formulae
	\begin{align}\label{eq:ito1}
		f(X_t)= &\, f(X_0)+\sum_{i=1}^d\dint{(0,t]}{}{\frac{\partial f}{\partial x_i}(X_{s-})}X_s^i+\frac{1}{2}\sum_{1\leq i,j\leq d} \dint{(0,t]}{}{\frac{\partial^2 f}{\partial x_i\partial x_j}(X_{s-})}[X^i,X^j]_s\\
		&\,+\sum_{0<s\leq t}\left\{f(X_s)-f(X_{s-})-\sum_{i=1}^n\frac{\partial f}{\partial x_i}(X_{s-})\Delta X_s^i-\frac{1}{2}\sum_{1\leq i,j\leq d} \frac{\partial^2 f}{\partial x_i\partial x_j}(X_{s-})\Delta X_s^i\Delta X_s^j\right\}\nonumber
	\end{align}
and
\begin{equation}\label{eq:ito2}
	\begin{aligned}
		f(X_t)= &\, f(X_0)+\sum_{i=1}^d\dint{(0,t]}{}{\frac{\partial f}{\partial x_i}(X_{s-})}X_s^i+\frac{1}{2}\sum_{1\leq i,j\leq d} \dint{(0,t]}{}{\frac{\partial^2 f}{\partial x_i\partial x_j}(X_{s-})}[X^i,X^j]^c_s\\
		&\,+\sum_{0<s\leq t}\left\{f(X_s)-f(X_{s-})-\sum_{i=1}^n\frac{\partial f}{\partial x_i}(X_{s-})\Delta X_s^i\right\},
	\end{aligned}
\end{equation}
where the infinite sums in \eqref{eq:ito1} and \eqref{eq:ito2} converge.
\end{theorem}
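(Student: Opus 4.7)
The plan is the classical two-stage strategy: first establish the formula for polynomials via a careful induction based on integration by parts, and then extend to $C^2$-functions by a localisation-and-approximation argument, using the ucp-continuity of stochastic integration stated in Theorem \ref{thm:preserve} and Proposition \ref{thm:import_form}.

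First I would verify \eqref{eq:ito1} for polynomials $f:\R^d\to\R$. By linearity it suffices to treat monomials, and one argues by induction on the total degree. The base cases $f\equiv 1$ and $f(x)=x_i$ are immediate. For the inductive step, write a monomial as a product $fg$ where $f$ and $g$ are monomials of strictly smaller degree for which the formula is assumed. Applying the integration by parts formula \eqref{eq:intbyparts2} to the semimartingales $f(X)$ and $g(X)$ gives
\begin{equation}
f(X_t)g(X_t)=\int_{(0,t]}f(X_{s-})\,\mathrm dg(X_s)+\int_{(0,t]}g(X_{s-})\,\mathrm df(X_s)+[f(X),g(X)]_t.
\end{equation}
Using the inductive formulae for $\mathrm df(X)$ and $\mathrm dg(X)$ together with Theorem \ref{thm:preserve}(i) (associativity) and Proposition \ref{thm:import_form} (to compute the covariation of the stochastic integrals appearing), one reassembles the right-hand side into the expression predicted by Itô's formula for $fg$. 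The jump-sum on the right of \eqref{eq:ito1} is exactly what is needed to convert $[X^i,X^j]$ into $[X^i,X^j]^c$ in \eqref{eq:ito2}, via the decomposition \eqref{eq:contpure} applied componentwise; so proving \eqref{eq:ito1} and deducing \eqref{eq:ito2} (or vice versa) is a bookkeeping exercise once one jump sum is handled.

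Next I would localise. For each $n\in\N$ set $T_n=\inf\{t\geq 0:\|X_t\|\vee\|X_{t-}\|\geq n\}$, a fundamental sequence of stopping times, and work with the stopped semimartingales $X^{T_n}$, whose paths (including the jump at $T_n$) are essentially contained in a compact set $K_n\subset\R^d$ $\mathbb P$-almost surely. For a general $f\in C^2(\R^d)$, pick by the Stone--Weierstrass theorem polynomials $p_k$ with $p_k\to f$, $\partial_i p_k\to\partial_i f$ and $\partial^2_{ij}p_k\to\partial^2_{ij}f$ uniformly on $K_n$. Each $p_k$ satisfies the formula by the first step. Letting $k\to\infty$, the integrands $\partial_i p_k(X_{-})$ and $\partial^2_{ij} p_k(X_-)$ converge to $\partial_i f(X_-)$ and $\partial^2_{ij} f(X_-)$ uniformly on $[0,\cdot\wedge T_n]$, hence in ucp, so the stochastic integrals and Lebesgue--Stieltjes integrals against $[X^i,X^j]$ (which is a finite variation process) converge in ucp by Theorem \ref{thm:preserve} and standard dominated convergence. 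Finally, sending $n\to\infty$ removes the localisation, since $T_n\to\infty$ $\mathbb P$-almost surely.

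The main obstacle, and the point where care is needed, is the absolute convergence of the jump sum in \eqref{eq:ito1} and the control of the Taylor remainder uniformly over the approximating polynomials. Writing
\begin{equation}
R(y,x):=f(y)-f(x)-\sum_{i}\partial_i f(x)(y_i-x_i)-\tfrac12\sum_{i,j}\partial^2_{ij}f(x)(y_i-x_i)(y_j-x_j),
\end{equation}
one has $|R(y,x)|\leq \varepsilon(|y-x|)\cdot|y-x|^2$ on $K_n\times K_n$, where $\varepsilon(r)\to 0$ as $r\to 0$, and a uniform version holds for the $p_k$. The jump series is then dominated (up to the compact localisation) by $\sum_{0<s\leq T_n}|\Delta X_s|^2$, which is $\mathbb P$-a.s. finite because it is bounded by $\mathrm{tr}[X^\top,X^\top]_{T_n}$ via \eqref{eq:contpure}; this both justifies pathwise absolute convergence and allows the dominated convergence argument needed to pass the jump sum to the limit in $k$. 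Once this is in place, the equivalence of \eqref{eq:ito1} and \eqref{eq:ito2} follows directly by absorbing the pure jump part of $[X^i,X^j]$ into the explicit jump sum, again using \eqref{eq:contpure} and Proposition \ref{prop:quadvarFV}.
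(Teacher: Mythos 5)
Your sketch is sound, but it takes a genuinely different route from the one the paper relies on: the paper does not prove the theorem itself, it defers to Theorems II.32--33 of \cite{book:protter}, whose argument is a direct second-order Taylor expansion of $f$ along a refining sequence of random partitions, with the jumps split into finitely many ``large'' ones (treated exactly) and the remaining ``small'' ones (absorbed into the quadratic-variation term via the convergence in Theorem \ref{thm:ucpquad}). You instead establish the formula for polynomials by induction on the degree, using the integration by parts formula \eqref{eq:intbyparts2} together with Proposition \ref{thm:import_form} and Proposition \ref{prop:quadvarFV} to reassemble the covariation terms, and then extend to $C^2$ by localisation and uniform approximation. Both are classical; yours trades the delicate partition estimates of the Taylor-expansion proof for an algebraic induction plus a density argument, at the price of having to control the integrals and jump sums under the polynomial approximation --- which you do correctly, via the remainder bound $|R(y,x)|\leq\varepsilon(|y-x|)|y-x|^2$ and the pathwise bound $\sum_{0<s\leq t}\|\Delta X_s\|^2\leq\sum_{i=1}^d[X^i]_t<\infty$ coming from \eqref{eq:contpure}.

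Two points in the sketch need repair before it is a complete proof. First, the stopped process $X^{T_n}$ with $T_n=\inf\{t\geq 0:\|X_t\|\vee\|X_{t-}\|\geq n\}$ is \emph{not} contained in a fixed compact set: one has $\|X_{T_n-}\|\leq n$, but the jump $\Delta X_{T_n}$ is unbounded, so the set $K_n$ you approximate on is random. The standard fix is to work with the pre-stopped process $X\mathds 1_{[0,T_n)}+X_{T_n-}\mathds 1_{[T_n,\infty)}$, which is still a semimartingale with all paths in the closed ball of radius $n$, and then to add the single jump at $T_n$ to both sides of the formula by hand. Second, Stone--Weierstrass by itself only yields uniform approximation of $f$; to obtain polynomials $p_k$ converging to $f$ together with their first and second derivatives you need density of polynomials in the $C^2(K_n)$-norm (approximate $\partial^2_{ij}f$ uniformly and integrate, or use Bernstein-type approximants). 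Both issues are routine, and with them your argument goes through and delivers \eqref{eq:ito1}; passing to \eqref{eq:ito2} by moving the pure-jump part of $[X^i,X^j]$ into the explicit sum via \eqref{eq:contpure} is, as you say, bookkeeping.
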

\begin{proof}
	The statement together with  equation \eqref{eq:ito2} is stated as Theorem 33 in \citeb{p. 81}{book:protter}. Substituting the continuous part $[X^i,X^j]^c$ by the quadratic covariation $[X^i,X^j]$ and its jump part yields equation \eqref{eq:ito1}; see also the proof of Theorem 32 of \citeb{p. 78}{book:protter}.
\end{proof}

%
% Observe 
%Notions and mach; define (] en s int t 
%
%We introduce the familiar integral notation through the definition
%(5.17)
%$$
%\int_{(s, t]} X d M=(X \cdot M)_{t}-(X \cdot M)_{s} \quad \text { for } 0 \leq s \leq t
%$$
%
%MT-MS
%
%Note that our convention is such that the value of X at t = 0 does not
%in
%uence the integral. (see BELANGRIJK page 139)

%An immediate consequence of \eqref{eq:intbyparts} is that $XY=$
%This is a special case of:
%Itô's formula
%\textbf{Bernd O(meteenstreeptje)ksenda(h)l nieuwste editie}
%
%Doob's inequality
%
%
%
%A consequence of Doob's martingale inequalities is that $L^{1}$ convergence of martingales
%implies ucp convergence.
%
%Lemma 3 Let $X^{n}$ be a sequence of cadlag martingales, and $X$ be a process such that
%$\mathbb{E}\left|X_{t}^{n}-X_{t}\right| \rightarrow 0$ as $n \rightarrow \infty$
%Then, $x$ is a martingale and has a cadlag version which is the ucp limit of $X^{n}$.
%Proof: Clearly, by $L^{1-\text { convergence, }}, x$ is a martingale. Furthermore, Doob's martingale
%inequality shows that
%$$
%\mathbb{P}\left(\sup _{s \leq t}\left|X_{s}^{n}-X_{s}^{m}\right|>K\right) \leq K^{-1} \mathbb{E}\left|X_{t}^{n}-X_{t}^{m}\right| \rightarrow 0
%$$
%as $m, n \rightarrow \infty$ for all $t, K>0 .$ Consequently, the sequence is Cauchy under ucp convergence
%and has a cadlag limit $X^{n} \stackrel{\text { ucp }}{\longrightarrow} Y$. As $Y_{t}=\lim _{n} X_{t}^{n}=X_{t}$ (convergence in probability), this
%limit is a cadlag version of $X$.

Observe,   these  infinite sums  are finite variation processes. Additionally,  if $X$ is continuous,  the formula simplifies considerably, because then the sum over $0<s\leq t$ vanishes. 

Note that Itô's formula is often written in differential form (especially within the continuous case). 
In a  general  setting, any   measurable process $H=(H_{t})_{t\geq 0}$, which is only   non-zero for a countable set of
times,  can   alternatively be considered as a differential when  $\sum_{s \leq t}\left|H_{s}\right|<\infty$.  This
is achieved by simply replacing integration with summation, i.e.,
\begin{equation}\label{eq:shorthand}
	d Y_t=H_t \quad \Longleftrightarrow \quad Y_{t}=Y_{0}+\sum_{s \leq t} H_{s} .
\end{equation}
This   enables us to express the general Itô's formula  in differential notation, of which we will give an example; see  \eqref{eq:short1}--\eqref{eq:short3}. Moreover,   we have
$d[X, Y]^{c}=d[X, Y]-\Delta X \Delta Y$ in differential notation,   where $[X,Y]^c$ is the continuous part of the covariation.

Consider, for example, the vector $(Y_t,t),t\geq 0,$ where $Y$ is a semimartingale. Define the semimartingale $X$  by $X_t=f(Y_t,t),t\geq 0$, where
\begin{align}\label{eq:short1}
	f(Y_t,t)=&\,f(Y_0,0)+\dint{(0,t]}{}{\frac{\partial f}{\partial s}(Y_{s-},s)}s+\dint{(0,t]}{}{\frac{\partial f}{\partial\nonumber x}(Y_{s-},s)}Y_s+\frac{1}{2}\dint{(0,t]}{}{\frac{\partial^2 f}{\partial x^2}(Y_{s-},s)}[Y]^c_s\\&\,+\sum_{0<s\leq t}\left\{f(Y_{s},s)-f(Y_{s-},s)-\frac{\partial f}{\partial x}(Y_{s-},s)\Delta Y_s\right\}.
\end{align}
Since the deterministic time process is continuous, writing either $s-$ or $s$ in the second argument of $f$ in the above makes no difference. In differential notation, we write
\begin{equation}\begin{aligned}\label{eq:short}
   \mathrm dX_t&=\frac{\partial f}{\partial t}(Y_{t-},t)\,\mathrm dt+\frac{\partial f}{\partial x}(Y_{t-},t)\,\mathrm dY_t+\frac12\frac{\partial^2 f}{\partial x^2}(Y_{t-},t)\,\mathrm d[Y]_t^c \\&\quad\quad\quad\quad\quad\quad\quad\quad\quad\quad\quad\quad +f(Y_{t},t)-f(Y_{t-},t)-\frac{\partial f}{\partial x}(Y_{t-},t)\Delta Y_t.
   \end{aligned}
\end{equation}
This shorthand notation  is, e.g., conform \citeb{p. 280}{book:tankov}. Under the presumption        $Y$ has at most a finite amount of jumps on compact time intervals, then $\sum_{0<s\leq t}|\Delta Y_s|<\infty$ holds, for all $t\geq 0$, and    $\sum_{0<s\leq t}\Delta Y_s$ is a finite variation process. Hence, we   decompose   $Y=Y^c+Y_0+\sum_{0<s\leq t}\Delta Y_s$  and differential equation \eqref{eq:short} becomes
\begin{equation}\begin{aligned} \label{eq:short3}
		\mathrm dX_t=\frac{\partial f}{\partial t}(Y_{t-},t)\,\mathrm dt+\frac{\partial f}{\partial x}(Y_{t-},t)\,\mathrm dY_t^c+\frac12\frac{\partial^2 f}{\partial x^2}(Y_{t-},t)\,\mathrm d[Y]_t^c  +f(Y_{t},t)-f(Y_{t-},t),
	\end{aligned}
\end{equation}
where $Y^c$ is the continuous part\footnote{Usually, ones  denotes  by the process $X^c$ the continuous \textit{local martingale} part of $X$, see \citeb{p. 226}{book:protter} and \citeb{p. 45}{book:jacod}, which is determined uniquely and due to Theorem \ref{thm:cont} (in the finite jumps on compacts case).}
%	 where usually it is the continuous local martingale part of $Y$
 of $Y$, satisfying  the infinitesimal relationship $\mathrm d Y^c=\mathrm d Y-\Delta Y$; see Remark 8.3 in \citeb{p. 279}{book:tankov}.
%\mark{$Y^c_t=Y^t-\sum \Delta Y_s$ denk ik...}
 Observe, the infinite    sum $\sum_{0<s\leq t}\left\{f(Y_{s},s)-f(Y_{s-},s)\right\}$ does  not converge in general (see also Exercise 12 of \citeb{p. 95}{book:protter}).

A useful application of Itô's formula yields the following result. 
%\mark{Also true for continuous martingale!!! Maybe enhance theorem??? NEEDED}
\begin{theorem}\namethrm{Doléans--Dade exponential}\label{thm:dade}
	Let $X$ be a semimartingale such that $X_0=0$. Then  there exists a unique semimartingale $Z$ that satisfies the equation $Z_t=1+\dint0t{Z_{s-}}X_s$, and is given by the equivalent explicit formulae
	\begin{equation}
		Z_t=\exp\left(X_t-\frac12[X]_t\right)\prod_{0<s\leq t}(1+\Delta X_s)\exp\left(-\Delta X_s+\frac{1}{2}(\Delta X_s)^2\right)
	\end{equation}
and
\begin{equation}
	Z_t=\exp\left(X_t-\frac12[X]_t^c\right)\prod_{0<s\leq t}(1+\Delta X_s)\exp\left(-\Delta X_s \right),
\end{equation}
where the infinite product converges.
\end{theorem}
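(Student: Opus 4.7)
The plan is to prove existence by writing down the candidate process $Z$ explicitly and verifying via Itô's formula that it satisfies the integral equation $Z_t=1+\int_0^t Z_{s-}\,\mathrm dX_s$, and then to establish uniqueness via a standard argument based on the integration-by-parts formula applied to the ratio of two solutions. Throughout I would work with the second representation $Z_t=\exp(X_t-\tfrac12[X]_t^c)\prod_{0<s\leq t}(1+\Delta X_s)\exp(-\Delta X_s)$, since it avoids the quadratic correction inside the product and interacts cleanly with Itô's formula via the continuous bracket $[X]^c$.

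The first step is to make sense of the infinite product and show it defines a c\`adl\`ag semimartingale. I would split the jumps of $X$ into large jumps (those with $|\Delta X_s|>\tfrac12$, say) and small jumps. Only finitely many large jumps occur on any compact interval, so the large-jump contribution is a manifestly c\`adl\`ag finite variation process. For the small-jump product $\prod_{0<s\leq t,\,|\Delta X_s|\leq 1/2}(1+\Delta X_s)\exp(-\Delta X_s)$, I would take logarithms and use the Taylor estimate $|\log(1+x)+x|\leq C x^2$ valid on $|x|\leq 1/2$; then $\sum_{0<s\leq t}(\Delta X_s)^2\leq [X]_t<\infty$ (recall \eqref{eq:contpure}) forces the series of logarithms to converge absolutely on compacts, hence the product converges to a c\`adl\`ag adapted process of finite variation. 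Thus $Z$ is well-defined, c\`adl\`ag and adapted, and has the expected jump structure $\Delta Z_t = Z_{t-}\Delta X_t$.

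Next I would verify the integral equation. Write $Z_t = \exp(U_t) V_t$, where $U_t := X_t-\tfrac12[X]_t^c$ is a semimartingale with $[U]^c=[X]^c$ and $\Delta U=\Delta X$, and $V_t$ is the (pure-jump) finite variation process obtained from the product. Applying Itô's formula in the form \eqref{eq:ito2} to $\exp(U_t)$ produces
\begin{equation}
\exp(U_t)=1+\int_{(0,t]}\exp(U_{s-})\,\mathrm dU_s+\tfrac12\int_{(0,t]}\exp(U_{s-})\,\mathrm d[U]_s^c+\sum_{0<s\leq t}\exp(U_{s-})\bigl(e^{\Delta X_s}-1-\Delta X_s\bigr),
\end{equation}
and the first two terms combine, using $\mathrm dU=\mathrm dX-\tfrac12\mathrm d[X]^c$, into $\int_{(0,t]}\exp(U_{s-})\,\mathrm dX_s$. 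One then applies the integration-by-parts formula \eqref{eq:intbyparts2} to the product $Z=\exp(U)V$, using that $V$ is a quadratic-pure-jump finite variation process so that by Proposition \ref{prop:quadvarFV} the covariation $[\exp(U),V]$ is a pure-jump sum. A careful bookkeeping of the jump terms, exploiting the identity $(1+\Delta X_s)e^{-\Delta X_s}=V_s/V_{s-}$ and $\Delta Z_s=Z_{s-}\Delta X_s$, collapses all the jump sums into $\int_{(0,t]}Z_{s-}\,\mathrm d(\text{jump part of }X)_s$, leaving exactly $Z_t=1+\int_{(0,t]}Z_{s-}\,\mathrm dX_s$. I expect this algebraic jump bookkeeping to be the main obstacle: one must check that the Itô jump-correction terms coming from $\exp(U)$ cancel precisely against the product structure of $V$ plus the covariation contribution.

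For uniqueness, suppose $Z$ and $\tilde Z$ both satisfy the equation. I would first observe that $Z$ never vanishes on the event that no jump equals $-1$; on that event, I would apply Itô's formula to $1/Z$ (which is itself a semimartingale by Theorem \ref{thm:preserve} applied appropriately) and then to the product $\tilde Z/Z$, using integration by parts to show that $\mathrm d(\tilde Z/Z)=0$, so $\tilde Z/Z\equiv 1$. To handle jumps of size $-1$, I would localise at the stopping time $T=\inf\{t:\Delta X_t=-1\}$: on $[0,T)$ the above argument applies, and at $T$ both solutions must jump to $0$ by the equation $\Delta Z_T=Z_{T-}\Delta X_T=-Z_{T-}$; after $T$, both $Z$ and $\tilde Z$ satisfy the same linear equation starting from $0$, and a Gronwall-type estimate based on Doob's maximal inequality (Theorem \ref{thm:Doob2}) and the Burkholder--Davis--Gundy inequalities (Theorem \ref{thm:BDG}), localised by a suitable fundamental sequence, forces both to remain identically zero thereafter.
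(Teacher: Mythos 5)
The paper itself does not prove this theorem: it defers entirely to Theorem II.37 of Protter and Theorem 26.8 of Kallenberg, adding only that uniqueness follows from the general theory of stochastic differential equations. Your outline essentially reconstructs the argument of those cited sources, so it is the "same" proof in substance: convergence of the product via the domination of $\sum_{0<s\leq t}(\Delta X_s)^2$ by $[X]_t$, verification through Itô's formula and integration by parts applied to $\exp(U)V$, and uniqueness reduced to the homogeneous linear equation. The jump bookkeeping you flag as the main obstacle does close: writing $x=\Delta X_s$, the three jump contributions $(e^{x}-1-x)$, $((1+x)e^{-x}-1)$ and $(e^{x}-1)\bigl((1+x)e^{-x}-1\bigr)$ sum to zero, so the verification goes through exactly as you predict. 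Two corrections. First, the Taylor estimate should read $|\log(1+x)-x|\leq Cx^{2}$ for $|x|\leq\tfrac12$ (the logarithm of the factor $(1+\Delta X_s)e^{-\Delta X_s}$ is $\log(1+\Delta X_s)-\Delta X_s$); as written, $\log(1+x)+x$ is of order $2x$ and the series would not converge. Second, your uniqueness argument is more roundabout than necessary and slightly fragile: the difference $D=Z-\tilde Z$ of two solutions satisfies $D_t=\int_0^t D_{s-}\,\mathrm dX_s$ with $D_0=0$, so the Gronwall/Burkholder--Davis--Gundy localisation you invoke only in the final step already yields $D\equiv 0$ globally, with no need for the ratio $\tilde Z/Z$ (which would additionally require localising $Z$ away from zero, since $x\mapsto 1/x$ is not $C^2$ on all of $\R$ and Itô's formula as stated in Theorem \ref{thm:ito} does not apply directly).
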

\begin{proof}
	This result can   be found in   Theorem 37 of \citeb{p. 84}{book:protter} or Theorem 26.8 of \citeb{p. 522}{book:kallenberg}. Proving the uniqueness of $Z$ can be achieved as a trivial consequence of the general theory on stochastic differential equations, conform \cite{book:protter}, for which we refer to {\S}\ref{Sec1.3}, however a more direct approach can also be followed, see the proof of \cite{book:kallenberg}.
\end{proof}

%JENSEN; martingale -> submartingale
%
%Doleans 
%
%
%
%
%\begin{figure}[!ht]
%	\centering
%	\includegraphics[width=0.7\linewidth]{Figures/FILL/kallenbergp522}
%	\caption{}
%	\label{fig:kallenbergp522}
%\end{figure}
%

% We complete this section with a  significant  result  needed in Chapter \ref{Ch3}, see Theorem \ref{thm-2} to be precise. Although we will use it for a fixed time $s\geq 0$ only, we generalised the statement for a stopping time $S$. 

To the best of our knowledge, the result as presented below is not   highlighted in any piece of literature. Intuitively, in equation \eqref{eq:shift}    the domain of integration shifts. The stochastic process $\bar X^S$ arises naturally when looking at Lévy processes and their strong Markov property; see Proposition \ref{prop:strong-M}.

%Moreover, the fixed time $s\geq 0$ could have been replaced by some finite stopping time $S$. Despite this observation, we do not  feel the need to present a generalised version in this thesis, especially so that we can avoid   notational inconveniences.

\begin{proposition}\label{prop:useful-id}
Let $X$ be a semimartingale with $X_0=0$, and   $S$   a finite stopping time. Define the   process $\bar X^S$ by $\bar  X^S_t=X_{S+t}-X_S$ for all $t\geq 0$. Then $\bar X^S$ is a semimartingale adapted to $\mathbb G=(\mathcal G_t)_{t\geq0}$ with $\mathcal G_t=\mathcal F_{S+t}$, and  $(\Omega,\mathcal F,\mathbb G,\mathbb P)$ satisfies the usual conditions. 

 Moreover, for every $\mathbb F$-adapted measurable process $H=(H_t)_{t\geq 0}$ for which the stochastic integral $\dint0\cdot {H_s}X_s$ is well-defined, we obtain 
	\begin{equation}
	\label{eq:shift}	\int_{(S,S+t]}H_u\mathrm dX_u=\int_0^tH_{S+u}\mathrm d\bar  X_u^S,
	\end{equation}
for all  $t\geq 0$,
where $(H_{S+t})_{t\geq 0}$ is a $\mathbb G$-adapted measurable process.
\end{proposition}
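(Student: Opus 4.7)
The plan is to handle the three assertions separately: filtration and semimartingale structure of $\bar X^S$; measurability of the shifted integrand; and finally the integral identity by approximation from simple predictable processes.

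For $\mathbb G = (\mathcal F_{S+t})_{t\geq 0}$, the inclusion $\mathcal G_s \subset \mathcal G_t$ for $s \leq t$ is immediate since $S+s \leq S+t$ pointwise. Right-continuity transfers from that of $\mathbb F$ because $S(\omega)+s \downarrow S(\omega)+t$ as $s \downarrow t$, and completeness is inherited since the $\mathbb P$-null sets lie in $\mathcal F_0 \subset \mathcal F_S \subset \mathcal G_0$. I would then split $X = X_0 + M + A$ and write $\bar X^S_t = (M_{S+t} - M_S) + (A_{S+t} - A_S)$. The finite variation summand is pathwise of finite variation on compacts with vanishing initial value and is $\mathcal G_t$-measurable (using progressive measurability at the stopping time $S+t$, together with $A_S \in \mathcal F_S \subset \mathcal G_t$). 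For the local martingale summand, I would pick a fundamental sequence $(T_n)$ for $M$, set $\tilde T_n := (T_n - S)^+$ (a $\mathbb G$-stopping time tending to $\infty$), and verify via optional stopping that $(M_{S+\cdot} - M_S)^{\tilde T_n}$ is a $\mathbb G$-martingale; càdlàgness of $\bar X^S$ is inherited from $X$.

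Since $H$ is $\mathbb F$-adapted and measurable, it admits a progressively measurable version which does not alter the stochastic integral, so I may assume $H$ is $\mathbb F$-progressive. Then $(t,\omega) \mapsto H(S(\omega) + t, \omega)$ is the composition of the jointly measurable shift $(t,\omega) \mapsto (S(\omega)+t, \omega)$ with $H$, and its restriction to $[0,t] \times \Omega$ is $\mathcal B([0,t]) \times \mathcal F_{S+t}$-measurable by progressive measurability evaluated at $S+t$, making $(H_{S+t})_{t\geq 0}$ a $\mathbb G$-progressive process. For \eqref{eq:shift}, I would first verify it for simple predictable $H = H_0 \mathds 1_{\{0\}} + \sum_{k=1}^n H_k \mathds 1_{(T_k, T_{k+1}]}$. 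The shift sends each interval $(T_k, T_{k+1}]$ to $((T_k - S)^+, (T_{k+1} - S)^+]$, so $H_{S+\cdot} = \sum_k H_k \mathds 1_{((T_k-S)^+, (T_{k+1}-S)^+]}$ is $\mathbb G$-simple predictable, since $H_k \in \mathcal F_{T_k} \subset \mathcal F_{S \vee T_k} = \mathcal G_{(T_k - S)^+}$. Expanding both sides through \eqref{eq:si_simple} reduces the identity to term-by-term differences of $X$ at the stopping times $S, S+t, T_k, T_{k+1}$, which match the corresponding increments of $\bar X^S$ via the relation $S + \tilde T_n = T_n \vee S$.

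The extension to arbitrary admissible $H$ proceeds by density and continuity. Choose simple predictable $H^n \to H$ in the topology in which the stochastic integral is defined (ucp for $H \in \mathbb L\Rplus$, or the appropriate Doléans-metric topology in the $L^2$-framework), apply the identity verified above to each $H^n$, and pass to the limit on both sides. The main obstacle is the matching of topologies under the shift: one must verify that $H^n_{S+\cdot}$ approximates $H_{S+\cdot}$ in the correct topology for integration against $\bar X^S$. This relies on the intertwining $[\bar X^S]_t = [X]_{S+t} - [X]_S$ (and its analogue for the Doléans measure, showing that $\mu_{\bar X^S}$ is the push-forward of $\mathds 1_{(S,\infty)}\mu_X$ under the time shift), combined with the continuity of the map $Z \mapsto Z_{S+\cdot} - Z_S$ in the ucp topology, which turns the ucp limit of $\int_0^\cdot H^n_u\,\mathrm dX_u$ into the ucp limit of its shifted counterpart. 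Once this reconciliation is in place, taking limits on both sides yields \eqref{eq:shift} in full generality.
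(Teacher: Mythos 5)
Your proposal is correct and follows essentially the same route as the paper's proof: decompose $X=M+A$, handle the local martingale part via optional sampling at the shifted (stopping) times, reduce \eqref{eq:shift} to simple predictable integrands using the $\mathbb G$-stopping times $(T_k-S)^+$, and conclude by a density/limiting argument. You are somewhat more explicit than the paper about the localization and the topology-matching in the limit passage, while the paper instead spells out the case-by-case verification of $\bar X_t^{\tilde T_k}=X_{S+t}^{T_k}-X_S^{T_k}$, but these are presentational differences only.
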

\begin{proof} The first part of the assertion is quite evident and its proof is as follows. Let us  suppose the decomposition $X=M+A$, then we have  $\bar X^S=\bar M^S+\bar A^S$. Clearly the components are $\mathbb G$-adapted,  $\bar A^S$ is the  finite variation part, and $\bar M^S$ is the local martingale part. Proving the latter, assume without loss of generality $M$ is an $\mathbb F$-martingale. Then $\mathbb P$-a.s. we have
	\begin{equation}
		\mathbb E[\bar M^S_t|\mathcal G_u]=\mathbb E[M_{S+t}|\mathcal F_{S+u}]-\mathbb E[M_S|\mathcal F_{S+u}]=M_{S+u}-M_S=\bar M^S_u,
	\end{equation}
for all $0\leq u\leq t.$ Observe,   we applied the martingale property---in fact we made use of \name{Doob's Optional Sampling Theorem}  \citeb{p. 9}{book:protter}, and used the observation that   the random variable $M_S$ is $\mathcal F_{S}$-measurable \citeb{p. 122}{book:kallenberg} with $\mathcal F_S\subset \mathcal F_{S+u}$. This   proves the first part.
	
	For the second part, observe $X_0=\bar X_0^S=0$ holds, which in particular yields   that integrating over domains $[0,t]$ and $(0,t]$ gives no longer different results.
	Notice that it suffices to prove the equality in   \eqref{eq:shift} for simple predictable processes $H$. The final assertion consequently  follows, as usual, from a limiting argument.
		
		Let us consider a simple predictable process
	\begin{equation}
		H_t=H_0\mathds{1}_{\{0\}}(t)+ \sum_{k=1}^nH_k\mathds{1}_{(T_{k},T_{k+1}]}(t),\quad t\geq 0.
	\end{equation}
 Taking $(0,0]=\emptyset$ as convention, one easily verifies the following expression:
\begin{equation}
	H_{S+t}=H_S\mathds{1}_{\{0\}}(t)+\sum_{k=1}^nH_k\mathds{1}_{(0\,\vee \,(T_{k}-S),0\,\vee \,(T_{k+1}-S)]}(t),\quad\text{for all }t\geq 0.
\end{equation}
Importantly, observe that $(H_{S+t})_{t\geq 0}$ can be interpreted as a simple predictable process as well, yet with respect to the filtration $\mathbb G$. Indeed, the random variables $\tilde T_k=0\,\vee\, (T_k-S)$ are stopping times with respect to $\mathbb G$, that is, $	\{\tilde T_k\leq u\}\in\mathcal G_u=\mathcal F_{S+u}$ for all $u\geq 0$. 

In more detail, for any $u\geq 0$ fixed, we exploit the fact   $S+u$ is again a stopping time, hence
\begin{equation}
\label{eq:stopping}	\{\tilde T_k\leq u\}\cap \{S+u\leq t\}=\{T_k\leq S+u\}\cap \{S+u\leq t\}\in \mathcal F_{t}
\end{equation}
holds for   $t\geq u.$ This is due to basic properties of stopping times and their associated $\sigma$-algebras; see for instance \citeb{Lem. 7.1}{book:kallenberg}.
 The result in equation \eqref{eq:stopping} is also valid for $0\leq t<u$, but this is somewhat trivial because we have  $	\{\tilde T_k\leq u\}\cap \{S+u\leq t\}=\emptyset\in \mathcal F_t.$

In the  final computation below,  we denote $\bar X$ instead of $\bar X^S$ to avoid notational inconveniences. A proof by exhaustion yields
\begin{align*}
 	\int_{(S,S+t]}H_u\mathrm dX_u&=\int_{[0,S+t]}H_u\mathrm{d}X_u -\int_{[0,S]}H_u\mathrm{d}X_u\\
 	&=\sum_{k=1}^nH_k(X_{S+t}^{T_{k+1}}-X_{S+t}^{T_{k}})-\sum_{k=1}^nH_k(X_{S}^{T_{k+1}}-X_{S}^{T_{k}})\\
 	&=\sum_{k=1}^nH_k\left(\big[X_{S+t}^{T_{k+1}}-X_{S}^{T_{k+1}}\big]-\big[X_{S+t}^{T_{k}}-X_{S}^{T_{k}}\big]\right)\\
 	&=\sum_{k=1}^nH_k(\bar X_{t}^{\tilde T_{k+1}}-\bar X_{t}^{\tilde T_{k}})\\
 	&= 	\int_0^tH_{S+u}\mathrm d\bar  X_u,
\end{align*}
which completes the proof.
\end{proof}

In retrospect, observe    in the series of equalities  at  the end of the proof of Proposition \ref{prop:useful-id}, the penultimate equality is far from trivial. (The others follow by definition.) We have
\begin{equation}
\bar X_t^{\tilde T_k}=	X_{S+t}^{T_{k}}-X_{S}^{T_{k}},\quad t\geq 0,
\end{equation}
for any integer $k$, which can be proven by   checking all possibles cases.  For instance, consider the non-trivial scenario:  $\omega\in\Omega$ such that  $S(\omega)+t\geq T_k(\omega)> S(\omega)$. Then $t\geq \tilde T_k(\omega)> 0$ and
\begin{align*}
	\bar X_t^{\tilde T_k}&=\bar X_{t\wedge \tilde  T_k}=X_{S+(t\wedge \tilde T_k)}-X_S=X_{S+\tilde T_k}-X_S=X_{S+(T_k-S)}-X_S\\&=X_{T_k}-X_S=X_{(S+t)\wedge T_k}-X_{S\wedge T_k}=	X_{S+t}^{T_{k}}-X_{S}^{T_{k}},
\end{align*}
where we omit    $\omega$ to simplify notation. The remaining cases are left   as an exercise to the reader.

\section{A brief tutorial on Lévy processes}\label{Sec1.2}
\noindent As before,   assume $(\Omega,\mathcal F,\mathbb F, \mathbb P)$ is some filtered probability space satisfying the usual conditions. 
% The overall purpose of this section is to initiate the concept of what we call ``regulated Lévy martingales'', see Definition \ref{def:regulated}.
In order to make sense of certain classes of Lévy processes, we outline the most fundamental results accompanied with several illustrative examples. 
An accessible overview on the basic results of Lévy processes can be found in \cite{book:applebaum,book:tankov,book:kallenberg,book:kyp,book:protter,book:sato} and the references therein.

To begin with, we state a (quite general) definition of a Brownian motion before we head towards the concept of Lévy processes. The subsequent definition is conform \citeb{p. 72}{book:karatzas}.
 
\begin{definition}
	\label{def:Brownian}  Let $d$ be a positive integer and $\mu$ a probability measure on $ (\mathbb{R}^{d}, \mathcal {B} (\mathbb{R}^{d} ) ) .$ A  $\mathbb R^d$-valued process $B=(B_t)_{t\geq 0}$ on $(\Omega,\mathcal F,\mathbb F,\mathbb P)$ is said to be a \name{$d$-dimensional Brownian motion with initial distribution $\mu$}{Brownian motion!with initial distribution $\mu$}, if it is $\mathbb F$-adapted and satisfies the properties
	\begin{enumerate}[\normalfont(i)]
		\item  $\mathbb P\left[B_{0} \in \Gamma\right]=\mu(\Gamma) $ for all $ \Gamma \in \mathcal{B}(\mathbb{R}^{d})$;
		\item $B$ has normally distributed increments, with mean zero and covariance matrix $(t-s) I_d$, i.e., for all $0\leq s\leq t$, we have $B_t-B_s\sim \mathcal N(0,(t-s)I_d)$, with $I_d$ the $d\times d$ identity matrix;
		\item $B$ has \name{increments independent of the past}, i.e., for all $0 \leqslant s \leqslant t,$ we have $B_{t}-B_{s}$ is independent of $\mathcal{F}_{s}$;
		\item $B$ is a continuous process.
	\end{enumerate}
\end{definition}
If $\mu(\{x\})=1$ for some  point  $x\in\R^d$, we say that $B$ is a \name{$d$-dimensional Brownian motion starting at $x$}{Brownian motion!starting at $x$}.	Moreover, throughout these notes, we let $W=(W_t)_{t\geq 0}$ denote a one-dimensional Brownian motion starting at 0, also known as a \name{standard Brownian motion}{Brownian motion!standard Brownian motion}.\index{Brownian motion!$d$-dimensional}

The reader is assumed to be familiar with the answer to the question: why does a standard Brownian motion exist in the first place? We will refer to \citeb{Ch. 2}{book:karatzas},  which consists of multiple approaches of constructing a standard Brownian motion. When this is known, the existence of a $d$-dimensional Brownian motion with initial distribution $\mu$ is affirmative, namely let 
%Conform \citeb{p. 72}{book:karatzas} For the existence of such a Brownian motion, living indeed on a filtered probability space satisfying the usual condtions, we refer to Chapter 2 of \citeb{p. 72}{book:karatzas}, which gives an extensive discussion. $d$ dimension easily obtained from d Wiener processes. Proof that a Wiener processes exists, explain it follows from a construction, see also protter.
  $X(\omega_{0})=\omega_{0}$ be the identity random variable on the probability space $(\mathbb{R}^{d}, \mathcal{B}(\mathbb{R}^{d}), \mu),$ and for each $i=1, \ldots, d,$ suppose $W^{(i)}$ is a
standard, one-dimensional Brownian motion on some filtered probability space $(\Omega^{(i)}, \mathcal{F}^{(i)}, \mathbb F^{(i)}, \mathbb P^{(i)}) .$ On the product space
\begin{equation}\label{eq:prob}
\left(\mathbb{R}^{d} \times \Omega^{(1)} \times \cdots \times \Omega^{(d)}, \mathcal {B} (\mathbb{R}^{d} ) \otimes \mathcal{F}^{(1)} \otimes \cdots \otimes \mathcal {F}^{(d)}, \mu \times \mathbb P^{(1)} \times \cdots \times \mathbb P^{(d)}\right),
\end{equation}
which we will now abbreviate by $(\Omega',\mathcal F',\mathbb P')$, define the process $B=(B_t)_{t\geq0}$ as
\begin{equation}
B_{t}(\omega) = X (\omega_{0})+\left(W_{t}^{(1)} (\omega_{1} ), \ldots, W_{t}^{(d)} (\omega_{d} )\right).
\end{equation}
Conventionally, for a stochastic process $X$ we denote by $\mathcal F^X_t$   the smallest $\sigma$-algebra for which all $X_s,s\leq t,$ are measurable. This   yields the filtration $\mathbb F^X=(\mathcal F_t^X)_{t\geq 0}$;  so-called the \name{natural filtration}. 
We conclude that $B=(B_t)_{t\geq 0}$ becomes the desired object on the filtered probability space $(\Omega',\mathcal F',\mathbb F^B,\mathbb P')$.

However,  the filtered probability space above does not necessarily satisfy the usual conditions. This can   be solved relatively easily by considering the \name{augmented filtration} $\mathbb F'$, which is an enlargement of  the filtration $\mathbb F^B$ by just adding   $\mathbb P'$-null sets. Remarkably, the new filtered probability space $(\Omega',\mathcal F',\mathbb F',\mathbb P')$ subsequently satisfies the usual conditions. We will not go into more detail, but refer to \citeb{p. 22}{book:protter} and \citeb{p. 89}{book:karatzas} for a proper discussion considering augmentation. There they discuss it within the setting of Lévy  processes and strong Markov processes, respectively. (In particular, a Brownian motion is both, see also  Proposition \ref{prop:strong-M}.) 

%It is worthwhile pointing out that they discuss   augmentation not only for the Brownian motion but  more generally for {strong Markov processes} (see also  Proposition \ref{prop:strong-M}).

\begin{remark}\label{remark:3}
	Before we   continue, we want to stress   the fact  that   condition (iv) in Definition \ref{def:Brownian} is somewhat superfluous.
	  Indeed,  let us assume $B=(B_t)_{t\geq 0}$ satisfies all the conditions  except   (iv). Then by the  \name{Kolmogorov Continuity Theorem} \citeb{p. 53}{book:karatzas} we obtain   that  $B$ has a (unique) continuous version. 	  
	  Hence, we     assume without loss of generality   $B$ is a continuous process (or, in line with Remark \ref{remark:1}, take the continuous version without special mention).
\end{remark}

%and let $\mathbb F=(\mathcal F_t)_{t\geq 0}$ be the filtration, where for all $t\geq0 $ we set   $\mathcal{F}_{t}=\mathcal{F}_{t}^B$ \citeb{p. 49}{book:karatzas}. 
% In conclusion,  the stochastic process $B=(B_t)_{t\geq 0}$ is the desired object living on the probability space \eqref{eq:prob} with filtration $\mathbb F$. \mark{why satisfies usual conditions?}

%Construction: p72. Construction one dimensional explain what also says in Protter. Holder continuous paths.

For the time being, let us take $d=1$. Observe that a one-dimensional Brownian motion $B$ with initial distribution $\mu$ is  a martingale as long as $\mathbb E|B_0|=\int_\R |x|\mu(\mathrm d x)<\infty.$ 

%Consequently, a standard Brownian motion $W$ is a continuous martingale.

%Also martingale, take $\mathbb E |B_0|<\infty.$ Then $B$ integrable for all $t$ and clearly it becomes a vector of martingales. Clearly, square integrable martingale.

%Continuity: when it is a martingale, we know it has \cadlag paths. But, by construction, we know it has continuous paths. Often this is integrated into the definition, as is in Karataz.
%\begin{remark}
%	From the discussion above, we take without any special mention the continuous version.
%\end{remark}

\begin{example}\label{ex:Brownian}
	Let $W=(W_t)_{t\geq 0}$ be a standard Brownian motion. It is clearly a continuous square integrable martingale. In particular, one easily deduces that $(W_t^2-t)_{t\geq 0}$ is   a martingale too. Following the discussion in {\S}\ref{Sec1.1.3}, we obtain $[W]_t=\langle W\rangle_t=t$ for all $t\geq 0.$
Alternatively, a more direct approach by means of Theorem \ref{thm:ucpquad} also shows that $[W]_t=t$ holds for all $t\geq 0$; see \citeb{p. 18}{book:protter}  for a proof. 

Moreover, almost all sample paths $t\mapsto W_t(\omega)$ are of unbounded variation on any interval, i.e., on any compact interval $[a,b]$ we have $FV_{[a,b]}(s\mapsto W_s(\omega))=\infty$ for $\mathbb P$-almost every $\omega\in\Omega$. We refer to \citeb{p. 19}{book:protter} for a proof, however it basically follows from the general fact that the quadratic variation vanishes for any continuous function of finite variation \citeb{p. 255}{book:kallenberg}.

 Ultimately, observe equation \eqref{eq:formula} in this setting becomes
\begin{equation}
	\mathbb E\left(\dint0t{H_s}W_s\right)^2=\mathbb E\dint0t{H^2_s}s,
\end{equation}
for suitable integrands $H$, which was one of the crucial results in Itô's original treatment of a stochastic integral, as is pointed out in \citeb{p. 77}{book:protter}.
\end{example}

It is not   that difficult to see that a $d$-dimensional Brownian motion $B$ starting at 0, when written as the vector $B=(B^1,...,B^n)$ consisting of independent standard Brownian motions,  satisfies
\begin{equation}
\label{eq:character} [ B^{i}, B^{j} ]_{t}=\langle B^{i}, B^{j} \rangle_{t}=\delta_{i j} t \quad \text { for } 1 \leq i, j \leq d,
\end{equation}
where $\delta_{i j}$ is the Kronecker symbol. Observe, the product $B^iB^j$ for $i\neq j$ is again a continuous martingale, hence $[B^i,B^j]$ is a continuous finite variation process as well as a local martingale by Theorem \ref{thm:preserve}. This yields $[B^i,B^j]=0$  due to the same argument as   in Remark \ref{remark:2}.

It turns out that this property \eqref{eq:character} characterises a Brownian motion among all continuous local martingales, as we state in the theorem below. Be aware, this is no longer valid when we consider \cadlag local martingales, which follows from the computations in {\S}\ref{Sec1.2.2}.

%\begin{proposition}
%	Linear variation, quadratic variation.
%\end{proposition}
%\begin{proof}
%	content...
%\end{proof}

\begin{theorem}[\name{Lévy's Theorem}]
  Suppose $M=(M^1,...,M^d)$ is a  $d$-dimensional  vector consisting of continuous local martingales such that we have   $M_{0}=0$ $\mathbb P$-a.s. and
\begin{equation}
[M^{i}, M^{j}]_{t}=\delta_{i j} t \quad \text { for } 1 \leq i, j \leq d.
\end{equation}
Then  $M$ is a $d$-dimensional Brownian motion starting at 0.
\end{theorem}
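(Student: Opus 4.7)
The plan is to apply Itô's formula to the complex exponential $f(x) = e^{i\langle \xi, x\rangle}$ for an arbitrary $\xi \in \mathbb{R}^d$ and to identify the conditional characteristic function of $M_t - M_s$ given $\mathcal{F}_s$ as that of an $\mathcal{N}(0,(t-s)I_d)$ random variable. Since a probability distribution on $\mathbb{R}^d$ is determined by its characteristic function, this simultaneously gives the Gaussian increment distribution and independence from $\mathcal{F}_s$. Continuity and $M_0 = 0$ are already hypotheses, so all four defining properties of a Brownian motion (Definition~\ref{def:Brownian}) will then be verified.

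Because $M$ is continuous, the jump sum in Itô's formula vanishes and $[M^i, M^j]^c = [M^i, M^j] = \delta_{ij}\, t$. Treating the real and imaginary parts of $f$ separately (both have continuous second-order partial derivatives) and combining the results, Theorem~\ref{thm:ito} yields
\begin{equation*}
f(M_t) = 1 + i\sum_{j=1}^d \xi_j \int_0^t f(M_{s})\, \mathrm{d}M^j_s - \tfrac{1}{2}|\xi|^2 \int_0^t f(M_s)\, \mathrm{d}s.
\end{equation*}
Each stochastic integral is a priori a local martingale by Theorem~\ref{thm:preserve}(iii), but since $|f(M_s)| = 1$ and $[M^j]_s = s$, Corollary~\ref{cor:truemart} upgrades each of them to a genuine square integrable martingale.

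Now fix $0 \leq s \leq t$ and define $g(u) := \mathbb{E}\bigl[e^{i\langle\xi, M_u - M_s\rangle} \bigm| \mathcal{F}_s\bigr]$ for $u \geq s$. Multiplying the displayed identity (at time $u$) by the $\mathcal{F}_s$-measurable factor $e^{-i\langle\xi, M_s\rangle}$, taking conditional expectation with respect to $\mathcal{F}_s$, using the true-martingale property to kill the stochastic integrals, and swapping the time integral with the conditional expectation by Fubini (justified since $|f(M_v)| \leq 1$), I obtain
\begin{equation*}
g(u) = 1 - \tfrac{1}{2}|\xi|^2 \int_s^u g(v)\, \mathrm{d}v, \qquad u \geq s.
\end{equation*}
This is a deterministic linear Volterra equation whose unique solution is $g(u) = \exp\bigl(-\tfrac{1}{2}|\xi|^2 (u-s)\bigr)$, i.e.\ the characteristic function of $\mathcal{N}(0, (u-s)I_d)$. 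Since this identity holds $\mathbb{P}$-almost surely for each $\xi$, standard uniqueness of characteristic functions together with an indicator/monotone class argument on $A \in \mathcal{F}_s$ gives both that $M_t - M_s \sim \mathcal{N}(0,(t-s)I_d)$ and that $M_t - M_s$ is independent of $\mathcal{F}_s$.

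The main subtleties I anticipate are the rigorous handling of the complex-valued Itô formula (cleanest via real/imaginary decomposition) and the justification that the stochastic integrals above are \emph{true} martingales rather than merely local ones; the boundedness $|f(M_s)| = 1$ combined with $[M^j]_s = s$ makes Corollary~\ref{cor:truemart} applicable, which is precisely the point where the hypothesis $[M^i, M^j]_t = \delta_{ij}t$ is exploited beyond merely computing the second-order term in Itô. Everything else is bookkeeping: $M_0 = 0$ gives the initial distribution $\mu = \delta_0$, and continuity is assumed outright.
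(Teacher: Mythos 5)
Your argument is correct and is essentially the standard proof of Lévy's characterisation via Itô's formula applied to $x\mapsto e^{i\langle\xi,x\rangle}$, which is precisely the proof given in the reference the paper cites for this theorem (Karatzas--Shreve, p.~157); the paper itself supplies no proof beyond that citation. The only step worth making explicit is that each $M^j$ is first upgraded from a continuous local martingale to a genuine $L^2$-martingale via Proposition~\ref{prop:intermed} (since $\mathbb{E}[M^j]_t=t<\infty$), which is what licenses the application of Corollary~\ref{cor:truemart}.
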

\begin{proof}
	A proof of this statement can be found for instance in \citeb{p. 157}{book:karatzas}.
\end{proof}

Another important result is that  continuous local martingales, starting at 0 with  quadratic variation tending to infinity, can be characterised by a standard Brownian motion via a time-change argument; see Theorem \ref{thm:dubins}. 

\begin{theorem}\label{thm:dubins}\namethrms{Time-Change for Martingales [Dambis, Dubins \& Schwarz]}{Time-Change for Martingales}\index{Dambis--Dubins--Schwarz Theorem}  Let $M$ be a continuous local martingale with $M_0=0$ such that $\lim _{t \rightarrow \infty}[ M]_{t}=$
$\infty$ holds $\mathbb P$-a.s.. Define, for each   instant $t\geq 0$ fixed, the $\mathbb F$-stopping time
\begin{equation}
S(t)=\inf \left\{s \geq 0 ;\, [ M]_{s}>t\right\}.\label{eq:ST}
\end{equation}
Then the time-changed process $B=(B_t)_{t\geq 0}$, defined by
\begin{equation}
B_{t} = M_{S(t)}, \quad  s\geq 0,
\end{equation}
is a standard  Brownian motion adapted to $ \mathbb G=(\mathcal G_t)_{t\geq 0} $, where $\mathcal G_t=\mathcal F_{S(t)}$. In particular, the  latter filtration also satisfies the usual conditions, and   \textnormal{(}up to indistinguishability\textnormal{)} we have 
\begin{equation}
M_{t}=B_{[ M]_{t}}, \quad t\geq 0.
\end{equation}

%4.8 Proposition. With the assumptions and the notation of Theorem $4.6,$ we have the following time-change formula for stochastic integrals. If $X=\left\{X_{t}, \mathscr{F}_{t}\right.$ $0 \leq t<\infty\}$ is progressively measurable and satisfies
%(4.19)
%$$
%\int_{0}^{\infty} X_{i}^{2} d\langle M\rangle_{t}<\infty \quad \text { a.s.}
%$$
%then the process
%(4.20)
%$Y_{s} \triangleq X_{T(s)}, \mathscr{G}_{s} ; \quad 0 \leq s<\infty$
%is adapted and satisfies, almost surely:
%(4.21)
%$$
%\int_{0}^{\infty} Y_{s}^{2} d s<\infty
%$$
%(4.22)
%$$
%\int_{0}^{t} X_{v} d M_{v}=\int_{0}^{\langle M\rangle_{t}} Y_{u} d B_{u} ; \quad 0 \leq t<\infty
%$$
%(4.23)
%$$
%\int_{0}^{T(s)} X_{v} d M_{v}=\int_{0}^{s} Y_{u} d B_{u} ; \quad 0 \leq s<\infty
%$$
\end{theorem}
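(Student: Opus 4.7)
The strategy is to verify the hypotheses of Lévy's Theorem for $B$ on $(\Omega,\mathcal F,\mathbb G,\mathbb P)$ and then unwind the time change. I will proceed in four steps.

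\textbf{Step 1: the time change and the filtration $\mathbb G$.} Because $[M]$ is continuous, non-decreasing, adapted, and tends to $\infty$, each $S(t)$ defined by \eqref{eq:ST} is an $\mathbb F$-stopping time with $S(t)<\infty$ $\mathbb P$-a.s., and $t\mapsto S(t)$ is itself non-decreasing and right-continuous. Setting $\mathcal G_t=\mathcal F_{S(t)}$, the inclusion $\mathcal G_s\subset \mathcal G_t$ for $s\leq t$ is immediate from $S(s)\leq S(t)$ together with the monotonicity of $\mathcal F_{(\cdot)}$ under stopping times.  Right-continuity of $\mathbb G$ follows from right-continuity of $\mathbb F$ combined with the right-continuity of $t\mapsto S(t)$, since $\bigcap_{u>t}\mathcal F_{S(u)}=\mathcal F_{\bigcap_{u>t}S(u)}=\mathcal F_{S(t)}$. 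The $\mathbb P$-null sets carry over from $\mathcal F_0$, so the usual conditions hold.

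\textbf{Step 2: $B$ is continuous and $\mathbb G$-adapted.} Adaptedness: since $M$ is continuous, hence progressive (see the remarks preceding Lemma \ref{prop:useful-id}), the random variable $M_{S(t)}$ is $\mathcal F_{S(t)}=\mathcal G_t$-measurable. Continuity is the first subtle point: $S$ may jump, but a jump of $S$ at $t$ corresponds to a flat interval $[S(t-),S(t)]$ of $[M]$; the crucial fact is that a continuous local martingale is constant on every interval where its quadratic variation is constant (this follows from Proposition \ref{prop:intermed} applied locally, or Theorem \ref{thm:BDG}). Thus $M$ is constant on $[S(t-),S(t)]$, so $B_t=M_{S(t)}=M_{S(t-)}=B_{t-}$, and continuity of $B$ across those times is recovered. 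Away from such flats $S$ itself is continuous and $B$ inherits continuity from $M$.

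\textbf{Step 3: $B$ is a $\mathbb G$-local martingale with $[B]_t=t$.} For each $n$, $M^{S(n)}$ is a continuous local martingale with $[M^{S(n)}]=[M]_{\cdot\wedge S(n)}\leq n$, so by the Burkholder--Davis--Gundy inequality (Theorem \ref{thm:BDG}) it is a uniformly integrable $L^2$-martingale. Doob's optional sampling theorem then yields, for $0\leq s\leq t\leq n$ and $A\in\mathcal G_s=\mathcal F_{S(s)}$,
\begin{equation}
\mathbb E[\mathds 1_A B_t]=\mathbb E[\mathds 1_A M^{S(n)}_{S(t)}]=\mathbb E[\mathds 1_A M^{S(n)}_{S(s)}]=\mathbb E[\mathds 1_A B_s],
\end{equation}
so $(B_{t\wedge n})_{t\geq 0}$ is a $\mathbb G$-martingale, and therefore $B$ is a $\mathbb G$-local martingale (with fundamental sequence $T_n=n$). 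The same argument applied to the continuous local martingale $M^2-[M]$ gives that $B_t^2-[M]_{S(t)}$ is a $\mathbb G$-local martingale; but $[M]_{S(t)}=t$ by continuity of $[M]$ and the definition of $S$, so $B_t^2-t$ is a $\mathbb G$-local martingale. By Proposition \ref{prop:coincide} and uniqueness of the quadratic variation we conclude $[B]_t=t$. Lévy's Theorem in dimension $d=1$ now identifies $B$ as a standard Brownian motion.

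\textbf{Step 4: recovering $M_t=B_{[M]_t}$.} Apply $S$ to $[M]_t$: if $[M]$ is strictly increasing through $t$, then $S([M]_t)=t$ and the identity is immediate from the definition of $B$. If $[M]$ is constant on a maximal interval $[a,b]\ni t$, then $S([M]_t)=b$, but by Step 2 the process $M$ is constant on $[a,b]$, so $M_t=M_b=B_{[M]_b}=B_{[M]_t}$. Continuity of both sides upgrades the pointwise equality to indistinguishability.

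\textbf{Main obstacle.} The hard part is Step 3, specifically justifying that $B$ is a martingale (not merely a process with martingale increments) and that its quadratic variation equals $t$: both hinge on an optional-sampling argument applied not to $M$ itself, which need not be integrable, but to the localised processes $M^{S(n)}$, together with the structural fact that continuous local martingales are constant on flats of $[M]$ (Step 2). Getting this last fact clean is what makes the whole time-change argument work.
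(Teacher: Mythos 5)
Your sketch is correct and follows exactly the standard Dambis--Dubins--Schwarz argument that the paper itself does not reproduce but merely cites (Karatzas \& Shreve, p.\ 174): right-continuous inverse time change, constancy of $M$ on the flats of $[M]$, optional sampling applied to the $L^2$-bounded martingales $M^{S(n)}$ and $(M^{S(n)})^2-[M]^{S(n)}$, and Lévy's characterisation. The only point to tighten in a full write-up is that the constancy-on-flats lemma must be established simultaneously for all random intervals on a single null set (e.g.\ via rational endpoints), but that is a standard refinement rather than a gap in the idea.
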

\begin{proof}
		A proof of this statement can be found for instance in \citeb{p. 174}{book:karatzas}.
	A $d$-dimensional analogue can be found in \citeb{p. 179}{book:karatzas}.
\end{proof}
Following the discussion in {\S}\ref{Sec1.1.4}, we immediately obtain the following corollary. 
\begin{corollary}\label{cor:dubins}
	Suppose $W$ is a standard Brownian motion  and assume $H$ is a progressively measurable process such that for all $t\geq0$ we have   $
		\mathbb E\dint0t{H_s^2}s<\infty.$
Then  $M=(M_t)_{t\geq 0}$ defined by
$
	M_t=\dint0t{H_s}W_s
$
is a continuous $L^2$-martingale with
\begin{equation}
	[M]_t=\dint0t{H_s^2}s,\quad t\geq 0.
\end{equation}
Presume $\lim _{t \rightarrow \infty}[ M]_{t}=$
$\infty$. Let $(S(t))_{t\geq 0}$ be the family of $\mathbb F$-stopping times as in \eqref{eq:ST}, then the stochastic process $B=(B_t)_{t\geq 0}$ satisfying 
\begin{equation}
B_t=\int_{[0, S(t)]}H_s\,\mathrm d W_s\quad\text{and}\quad	M_t=B_{\textstyle  \int_0^t{\scriptstyle H_s^2 \mathrm ds}},\quad t\geq 0.
\end{equation}
is a standard Brownian motion on $(\Omega,\mathcal F,\big(\mathcal F_{S(t)}\big)_{t\geq 0},\mathbb P)$.

%
%$B=(B_t)_{t\geq 0}$ defined by 
%\begin{equation}
%	B_s=\dint{[0,T(s)]}{}{H_u}W_u
%\end{equation}
\end{corollary}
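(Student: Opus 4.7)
The strategy is to show that Corollary \ref{cor:dubins} is nothing more than a direct instance of Theorem \ref{thm:dubins} applied to the particular continuous local martingale $M=\int_0^\cdot H_s\,\mathrm dW_s$, after first identifying $M$ and $[M]$ explicitly.

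First I would address the preliminary claim that $M$ is a continuous $L^2$-martingale with $[M]_t=\int_0^t H_s^2\,\mathrm ds$. Since $W$ is a continuous square integrable martingale with $[W]_t=t$ (Example \ref{ex:Brownian}) and its Doléans measure is $\mathrm ds\times\mathbb P$, Brownian motion satisfies hypothesis~2 (and in fact also hypothesis~1) of the extension described in {\S}\ref{Sec1.1.4}, so $H\in\mathcal P^*(W)$ by the assumed $L^2$-integrability. Hence, by Theorem \ref{thm:square} (and Theorem \ref{thm:preserve} part (v), extended to the progressive setting), the integral $M$ lies in $\mathscr M^2$ and has continuous paths. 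The quadratic variation identity then follows from the predictable version Proposition \ref{thm:import_form!} (valid for progressive integrands, see {\S}\ref{Sec1.1.4}), giving $[M]_t=\int_0^t H_s^2\,\mathrm d[W]_s=\int_0^t H_s^2\,\mathrm ds$.

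Next, I would apply Theorem \ref{thm:dubins} directly to $M$. The hypotheses are met: $M$ is a continuous local martingale with $M_0=0$ and the assumption $\lim_{t\to\infty}[M]_t=\infty$ is exactly the divergence hypothesis in the Dambis--Dubins--Schwarz theorem. The theorem then yields that the stopping times
\begin{equation}
S(t)=\inf\{s\geq 0:[M]_s>t\}=\inf\left\{s\geq 0:\int_0^s H_u^2\,\mathrm du>t\right\}
\end{equation}
are $\mathbb F$-stopping times, the filtration $(\mathcal F_{S(t)})_{t\geq 0}$ satisfies the usual conditions, and $B_t:=M_{S(t)}$ is a standard Brownian motion with respect to it, and moreover $M_t=B_{[M]_t}=B_{\int_0^t H_s^2\,\mathrm ds}$ up to indistinguishability.

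Finally, I would reconcile the two expressions for $B_t$ claimed in the statement. By the notation introduced in {\S}\ref{Sec1.1.5}, $\int_{[0,S(t)]}H_s\,\mathrm dW_s$ is defined as $(\int_0^\cdot H_s\,\mathrm dW_s)_{S(t)}=M_{S(t)}$, which is literally $B_t$, and $M_t=B_{\int_0^t H_s^2\,\mathrm ds}$ is just a rewriting of $M_t=B_{[M]_t}$. The only mild subtlety worth checking here is that $S(t)<\infty$ $\mathbb P$-a.s.\ for every fixed $t\geq 0$, so that $M_{S(t)}$ is well-defined without ambiguity; this is immediate from continuity of $[M]$ together with the divergence assumption. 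The principal obstacle---if any---is thus not conceptual but lies in verifying that the extended construction of $M$ from {\S}\ref{Sec1.1.4} genuinely produces a continuous $L^2$-martingale with the expected quadratic variation, so that Theorem \ref{thm:dubins} is legitimately applicable; once this is granted, Corollary \ref{cor:dubins} reduces to a translation of its conclusion into the integral notation of {\S}\ref{Sec1.1.5}.
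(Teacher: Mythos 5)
Your proposal is correct and follows exactly the route the paper intends: the paper offers no written proof beyond the remark that the corollary follows ``immediately'' from the discussion in {\S}\ref{Sec1.1.4} (which supplies $M\in\mathscr M^2$, its continuity, and $[M]_t=\int_0^t H_s^2\,\mathrm ds$ for progressive integrands against Brownian motion) combined with Theorem \ref{thm:dubins}. Your write-up simply makes those two ingredients and the notational reconciliation explicit, including the worthwhile observation that $S(t)<\infty$ a.s.\ under the divergence hypothesis.
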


Now that we have extensively dealt with     Brownian motions starting at 0, it is time to initiate the necessary  definitions and results with regard  to Lévy processes. Let us write $\langle\cdot,\cdot\rangle$ and $\|\cdot\|$ for the standard inner product and norm   on the Hilbert space $\R^d$.

\begin{definition}\label{def:Levy}
	An $\mathbb{R}^{d}$-valued process $X=\left(X_{t}\right)_{t \geqslant 0}$   on   $(\Omega,\mathcal F,\mathbb F, \mathbb P)$ is called a \name{Lévy process}{Lévy process!definition of}, if it is $\mathbb F$-adapted and satisfies the properties
	\begin{itemize} 
	\item[\textnormal{(i)}] $\mathbb P(X_0=0)=1$;
	\item[\textnormal{(ii)}] $X$ has \name{stationary increments}, i.e., for all $0 \leqslant s \leqslant t,$ we have that $ X_{t}-X_{s}$ equals in distribution to $X_{t-s}$;
	\item[\textnormal{(iii)}] $X$ has \name{increments independent of the past}, i.e., for all $0 \leqslant s \leqslant t,$ we have $X_{t}-X_{s}$ is independent of $\mathcal{F}_{s}$;
		\item[\textnormal{(iv)}] $X$ is \name{continuous in probability}, i.e., for any      $t_0 \geqslant 0$ fixed, we have     $ \mathbb{P}\left(\left\|X_{t}-X_{t_0}\right\|>\varepsilon\right) \rightarrow 0$     as $t \rightarrow t_0$ for all $\varepsilon>0$. 
	\end{itemize}
\end{definition}

We see that   a Brownian motion starting at 0 clearly belongs to the class of   Lévy processes. One could think of Lévy processes as  tilted Brownian motions with jump occurrences. We are going to make this comment more rigorous in {\S}\ref{Sec1.2.1}. As a matter of  fact, we will   then see that all Lévy processes are semimartingales.

Observe, if $X=(X_t)_{t\geq 0}$ is a process satisfying parts (ii) and (iii) of Definition \ref{def:Levy},  then the characteristic function of $X_t$, that is,
\begin{equation} \varphi_{t}(u) =\mathbb{E}\left[e^ {\mathrm{i} \langle u, X_{t}\rangle}\right],\quad u \in \mathbb{R}^d, \,t \geq 0, \end{equation}
satisfies $\varphi_{t+s}(u) =\varphi_{t}(u) \varphi_{s}(u) $ for all $u\in\mathbb R^d$. 
% first two items of the definition imply that for $u \in \mathbb{R}$, the function $f(t)=\mathbb{E} \exp \left(i u X_{t}\right)$ satisfies
%$$
%f(s+t)=\mathbb{E} e^{i u\left(\left(X_{s+t}-X_{t}\right)+X_{t}\right)}=\mathbb{E} e^{i u X_{s}} \mathbb{E} e^{i u X_{t}}=f(s) f(t)
%$$
If $X$ is a process that is continuous in probability, we obtain that the function $t\mapsto \phi_t(u)$ is continuous \citeb{p. 43}{book:applebaum}.  
 Combining all properties of a Lévy process yields   
\begin{equation}
\phi_t(u)=e^{ t \psi(u)},
\end{equation}
where $\psi: \mathbb{R} \rightarrow \mathbb{C}$ is a continuous function satisfying $\psi(0)=0 .$ The function $\psi$ is known as the \name{characteristic exponent} of the Lévy process (see also Theorem \ref{thm:levykhint}).

%Notice that when some process $  X $ is continuous in probability, the characteristic function \begin{equation} \varphi_{t}(u) =\mathbb{E}\left[e^ {\mathrm{i} \langle u, X_{t}\rangle}\right],\quad u \in \mathbb{R}^d, t \geq 0, \end{equation}
%is also continuous in the time variable $t$ (recall that any characteristic function is absolutely continuous on the entire space $\R^d$). 

Moreover, due to      stationarity of   increments, condition (iv) is non-trivially equivalent---as  is  pointed out in \citeb{p. 43}{book:applebaum}---to being  right-continuous in probability at $t=0,$ i.e.,
\begin{equation}\label{eq:rcinprob}
  \lim _{h \searrow 0} \mathbb{P}\left(\left\|X_{h}\right\|>\epsilon\right)=0, \quad \epsilon>0.
\end{equation}
If $Y$ is some \cadlag stochastic process, then  $Y$ is in particular right-continuous at $t=0$, hence satisfies equation \eqref{eq:rcinprob}. Conclusively, condition (iv) of a Lévy process $X$ will be superfluous when we additionally assume that $X$ has $\mathbb P$-a.s. \cadlag sample paths. Even more is true.

\begin{remark}
	The definition above is, e.g., conform \cite{book:applebaum} and \cite{book:protter}. Lots of literature such as  \cite{book:sato} and \cite{book:kyp} assume    $X$ is a \cadlag process. Analogous to Remark \ref{remark:3}, this stronger  assumption can be made without loss of generality. Indeed, for every Lévy process $X$ there exists a unique (up to indistinguishability) \cadlag version, which is proven for instance in  \citeb{p. 63}{book:sato}, \citeb{p. 21}{book:protter}, and \citeb{p. 87}{book:applebaum}. Therefore, as usual, we  will     always take the \cadlag version  without special mention.
\end{remark}

Whenever $X$ is a \cadlag process which is continuous in probability, we obtain that $X$ has no fixed discontinuities $\mathbb P$-a.s., that is, $\mathbb P(\Delta X_t\neq 0)=0$ for all $t\geq 0.$ In other words,  the jumps are   fully ``unpredictable'' and   thus random. 
%
%
%
%\mark{Martingales zijn stochastisch continu!!! Semimartingales niet per se, want het gaat fout bij deterministisch deel.} 

%Why do these things exist? Think about it for a little. The class of Lévy processes is known to be non-empty, because we know Brownian motions are obviuosly Levy processes.
%
%In stead of a general begin condition, we know restrict ourselves to $X_0=0$ almost surely.
% 
% Observe continuous a.s. has now become continuous in probability. Clearly a brownian motion satisfies all.
% 
% In fact, one often assumed in the definition that it is cadlag. But this condition is not necessary, because every Levy process appears to be a semimartingale. Thus this yields cadlag without loss of generality.
% 
% Also (ii) not necessary if I recall corectly.
 
%\begin{remark}
%	 Every Lévy process has a càdlàg modification. Without loss of generality we will assume that all sample paths of Lévy processes are càdlàg. DOOBS REGULARITY 
%\end{remark}
%
%In more detail, there exists a modification, i.e. DEFINTION, such that this a.s. 

%Definition of a d-dimensional Brownian motion,

%without loss of generality we may assume even stronger that it has continuious path, i.e. it has a modification that has a.s continuos paths. Definition of modification.

%Consider the Hilbert space $\R^n$ where we denote $\langle\cdot,\cdot\rangle$ for its standard inner product.

% In section \ref{Sec2.2}, especially subsections \ref{Sec2.2.4} and \ref{Sec2.2.6}, it turns out useful to consider  continuity in probability   in   metric spaces. For the sake of completeness, we state the definition below.
\begin{definition}
	Let $(M,d)$ be a metric space. An $(M,d)$-valued process $X=(X_t)_{t\geq 0}$ on a filtered probability space $(\Omega,\mathcal F,\mathbb F,\mathbb P)$ is said to be \name{continuous in probability}, or \name{stochastically continuous}, if for any $t_0\geq 0$ fixed, we have
	\begin{equation}
		\mathbb P(d(X_t,X_{t_0})>\epsilon)\to 0,\quad \text{as }t\to t_0,
	\end{equation}
for all $\epsilon>0.$
\end{definition}

For every Lévy process $X=(X_t)_{t\geq 0}$, we have the significant property  that the characteristic function  of $X_t$, for any fixed instant $t\geq 0,$  has an  explicit expression; see   Lévy--Khintchine's representation in Theorem \ref{thm:levykhint}.  For this we need to   define what is meant by  a Lévy measure. There are several equivalent definitions; the  definition below is conform \citeb{p. 29}{book:applebaum} and \citeb{p. 37}{book:sato}. 

%Many equivalent definitions exist (see also subsection \ref{Sec1.2.1}).

 \begin{definition}
 	A \name{Lévy measure}{Lévy measure} on $\mathbb{R}^{d}$ is a $\sigma$-finite measure $\nu$ on $\mathbb{R}^{d}$ such that $\nu(\{0\})=0$ holds and
 	\begin{equation}
 	\int_{\mathbb{R}^{d}}\left(1 \wedge\|x\|^{2}\right) \mathrm{d} \nu(x)<\infty.\label{eq:levyint}
 	\end{equation}
 \end{definition} 
 Note that any Lévy measure $\nu$  satisfies the following:   $\nu(A)<\infty$  for all $A\in\mathcal B(\R^d)$  \name{bounded away from zero}, i.e., the closure of $A$ may not contain 0. In words, Lévy measures appear to have no mass at the
 origin, while singularities can occur around the origin. Differently put,  the corresponding L\'evy processes allow infinitely many small jumps  to happen, yet only a finite number of big jumps are possible. We   justify the latter translation after Example \ref{ex:complex}.
 
%  Moreover, the mass away from the origin is bounded, so only a finite number
% of big jumps can occur. 
% The Lévy measure has no mass at the
% origin, while singularities (i.e., infinitely many jumps) can occur around the origin (i.e.,
% small jumps).\mark{We show succeeding remark what it actually means.}

\begin{theorem}\label{thm:levykhint}
	 \namethrm{Lévy--Khintchine representation} Let $X=(X_t)_{t\geq 0}$ be a Lévy process.  
	 \begin{enumerate}[\normalfont(i)]
	 	\item For each $t \geqslant 0$, the characteristic function $\varphi_{t}$ of the random variable $X_{t}$ satisfies
	 	\begin{equation}
	 		\varphi_{t}(u)=\mathbb{E}\left[e^{i\left\langle u, X_{t}\right\rangle}\right]=e^{t \psi(u)}, \quad u \in \mathbb{R}^{d},
	 	\end{equation}
	 	with \name{characteristic exponent} $\psi(u)$ given by
	 	\begin{equation}\label{eq:repres}
	 		\psi(u)=i\langle u, b\rangle-\frac{1}{2}\langle u, \Sigma u\rangle+\int_{\mathbb{R}^{d}}\left(e^{i\langle u, x\rangle}-1-i\langle u, x\rangle \mathds{1}_{\{\|x\| \leqslant 1\}}\right)\,\nu( \mathrm{d} x),
	 	\end{equation}
	 	where $b \in \mathbb{R}^{d},$ $ \Sigma$ is a real  symmetric non-negative  definite $d \times  d$ matrix, and ultimately $\nu$ is a Lévy measure on $\mathbb{R}^{d}$.
	 	\item  The representation of $\psi$ by $b,$ $\Sigma$,    $\nu$ as in \eqref{eq:repres}  is in fact unique. Thus $X$ can be associated with a tuple $(b, \Sigma, \nu)$,  known as the \name{characteristic triplet} of $X .$
	 	 
	 \end{enumerate}
	  Conversely, suppose $b \in \mathbb{R}^{d},$ let $ \Sigma$  be any  real  symmetric non-negative definite $d \times  d$ matrix, and   take $\nu$ to be some Lévy measure on $\mathbb{R}^{d}$.
	 \begin{itemize} 
	 	\item[\textnormal{(iii)}] Then there is a filtered probability space $(\Omega,\mathcal F,\mathbb F,\mathbb P)$---satisfying the usual conditions---on which a Lévy process $X$ exists whose  characteristic triplet is given by $(b, \Sigma, \nu)$.	 \end{itemize}
	 
\end{theorem}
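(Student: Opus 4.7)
The plan is to reduce parts (i)--(ii) to the classical Lévy–Khintchine representation for infinitely divisible laws, and to establish (iii) by an explicit Lévy–Itô-style construction.

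For (i)--(ii), I would first observe that each $X_t$ is infinitely divisible: the decomposition $X_t=\sum_{k=1}^{n}(X_{kt/n}-X_{(k-1)t/n})$ exhibits $X_t$ as a sum of $n$ i.i.d.\ copies of $X_{t/n}$, by stationarity and independence of increments. Combined with stochastic continuity and the relation $\varphi_{t+s}(u)=\varphi_t(u)\varphi_s(u)$, standard arguments give $\varphi_t(u)\neq 0$ everywhere and $\varphi_t(u)=e^{t\psi(u)}$ for a unique continuous $\psi:\R^d\to\mathbb{C}$ with $\psi(0)=0$. The specific integral representation \eqref{eq:repres} and the uniqueness of the triplet $(b,\Sigma,\nu)$ then follow by applying the Lévy–Khintchine representation for infinitely divisible characteristic functions to the law of $X_1$; see, e.g., \citeb{Thm. 8.1}{book:sato}.

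For part (iii), given a triplet $(b,\Sigma,\nu)$, I would construct the process on an appropriate filtered product probability space (satisfying the usual conditions via augmentation, as discussed after Definition \ref{def:Brownian}) as the sum of four mutually independent ingredients: a deterministic drift $t\mapsto bt$; a continuous Gaussian process $G$ with $G_t\sim\mathcal{N}(0,t\Sigma)$, obtained by applying a symmetric square root of $\Sigma$ to a standard $d$-dimensional Brownian motion; a compound Poisson process $J^{\mathrm{big}}$ with finite jump intensity measure $\nu|_{\{\|x\|>1\}}$ (well-defined, and with only finitely many jumps on compacts $\mathbb{P}$-a.s.); and the compensated small-jump martingale $J^{\mathrm{sm}}$ described below. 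Once $J^{\mathrm{sm}}$ is in hand, checking that the sum satisfies all four properties of Definition \ref{def:Levy} is immediate, and mutual independence of the summands makes the four characteristic exponents add up to precisely \eqref{eq:repres}.

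The main obstacle will be the construction of the small-jump part $J^{\mathrm{sm}}$, because $\nu(\{0<\|x\|\leq 1\})=\infty$ is allowed, ruling out any direct compound Poisson definition. My plan is to fix a decreasing sequence $\varepsilon_n\downarrow 0$ and let $J^{\varepsilon_n}$ be the compensated compound Poisson process with jump measure $\nu|_{\{\varepsilon_n<\|x\|\leq 1\}}$; each $J^{\varepsilon_n}$ belongs to $\mathscr{M}^2$. Using independence of jump contributions on disjoint annuli together with the bound $\int_{\|x\|\leq 1}\|x\|^2\,\mathrm{d}\nu(x)<\infty$ from \eqref{eq:levyint}, a direct computation gives, for $m>n$,
\begin{equation}
\mathbb{E}\bigl\|J^{\varepsilon_n}_t-J^{\varepsilon_m}_t\bigr\|^2 \;=\; t\int_{\varepsilon_m<\|x\|\leq \varepsilon_n}\|x\|^2\,\mathrm{d}\nu(x)\;\longrightarrow\;0\quad\text{as }n\to\infty,
\end{equation}
so $(J^{\varepsilon_n})_n$ is Cauchy in the complete metric space $(\mathscr{M}^2,d_{\mathscr{M}^2})$ from \S\ref{Sec1.1.4}. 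Doob's maximal inequality (Theorem \ref{thm:Doob2}) upgrades this to uniform-on-compacts convergence in $L^2$, yielding a càdlàg limit $J^{\mathrm{sm}}\in\mathscr{M}^2$ inheriting stationary independent increments and having characteristic exponent matching the Lévy measure $\nu|_{\{0<\|x\|\leq 1\}}$, namely $\int(e^{\mathrm{i}\langle u,x\rangle}-1-\mathrm{i}\langle u,x\rangle)\,\mathrm{d}\nu|_{\{0<\|x\|\leq 1\}}(x)$, which is exactly the piece of \eqref{eq:repres} contributed by the small jumps.
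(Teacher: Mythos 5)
Your proposal is correct in outline, but it takes a more constructive route than the paper, whose proof of this theorem is essentially a set of pointers: parts (ii) and (iii) are delegated to \citeb{p. 40--41}{book:sato} and \citeb{p. 30}{book:applebaum}, the usual-conditions claim to \citeb{Thm. I.31}{book:protter}, and part (i) is presented as a by-product of the Lévy--Itô decomposition (Theorem \ref{thm:LevyIto}), with the ``direct analytical approach'' of \citeb{p. 42}{book:sato} mentioned only as an alternative. Your treatment of (i)--(ii) is precisely that alternative: exhibit $X_t$ as a sum of $n$ i.i.d.\ copies of $X_{t/n}$, deduce infinite divisibility, use stochastic continuity and the semigroup relation to get $\varphi_t=e^{t\psi}$, and then invoke the classical Lévy--Khintchine theorem for infinitely divisible laws. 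This is clean and correct, but be aware that the genuinely hard analytic content --- the integral representation \eqref{eq:repres} and the uniqueness of $(b,\Sigma,\nu)$ --- is still outsourced, just to Sato's Theorem 8.1 rather than to the Lévy--Itô decomposition; what your route buys is independence from the pathwise decomposition machinery, at the cost of not explaining where $\nu$ comes from as the jump measure $\nu^X$ of Definition \ref{def:Levy2} (which the paper's preferred route delivers for free). For (iii), your explicit synthesis (drift, Gaussian part, big-jump compound Poisson, and the $\mathscr M^2$-limit of compensated small-jump processes over shrinking annuli) is the Lévy--Itô construction run in reverse and is sound: the variance identity for the compensated compound Poisson differences is correct, \eqref{eq:levyint} gives the Cauchy property, and completeness of $(\mathscr M^2,d_{\mathscr M^2})$ plus Doob's inequality (Theorem \ref{thm:Doob2}) yields the càdlàg martingale limit. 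Two small points deserve explicit mention rather than a wave at ``an appropriate product space'': the annulus processes $J^{\varepsilon_n}$ must be built \emph{jointly}, e.g.\ from a single Poisson random measure with intensity $\mathrm ds\times\nu$, so that the claimed independence across disjoint annuli is available; and the usual-conditions claim is not merely ``augmentation'' but the nontrivial fact that the augmented natural filtration of a Lévy process is right-continuous and that the increments remain independent of the enlarged $\sigma$-algebras, which is exactly what \citeb{Thm. I.31}{book:protter} supplies.
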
 
\begin{proof}
Parts (ii)	and (iii) are relatively simple to prove, for which we refer to \citeb{p. 40}{book:sato} and \citeb{p. 41}{book:sato} (or   \citeb{p. 30}{book:applebaum} for instance) respectively. Observe the theorem above is often stated in terms of \name{infinitely divisible distributions}. In short, there is a 1-1 correspondence  between infinitely divisible distributions and Lévy processes; see   \citeb{Thm. 7.10}{book:sato}.
The ``usual conditions'' part of (iii) follows from  \citeb{Thm. I.31}{book:protter}; the natural filtration   of any Lévy process combined with all the $\mathbb P$-null sets is known to be right continuous.

Part (i) on the other hand is much more difficult to handle with. Nowadays it is seen as a by-product of the \name{Lévy--Itô decomposition}, e.g., conform \citeb{p. 127}{book:applebaum}. This decomposition is stated in    Theorem \ref{thm:LevyIto} for the one-dimensional case, and again we refer to \citeb{p. 119}{book:sato} or \citeb{p. 126}{book:applebaum} for its $d$-dimensional analogue. However, part (i) can alternatively be proven via a direct analytical approach, as is done in \citeb{p. 42}{book:sato}.
%THEOREM $7.10 .$ (i) If $\left\{X_{t}: t \geq 0\right\}$ is a Lévy process in law on $\mathbb{R}^{d}$, then, for any $t \geq 0, P_{X_{t}}$ is infinitely divisible and, letting $P_{X_{1}}=\mu,$ we have $P_{X_{t}}=\mu^{t}$
%(ii) Conversely, if $\mu$ is an infinitely divisible distribution on $\mathbb{R}^{d}$, then there is a Lévy process in law $\left\{X_{t}: t \geq 0\right\}$ such that $P_{X_{1}}=\mu$
%(iii) If $\left\{X_{t}\right\}$ and $\left\{X_{t}^{\prime}\right\}$ are Lévy processes in law on $\mathbb{R}^{d}$ such that $P_{X_{1}}=$ $P_{X_{1}^{\prime}},$ then $\left\{X_{t}\right\}$ and $\left\{X_{t}^{\prime}\right\}$ are identical in law.
\end{proof}
Conclusively, the above motivates us to speak of a Lévy process $X=(X_t)_{t\geq 0}$ with characteristic triplet $(b,\Sigma,\nu)$.  As characteristic functions uniquely determine the underlying probability distributions,    it is  important to note   each Lévy process is uniquely determined by its   triplet.

\begin{remark} More generally, the Lévy--Khintchine representation depends on the choice of \name{truncation function}, i.e.,\label{remark:levykhint}  a bounded function  $h:\R^d\to\R^d$ which   satisfies $h(x)=x$ for all $x$ in a neighbourhood of 0. Typically, in   \eqref{eq:repres} we have set $h(x)=x\mathds 1_{\{\|x\|\leq 1\}}$, and we refer to the latter   as the canonical choice.
	
	If we choose to use another truncation function $h$ instead of the canonical one, then it yields that the characteristic exponent in equation \eqref{eq:repres} can be rewritten as  
	\begin{equation}
		\psi(u)= i\left\langle u, b_{h}\right\rangle-\frac12{\langle u, \Sigma u\rangle}+\int_{\mathbb{R}^{d}}\left(\mathrm{e}^{i\langle u, x\rangle}-1-i\langle u, h(x)\rangle\right) \nu(\mathrm{d} x),
	\end{equation}
	with $b_{h}\in\R^d$  defined by  
	\begin{equation}
		b_{h}:=b+\int_{\mathbb{R}^{d}}\left(h(x)-x \mathds 1_{\{\|x\|\leq 1\}}\right) \nu(\mathrm{d} x).
	\end{equation}
	Note that the matrix $\Sigma$ and Lévy measure $\nu$ act invariantly with respect to truncation functions. In order to  avoid any confusion,  we will denote the triplet by $\left(b_{h}, \Sigma, \nu\right)$. When $h$ is clear from the context, we may drop this notation, and simply write $(b,\Sigma, \nu)$. 
	
	In these notes, we always take the canonical truncation function  unless specified otherwise.
\end{remark}

The next   example is simply a direct corollary of the Lévy--Khintchine representation.  
% This observation is relevant when studying semimartingale characteristics; a natural generalisation of the Lévy characteristic triplet \cite{book:jacod}. 
Sometimes, as in Example \ref{ex:complex},  one writes $\psi_t(u)=t\psi(u)$; 
see \citeb{p. 75}{book:jacod} for instance.

\begin{example}\label{ex:complex}
	Let $X$ be a Lévy process on $(\Omega,\mathcal F, \mathbb F, \mathbb P)$. Then,    for any $u\in \R^d$, we have
	\begin{equation}
		\left(\frac{\exp(i\langle u,X_t\rangle)}{\exp \psi_t(u) }\right)_{t\geq 0}
	\end{equation}
	is a complex-valued martingale (that is, both the real and imaginary parts are martingales) with respect to the same filtration $\mathbb F$. Indeed, for any $t\geq s$,  we find via   simple verification
	\begin{align*}
		\mathbb E\left[\frac{\exp(i\langle u,X_t\rangle)}{\exp \psi_t(u) }\mid \mathcal F_s \right]&=\mathbb E\left[\frac{\exp(i\langle u,X_t-X_s\rangle)}{\exp \psi_{t-s}(u )}\frac{\exp(i\langle u,X_s\rangle)}{\exp \psi_s(u) }\mid \mathcal F_s \right]\\
		&=\frac{\exp(i\langle u,X_s\rangle)}{\exp \psi_s(u )}\frac{\mathbb E\left[{\exp(i\langle u,X_t-X_s\rangle)} \mid \mathcal F_s \right]}  {\exp \psi_{t-s}(u) }\\
		&=\frac{\exp(i\langle u,X_s\rangle)}{\exp \psi_s(u )}\frac{\mathbb E\left[{\exp(i\langle u,X_t-X_s\rangle)}  \right]}  {\exp \psi_{t-s}(u) }\\
		&=\frac{\exp(i\langle u,X_s\rangle)}{\exp(\psi_s(u) }\frac{\mathbb E\left[{\exp(i\langle u,X_{t-s}\rangle)}  \right]}  {\exp \psi_{t-s}(u))}\\
		&=\frac{\exp(i\langle u,X_s\rangle)}{\exp \psi_s(u) },
	\end{align*}
	making explicitly use of the fact that increments are stationary and independent of the past. Also, notice that adaptivity and integrability in this context is clear.
\end{example}

 Let $(b,\Sigma,\nu)$ be the triplet of the Lévy process $X=(X_t)_{t\geq 0}$. As we already have suggested, the Lévy measure $\nu$ describes the jump behaviour of   $X$.  We motivate this by   mentioning a few properties and, subsequently, by giving an equivalent definition of   Lévy measures.
 
 Firstly, notice that if $\nu (\R^d)=0$   holds, then the characteristic exponent simplifies into  
 \begin{equation}
 	\psi(u)=i\langle u,b\rangle - \frac12\langle u,\Sigma u\rangle,\quad u\in \R^d.
 \end{equation}
This characteristic exponent typically belongs to the continuous process
\begin{equation}
	X_t=bt+\sigma B_t,\quad t\geq 0,
\end{equation}
where $\sigma$ is some $d\times d$-matrix with $\sigma \sigma^T=\sigma ^T\sigma=\Sigma$ and $B=(B_t)_{t\geq 0}$ a $d$-dimensional Brownian motion starting at 0  (see also Example \ref{ex:BM}). Observe $X_1 \sim \mathcal N(b  , \Sigma  )$. Recall that the choice of $\sigma$ plays no role \citeb{p. 355}{book:vandervaart}, however, one often chooses the (unique) {non-negative square root} $\sigma:=\sqrt{\Sigma}$ of the dispersion matrix $\Sigma$.
In conclusion, the Lévy process $X$ attains $\mathbb P$-almost surely no jumps, i.e., the process $X$ is continuous, if and only if $\nu (\R^d)=0$.  

Secondly, whether $\nu (\R^d)$ is finite or not tells us something about the amount of jumps $X$ attains. For a sample path $t\mapsto X_t(\omega)$, for any realisation $\omega\in\Omega$, we can speak of the set of all its jumping times $\{t\in \Rplus:\Delta X_t(\omega)\neq 0\}$.

 \begin{proposition}[Theorem 21.3 of  \cite{book:sato}]\label{prop:intensity}
 	Let $X=(X_t)_{t\geq 0}$ be a Lévy process with characteristic triplet $(\gamma,\Sigma,\nu)$.
 	\begin{enumerate}[\normalfont(i)]
 		\item If $\nu(\R^d)<\infty$ holds, then $\mathbb P$-almost every path of $X$ has a finite number of jumps on every compact time interval. Moreover,   the set of all jumping times is $\mathbb P$-a.s. countably infinity.
 		\item If $\nu(\R^d)=\infty$ holds, then the set of all jumping times of $\mathbb P$-almost every path of $X$ is countably infinite and dense in $\Rplus$.
 	\end{enumerate}
 \end{proposition}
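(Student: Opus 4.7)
\medskip
\noindent\textit{Sketch of proof proposal for Proposition \ref{prop:intensity}.}
The plan is to extract both assertions from a count of the jumps of $X$ falling in regions bounded away from $0$, so the central object will be the random measure
\begin{equation}
N(t,A):=\#\{s\in(0,t]:\Delta X_s\in A\},\qquad A\in\mathcal B(\R^d\setminus\{0\}),
\end{equation}
and its Poisson structure. The mere fact that $X$ is \cadlag already gives that its jumps on every compact interval form an at most countable set, which takes care of the ``countable'' part of both (i) and (ii) for free. The real work is to show that these countable sets are almost surely \emph{finite} in case (i) and \emph{dense} in case (ii).

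The key input I would take (either from the Lévy--Itô decomposition announced in Theorem \ref{thm:LevyIto}, or via a direct computation of the characteristic function of $N(t,A)$ based on the independence and stationarity of the increments of $X$) is that for every Borel set $A$ bounded away from $0$ the process $t\mapsto N(t,A)$ is a Poisson process on $\Rplus$ with rate $\nu(A)$. Because $\nu$ is a Lévy measure, such sets satisfy $\nu(A)<\infty$, so the Poisson random variable $N(t,A)$ has finite mean $t\nu(A)$.

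For part (i), I would choose the exhausting family $A_n=\{x\in\R^d:\|x\|>1/n\}$; these are bounded away from $0$ and $A_n\uparrow \R^d\setminus\{0\}$, so monotone convergence of the counts together with $\nu(A_n)\uparrow\nu(\R^d)<\infty$ forces $N(t,\R^d\setminus\{0\})$ to be almost surely finite for each $t\geq 0$. A countable union over $t\in\N$ then yields the finiteness of the jump set on every compact interval, $\mathbb P$-a.s. For the ``countably infinite'' claim (which tacitly assumes $\nu\neq 0$), I would let $t\to\infty$: since $N(t,\R^d\setminus\{0\})$ is Poisson with mean $t\nu(\R^d)\to\infty$ and is non-decreasing in $t$, it tends to $\infty$ almost surely, so $X$ has infinitely many jumps along the full time axis $\Rplus$.

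For part (ii), the key observation is that stationarity and independence of increments of $X$ transfer to $N(\cdot,A)$, hence $N(b,A)-N(a,A)$ is Poisson with mean $(b-a)\nu(A)$ for every $0\leq a<b$. With $A_n$ as before, the hypothesis $\nu(\R^d)=\infty$ yields $\nu(A_n)\uparrow\infty$, so for each rational interval $(a,b]\subset \Rplus$ the increment $N(b,A_n)-N(a,A_n)$ diverges almost surely as $n\to\infty$. Intersecting over the countable family of rational intervals produces a single null set off of which every non-empty open subinterval of $\Rplus$ contains a jump of $X$, proving density of the set of jumping times. The principal obstacle in this plan is the Poisson distribution of $N(t,A)$: showing $N(\cdot,A)$ has stationary independent increments is straightforward, but pinning down $\nu(A)$ as its rate intertwines with the proof of the Lévy--Khintchine representation and is really the content behind invoking Theorem \ref{thm:LevyIto}. \qed
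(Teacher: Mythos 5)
Your sketch is correct and follows essentially the same route the paper indicates: the paper gives no proof of its own (it cites Theorem 21.3 of Sato) but remarks afterwards that the proposition ``essentially follows from the equivalent interpretation of a L\'evy measure,'' i.e.\ precisely from the fact that $N^A$ is a Poisson process with rate $\nu(A)$ for $A$ bounded away from $0$, which is the key input you isolate. Your exhaustion by $A_n=\{x:\|x\|>1/n\}$ together with monotone convergence for (i) and the intersection over rational intervals for (ii) is the standard completion of that argument and contains no gaps, provided the Poisson structure of $N^A$ is granted from the L\'evy--It\^o decomposition as you state.
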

The above motivates the following terminology. We say that a Lévy process $X$ is of \name{finite intensity}, or that it has \name{finite activity}, if and only if $\nu(\R^d)<\infty.$ Analogously,  a Lévy process $X$ is said to be of \name{infinite intensity}, or that it has \name{infinite activity}, if and only if $\nu(\R^d)=\infty.$ 
% In most of our applications, see Chapter \ref{Ch2} and Chapter \ref{Ch3}, we will restrict ourselves to the finite intensity case. (We will consider \textit{regulated} Lévy martingales mostly, see subsection \ref{Sec1.2.2}.) 

 The countability of  all the jumping times on $\Rplus$, for both cases, should actually not come as a surprise, since this immediately follows from $X$ being \cadlag (remember the discussion  succeeding Definition \ref{def:first}). Moreover, recall that for any \cadlag process $X$---with \cadlag sample paths everywhere (without loss of generality)---the number of jumps $\Delta X_{s}$ such that  $\left\|\Delta X_{s}\right\| \geq \varepsilon$ holds, with $0<s\leq t$, has to be finite everywhere and for all fixed $\varepsilon>0 .$ Consequently, for any    measurable Borel set $A\in\mathcal B(\R^d)$   bounded away from zero, that is, $\bar A\subset  \R^d\backslash\{0\}$, and all $t\geq 0$, we obtain that the function $N_t^A:\Omega\to\R$, defined by
 \begin{equation}
 N_{t}^{A}:=\#\left\{s \in[0, t]: \Delta X_{s} \in A\right\}=\sum_{0<s\leq t}\mathds 1_{\{\Delta X_s\in A\}}=\sum_{n\in\N} \mathds 1_{\{T^A_n\leq t\}},\label{eq:begin}
\end{equation}
where
\begin{equation}
	 T^{A}_{1}:=\inf \left\{t>0: \Delta X_{t} \in A\right\}, ...,
	 T^{A}_{n+1}:=\inf \left\{t>T^{A}_{n}: \Delta X_{t} \in A\right\}, \quad n\in\N,\label{eq:stoptimes}
\end{equation}
 is a well-defined random variable, which only attains values in $\mathbb N_0=\{0,1,2,...\}$.\index{$\mathbb N_0 $, natural numbers including zero} Indeed, since the process $X$ is \cadlag and adapted, we have that all the $T_n^A$ are  stopping times, hence
 \begin{equation}
 	\{N_t=n\}=\{\omega\in\Omega:T_n^A(\omega)\leq t<T_{n+1}^A(\omega)\}\in\mathcal F_t\subset\mathcal F,\label{eq:adaptiv}
 \end{equation}
 for each $n\in\N_0$, where $T_0^A:=0.$ Another, and a somewhat more direct,  approach showing that  $N_t^A$ is finite ($\mathbb P$-almost) everywhere can be found in    \citeb{p. 2}{book:sulem} and \citeb{p. 101}{book:applebaum}. 
% \mark{See \citeb{p. 37}{book:kyp} where it does this for a compound Poisson process (chil, want dan is $\nu(A)<\infty)$. Then it generalises...}

 Moreover, note that \index{${N}^A$} $N^A=(N_t^A)_{t\geq 0}$ defines an adapted counting process without explosions, for any $A\in\mathcal B(\R^d)$ bounded away from zero.

 \begin{definition}
Let  $(T_n)_{n\in\N}$ be a  strictly increasing sequence\label{def:counting}  	of positive random variables. A stochastic process $N=(N_t)_{t\geq 0}$ on $(\Omega,\mathcal F,\mathbb P)$, given by
\begin{equation}
	N_t=\sum_{n\in\N} \mathds 1_{\{T_n\leq t\}},\quad t\geq 0,
\end{equation}
with values in $\mathbb N_0\cup \{\infty\}$ is said to be the \name{counting process} associated to the sequence $(T_n)_{n\in\N}$. 

In addition, let us set $T=\sup_{n\in\N}T_n$, which is called the \name{explosion time} of $N$. If $T=\infty$ holds $\mathbb P$-a.s., then $N$ is  a counting process \name{without explosions}. 
 \end{definition}
Observe that counting processes without explosions have \cadlag sample paths $\mathbb P$-a.s.. Consequently, all processes   $N^A=(N_t^A)_{t\geq 0}$ are \cadlag. The adaptivity of $N^A$ is also clear; it  follows from \eqref{eq:adaptiv}. In general, it can easily  be seen that a counting process $N$ is adapted if and only if the associated random variables $(T_n)_{n\in\N}$ are stopping times. Finally, notice that all adapted counting processes are increasing process  and hence   of finite variation.

 \begin{remark}\label{remark:measurability}
	For every remaining $A\in\mathcal B(\R^d\backslash \{0\})$, or equivalently, for all $A\in\mathcal B(\R^d)$ with $0\notin A$ (but now $0\in \bar A$ is allowed), it is also possible to properly define  \begin{equation}N_{t}^{A}=\#\left\{s \in[0, t]: \Delta X_{s} \in A\right\}=\sum_{0<s\leq t}\mathds 1_{\{\Delta X_s\in A\}}.\end{equation}
	Whenever $X=(X_t)_{t\geq 0}$   has a  finite amount of jumps on every compact interval (e.g., compound Poisson processes, see Example \ref{ex:compound}), then we can exactly mimic all the lines starting from equation \eqref{eq:begin}  until   \eqref{eq:end}. Every observation  above then remains valid. 
	
	In the more general case, we can still see $N_t^A$, for any $t\geq 0$, as a countable sum of indicator functions together with stopping times, namely:
	\begin{equation}
		N_t^A=\sum_{k\in\N}  \sum_{n\in\N}\mathds 1_{\{T_n^{A_k}\leq t\}},
	\end{equation}
where $A_k:=A\cap \{x\in\R^d:\|x\|>1/k\}$, for all $k\in\N$, and the $T_n^{A_k}$ are as in equation \eqref{eq:stoptimes} with $A=A_k$. Importantly, note that $N_t^A:\Omega\to[-\infty,\infty]$ is now written as a countable sum of random variables, hence a random variable itself (attaining values on the extended real line). 
All random variables $N_t^A$ remain $\mathcal F_t$-measurable, but we may have $\mathbb EN_1^A=\infty$. In general, the process $N=(N_t)_{t\geq 0}$ fails to be a counting process.
\end{remark}

As a matter of fact, when $X$ is a Lévy process,   it turns out that $N^A$ is a Poisson process (see Example \ref{ex:poisson}) for any measurable set $A\in\mathcal B(\R^d)$  bounded away from zero \citeb{p. 26}{book:protter}, which  implies
\begin{equation}\mathbb E N^A_1=\mathbb E\big[\#\left\{s \in[0, 1]: \Delta X_{s} \in A\right\}\big]<\infty.\label{eq:end}\end{equation}
This observation is going to be key for the alternative definition of a Lévy measure.

 One should think of   Lévy measures  as describing the expected number of jumps  
of a certain size within a time interval of length one. This is justified by the following (promised) definition.
\begin{definition}\label{def:Levy2}
Suppose $X$ is a Lévy process. Then we write $\nu^{X}$ for the $\sigma$-finite measure on $\mathbb R^d$ satisfying $\nu^X(\{0\})=0$ and
\begin{equation}\nu^X(A)=\mathbb E\big[\#\left\{s \in[0, 1]: \Delta X_{s} \in A\right\}\big]=\mathbb E\left[\sum_{0<s\leq t}\mathds 1_{\{\Delta X_s\in A\}}\right],\label{eq:def}\end{equation}
for all $A\in\mathcal B(\R^d\backslash \{0\})$, is called the \name{Lévy measure associated to $X$.}{Lévy measure}
\end{definition}
This definition of a Lévy measure is in line with, e.g., \citeb{p. 26}{book:protter} and \citeb{p. 88}{book:tankov}. The measure $\nu$ of a Lévy process $X$ with characteristic triplet  $(b,\Sigma,\nu)$ coincides with $\nu^X$. We claim that this follows from (the proof of) the Lévy--Itô decomposition; see Theorem \ref{thm:LevyIto}. Due to its equivalence, we will drop the $X$  and always write $\nu$  from now on (with Remark \ref{remark:clarify} as only exception).

\begin{remark}
	It is actually not immediately clear why $\nu^X$ is well-defined.\label{remark:clarify} Observe that, for any $\omega\in\Omega$ fixed, the set function \begin{equation}\mathcal B(\R^d\backslash \{0\})\to[0,\infty],\,A\mapsto N_1^A(\omega)\end{equation}
	 is a counting measure on $\R^d\backslash \{0\}.$ Consequently, since we know from Remark \ref{remark:measurability}  that all functions $N_1^A:\Omega\to[-\infty,\infty]$ are measurable, we obtain
	 \begin{equation}
	 	\nu^X\left(\bigsqcup _{k=1}^{\infty }E_{k}\right)=\int_\Omega \left(\sum _{k=1}^{\infty }N_1^{E_k}(\omega)\right) \mathrm d\mathbb P(\omega)=\sum _{k=1}^{\infty }\int_\Omega  N_1^{E_k}(\omega) \,\mathrm d\mathbb P(\omega)=\sum _{k=1}^{\infty }\nu^X (E_{k}), 
	 \end{equation}
for all sequences ${\displaystyle (E_{k})_{k=1}^{\infty }}$ of pairwise disjoint sets in $\mathcal B(\R^d\backslash \{0\})$. This follows from the monotone convergence theorem. We conclude, the set function
 \begin{equation}\nu ^X: \mathcal B(\R^d\backslash \{0\})\to[0,\infty],\,A\mapsto \mathbb E N_1^A\end{equation}
defines a measure on $\R^d\backslash\{0\}$, which is in fact $\sigma$-finite:   $\nu^{X}( \{x\in\R^d:\|x\|>1/k\})<\infty$ holds for all $k\in\N$. This justifies the explicit expression in \eqref{eq:def} for any set $A\in\mathcal B(\R^d\backslash \{0\})$.
%
%\begin{equation}	{\displaystyle \mu \left(\bigsqcup _{k=1}^{\infty }E_{k}\right)=\sum _{k=1}^{\infty }\mu (E_{k}).} \end{equation}
%	

	If one would rather  think of expectations for integrable functions only (i.e., with finite expectation), note that equation \eqref{eq:def} defines a $\sigma$-finite \textit{pre-measure} on the collection of   all Borel measurable sets bounded away from zero. Carathéodory's extension theorem then tells us that there is a unique $\sigma$-finite measure on $\R^d\backslash \{0\}$ extending the pre-measure. Either way, we obtain the same measure (due to the $\sigma$-finiteness).

%	 First, we show    that $\nu^X$ is  a  $\sigma$-finite measure on $\mathbb R^d\backslash \{0\}$. That is, the set function $\mathcal B(\R^d\backslash \{0\})\to[0,\infty],\,A\mapsto \nu^X(A)$ defines a  measure and is moreover $\sigma$-finite.

Finally, in order to obtain the unique measure $\nu^X$ on $\R^d$ as in Definition \ref{def:Levy2}, one may take the push-forward measure of $\nu^X$ on $\R^d\backslash \{0\}$ under  the inclusion map $\iota:\R^d\backslash \{0\}\to\R^d$. This immediately yields the  $\nu^X(\{0\})=0$ property, and it is  also obviously $\sigma$-finite again.
\end{remark}

  Proposition \ref{prop:intensity} essentially follows from the equivalent interpretation of a Lévy measure. 
  Before we head towards the next subsection, it is worthwhile  mentioning that  the moments of a Lévy process $X$ are directly related to integrability of its Lévy measure $\nu$.
  \begin{theorem}[Theorem 25.3 of \cite{book:sato}]\label{thm:expec}
  	Let $X$ be a Lévy process with characteristic triplet $(b, \sigma^2, \nu) .$ Suppose $p\in\Rplus$. The random variable
  	$X_{t}$ has a finite $p$-th moment for any instant $t\geq 0$, i.e., $  \mathbb{E}\left|X_{t}\right|^{p}<\infty,\, t\geq 0,$ if and only if $\int_{|x| \geq 1}|x|^{p} \,\nu(\mathrm{d} x)<\infty$.
  \end{theorem}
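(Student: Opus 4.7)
The plan is to reduce the claim to analysing the big-jump component of $X$, which turns out to be a compound Poisson process whose $p$-th moment is directly controlled by the tail integral $\int_{|x| \geq 1} |x|^p \,\mathrm{d}\nu(x)$. All other pieces of the Lévy--Itô decomposition contribute moments of every order, so they are harmless.

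Concretely, I would first invoke the Lévy--Itô decomposition (Theorem \ref{thm:LevyIto}) to write $X_t = bt + \sigma W_t + M_t + Y_t$, where $W$ is a standard Brownian motion, $M$ is the compensated small-jump martingale with jumps uniformly bounded by $1$, and $Y_t := \int_{(0,t]} \int_{|x| \geq 1} x\, N(\mathrm{d}s,\mathrm{d}x)$ is an independent compound Poisson process. The first three summands possess moments of every order: the Gaussian/drift part trivially, and $M$ because it is an $L^2$-martingale with uniformly bounded jumps, so a Burkholder--Davis--Gundy bound (Theorem \ref{thm:BDG}) applied to $[M]_t$ controls every $L^p$-norm for $p \geq 2$, and the smaller $p$ then follow by Jensen. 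Hence the theorem reduces to showing $\mathbb{E}|Y_t|^p < \infty$ if and only if $\int_{|x| \geq 1} |x|^p\,\nu(\mathrm{d}x) < \infty$.

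For this reduced statement set $\lambda := \nu(\{|x| \geq 1\})$; one may assume $\lambda > 0$ since otherwise $Y \equiv 0$ trivially. Then $Y_t = \sum_{k=1}^{N_t} \xi_k$ with $N_t \sim \mathrm{Poi}(\lambda t)$ and $(\xi_k)_{k\in\N}$ i.i.d.\ of law $\lambda^{-1}\nu|_{\{|x| \geq 1\}}$, independent of $N$. For the ``only if'' direction, independence of $\xi_1$ from the event $\{N_t = 1\}$ yields
\[
\mathbb{E}|Y_t|^p \;\geq\; \mathbb{E}\bigl[|\xi_1|^p \mathds{1}_{\{N_t = 1\}}\bigr] \;=\; \lambda t e^{-\lambda t}\, \mathbb{E}|\xi_1|^p,
\]
so finiteness of the left-hand side forces $\mathbb{E}|\xi_1|^p = \lambda^{-1}\int_{|x| \geq 1} |x|^p\,\nu(\mathrm{d}x) < \infty$. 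For the ``if'' direction, I would condition on $N_t$ and bound $\mathbb{E}|\xi_1 + \cdots + \xi_n|^p$ via the $c_r$-inequality for $0 < p \leq 1$ (giving $n\,\mathbb{E}|\xi_1|^p$) or Minkowski for $p \geq 1$ (giving $n^p\,\mathbb{E}|\xi_1|^p$); since every moment of the Poisson variable $N_t$ is finite, $\mathbb{E}|Y_t|^p < \infty$ follows.

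Passing back to $X_t$ is then routine: the ``if'' direction combines the moment bounds on all four summands via the $c_r$-inequality, and for ``only if'' one writes $Y_t = X_t - (bt + \sigma W_t + M_t)$ and transfers finiteness through the same inequality, using that the subtracted piece has all moments. The main obstacle I anticipate is the small-jump martingale $M$: although its jumps are bounded by $1$, the number of such jumps can be infinite on finite intervals, so the BDG argument must be coupled with a careful estimate of $[M]_t = \int_{(0,t]}\int_{|x|<1}|x|^2\,N(\mathrm{d}s,\mathrm{d}x)$, whose expectation equals $t\int_{|x|<1}|x|^2\,\nu(\mathrm{d}x) < \infty$ by the defining property of a Lévy measure in \eqref{eq:levyint}.
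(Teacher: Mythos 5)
The paper offers no proof of its own here: Theorem \ref{thm:expec} is stated with a pointer to Theorem 25.3 of \cite{book:sato}, and your strategy --- split off the big-jump compound Poisson part via the L\'evy--It\^o decomposition, show the remaining drift $+$ Gaussian $+$ compensated-small-jump piece has moments of every order, and then read the $p$-th moment of the compound Poisson part off the tail integral by conditioning on the Poisson count --- is exactly the standard route that Sato follows (he proves it for general submultiplicative $g$, as the paper remarks after the statement, but the mechanism is the same). Your treatment of $Y$ is correct in both directions: the lower bound via $\{N_t=1\}$ and the upper bound via $\mathbb{E}[N_t^{p\vee 1}]<\infty$ together with the $c_r$/Minkowski dichotomy are exactly what is needed.

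The one genuine gap is the small-jump martingale $M$ for $p>2$. You assert that BDG ``controls every $L^p$-norm for $p\geq 2$,'' but the only estimate you actually supply is $\mathbb{E}[M]_t = t\int_{|x|<1}|x|^2\,\nu(\mathrm{d}x)<\infty$, and BDG with exponent $p$ requires $\mathbb{E}\bigl[[M]_t^{p/2}\bigr]<\infty$, which does not follow from finiteness of the first moment of $[M]_t$ when $p>2$. Since both directions of the theorem need $\mathbb{E}|M_t|^p<\infty$ for the given $p$, this must be closed. The standard repair is the separate fact that a L\'evy process with uniformly bounded jumps has finite \emph{exponential} moments (Sato's Theorem 25.17, which his proof of 25.3 invokes): for $|x|\leq 1$ one has $|e^{ux}-1-ux|\leq \tfrac12 u^2x^2e^{|u|}$, so the L\'evy--Khintchine integral for $M$ extends to real exponents and $\mathbb{E}e^{u M_t}<\infty$ for all $u$; alternatively, note that $[M]_t$ is itself a driftless subordinator with jumps in $(0,1)$ and L\'evy measure of finite mass times mean, so $\mathbb{E}e^{u[M]_t}=\exp\bigl(t\int(e^{uy}-1)\,\rho(\mathrm{d}y)\bigr)<\infty$, which then feeds all powers into BDG. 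With either patch the argument is complete.
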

  As a matter of fact, we have stated the above for the typical functions \begin{equation}g(x)=|x|^p\vee 1,\quad p\geq 0,\end{equation}  but any non-negative measurable function $g:\R\to\R$ bounded on compacts and satisfying the submultiplicative property $g(x+y)\leq \alpha g(x)g(y)$, for some constant $\alpha>0$, suffices.
 
 Another and final important property of   Lévy processes (unrelated  to the discussion on equivalent definitions of $\nu$) is it that they are    strong Markov processes. 
 % The following result states this and even more. 
 Observe that this result in terms of applications can be very useful in combination with Proposition \ref{prop:useful-id}.

\begin{proposition}\namethrms{Strong Markov and renewal property for Lévy processes}{Lévy process!strong Markov property}\index{Lévy process!renewal property}\label{prop:strong-M}
	Let $X$ be a Lévy process on $(\Omega,\mathcal F, \mathbb F, \mathbb P)$ and assume  $T$   is a    stopping time. Conditioned on the set $\{T<\infty\}$, we have that the process    $\bar X^T$, defined  by
	\begin{equation}
		\bar X^T_t=X_{t+T}-X_T,\quad t\geq 0,
	\end{equation}
	is    a Lévy process on $(\Omega,\mathcal F,(\mathcal F_{t+T})_{t\geq 0},\mathbb P)$, $\bar X^T$ is independent of $\mathcal F_T$, and moreover $\bar X^T$ has the same distribution as $X$.  
\end{proposition}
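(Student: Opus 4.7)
The strategy is to establish, for every $F \in \mathcal{F}_T$, every $n \in \mathbb{N}$, times $0 \leq t_1 < \cdots < t_n$, and $u_1, \ldots, u_n \in \mathbb{R}^d$, the single key identity
\begin{equation*}
\mathbb{E}\Bigl[\mathds{1}_{F \cap \{T<\infty\}} \exp\!\Bigl(\mathrm{i}\sum_{j=1}^n \langle u_j, \bar X^T_{t_j}\rangle\Bigr)\Bigr] = \mathbb{P}(F \cap \{T<\infty\})\, \mathbb{E}\Bigl[\exp\!\Bigl(\mathrm{i}\sum_{j=1}^n \langle u_j, X_{t_j}\rangle\Bigr)\Bigr].
\end{equation*}
This does double duty: the factorization yields independence of $\bar X^T$ from $\mathcal{F}_T$, and taking $F = \Omega$ yields equality of all finite-dimensional distributions, hence $\bar X^T \stackrel{d}{=} X$.

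Before attacking this I would dispose of the easy bookkeeping (everything understood on $\{T < \infty\}$). Trivially $\bar X^T_0 = X_T - X_T = 0$. Since $t + T$ is a stopping time and $X$ is \cadlag hence progressively measurable, $X_{t+T}$ is $\mathcal{F}_{t+T}$-measurable while $X_T$ is $\mathcal{F}_T \subset \mathcal{F}_{t+T}$-measurable, so $\bar X^T$ is adapted to $(\mathcal{F}_{t+T})_{t \geq 0}$. That this filtration satisfies the usual conditions is already recorded in Proposition \ref{prop:useful-id}. C\`adl\`agness of $\bar X^T$ is inherited from $X$, and stochastic continuity is a distributional property that will follow once the identity above is proven.

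The identity I would first prove when $T$ takes countably many values $(t_k)_{k \in \mathbb{N}} \cup \{\infty\}$. Decomposing
\begin{equation*}
F \cap \{T < \infty\} = \bigsqcup_k F \cap \{T = t_k\}, \qquad F \cap \{T = t_k\} \in \mathcal{F}_{t_k},
\end{equation*}
and using that the increments $(X_{t_j + t_k} - X_{t_k})_{j=1}^n$ are independent of $\mathcal{F}_{t_k}$ with the same joint law as $(X_{t_j})_{j=1}^n$ by Definition \ref{def:Levy}(ii)--(iii), the identity drops out by summing over $k$. For a general stopping time $T$, I would approximate by the dyadic simple stopping times $T_n := 2^{-n} \lceil 2^n T \rceil$ on $\{T < \infty\}$ (with $T_n := \infty$ on $\{T = \infty\}$); these decrease to $T$, satisfy $T \leq T_n$ so that $\mathcal{F}_T \subset \mathcal{F}_{T_n}$ (making any $F \in \mathcal{F}_T$ admissible for the simple-case identity applied to $T_n$), and $\{T_n < \infty\} = \{T < \infty\}$. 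Right-continuity of the sample paths gives $\bar X^{T_n}_{t_j} \to \bar X^T_{t_j}$ pointwise, and bounded convergence passes the identity to the limit.

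With this identity in hand the remaining L\'evy properties of $\bar X^T$ on $(\mathcal{F}_{t+T})_{t \geq 0}$ come quickly: stationarity of increments follows from $\bar X^T \stackrel{d}{=} X$; for independence of $\bar X^T_t - \bar X^T_s$ from $\mathcal{F}_{s+T}$ I would apply the identity to the stopping time $S := s + T$, noting $\bar X^T_t - \bar X^T_s = X_{t+T} - X_{s+T} = \bar X^S_{t-s}$, and the independence clause just proved supplies the independence of $\bar X^S$ from $\mathcal{F}_S = \mathcal{F}_{s+T}$. The main obstacle is the approximation step: the limit must be taken on the bounded exponential test functional rather than on the increments themselves to avoid integrability issues, and one must be attentive to the inclusion $\mathcal{F}_T \subset \mathcal{F}_{T_n}$ so that the simple-case identity is uniformly applicable along the approximating sequence.
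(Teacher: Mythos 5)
Your proof is correct, but it follows a genuinely different route from the one the paper points to. The paper's proof is by reference to \citeb{p. 23}{book:protter}, and it explicitly notes that the argument there ``exploits the result in Example \ref{ex:complex}'': one applies Doob's optional sampling to the bounded complex martingale $\exp(i\langle u,X_t\rangle)/\exp\psi_t(u)$ at the stopping time $T$ (first bounded, then general) to obtain directly that $\mathbb E[\mathds 1_\Lambda \exp(i\langle u, X_{T+t}-X_T\rangle)]=\mathbb P(\Lambda)e^{t\psi(u)}$ for $\Lambda\in\mathcal F_T$, and then bootstraps to finite-dimensional distributions by induction. You instead prove the same factorisation identity by the classical discretisation argument: establish it for countably valued stopping times by decomposing over $\{T=t_k\}$, then approximate a general $T$ from above by the dyadic stopping times $T_n=2^{-n}\lceil 2^nT\rceil$, using $\mathcal F_T\subset\mathcal F_{T_n}$, right-continuity of paths, and bounded convergence on the exponential test functional. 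Your approach is more elementary and self-contained --- it needs only the definition of a Lévy process and \cadlag paths, with no optional sampling theorem and no martingale machinery --- whereas the martingale route is shorter once Example \ref{ex:complex} is in hand and meshes with how the paper has organised its prerequisites. Two small points you should make explicit if you write this up: the claim that the vector $(X_{t_j+t_k}-X_{t_k})_{j=1}^n$ is jointly independent of $\mathcal F_{t_k}$ with the same law as $(X_{t_j})_{j=1}^n$ does not follow verbatim from Definition \ref{def:Levy}(ii)--(iii), which concern single increments; it requires the standard telescoping/successive-conditioning argument over the increments $X_{t_j+t_k}-X_{t_{j-1}+t_k}$. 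And your final step deducing independence of $\bar X^T_t-\bar X^T_s$ from $\mathcal F_{s+T}$ needs the observation that $s+T$ is again a stopping time, which is true but worth a line. Neither is a genuine gap.
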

\begin{proof}
	For a proof, see \citeb{p. 23}{book:protter}. It basically exploits the result in Example \ref{ex:complex}.
\end{proof}

\subsection{Examples, illustrations, and the Lévy--Itô decomposition}\label{Sec1.2.1}
\noindent  In the remainder of this section, we restrict  ourselves to the one-dimensional setting.  We claim that any result below extends to the $d$-dimensional case, but for our intents and purposes $d=1$ suffices and appears to be less technical sometimes.

%Under additional assumptions on $\nu$ has a simpler form. Also note some kind of 1-1 correspondence.

  The terms of   a Lévy triplet $(b,\sigma^2,\nu)$, where we replace   $\Sigma $ by  $\sigma^{2}$,  suggest that a Lévy process can be seen as having three independent components: a linear drift, a Brownian motion, and a  Lévy jump process. In particular, recall the discussion after Remark \ref{remark:levykhint}. This observation has a mathematical rigorous interpretation, which is given by the Lévy--It\^o decomposition in Theorem \ref{thm:LevyIto}. Before we head towards this result, we provide the most fundamental examples of Lévy processes.
  
  Let us recall that a Lévy process is continuous if and only if $\nu(\R)=0$, hence the following example demonstrates all possible Lévy processes that are pathwise continuous.
  
  \begin{example}\label{ex:BM}
  	Suppose $b\in\R$ and $\sigma^2\geq0.$ Then, a stochastic process $X=(X_t)_{t\geq 0}$ is said to be a Brownian motion with \name{drift coefficient} $b$ and \name{dispersion coefficient} $\sigma^2$, if \begin{equation}\label{eq:brdrift}X_{t}=b t+\sigma W_{t},\quad t \geqslant 0,\end{equation} 
  	where $W=(W_t)_{t\geq 0}$ is a standard Brownian motion. Every $X$ of the form \eqref{eq:brdrift} is a Lévy process with characteristic triplet $(b, \sigma^{2}, 0)	$.
  	
  	Figure \ref{fig:drift}   illustrates a continuous Lévy process, i.e., a process as in equation \eqref{eq:brdrift}. Note, we can write   $X_t=\sum_{j=1}^n X_{j\Delta}-X_{(j-1)\Delta  }$ with $n\Delta  =t$, where $X_{j\Delta}-X_{(j-1)\Delta} \sim \mathcal N(b\Delta,\sigma^2\Delta)$, which particularly motivates the numerical scheme in Appendix \ref{matlab-drift}.
%  	
%  	
%  	$X_t-X_s\sim \mathcal N(b)$ See matlab code! This code makes use of the fact that... and 
  \end{example}

Throughout these notes, we will  comment very little to none upon the accuracy and stability of our simulations (see {\S}\ref{Sec1.3} and its references, e.g.,  \cite{article:platen} and \cite{book:kloeden}, for more about ``the error'' of our approximations). Instead, throughout this section we refer to, e.g.,  \cite{book:kroese} and \cite{article:manuge} for an extensive overview on the relevant Monte Carlo methods. Especially, the scheme  in Appendix \ref{matlab-drift} can be found in \citeb{p. 181}{book:tankov}, \citeb{p. 182}{book:kroese}, and \citeb{p. 4}{article:manuge}. 
%
%
%%A comprehensive study on ``the error'' of our approximations can be found in, e.g., \cite{article:platen} and \cite{book:kloeden}. We merely use the simulations as supporting tools.
%
%General references where you can find modelling stuff!

%  \begin{figure}[!ht]
%  	\begin{center}\vspace{-.23cm}
%  		\animategraphics[controls={play,step,stop},trim = 0 9cm 0 9.5cm, width=\textwidth]{30}{Figures/BM3/BM_}{1}{201}%201
%  	\end{center}
%  	%	\mediabutton[
%  	%	jsaction={
%  	%		if(anim[’taylor’].isPlaying)
%  	%		anim[’taylor’].pause();
%  	%		else
%  	%		anim[’taylor’].playFwd();
%  	%	}
%  	%	]{\fbox{Play/Pause}}
%  	\caption{Five sample path realisations of the one-dimensional Lévy process $L=(L_t)_{t\geq 0}$, given by $L_t=b t+\sigma W_t$ with $b=1$, $\sigma =2$ and where $W=(W_t)_{t\geq 0}$ is a standard Brownian motion.}  \label{fig:drift}   %Variance}
%  \end{figure}

%\begin{figure}[!ht]
%	\begin{center} \vspace{-.23cm}
%		\animategraphics[controls={play,step,stop},  width=.7\textwidth]{30}{Figures/BM3/BMD_}{1}{201}%201
%	\end{center}
%	%	\mediabutton[
%	%	jsaction={
%	%		if(anim[’taylor’].isPlaying)
%	%		anim[’taylor’].pause();
%	%		else
%	%		anim[’taylor’].playFwd();
%	%	}
%	%	]{\fbox{Play/Pause}}
%	\caption{Five sample path realisations of the one-dimensional Lévy process $L=(L_t)_{t\geq 0}$, given by $L_t=b t+\sigma W_t$ with $b=1$, $\sigma =2$ and where $W=(W_t)_{t\geq 0}$ is a standard Brownian motion.}  \label{fig:drift}   %Variance}
%\end{figure}

 \begin{figure}[!ht]
	\begin{center}
		\includegraphics[width=0.675\textwidth]{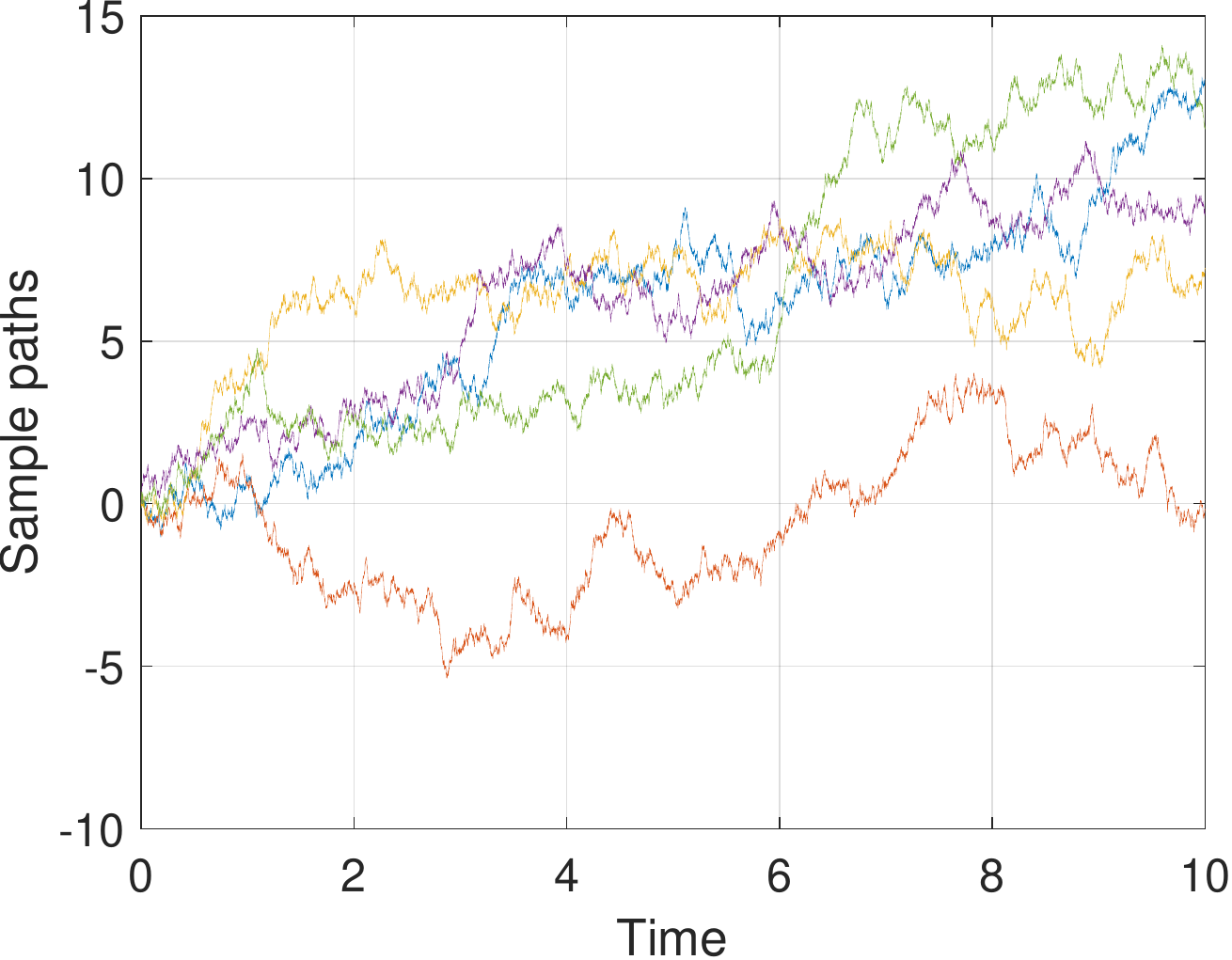}
	\end{center}
	%	\mediabutton[
	%	jsaction={
	%		if(anim[’taylor’].isPlaying)
	%		anim[’taylor’].pause();
	%		else
	%		anim[’taylor’].playFwd();
	%	}
	%	]{\fbox{Play/Pause}}
	\caption{Five sample path realisations of the one-dimensional Lévy process $L=(L_t)_{t\geq 0}$, given by $L_t=b t+\sigma W_t$ with $b=1$, $\sigma =2$ and where $W=(W_t)_{t\geq 0}$ is a standard Brownian motion.}  \label{fig:drift}   %Variance}
\end{figure}

%In order to understand this decomposition, we require a few examples. It will turn out that a linear combination of these examples are more or less also the only levy processes that exist.

%Note We will sometimes find it convenient for each $t \geq 0,$ to write
%$$
%B_{A}(t)=\left(B_{A}^{1}(t), \ldots, B_{A}^{d}(t)\right)
%$$
%in the form
%$$
%B_{A}^{i}(t)=\sum_{j=1}^{m} \sigma_{j}^{i} B^{j}(t)
%$$
%where $B^{1}, \ldots, B^{m}$ are standard one-dimensional Brownian motions and $\sigma$ is a $d \times m$ real-valued matrix for which $\sigma \sigma^{\mathrm{T}}=A$

% It turned out useful to model stochastic integrals once in a while to understand the results in Chapter \ref{Ch3}. This and the above motivates the following example.

\begin{example}
	Let $B=(B_t)_{t\geq 0}$  be a standard Brownian motion.  Theorem\label{ex:interm} \ref{thm:approx} in combination with a straightforward computation (see \citeb{p. 59}{book:protter}, for instance), or a simple application of Itô's formula, see Theorem \ref{thm:ito}, yields the following famous identity:
	 \begin{equation}
		\dint0t{B_s}B_s=\frac12B_t^2-\frac12t.\label{eq:SI}
	\end{equation}
\pagebreak This (or any other) stochastic integral process  can be simulated by approximating the integral; see the  numerical scheme in Appendix \ref{matlab-WdW}  for the integral in \eqref{eq:SI}, which is inspired by Theorem \ref{thm:approx}  and conform to, e.g., \cite{phdthesis:norton}. 
On the other hand, one can  simulate this process in \eqref{eq:SI} by using the explicit expression on  the right hand side.
Figure \ref{fig:compare} compares both methods.
\end{example}
%As a continuation of Example \ref{ex:interm},   both, and we observe both outputs or not that off.
\begin{figure}[!ht]
	\centering
	\includegraphics[width=0.4715\linewidth]{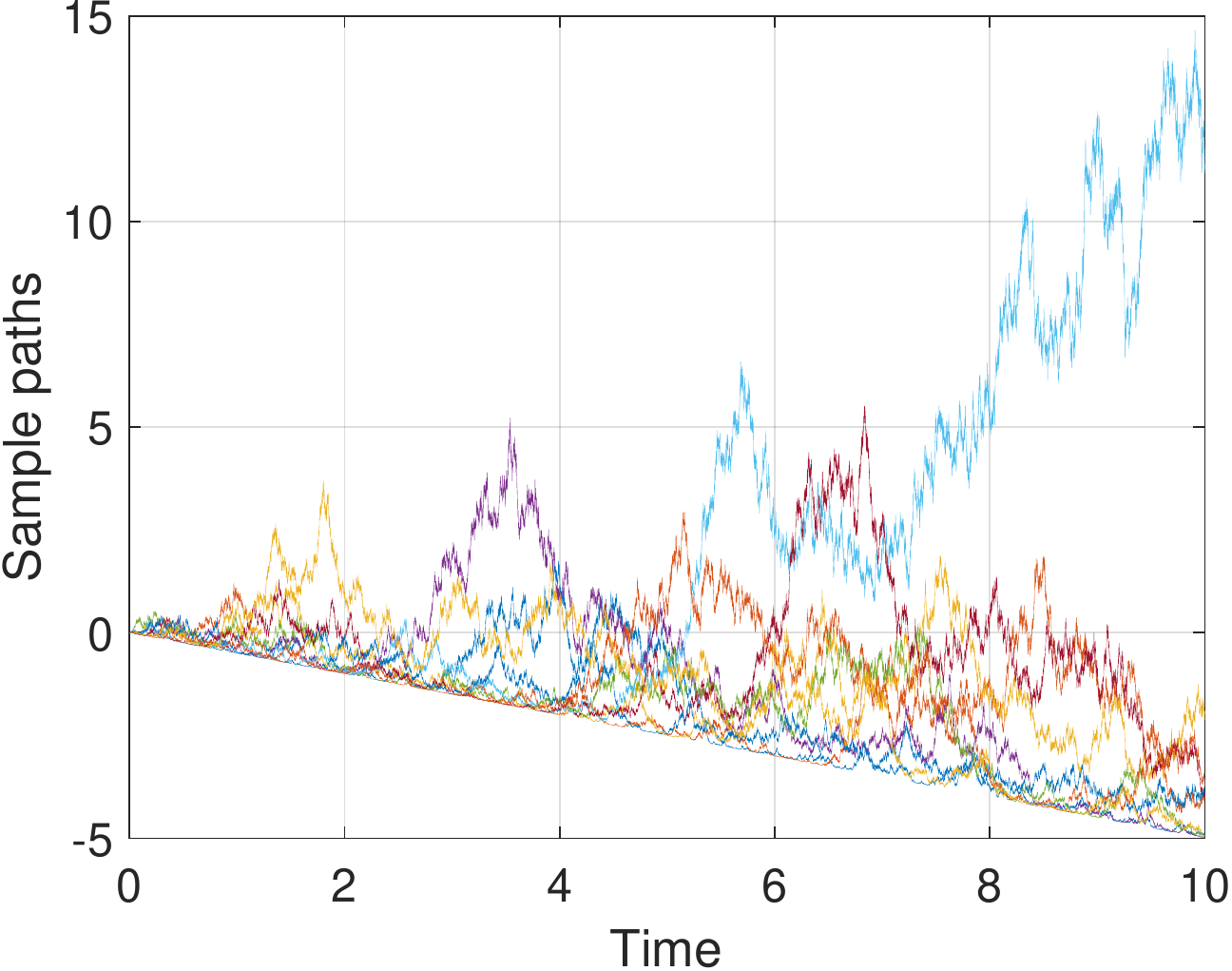} 
	\includegraphics[width=0.495\linewidth]{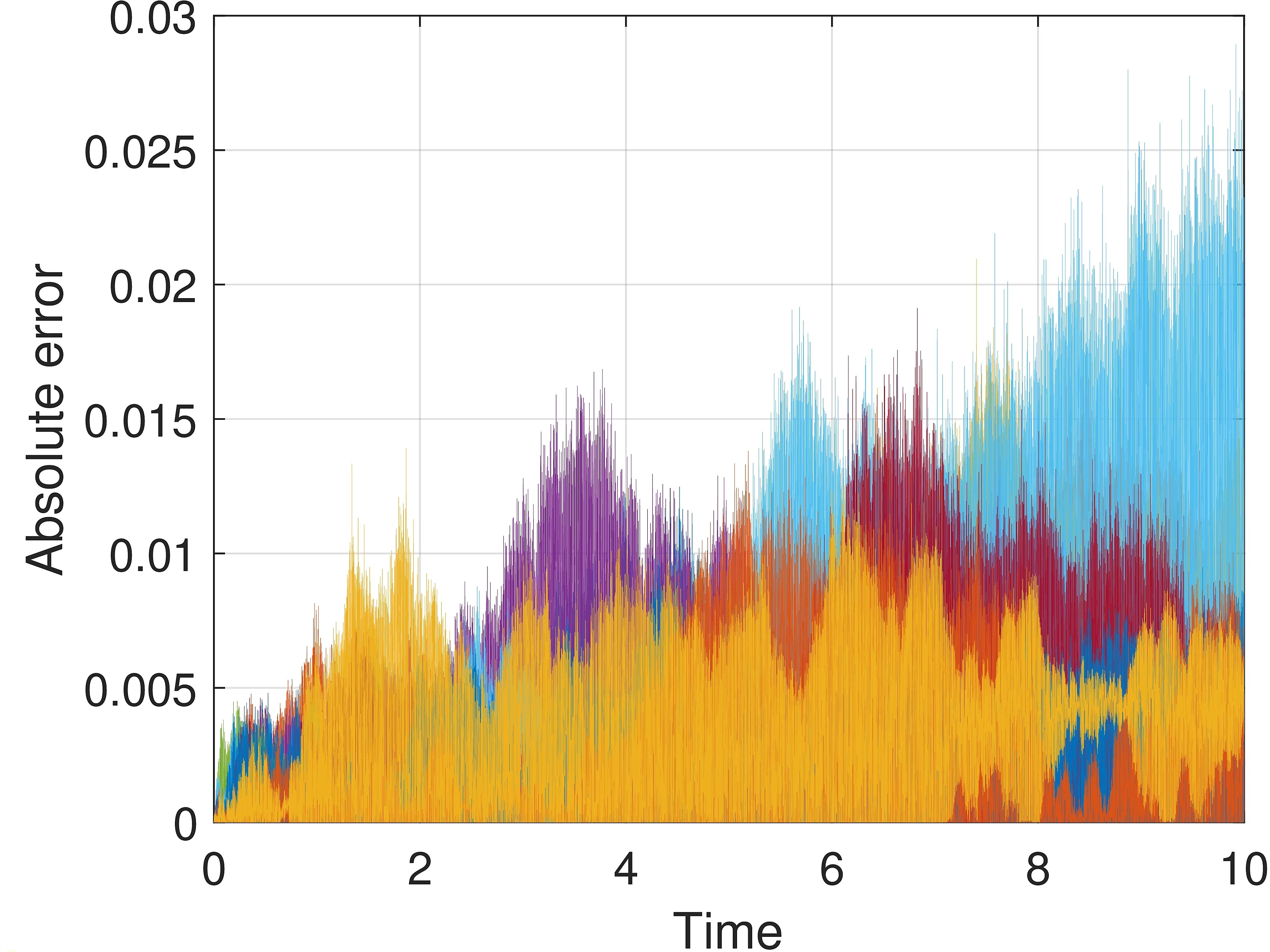}
	\caption{On the left, we see ten sample path realisations of the stochastic integral in equation \eqref{eq:SI}, simulated by the integral approximation scheme in Appendix \ref{matlab-WdW}. On the right, we plot the  absolute difference  between the latter and the direct numerical approximation of the stochastic process $\frac12B_t^2-\frac12t$ with $\Delta=10^{-6}.$ (We notice that  (much) larger choices for $\Delta$ give reasonable results too.) }
	\label{fig:compare}
\end{figure}
  Figure \ref{fig:compare} illustrates that the difference between the two methods appears to be negligible, hence motivates us that the scheme in Appendix \ref{matlab-WdW} is appropriate. Moreover, we provide in Appendix \ref{matlab-fdW}  a more general numerical scheme for stochastic integrals  with a deterministic integrand.
  % (used in Chapter \ref{Ch3}).

Now, let us return to giving explicit examples of Lévy processes. Another fundamental example are  Poisson   processes. We   introduce the latter in line with \citeb{p. 13}{book:protter}.

\begin{figure}[!t]
	\begin{center}
		\includegraphics[width=0.675\textwidth]{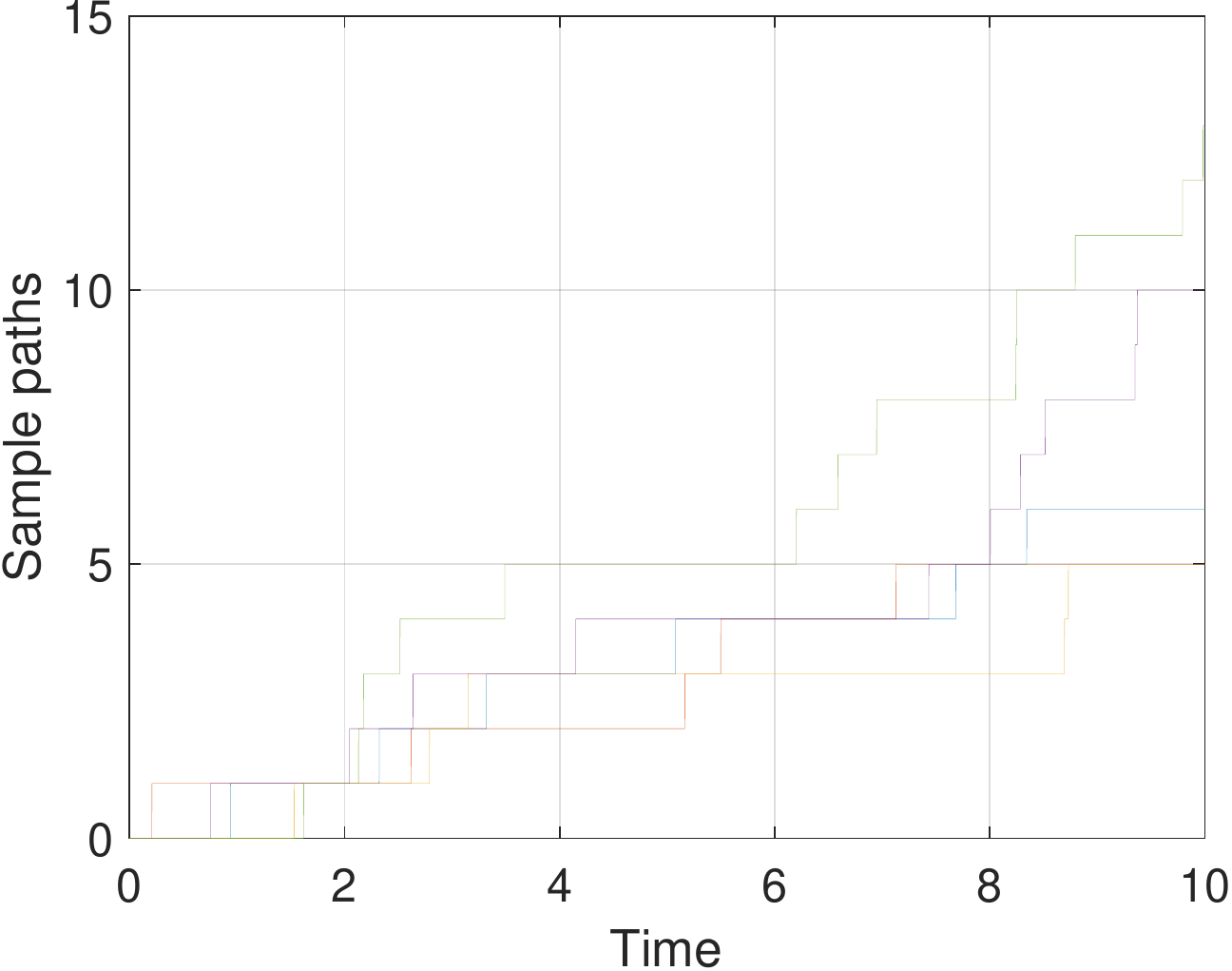}
	\end{center}
	%	\mediabutton[
	%	jsaction={
	%		if(anim[’taylor’].isPlaying)
	%		anim[’taylor’].pause();
	%		else
	%		anim[’taylor’].playFwd();
	%	}
	%	]{\fbox{Play/Pause}}
	\caption{Five sample path realisations of a Poisson process $N=(N_t)_{t\geq 0}$ with intensity $\lambda=1$.}     %Variance}
\end{figure}

\begin{example}
	\label{ex:poisson} An adapted counting process $N$ (recall Definition \ref{def:counting}) is called a \name{Poisson process} on $(\Omega,\mathcal F,\mathbb F,\mathbb P)$ if it has stationary increments that are also independent of the past. Observe, conditions (i)--(iii) of a Lévy process as in Definition \ref{def:Levy} are already satisfied. These conditions alone imply---according to Theorem 23 of \citeb{p. 13}{book:protter}---that\footnote{By convention, we allow $\lambda =0$ and $0^0=1$. Then for all $t\geq0$,  $N_t$ is  Poisson distributed with parameter 0.} 
	\begin{equation}
		\mathbb P(N_t=n)=\frac{e^{-\lambda t}(\lambda t)^n}{n!},\quad n\in\N_0,
	\end{equation}
for all $t\geq 0$, where $\lambda=\mathbb E N_1$ is the \name{intensity} of $N$. Moreover, we obtain  that $N$ is continuous in probability and does not have explosions. This yields   that $N$ is a \cadlag process (one may assume without loss that all paths are \cadlag; recall  the discussion in {\S}\ref{Sec1.1.1}).

Moreover, due to the above we obtain that $N=(N_t)_{t\geq 0}$ is a Lévy process with characteristic triplet $(\lambda,0,\lambda \delta_1),$ where $\delta_1$ denotes the Dirac measure concentrated at atom 1. Consistent with Remark \ref{remark:levykhint}, we have presented the Lévy triplet with the canonical truncation function. In many textbooks, e.g., \citeb{p. 32}{book:protter} and \citeb{p. 5}{book:kyp}, one however considers the truncation function $h(x)=x\mathds 1_{\{|x|<1\}}$ instead, yielding the characteristic triplet  $(0,0,\lambda \delta_1)$. We prefer   the canonical one. This is because one should think of $\lambda t$ as the drift term of $N_t$, since 
\begin{equation}
	\big(N_t-\lambda t\big)_{t\geq 0},
\end{equation}
\noindent known as the corresponding \name{compensated Poisson process}{compensated!Poisson process}, is a martingale with respect to the filtration $\mathbb F$. This follows by a simple verification.

Finally, let $(T_n)_{n\in \N}$ be the (unique) sequence of stopping times associated to $N$. In many fields, we say $T_n$ are     \name{arrival times}. Now, set $T_0 =0$, as usual, and let  $\bar T_n:=T_{n}-T_{n-1}$, $n\in\N$ be   the \name{interarrival times} of the Poisson process $N$. It is commonly known that all  $\bar T_n\sim \text{Exp}(\lambda)$ are exponentially distributed with parameter $\lambda$, since the probability of no  arrivals (i.e., jumps) in $(s,s+t]$, given that there was an arrival  at $s$, is given by
\begin{align*}
	\mathbb P\big(\bar T_{n+1}>s+t \mid\bar T_n=s\big)&=\mathbb P\big(N_{t+s}-N_s=0\mid \bar T_n=s\big) \\
	&=\mathbb P\big(N_{t+s}-N_s=0\big) \\
	&=e^{-\lambda t},
\end{align*}
 for any $s,t>0$. The ultimate and penultimate equalities   follow from the facts that  increments are  stationary and independent of the past, respectively. 

The latter observation is useful, since it gives rise to a relatively straightforward numerical scheme; see Appendix \ref{matlab-poisson}. This   scheme is conform to, e.g., \citeb{p. 181}{book:tankov}, \citeb{p. 172}{book:kroese} and \citeb{p. 5}{article:manuge}. For alternatives, we like to refer to  \cite{book:kroese} as well.
%
%
%	Also, observe that
%	\begin{equation}
%		N_t-\lambda t
%	\end{equation}
%is a martingale.\mark{Do this also for general Lévy process!}
%	
%	
\end{example}

In addition to Example \ref{ex:poisson}, recall  we always assume that the filtered probability space satisfies the usual conditions.	It should be noted that the existence of such Poisson processes is affirmative;  the natural filtration   of $N$ combined with all the $\mathbb P$-null sets is   right-continuous; see  \citeb{Thm. I.31}{book:protter} (which is actually valid for any Lévy process, hence we will no longer repeat this in the further reading). Recall, actually, that this  has already been dealt with in Theorem \ref{thm:levykhint}.

%\begin{figure}[!ht]
%	\begin{center}\vspace{-.23cm}
%		\animategraphics[controls={play,step,stop},trim = 0 9cm 0 9.5cm, width=\textwidth]{30}{Figures/Poisson/CPP_}{1}{201}%201
%	\end{center}
%	%	\mediabutton[
%	%	jsaction={
%	%		if(anim[’taylor’].isPlaying)
%	%		anim[’taylor’].pause();
%	%		else
%	%		anim[’taylor’].playFwd();
%	%	}
%	%	]{\fbox{Play/Pause}}
%	\caption{Five sample path realisations of a Poisson process $N=(N_t)_{t\geq 0}$ with intensity $\lambda=1$.}  %Variance}
%\end{figure}

%\begin{figure}[!ht]
%	\begin{center}\vspace{-.23cm}
%		\animategraphics[controls={play,step,stop}, width=.7\textwidth]{30}{Figures/Poisson/P_}{1}{201}%201
%	\end{center}
%	%	\mediabutton[
%	%	jsaction={
%	%		if(anim[’taylor’].isPlaying)
%	%		anim[’taylor’].pause();
%	%		else
%	%		anim[’taylor’].playFwd();
%	%	}
%	%	]{\fbox{Play/Pause}}
%	\caption{Five sample path realisations of a Poisson process $N=(N_t)_{t\geq 0}$ with intensity $\lambda=1$.}  %Variance}
%\end{figure}

Remember that we have encountered Poisson processes before, namely, for $X$ any Lévy process with triplet $(b,\sigma^2,\nu)$, the process $N^A=(N^A)_{t\geq 0}$ given by  $N_{t}^{A}=\#\left\{s \in[0, t]: \Delta X_{s} \in A\right\}$ is a Poisson process with intensity $\lambda =\mathbb E N^A_1=\nu(A)<\infty,$ for all Borel measurable sets $A\in\mathcal B_0$  bounded away from zero. It is quite easy to see that $N^A$ inherits every Lévy property of $X$; we refer to \citeb{p. 26}{book:protter} for a somewhat more elaborate discussion.

Now that we have elaborately introduced Poisson processes, we are able to clarify some comments made in  {\S}\ref{Sec1.1} regarding measurability and stochastic integration.  Poisson processes appear to be no predictable processes.  Intuitively speaking this is very convincing, since the jumps of a Poisson process just cannot be “predicted in
advance”. We provide a rigorous proof of the non-predictability below.
\begin{lemma}\label{lem:pred}
	Let $N=(N_t)_{t\geq 0}$ be a Poisson process. Then $N_-=(N_{t-})_{t\geq 0}$ is a predictable process, but the process $N$ itself is not predictable.
\end{lemma}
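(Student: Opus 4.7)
The plan is to handle the two assertions separately. Predictability of $N_-$ is immediate: since $N$ is adapted to the right-continuous filtration $\mathbb F$, the left limit $N_{t-}=\lim_{s\uparrow t}N_s$ is $\mathcal F_{t-}$-measurable, hence $\mathcal F_t$-measurable, so $N_-$ is an adapted \caglad process, i.e.\ $N_-\in\mathbb L\Rplus$. Definition \ref{def:predict} then yields predictability for free.

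For the non-predictability of $N$, I would argue by contradiction, exploiting the compensator decomposition $N=M+\lambda t$ where $M_t:=N_t-\lambda t$ is the compensated Poisson process. Assume first that $\lambda>0$ (the case $\lambda=0$ forces $N\equiv 0$, which is trivially predictable). If $N$ were $\mathcal P$-measurable, then since the deterministic map $t\mapsto \lambda t$ is predictable by Proposition \ref{prop:predictdet}, the difference $M$ would inherit predictability as well. By Example \ref{ex:poisson}, $M$ is a martingale with $M_0=0$, hence in particular a local martingale. Furthermore $M$ is of finite variation, being the difference of the two increasing processes $N$ and $(\lambda t)_{t\geq 0}$.

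At this point the argument concludes by invoking Remark \ref{remark:2}: a local martingale starting at $0$ which is simultaneously predictable and of finite variation must be indistinguishable from the zero process. This would force
\begin{equation*}
N_t=\lambda t\quad\mathbb P\text{-a.s., for every }t\geq 0,
\end{equation*}
which is absurd since $N_t\in\mathbb N_0$ takes only integer values whereas $\lambda t\notin\mathbb N_0$ for any $t\in\Rplus\setminus\lambda^{-1}\mathbb N_0$ (e.g.\ $t=1/(2\lambda)$). The contradiction shows that $N$ cannot be predictable.

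The main obstacle, such as it is, lies in selecting the appropriate tool. The heuristic picture---that jumps of $N$ occur at ``completely random'' times that cannot be foreseen---naturally suggests analysing the arrival times $T_n$ directly as totally inaccessible stopping times, but that route is substantially more technical. The slick approach sketched above sidesteps any pathwise analysis entirely by leveraging the uniqueness principle recorded in Remark \ref{remark:2} applied to the compensator decomposition $N=M+\lambda t$.
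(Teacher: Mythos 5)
Your proof is correct and follows essentially the same route as the paper's: both rest on the decomposition $N_t=M_t+\lambda t$ and the fact that a predictable finite variation local martingale vanishing at $0$ is indistinguishable from zero (the paper phrases this as ``unicity of the compensator,'' which is the same underlying principle from Remark \ref{remark:2} that you invoke directly). Your version is slightly more explicit in unpacking the contradiction $N_t=\lambda t$ against integer-valuedness, and correctly flags the degenerate case $\lambda=0$, which the paper's statement tacitly excludes.
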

\begin{proof}
All adapted \caglad processes are predictably measurable, hence $N_-$ is obviously a predictable process.  
Now, recall the definition of a compensator, see Definition \ref{def:compensator}, and the discussion thereafter. We easily deduce from Example \ref{ex:poisson} that $\lambda t$ is the compensator of the finite variation process $N$. By unicity of the compensator, we conclude $N$ cannot be predictable.
\end{proof}

With the present knowledge above, we are finally able to convince the  reader why restricting to integrands in $\mathbb L\Rplus$, i.e., adapted \caglad stochastic processes, is far from innocent. Recall  from {\S}\ref{Sec1.1.4} that, for general $L^2$-integrable martingales, the space of integrands can be expanded to suitable predictable processes. This suggests that a Poisson process $N\in\mathbb D\Rplus$ will not be an appropriate integrand, which Example \ref{ex:generalise} confirms.

Before we continue, remember that continuous $L^2$-martingales are ``perfect'' integrators in the sense that we may expand the class of integrands to progressively measurable processes without to many technicalities. In particular, a Brownian motion perfectly fits into that framework and, consequently, a Poisson process $N\in\mathbb D\Rplus$ would be a suitable integrand.

%\paragraph{EXTENSION OF THE	PREDICTABLE INTEGRANDS (dit moet gewoon gaan horen bij 1.2.2)}
%Based on Introduction to Stochastic Integration by Chung and Williams
%
%The reason for not able to extend this to \cadlag processes, is there are counter examples like:
%
%perspicacious COUNTER EXAMPLE\mark{\url{https://math.stackexchange.com/questions/1497704/why-predictable-processes}}

\begin{example}\label{ex:generalise} Let $N=(N_t)_{t\geq 0}$ be a Poisson process and consider $M=(M_t)_{t\geq 0}$ to be the  compensated Poisson process, thus defined by $M_t=N_t-\lambda t,\,t\geq 0$. Due to the fact that $M$ is a finite variation process, we may  examine the following Lebesgue--Stieltjes integral:
\begin{equation}
	\begin{aligned}
	Y_t&=\int_0^t N_s \,\mathrm dM_s = \int_0^t (N_{s-} +  \Delta N_s)\,\mathrm  dM_s  \\
	&= \int_0^t N_{s-}\,\mathrm dM_s + \sum_{0<s\le t}(\Delta N_s)^2
	= \int_0^t N_{s-}\,\mathrm dM_s + N_t, \quad t\geq 0.\label{eq:calc}
\end{aligned}
\end{equation}
If an integrator is of finite variation, stochastic integrals coincide with the Lebesgue--Stieltjes integrals for adapted \caglad integrands; see Proposition \ref{prop:stieltjes}. The same holds true for predictable integrands; see Proposition \ref{prop:stieltjes2}. To prevent ambiguity, we would initially want   that both notions of integrals   remain  indistinguishable for any other extended class of integrands. 

%In other words, we endeavour to provide a notion of stochastic integral that preserves the property that it coincides with the Lebesgue--Stieltjes integral whenever the integrator is of finite variation. 
Simultaneously, however, it is favourable (or actually, it is necessary for useful applications) to keep \textit{all} the preservation properties   in Theorem \ref{thm:preserve}  whenever we would expand  the class of integrands. We see that  $Y$ is a semimartingale and again of finite variation, hence preservation properties (i) and (ii) of Theorem \ref{thm:preserve} remain intact.   Nevertheless, 
due to the fact    $M$ is a local martingale (in fact, a true martingale), we obtain  that  \begin{equation}\int_0^\cdot  N_{s-}\,\mathrm dM_s\end{equation} 
is a local martingale according to Theorem \ref{thm:preserve}. This yields that $Y$ is not a local martingale, because a Poisson process $N$ fails to be a local martingale (since it is an increasing process).

This comes down to the conclusion that the space of integrands cannot be expanded properly to $\mathbb D\Rplus$, in case we endeavour to neither lose the preservation property of local martingales nor the agreement of stochastic  and Lebesgue--Stieltjes integrals for integrators of finite variation. This problem simply does not occur in the continuous  integrator setting, because there do not exist non-trivial (that is, apart from the zero process) continuous (local) martingales that are finite variation processes as well; see Remark \ref{remark:2}.

Accepting this, we are still able to make sense of   $Y=(Y_t)_{t\geq 0}$ as a stochastic integral---thus, preserving all properties of Theorem \ref{thm:preserve}---according to  {\S}\ref{Sec1.1.4}. This is namely due to the fact that the Doléans measure $\mu_M$ is absolutely continuous with respect to $\mathrm ds\times \mathbb P$. Indeed, one can show that $\mu_M=\lambda(\mathrm ds\times \mathbb P)$ holds; see Remark \ref{remark:doleansabs} for a computation in a somewhat more general setting.
In particular, in line with the extension of {\S}\ref{Sec1.1.4}, we have
\begin{equation}
	Y_t=\int_0^t N_s \,\mathrm dM_s =\int_0^t N_{s-} \,\mathrm dM_s,\quad t\geq 0,\label{eq:conclude}
\end{equation}
since   $N_-$ is a predictable version of   $N$, and because by Fubini--Tonelli's theorem we then get  
 \begin{equation}(\mathrm ds\times \mathbb P)(\{(t,\omega)\in\Rplus\times \Omega:N_{t-}(\omega)\neq N_t(\omega)\})=\int_{\Rplus}\mathbb P(N_{s-}\neq N_s)\,\mathrm ds=0.\end{equation}
These two conditions are sufficient to conclude the indistinguishability of the   stochastic processes in equation \eqref{eq:conclude}. For additional information, we refer to \citeb{p. 204}{unpublished:timo}.
\end{example}

\begin{remark} Example \ref{ex:generalise}   shows us that  we cannot  expand  the class  of integrands to the space of adapted \cadlag processes $\mathbb D\Rplus$, or at least, not in the most straightforward sense. The main problem we encountered is losing the local martingale preservation property, which is unfavourable. Other insightful examples which stresses out this loss can be found in \citeb{p. 65}{book:protter} and \citeb{p. 312}{article:ahn}.  
	
	The perspicacious reader might now wonder if the semimartingale preservation property,  as in  part (i) of Theorem \ref{thm:preserve}, can get lost as well if we expand the class beyond predictable integrands. For instance, in the previous example, both integrals in \eqref{eq:calc} and \eqref{eq:conclude} result into semimartingales.
	  In the paper \citeb{p. 312}{article:ahn}, the authors provide  an exhaustive counter-example of an adapted, \cadlag process $H$ and a martingale $M$ such
		that a stochastic integral process   makes sense but is not a semimartingale.
		
		 In retrospect, the preservation property of local martingales is not the only problem that one perceives when trying to make things work on $\mathbb D\Rplus$ as a  class of integrands.
\end{remark}

Another claim we made in {\S}\ref{Sec1.1} is that some given decomposition of a   semimartingale does  not need to be unique, unless---recall Theorem \ref{thm:predictable}---we require from the decomposition that the finite variation process is predictably measurable (whenever possible).  We illustrate   non-uniqueness via the existence of Poisson processes.

\begin{example}\label{ex:protter130}
% (Inspired by \citeb{p. 130}{book:protter}.) 
Suppose $X=X_0+M+A$ is a semimartingale on the filtered probability space $(\Omega,\mathcal F,\mathbb F,\mathbb P)$. In case  $(\Omega,\mathcal F,\mathbb F,\mathbb P)$ admits a Poisson process, say $N=(N_t)_{t\geq 0}$, observe that
	\begin{equation}
		X_t=X_0+(M_t+N_t-\lambda t)+(A_t-N_t+\lambda t),\quad t\geq 0,\label{eq:canonical}
	\end{equation}
yields another decomposition of $X$.

Hence, if $X$ were a special martingale with canonical decomposition $X=X_0+A+M$, we see that \eqref{eq:canonical} is not a canonical decomposition of $X$, simply due to the fact that a Poisson process is not predictable (recall Lemma \ref{lem:pred}).
%	MAAK EEN PROPOSITION VAN, GEEN EXAMPLE.
%	Example on page 130, when we have introduced a poisson process! \mark{Introduce the example which we can use from Protter on page 130. The predictability is necessary in order to ensure a unique decomposition} 
\end{example}

Ultimately, after having deviated a little, we are ready to introduce the last fundamental class of Lévy processes. These are the so-called compound Poisson processes.

%First we will  This immediately gives that the only (nondeterministic) continuous Lévy process is a Brownian motion with drift; similarly, every Lévy process is a semimartingale.[3]
%
%
%
%%  \iffalse
%
%
%
%
%
%
%Levy Processes For Finance:
%An Introduction in R
%D.J. Manuge
%Abstract
%This brief manuscript provides an introduction to Levy processes and their applications in nance as the
%random process that drives asset models. Characteristic functions and random variable generators of popular
%Levy processes are presented in R.
%
%\begin{figure}[!ht]
%	\centering
%	\includegraphics[width=0.7\linewidth]{Figures/FILL/hgfgfhhgfgfdhdg}
%	\caption{}
%	\label{fig:hgfgfhhgfgfdhdg}
%\end{figure}
%

%As we claimed previously, a Lévy measure $\nu$  entirely determines the jump behaviour of the corresponding Lévy process $X$. We  briefly  investigate this in  subsection \ref{Sec1.2.1}, yet we start by noting a few useful observations. 

%The terms of this triplet suggest that a Lévy process can be seen as having three independent components: a linear drift, a Brownian motion, and a Lévy jump process,

%In short, for a Lévy process $X$ we obtain an associated tuple $(\gamma, \Sigma, \nu)$, which is known as the \name{characteristic triplet} of $X .$

%  as described below. This immediately gives that the only (nondeterministic) continuous Lévy process is a Brownian motion with drift; similarly, every Lévy process is a semimartingale.[3]

%ONLY CONTINUOUS LEVY PROCESSES ARE OF THESE TYPE

%Some important facts of brownian motion plus 

%
%
%
%TBA\mark{Existence of Poisson process by means of usual conditions! :)}

\begin{figure}[!t]
	\begin{center}
		\includegraphics[width=0.675\textwidth]{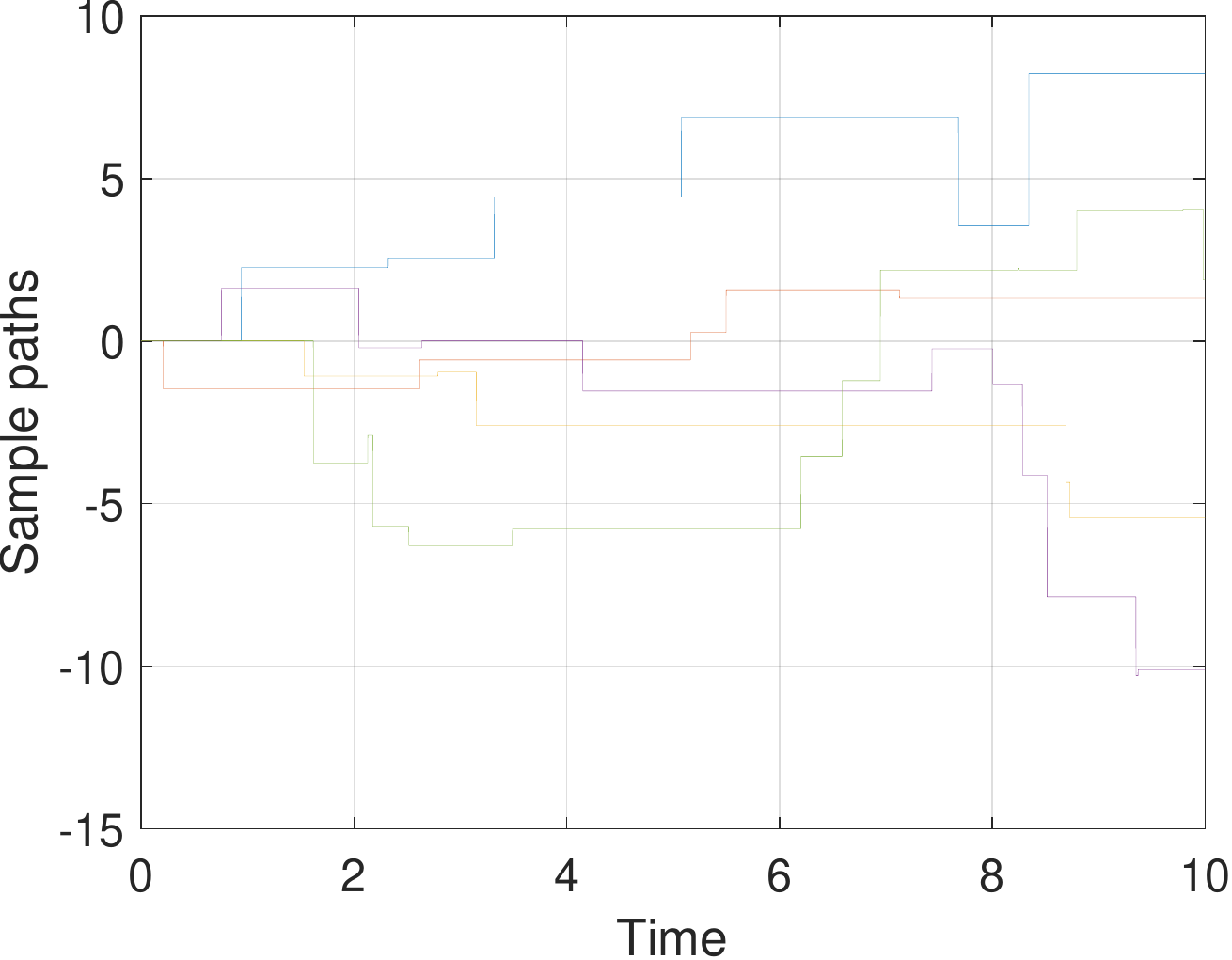}
	\end{center}
	%	\mediabutton[
	%	jsaction={
	%		if(anim[’taylor’].isPlaying)
	%		anim[’taylor’].pause();
	%		else
	%		anim[’taylor’].playFwd();
	%	}
	%	]{\fbox{Play/Pause}}
	\caption{Five sample path realisations of  the process $C=(C_t)_{t\geq 0}$, a compound Poisson process  given  by $C_t=\sum_{k=1}^{N_t}Z_k$ with $N=(N_t)_{t\geq 0}$ a Poisson process with intensity $\lambda=1$ and  $Z_1\sim \mathcal N(0,\sigma^2)$, $\sigma=2$.}  %Variance}
	\label{fig:compound}  
\end{figure}

\begin{example}A \name{compound Poisson process} on the filtered probability\label{ex:compound}  space $(\Omega,\mathcal F,\mathbb F,\mathbb P)$ is an adapted stochastic process $X=(X_t)_{t\geq 0}$  of the form  \begin{equation}X_{t}=\sum_{k=1}^{N_{t}} Z_{k}, \quad  t\geq 0,\end{equation} 
	where $N=(N_t)_{t\geq 0}$ is a Poisson process of intensity $\lambda$ and $(Z_{k})_{k \geqslant 1}$ is an i.i.d. sequence  of real-valued random variables  with common law $\mu,$ all independent of $N$. We   additionally assume $\mu(\{0\})=0$ per definition. Note,  for $\mu=\delta_1$ we have that $X=N$ is an ordinary Poisson process. 
	We will call $\mu$ the \name{jump measure} of $X$ and $\lambda$ the \name{intensity} of $X$ (for reasons that will become apparent later).
	
	The characteristic function of $X_t$ can be found  by conditioning on $N$. Indeed, by the law of total expectation, we obtain
	\begin{align*}\phi_t(u)&=\mathbb E\left[e^{iuX_t}\right]= \mathbb{E}\Big[\mathbb{E}\left[ e^{iu X_t}\mid N_t\right]\Big]=\mathbb{E}\left[\left(\mathbb{E}\left[ e^{iu Z_1}\right]\right)^{N_t}\right]\\&=\sum_{n=0}^{\infty}e^{-\lambda t}\frac{(\lambda t)^n}{n!}\left(\mathbb{E}\left[e^{iu Z_1}\right]\right)^{n} 
	=e^{-\lambda t} \sum_{n=0}^{\infty}\frac{(\lambda t \mathbb{E}\left[e^{iu Z_1}\right])^n}{n!}=e^{\lambda t\big( \mathbb{E}\left[e^{iu Z_1}\right]-1\big)}.\end{align*}
Notice that we have used all   assumed stochastic independence   in the third equality. Observe, we can write the characteristic function as $\phi_t(u)=e^{t\psi(u)}$, where
\begin{equation}
	\psi(u)=\lambda t\big( \mathbb{E}\left[e^{iu Z_1}\right]-1\big)=\lambda t\int_\R \left(e^{iux}-1\right)\mu(\mathrm dx).
\end{equation}
Rewriting a bit  shows us that $\psi$ is of the form \eqref{eq:repres}, hence by Theorem \ref{thm:levykhint} we can conclude that $X$ is a Lévy process with characteristic triplet
	\begin{equation}\left(\lambda \int_{\{|x| \leqslant 1\}} x \, \mu(\mathrm dx), 0, \lambda \mu\right).\end{equation}
	Actually, it  is neater to show that $X$ is a Lévy process  by definition. In essence, the process $X$ simply inherits (once again) all Lévy properties of the Poisson process $N$ \citeb{p. 7}{book:kyp}. Moreover, due to the fact $N$ is a \cadlag  finite variation process, we obtain that $X$ is   \cadlag and of finite variation as well. Note that $X$ is of finite intensity, since $\nu(\R)=\lambda \mu(\R)=\lambda <\infty.$

\begin{figure}[!t]
	\begin{center}
		\includegraphics[width=0.675\textwidth]{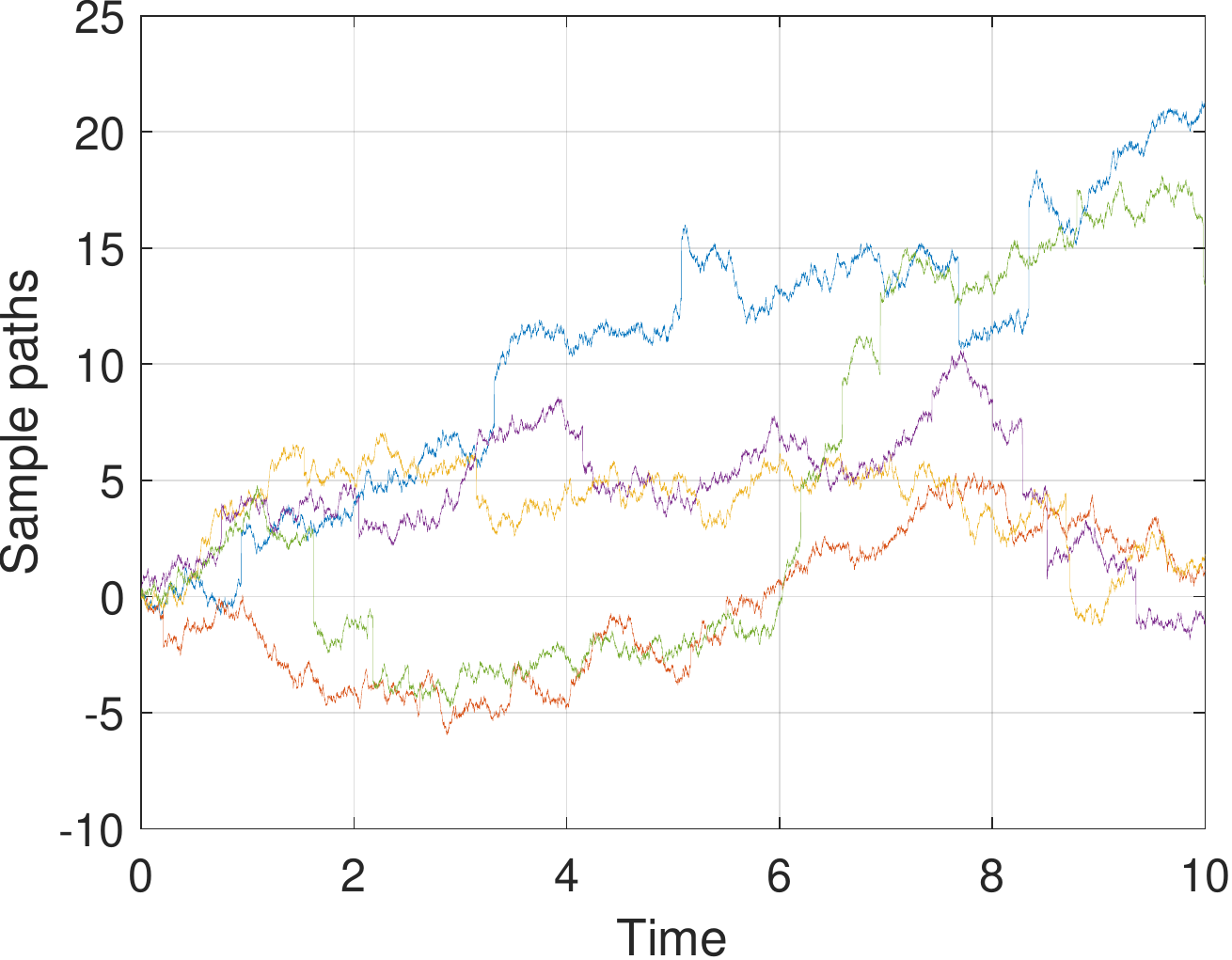}
	\end{center}
	%	\mediabutton[
	%	jsaction={
	%		if(anim[’taylor’].isPlaying)
	%		anim[’taylor’].pause();
	%		else
	%		anim[’taylor’].playFwd();
	%	}
	%	]{\fbox{Play/Pause}}
	\caption{Five sample path relations of a linear combination of (in fact, the sum of)  the processes $L$ and $C$ from Figures \ref{fig:drift} and \ref{fig:compound} respectively.}%Variance}\}_{i=0}
	 
\end{figure}

	Note that we have assumed nothing about the integrability of $Z_1$. It is worth pointing out that whenever $Z_1$ has a finite $k$-th moment, the process $X$ also has a finite $k$-th moment.  In particular, if we presume $\mathbb E|Z_1|<\infty $ and $\mathbb EZ_1^2<\infty$, then we obtain
	\begin{equation}
		\mathbb{E}\left[X_{t}\right] =-i\frac{\partial}{\partial u} \mathbb{E}\left[e^{iu X_{t}}\right]\Bigr|_{u=0} \\
		=\lambda t \int_{\R} x\, \mu(\mathrm d x)=\lambda t \mathbb{E}\left[Z_1\right]=\mathbb E[{N_t}]\mathbb E [Z_1],
	\end{equation}
and 
\begin{equation}
	\mathbb{E} [X_{t}^{2} ] =-\frac{\partial^{2}}{\partial u^{2}} \mathbb{E}\left[\mathrm{e}^{iu X_{t}}\right]\Bigr|_{\alpha=0} \\
	= \mathbb E[{N_t}]\mathbb{E}[Z^{2}_1]+(\mathbb E[{N_t}] \mathbb{E}[Z_1])^{2}.
\end{equation}
		Ultimately, if $\mathbb E|Z_1|<\infty $ holds,  we define the  \name{compensated compound Poisson process}{compensated!compound Poisson process}   by
	\begin{equation}
	M_{t}:=X_{t}-\lambda t \mathbb{E}[Z_1], \quad t \geq 0,
	\end{equation}
	which is a martingale with respect to $\mathbb F$. For any $t\geq s\geq 0$, we have
	\begin{equation}
		\mathbb E[M_t-M_s\mid \mathcal F_s]=\mathbb E[X_t-X_s\mid \mathcal F_s]-\lambda(t-s)\mathbb E[Z_1]=\mathbb E[X_t-X_s]-\lambda(t-s)\mathbb E[Z_1]=0,
	\end{equation}
since $X$ is a Lévy process and, therefore, has increments independent of the past.

This gives us  the following immediate corollary: a compound Poisson process $X$ itself is a martingale if and only if    the  process is symmetric, i.e.,  $\mathbb E Z_1=0$. Hence, whenever we satisfy \begin{equation}\mathbb EZ_1=0\quad\text{and}\quad\mathbb E Z_1^2<\infty,\end{equation} the compound Poisson process  $X=(X_t)_{t\geq 0}$ is an $L^2$-martingale. 

The numerical scheme in Appendix \ref{matlab-compound} for compound Poisson processes extends   \ref{matlab-poisson} in a straightforward manner; see also \citeb{p. 181}{book:tankov}, \citeb{p. 175}{book:kroese} and \citeb{p. 6}{article:manuge}. 
%	 is a Lévy process.  The characteristic triplet of $X$ is given by 
%	
%	In addition note that when $F=\delta_1$ holds, thus the jump measure equals the Dirac measure centred at 1, then ew obtain $X=N$ where $N$ is a Poisson process with intensity $\lambda.$ The characteristic triplet simplifies to $\left(\lambda, 0, \lambda \delta_{1}\right)$
\end{example}

We   know from Example \ref{ex:compound} that any compound Poisson process is a Lévy process whose sample paths are piecewise constant functions. It   turns out that they are actually the only possible Lévy processes with piecewise constant sample paths. 
\begin{proposition}[Proposition 3.3 of \cite{book:tankov}]\label{prop:tankov}
	A stochastic   process $(X_t)_{t\geq 0}$ is a compound Poisson process if and only if it is a Lévy process and its sample paths are piecewise constant functions.
\end{proposition}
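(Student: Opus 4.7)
The forward implication is essentially immediate from Example~\ref{ex:compound}: if $X_t = \sum_{k=1}^{N_t} Z_k$, then $X$ was already shown there to be a Lévy process, and since the underlying Poisson process $N$ has piecewise constant sample paths (constant on each inter-arrival interval, jumping by $1$ at each $T_k$), $X$ is constant between consecutive arrivals of $N$ and thus has piecewise constant paths $\mathbb P$-almost surely. I would therefore focus entirely on the converse.

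For the converse, let $X$ be a Lévy process with characteristic triplet $(b,\sigma^2,\nu)$ whose sample paths are piecewise constant $\mathbb P$-a.s. The first step is to observe that the Lévy measure must be finite: indeed, Proposition~\ref{prop:intensity}(ii) says that if $\nu(\R) = \infty$ then the set of jump times of $\mathbb P$-almost every path is dense in $\Rplus$, which flatly contradicts piecewise constancy. Hence $\lambda := \nu(\R) < \infty$; the degenerate case $\lambda = 0$ forces $X \equiv 0$ (a continuous piecewise constant process starting at $0$), which is the trivial compound Poisson, so I would discard it and assume $\lambda > 0$. With $\nu$ finite, the total jump counter $N_t := \#\{s \in (0,t] : \Delta X_s \neq 0\}$ is $\mathbb P$-a.s. finite for all $t \geq 0$; as a monotone limit of the Poisson processes $N^{\{|x|>1/k\}}$, each being Poisson by the fact recalled before Definition~\ref{def:Levy2}, $N$ itself is a Poisson process of intensity $\lambda$. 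Let $(T_k)_{k\geq 1}$ be its arrival times and set $Z_k := \Delta X_{T_k} = X_{T_k} - X_{T_k-}$. Since $X$ is piecewise constant with $X_0 = 0$, the identity $X_t = \sum_{k : T_k \leq t} Z_k = \sum_{k=1}^{N_t} Z_k$ holds pathwise.

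It remains to verify that the $Z_k$ are i.i.d.\ with a common distribution $\mu := \nu/\lambda$ on $\R \setminus \{0\}$, independent of the arrival process $N$. My plan is to apply the strong Markov property, Proposition~\ref{prop:strong-M}, iteratively at each $T_k$: these are $\mathbb F$-stopping times (as jump times of a \cadlag adapted process under the usual conditions), and the shifted process $\bar X^{T_k}_t = X_{T_k + t} - X_{T_k}$ is a Lévy process distributed as $X$, independent of $\mathcal F_{T_k}$, which carries all of $(T_1, Z_1, \ldots, T_k, Z_k)$. Consequently the joint law of $(T_{k+1}-T_k,\, Z_{k+1})$ equals that of $(T_1, Z_1)$ and is independent of the previously generated pairs, yielding the sequential i.i.d.\ property. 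The main obstacle is the final ingredient, namely showing that $T_1$ and $Z_1$ are independent with $Z_1 \sim \mu$. I would extract this from a competing-exponentials argument: for any Borel $A \subset \R \setminus \{0\}$ bounded away from zero and its complement $A^c$ in $\R \setminus \{0\}$, the counting processes $N^A$ and $N^{A^c}$ are independent Poisson processes of intensities $\nu(A)$ and $\nu(A^c)$ respectively (a Poisson random measure fact for the jumps of $X$, which is the technically deepest point and the real crux of the argument), so their first jump times $T_1^A, T_1^{A^c}$ are independent exponentials. A direct integration then gives $\mathbb P(T_1 \leq t,\, Z_1 \in A) = \mathbb P(T_1^A \leq T_1^{A^c},\, T_1^A \leq t) = \mu(A)\,\mathbb P(T_1 \leq t)$, which simultaneously yields independence and the marginal law $Z_1 \sim \mu$. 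Combining, $X_t = \sum_{k=1}^{N_t} Z_k$ is compound Poisson.
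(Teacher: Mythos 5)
The paper itself offers no proof of Proposition \ref{prop:tankov}; it simply cites Proposition 3.3 of \cite{book:tankov}. Your argument is the standard one and it is essentially sound, so what you have supplied is a genuine proof where the paper has only a pointer. The skeleton is right: piecewise constancy kills the infinite-activity case via Proposition \ref{prop:intensity}(ii), and --- more importantly --- it forces the pathwise identity $X_t=\sum_{k\le N_t}Z_k$, which silently disposes of the drift and Gaussian parts without your having to argue $\sigma^2=0$ separately; the renewal structure of $(T_{k+1}-T_k,Z_{k+1})$ then comes from Proposition \ref{prop:strong-M} exactly as you describe. Two technical points deserve a word. First, your ``competing exponentials'' step needs $N^{A}$ and $N^{A^c}$ to be \emph{independent Poisson processes}, where $A^c=(\R\setminus\{0\})\setminus A$ is in general \emph{not} bounded away from zero, so the fact quoted before Definition \ref{def:Levy2} does not apply verbatim; you must invoke the finite-intensity extension of Remark \ref{remark:measurability} (or a monotone limit over $A^c\cap\{|x|>1/k\}$) for the Poisson claim, and the independence itself is exactly the Poisson-random-measure property of $\mu_X$ (Theorem 19.2 of \cite{book:sato} as quoted in the paper), applied to the disjoint sets $[0,t]\times A$ and $[0,s]\times A^c$. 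You correctly flag this as the crux, and it is available inside the paper's toolbox, so the gap is one of citation rather than of substance. Second, in the iteration you use that $Z_i=\Delta X_{T_i}$ is $\mathcal F_{T_k}$-measurable for $i\le k$; this is true because $X$ is progressive and the left limit of an adapted \cadlag process at a stopping time is again measurable with respect to the stopped $\sigma$-algebra, but it is worth saying. With those two remarks added, the proof is complete and self-contained relative to the results already stated in these notes, which is arguably more useful to the reader than the bare citation the paper gives.
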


Importantly, note that a linear combination of \textit{independent} Lévy processes is again a Lévy process. This, for instance, implies that $X=(X_t)_{t\geq 0}$, given by
\begin{equation}
	X_t=b t + \sigma W_t+\sum_{k=1}^{N_{t}} Z_{k},\quad t\geq 0, \label{eq:jumpdif}
\end{equation}
is again a Lévy process  (presuming that $W$ is independent of $N$ and $(Z_k)_{k\geq 0}$).  Stochastic processes of the form \eqref{eq:jumpdif} are widely known as \name{Lévy jump  diffusion processes}{Lévy process!jump diffusions}.
%\begin{figure}[!ht]
%	\begin{center}\vspace{-.23cm}
%		\animategraphics[controls={play,step,stop},trim = 0 9cm 0 9.5cm, width=\textwidth]{30}{Figures/Combi/CPP_}{1}{201}%201
%	\end{center}
%	%	\mediabutton[
%	%	jsaction={
%	%		if(anim[’taylor’].isPlaying)
%	%		anim[’taylor’].pause();
%	%		else
%	%		anim[’taylor’].playFwd();
%	%	}
%	%	]{\fbox{Play/Pause}}
%	\caption{Five sample path relations of a linear combination of (in fact, the sum of)  the processes $L$ and $C$ from Figures \ref{fig:drift} and \ref{fig:compound} respectively.}  %Variance}\}_{i=0}
%	\vspace{-1cm}
%\end{figure}
%\begin{figure}[!ht]
%	\begin{center}\vspace{-.23cm}
%		\animategraphics[controls={play,step,stop}, width=.7\textwidth]{30}{Figures/Combi/Combi_}{1}{201}%201
%	\end{center}
%	%	\mediabutton[
%	%	jsaction={
%	%		if(anim[’taylor’].isPlaying)
%	%		anim[’taylor’].pause();
%	%		else
%	%		anim[’taylor’].playFwd();
%	%	}
%	%	]{\fbox{Play/Pause}}
%	\caption{Five sample path relations of a linear combination of (in fact, the sum of)  the processes $L$ and $C$ from Figures \ref{fig:drift} and \ref{fig:compound} respectively.}  %Variance}\}_{i=0}
%	\vspace{-1cm}
%\end{figure}
One may wonder   whether these linear combinations are all possible Lévy processes. The answer is obviously no, because taking linear combinations result into processes of finite intensity only, i.e., $\nu(\R)<\infty$; recall Proposition \ref{prop:intensity}. There are many interesting examples of Lévy processes with infinite activity, such as: {Gamma processes} \citeb{p. 33}{book:protter}, {$\alpha$-stable processes} \citeb{p. 51}{book:applebaum}, and numerous others \cite{article:manuge}. An extensive  review of  examples of L\'evy processes is also available in \citeb{p. 12}{book:kyp}. 
% We will, however, not  need these in this thesis.

Nevertheless, as mentioned at the beginning of this section, each Lévy process can be decomposed into terms   which we are familiar with now.

%\begin{figure}[!ht]
%	\centering
%	\includegraphics[width=0.7\linewidth]{Figures/FILL/screenshot001}
%	\caption{Realisations of Levy processes (1) compound Poisson process and (2) compound + brownion motion without drift.}
%	\label{fig:screenshot001}
%\end{figure}

\begin{theorem}\label{thm:LevyIto} \namethrm{Lévy--Itô decomposition}
 Given an arbitrary Lévy process $X=(X_t)_{t\geq 0}$ on   $(\Omega,\mathcal F,\mathbb F,\mathbb P)$ with characteristic triplet $(b,\sigma^2,\nu)$,   there exist three independent Lévy processes living on  the same filtered probability space, say $X^{(1)},$ $X^{(2)}$ and $X^{(3)}$, such that
 \begin{equation}X=X^{(1)}+X^{(2)}+X^{(3)}\quad \mathbb P\textnormal{-a.s.},\end{equation}
  where
 \begin{itemize}
\item  $ {X}^{(1)}$ is a Brownian motion with drift, i.e.,
 \begin{equation}
 	X_{t}^{(1)}=b t+\sigma W_{t}, \quad t \geqslant 0;
 \end{equation}
\item  $X^{(2)}$ is a square integrable martingale with a  \textnormal{(}$\mathbb P$-almost surely\textnormal{)} countable number of jumps on each
finite time interval, which are of magnitude less than one, and with characteristic exponent
 \begin{equation}
 	\psi^{(2)}(u)=\int_{\mathbb{R}}\left(e^{i u x}-1-i u x\right) \mathbf{1}_{\{|x| \leqslant 1\}} \mathrm{d} \nu(x);
 \end{equation}
\item  ${X}^{(3)}$ is a compound Poisson process, i.e.,
 \begin{equation}
 	X_{t}^{(3)}=\sum_{k=1}^{N_{t}} Z_{k}, \quad t \geqslant 0,
 \end{equation}
 with $N=\left(N_{t}\right)_{t \geqslant 0}$   a Poisson process  with intensity $\lambda:=\nu(\mathbb{R} \backslash[-1,1])$, independent of the i.i.d. sequence $ (Z_{k} )_{k \geqslant 1}$ with a distribution concentrated on the set $\{x\in\R: |x|>1\}$ and given by   $ \frac1\lambda   \nu|_{\{x\in\R: |x|>1\}}$ \textnormal{(}unless $\lambda=0$ holds, because then  $X^{(3)}$ is identically zero\textnormal{)}.
 \end{itemize}
%The Lévy process $X$ has characteristic triplet $(b,\sigma^2,\nu)$.
%In other words, a Lévy process $X$ with characteristic triplet $(b,\sigma^2,\nu)$  decomposes into
%
% Lévy--Itô decomposition tells us that $X$ is a Lévy process with characteristic triplet $\left(\gamma, \sigma^{2}, \nu\right)$ if and only if it can be written as the sum of three independent Lévy
%processes:
%\begin{equation}
%	X_{t}=\gamma t+\sigma W_{t}+\lim _{\eta \rightarrow 0}\left(\sum_{s \leqslant t} \Delta X_{s} \mathbf{1}_{\eta<\left|\Delta X_{s}\right| \leqslant 1}-t \int_{\eta<|x| \leqslant 1} x \mathrm{d} \nu(x)\right)+\sum_{s \leqslant t} \Delta X_{s} \mathbf{1}_{\left|\Delta X_{s}\right|>1},
%\end{equation}
%where the limit is in the $L^2$-sense and converges to a square integrable martingale.
\end{theorem}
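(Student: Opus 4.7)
The plan is to decompose the jumps of $X$ according to their size and then identify what remains as a continuous Lévy process. Let me describe the construction and then the verifications.

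First I would build $X^{(3)}$ from the large jumps. Since the set $A:=\{x\in\R:|x|>1\}$ is bounded away from zero, the counting process $N^A_t=\#\{s\in[0,t]:\Delta X_s\in A\}$ is a Poisson process with intensity $\lambda=\nu(A)<\infty$ and its jump times $(T_n)_{n\in\N}$ form a discrete sequence $\mathbb P$-a.s. Set $Z_n:=\Delta X_{T_n}$, so that
\begin{equation}
X_t^{(3)}:=\sum_{n\in\N}Z_n\mathds 1_{\{T_n\leq t\}}=\sum_{0<s\leq t}\Delta X_s\mathds 1_{\{|\Delta X_s|>1\}}.
\end{equation}
Since the $T_n$ are the jump times of a Poisson process independent of the jump marks (a fact that needs separate verification using the strong Markov property, Proposition \ref{prop:strong-M}, combined with the independent-increments structure of $X$), $X^{(3)}$ will be a compound Poisson process with intensity $\lambda$ and jump law $\frac1\lambda\nu|_A$, as required.

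Next I would construct $X^{(2)}$ via a compensated sum of small jumps. For $0<\epsilon<1$, define the finite-intensity process
\begin{equation}
Y^\epsilon_t:=\sum_{0<s\leq t}\Delta X_s\mathds 1_{\{\epsilon<|\Delta X_s|\leq 1\}}-t\int_{\epsilon<|x|\leq 1}x\,\nu(\mathrm dx).
\end{equation}
Each $Y^\epsilon$ is a compensated compound Poisson process, hence a martingale with stationary independent increments. Using Proposition \ref{prop:intermed} together with $\mathbb E(Y^\epsilon_t)^2=t\int_{\epsilon<|x|\leq 1}x^2\,\nu(\mathrm dx)$, the Cauchy criterion gives $(Y^\epsilon_t)$ converging in $L^2$ as $\epsilon\searrow 0$, by the integrability $\int_{|x|\leq 1}x^2\,\nu(\mathrm dx)<\infty$ built into the definition of a Lévy measure. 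A sharper argument—combining Doob's maximal inequality (Theorem \ref{thm:Doob2}) with the independence of $Y^\epsilon$ and $Y^{\epsilon'}-Y^\epsilon$—upgrades the convergence to uniform convergence on compact time intervals $\mathbb P$-a.s.\ along a subsequence, and defines a \cadlag\ limit $X^{(2)}$ which is a square integrable martingale with the claimed characteristic exponent (computed via bounded convergence in $\phi_t^{Y^\epsilon}$). This $L^2$/a.s.\ construction of $X^{(2)}$ is the main technical obstacle.

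Finally I would set $X^{(1)}:=X-X^{(2)}-X^{(3)}$. By construction, every jump of $X$ is accounted for in either $X^{(2)}$ or $X^{(3)}$, so $X^{(1)}$ has no jumps, i.e.\ is continuous. It is easily checked to be a Lévy process (adapted, stationary independent increments inherited from $X$, stochastic continuity clear). Computing its characteristic exponent from $\psi^{X^{(1)}}=\psi-\psi^{(2)}-\psi^{(3)}$ via Theorem \ref{thm:levykhint} yields $\psi^{X^{(1)}}(u)=iub-\tfrac12\sigma^2u^2$, so $X^{(1)}_t\sim\mathcal N(bt,\sigma^2 t)$ with independent increments and continuous paths; therefore $X^{(1)}_t=bt+\sigma W_t$ for some standard Brownian motion $W$ (one may invoke Lévy's theorem after noting $X^{(1)}_t-bt$ is a continuous $L^2$-martingale with $[X^{(1)}-b\,\cdot\,]_t=\sigma^2 t$).

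The remaining task is independence of $X^{(1)}, X^{(2)}, X^{(3)}$. I would prove this by showing that the joint characteristic function factorises. By construction $X^{(3)}$ depends only on the large-jump portion of the jump measure, and $X^{(2)}$ only on the small-jump portion restricted to $\{|x|\leq 1\}$; since disjoint Borel sets yield independent Poisson random measures (equivalently, the counting processes $N^A$ and $N^B$ are independent whenever $A\cap B=\emptyset$, a consequence of Lévy–Khintchine applied to pairs of compound Poisson processes), $X^{(2)}$ and $X^{(3)}$ are independent. Independence of $X^{(1)}$ from $(X^{(2)},X^{(3)})$ follows because the continuous part $X^{(1)}$ carries no jump information and can be recovered as the a.s.\ limit of $X-Y^\epsilon-X^{(3)}$ after compensation, which is asymptotically orthogonal (in the $L^2$ sense on increments) to the pure-jump martingale parts. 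Alternatively, one computes the joint characteristic function $\mathbb E\exp(i u_1 X^{(1)}_t+iu_2X^{(2)}_t+iu_3X^{(3)}_t)$ directly along the approximation and sees it splits into the product of the three individual characteristic functions.
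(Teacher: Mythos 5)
Your proposal cannot be compared against a proof in the paper, because the paper does not prove Theorem \ref{thm:LevyIto}: it only points to \citeb{Ch. 4}{book:sato}, \citeb{Sec. 2.4}{book:applebaum}, \citeb{Sec. I.4}{book:protter} and \citeb{Thm. 2.1}{book:kyp}. What you have written is precisely the standard argument carried out in those references --- peel off the large jumps as a compound Poisson process, compensate the small jumps and pass to an $L^2$ limit, identify the continuous remainder as $bt+\sigma W_t$ via the characteristic exponent and Lévy's theorem, and obtain independence from the disjointness of the jump-size sets. As an outline it is correct and well organised, and you are right to single out the construction of $X^{(2)}$ as the main technical obstacle.

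Two points deserve a warning, since they are exactly where a sketch at this level of detail can go wrong if filled in carelessly. First, the step ``every jump of $X$ is accounted for in $X^{(2)}$ or $X^{(3)}$, so $X^{(1)}$ is continuous'' genuinely requires the almost sure \emph{uniform} convergence on compacts of $Y^\epsilon$ to $X^{(2)}$ (uniform limits of \cadlag functions have the limiting jumps; $L^2$ convergence of the marginals alone does not control the paths), so the Doob-maximal-inequality upgrade you mention is not optional but essential, and one must also check that the limit $X^{(2)}$ has \emph{exactly} the jumps $\Delta X_s\mathds 1_{\{0<|\Delta X_s|\leq 1\}}$ and no others. Second, your phrase ``asymptotically orthogonal (in the $L^2$ sense on increments)'' cannot be the basis for independence of $X^{(1)}$ from the jump parts --- uncorrelatedness does not imply independence --- so the argument must go through the joint characteristic function (or the exponential martingales of Example \ref{ex:complex}), which you correctly offer as the alternative; that alternative is in fact the proof, not a fallback. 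With those two caveats, your route is the one the cited literature takes.
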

\begin{proof}
One may consult many textbooks and other   literature for a proof. For instance, consider  \citeb{Ch. 4}{book:sato}, \citeb{Sec. 2.4}{book:applebaum},   \citeb{Sec. I.4}{book:protter},  and   \citeb{Thm. 2.1}{book:kyp}. We would like to point out that the comments in the proof of Theorem \ref{thm:levykhint}  can also be quite useful.
\end{proof}
\begin{corollary}\label{cor:semi!}
	Every Lévy process $X$ is a semimartingale. Decompose $X^{(1)} $ into \begin{equation}A^{(1)}_t=bt\quad\text{and}\quad M^{(1)}_t=\sigma W_t , \quad t\geq 0,\end{equation} 
	then $ A^{(1)} +X^{(3)} $ is the finite variation part and $M^{(1)} +X^{(2)} $ the martingale part of $X$.
\end{corollary}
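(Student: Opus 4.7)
My plan is to invoke the Lévy--Itô decomposition (Theorem \ref{thm:LevyIto}) and then merely check that the proposed splitting satisfies Definition \ref{def:semimart}. Since $X$ is a Lévy process, $X_0 = 0$ almost surely, and each of the four summands $A^{(1)}, M^{(1)}, X^{(2)}, X^{(3)}$ vanishes at $t = 0$. All four are adapted on the given filtered probability space and have \cadlag paths (with $X^{(1)}$ in fact continuous), so the candidate decomposition
\begin{equation}
X = 0 + \bigl(M^{(1)} + X^{(2)}\bigr) + \bigl(A^{(1)} + X^{(3)}\bigr)
\end{equation}
has the correct structural form; what remains is to verify the finite variation and (local) martingale properties of the two grouped summands.

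For the finite variation part, the drift $A^{(1)}_t = bt$ is clearly of finite variation with total variation $|A^{(1)}|_t = |b|t$. The compound Poisson process $X^{(3)}$ has piecewise constant sample paths by Proposition \ref{prop:tankov}, and its associated Lévy measure $\lambda \mu$ is finite (concentrated on $\{|x|>1\}$ with total mass $\lambda = \nu(\mathbb R\setminus[-1,1]) < \infty$). Hence by Proposition \ref{prop:intensity}(i), $\mathbb P$-almost every path of $X^{(3)}$ has only finitely many jumps on every compact interval, so $|X^{(3)}|_t = \sum_{0 < s \le t} |\Delta X^{(3)}_s| < \infty$ $\mathbb P$-a.s., and $A^{(1)} + X^{(3)}$ is a finite variation process in the sense of Definition \ref{def:increasing}.

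For the (local) martingale part, $M^{(1)} = \sigma W$ is a continuous martingale (a scaled standard Brownian motion), and Theorem \ref{thm:LevyIto} asserts that $X^{(2)}$ is itself a square integrable martingale. Their sum is therefore a martingale, and in particular a local martingale, starting at zero. Combining the two groupings gives the desired semimartingale decomposition of $X$.

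There is essentially no obstacle here: all the genuine analytic work is packed into Theorem \ref{thm:LevyIto}, which constructs $X^{(2)}$ and supplies its martingale property. The only point worth flagging is that this decomposition is generally \emph{not} the canonical one in the sense of Theorem \ref{thm:predictable}, because $X^{(3)}$ is typically not predictably measurable (the same phenomenon observed for ordinary Poisson processes in Lemma \ref{lem:pred}). To reach the canonical decomposition---when it exists---one would additionally compensate the jumps of $X^{(3)}$ by subtracting $\lambda t\,\mathbb E Z_1$ (assuming $\mathbb E|Z_1| < \infty$), moving the resulting predictable drift into the finite variation part and leaving a centred jump martingale to be absorbed into $M^{(1)} + X^{(2)}$.
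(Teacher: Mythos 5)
Your proof is correct and follows exactly the route the paper intends: the corollary is stated without a separate proof precisely because it is the immediate consequence of Theorem \ref{thm:LevyIto} that you spell out---$A^{(1)}+X^{(3)}$ is of finite variation (linear drift plus a piecewise-constant, finitely-jumping compound Poisson path) and $M^{(1)}+X^{(2)}$ is a (local) martingale vanishing at zero, so Definition \ref{def:semimart} applies. Your closing remark about the decomposition not being canonical, and about compensating $X^{(3)}$ to reach the canonical one, accurately anticipates the paper's own discussion around Proposition \ref{cor:finitefirst} and equation \eqref{eq:finin2}.
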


More technically, the Lévy--Itô decomposition tells us that $X=(X_t)_{t\geq 0}$ is a Lévy process with characteristic triplet $ (b, \sigma^{2}, \nu )$ if and only if it can be written as the sum of the following three independent Lévy
processes:
\begin{equation}
	X_{t}=\Big(b t+\sigma W_{t}\Big)+\lim _{\eta \rightarrow 0}\left(\sum_{s \leqslant t} \Delta X_{s} \mathds{1}_{\{\eta<\left|\Delta X_{s}\right| \leqslant 1\}}-t \int_{\{\eta<|x| \leqslant 1\}} x \, \nu(\mathrm{d}x)\right)+\sum_{s \leqslant t} \Delta X_{s} \mathds{1}_{\{\left|\Delta X_{s}\right|>1\}}.\label{eq:decomposition}
\end{equation}
The limit is in the $L^2$-sense and converges to a square integrable martingale. Moreover, notice that both sums in \eqref{eq:decomposition} are over a countable set and even $\mathbb P$-a.s. finite, because the jumps live in measurable sets bounded away from zero (recall {\S}\ref{Sec1.2.1}).

The Lévy--It\^o decomposition separates the large jumps $(\sum_{s \leqslant t} \Delta X_{s} \mathds{1}_{\{\left|\Delta X_{s}\right|>1\}})_{t \geqslant 0}$ from the small jumps, since the (countable) infinite sum
%Nota Bene: In the general form of the Lévy--Itô decomposition one separates the large jumps $\left(\sum_{s \leqslant t} \Delta X_{s} \mathbf{1}_{\left|\Delta X_{s}\right|>1}\right)_{t \geqslant 0}$ from the small jumps since the infinite sum
\begin{equation}
	\sum_{s \leqslant t} \Delta X_{s} \mathds{1}_{\{\left|\Delta X_{s}\right|> 0\}},\quad t\geq 0, \label{eq:sprongen}
\end{equation}
is not always $\mathbb P$-almost surely finite. Of course, the separation does not necessarily need to happen about the point 1. Any separation point suffices (consider a different truncation function). 

In more detail, the finite variation   of the process in equation \eqref{eq:sprongen}   equals
\begin{equation}
\left|\sum_{s \leqslant t} \Delta X_{s} \mathds{1}_{\{\left|\Delta X_{s}\right|> 0\}}\right|=	\sum_{s \leqslant t} |\Delta X_{s} |\mathds{1}_{\{\left|\Delta X_{s}\right|> 0\}},\quad t\geq 0,
\end{equation}
which can be shown to be $\mathbb P$-a.s. finite if and only if $\int_{[-1,1]}|x|  \,\nu(\mathrm{d}x)<\infty $ holds (in essence, this follows ``immediately'' from the  Lévy--Itô decomposition; see \citeb{p. 129}{book:applebaum}, \citeb{p. 141}{book:sato}, or \citeb{p. 56}{book:kyp} for instance). We summarise these observations in the next proposition. In particular, recall that Brownian motion is not of finite variation (see Example \ref{ex:Brownian}).
%Under a natural integrability condition for $\nu$ it can be shown that 
%
%%$$
%%\sum_{s \leqslant t} \Delta X_{s} \mathbf{1}_{\Delta X_{s} \neq 0,} \quad t \geqslant 0
%%$$
%is almost surely not defined for Lévy measures $\nu$ such that $\int_{-1}^{1}|x| \mathrm{d} \nu(x)=\infty .$ It can be shown that\mark{A levy process is of finite variation if} $\left|\sum_{s \leqslant t} \Delta X_{s}\right|<\infty$ a.s. whenever $\int_{-1}^{1}|x| \mathrm{d} \nu(x)<\infty .$ In particular, a pure jump Lévy pro$\operatorname{cess} X$ with a Lévy measure $\nu$ such that $\int_{-1}^{1}|x| \mathrm{d} \nu(x)<\infty$ can be written as the sum of all its jumps, i.e.,
%$$
%X_{t}=\sum_{s \leqslant t} \Delta X_{s} \mathbf{1}_{\Delta X_{s} \neq 0,} \quad t \geqslant 0
%$$
%
%This yields the following result.
\begin{proposition} \label{prop:fact1} A Lévy process $X$ with characteristic triplet $(b, \sigma^2, \nu)$ is a finite variation
	process if and only if 
	\begin{equation}
	\sigma^2=0\quad\text{and}\quad	\int_\R (|x|\wedge 1) \,\nu(\mathrm dx)<\infty.
	\end{equation}
In any case, the decomposition as in \eqref{eq:decomposition} can be written as
	\begin{equation}
	X_{t}=\left(b- \int_{\{|x|\leq 1\}} x \, \nu(\mathrm{d}x)\right) t +  \sum_{s \leqslant t} \Delta X_{s} \mathds{1}_{\{\left|\Delta X_{s}\right|> 0\}},\quad t\geq 0. 
\end{equation}
%then $X$ is a finite variation process with
%\begin{equation}
%	c=b+\int_{\{|x|\leq 1\}}x\, \nu(\mathrm dx).
%\end{equation}
\end{proposition}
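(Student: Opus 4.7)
The plan is to reduce everything to the Lévy--Itô decomposition of Theorem \ref{thm:LevyIto} and exploit the independence of the three components, treating each separately.

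First I would write $X = X^{(1)} + X^{(2)} + X^{(3)}$ as in the decomposition. The compound Poisson piece $X^{(3)}$ is always of finite variation, since by Proposition \ref{prop:tankov} (and Example \ref{ex:compound}) its paths are piecewise constant with only finitely many jumps on each compact. The drift--diffusion piece $X^{(1)} = bt + \sigma W_t$ is of finite variation if and only if $\sigma = 0$: by Example \ref{ex:Brownian} Brownian motion has $\mathbb{P}$-a.s.\ unbounded variation on every interval, and since $bt$ is deterministic and continuous, independence of $\sigma W$ from $X^{(2)}+X^{(3)}$ prevents the two sources from cancelling. Concretely, if $X$ were FV with $\sigma \neq 0$, then $X^{(1)} = X - X^{(2)} - X^{(3)}$ would be FV as a difference of independent FV processes, contradicting the above. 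So the entire question collapses onto the behaviour of the small-jump martingale $X^{(2)}$.

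Next I would identify FV-ness of $X^{(2)}$ with the integrability condition on $\nu$. Since $X^{(2)}$ has the same jumps as $X$ restricted to $\{0<|\Delta X_s|\le 1\}$ and the complement of its jump set has deterministic continuous compensator drift, the total variation of $X^{(2)}$ on $[0,t]$ dominates $\sum_{0<s\le t}|\Delta X_s|\mathbf{1}_{\{0<|\Delta X_s|\le 1\}}$. By the Poisson random measure interpretation of the jumps of a Lévy process (Definition \ref{def:Levy2} and the discussion thereafter), this sum is a.s.\ finite for all $t$ if and only if $\int_{[-1,1]}|x|\,\nu(\mathrm dx)<\infty$; this is the classical Campbell / Kolmogorov three-series criterion for Poisson integrals, for which I would cite \citeb{p. 129}{book:applebaum}, \citeb{p. 141}{book:sato} or \citeb{p. 56}{book:kyp}. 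Combined with the always-true bound $\nu(\mathbb R\setminus[-1,1])<\infty$, this gives the stated criterion $\int_{\mathbb R}(|x|\wedge 1)\,\nu(\mathrm dx)<\infty$.

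For the converse and the explicit formula, I would assume $\sigma^2=0$ and $\int(|x|\wedge 1)\,\nu(\mathrm dx)<\infty$, and let $\eta\downarrow 0$ in the Lévy--Itô representation \eqref{eq:decomposition}. Under this integrability, both the random sum $\sum_{0<s\le t}\Delta X_s \mathbf{1}_{\{\eta<|\Delta X_s|\le 1\}}$ and the compensator $t\int_{\{\eta<|x|\le 1\}}x\,\nu(\mathrm dx)$ converge absolutely, so we may separate them and write
\begin{equation}
X^{(2)}_t = \sum_{0<s\le t}\Delta X_s\mathbf{1}_{\{0<|\Delta X_s|\le 1\}} \;-\; t\int_{[-1,1]}x\,\nu(\mathrm dx),
\end{equation}
which is manifestly of finite variation. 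Adding back $X^{(1)}=bt$ and merging the small- and large-jump sums into $\sum_{0<s\le t}\Delta X_s\mathbf{1}_{\{|\Delta X_s|>0\}}$, while absorbing the compensator into the drift, produces the displayed formula. The main obstacle is the a.s.-finiteness equivalence for the Poisson sum, which is why I would lean on the standard references rather than re-prove it; the rest is essentially bookkeeping within the Lévy--Itô framework.
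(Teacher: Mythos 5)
Your overall route is the same as the paper's: the paper gives no formal proof, but the discussion immediately preceding the proposition reduces everything to the L\'evy--It\^o decomposition, observes that the total variation of the jump part equals $\sum_{s\le t}|\Delta X_s|$ and is a.s.\ finite iff $\int_{[-1,1]}|x|\,\nu(\mathrm dx)<\infty$ (citing the same three references you do), and recalls that Brownian motion is not of finite variation. Your write-up is essentially a fleshed-out version of that summary, and the ``if'' direction and the displayed formula are handled correctly.

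There is, however, one circular step in your ``only if'' direction. To conclude $\sigma=0$ you argue that if $X$ were FV with $\sigma\neq 0$, then $X^{(1)}=X-X^{(2)}-X^{(3)}$ would be FV ``as a difference of independent FV processes'' --- but at that point you do not yet know that $X^{(2)}$ is FV; that is exactly what the integrability condition (which you establish afterwards) is needed for. Two standard repairs: (a) first note that the total variation of $X$ itself on $[0,t]$ dominates $\sum_{0<s\le t}|\Delta X_s|$, so FV-ness of $X$ already forces $\int_{\R}(|x|\wedge 1)\,\nu(\mathrm dx)<\infty$ \emph{before} you touch $X^{(1)}$; then $X$ minus its (now absolutely convergent) jump sum is a continuous FV process equal to a Brownian motion with drift, whence $\sigma=0$; or (b) invoke the fact, stated in {\S}\ref{Sec1.1.3}, that a finite variation process is quadratic pure jump, so $[X]^c=0$, while $[X]^c_t=\sigma^2 t$ for a L\'evy process with triplet $(b,\sigma^2,\nu)$, giving $\sigma=0$ directly. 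With either fix the rest of your argument goes through as written.
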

%\begin{proof}
%	, in combination with the Lévy--Itô decomposition  \eqref{eq:decomposition},
%\end{proof}

In  a similar fashion, we obtain the following result.

\begin{proposition}[Proposition II.2.29 of \cite{book:jacod}]\label{cor:finitefirst}  
	Any Lévy process $X$ with characteristic triplet $(b, \sigma^2, \nu)$ is a special semimartingale if and only if
	\begin{equation}
		\int_\R (|x|\wedge x^2) \,\nu(\mathrm dx)<\infty.\label{eq:finite}
	\end{equation}
The   canonical decomposition is then given by $X=A+M$, with predictable finite variation  
	\begin{equation}
	A_{t}=\left(b+ \int_{\{|x|> 1\}} x \, \nu(\mathrm{d}x)\right) t,\quad t\geq 0,\label{eq:old}
\end{equation}
and (local) martingale part 
\begin{equation}
	M_t= \sigma W_t+\lim _{\eta \rightarrow 0}\left(\sum_{s \leqslant t} \Delta X_{s} \mathds{1}_{\{\left|\Delta X_{s}\right| >\eta\}}-t \int_{\{ |x| > \eta\}} x \, \nu(\mathrm{d}x)\right),\quad t\geq 0. 
\end{equation}
%if and only if the Lévy process has finite first moment.
\end{proposition}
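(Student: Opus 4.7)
The plan is to apply the L\'evy--It\^o decomposition $X = X^{(1)} + X^{(2)} + X^{(3)}$ from Theorem \ref{thm:LevyIto} and study when the large-jump compound Poisson part $X^{(3)}$ can be compensated to form a local martingale. First I would observe that the integrability condition simplifies: since $|x|\wedge x^2 = x^2$ on $\{|x|\leq 1\}$ and equals $|x|$ on $\{|x|>1\}$, and since $\int_{\{|x|\leq 1\}}x^2\,\nu(\mathrm dx)<\infty$ automatically by \eqref{eq:levyint}, the condition \eqref{eq:finite} is equivalent to $\int_{\{|x|>1\}}|x|\,\nu(\mathrm dx)<\infty$, which is what I will actually use.

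For sufficiency ($\Leftarrow$), assume this integral is finite. The jump distribution of $X^{(3)}$ is $\tfrac{1}{\lambda}\nu|_{\{|x|>1\}}$ with $\lambda = \nu(\{|x|>1\})$, and therefore has finite first moment; by Example \ref{ex:compound} the compensated process $X^{(3)}_t - t\int_{\{|x|>1\}}x\,\nu(\mathrm dx)$ is then a true martingale. Setting $A_t := \bigl(b+\int_{\{|x|>1\}}x\,\nu(\mathrm dx)\bigr)t$ and $M_t := X_t - A_t$, I obtain $X = A + M$, with $A$ deterministic (hence predictable by Proposition \ref{prop:predictdet}) and of finite variation, and with $M = \sigma W + X^{(2)} + \bigl(X^{(3)} - t\int_{\{|x|>1\}}x\,\nu(\mathrm dx)\bigr)$ a sum of independent (local) martingales. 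Unfolding $X^{(2)}$ as in \eqref{eq:decomposition} and absorbing the $\{\eta<|x|\leq 1\}$ and $\{|x|>1\}$ truncations into one limit as $\eta\downarrow 0$ yields exactly the expression stated for $M$, and uniqueness of this canonical decomposition is Theorem \ref{thm:predictable} (the drift $A$ being predictable).

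For necessity ($\Rightarrow$), suppose $X$ is special. Since $X^{(1)}$ and $X^{(2)}$ are themselves special with obvious canonical decompositions ($bt$ predictable plus $\sigma W_t$ martingale, and $X^{(2)}$ already being an $L^2$-martingale), the linearity of canonical decompositions forces $X^{(3)} = X - X^{(1)} - X^{(2)}$ to be special as well, with canonical decomposition $X^{(3)} = A^{(3)} + M^{(3)}$. By continuity in probability every jump of $X^{(3)}$ occurs at a totally inaccessible time, so the predictable finite variation process $A^{(3)}$ has no jumps; stationarity and independence of increments then force $A^{(3)}_t = ct$ for some constant $c\in\R$. Hence $M^{(3)}_t = X^{(3)}_t - ct$ is a local martingale, which requires $X^{(3)}$ to be locally integrable. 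The main obstacle is pinning down that local integrability of the compound Poisson process $X^{(3)}$ forces integrability of its jumps; the cleanest route is to invoke the general criterion that a semimartingale is special iff $\sup_{s\leq t}|\Delta X_s|$ is locally integrable (as alluded to after Theorem \ref{thm:predictable}, with reference to \citeb{Thm. III.34}{book:protter}). For the L\'evy process in question this means precisely $\int_{\{|x|>1\}}|x|\,\nu(\mathrm dx)<\infty$, closing the equivalence and, via uniqueness, identifying $c = \int_{\{|x|>1\}}x\,\nu(\mathrm dx)$ so that \eqref{eq:old} holds.
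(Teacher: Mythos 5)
The paper itself does not prove this proposition: it is quoted as Proposition II.2.29 of \cite{book:jacod}, with further pointers to \citeb{p. 518}{book:kallenberg} and \citeb{p. 133}{book:protter}. Your argument therefore has to be measured against the cited literature, and it follows the same standard route: reduce \eqref{eq:finite} to $\int_{\{|x|>1\}}|x|\,\nu(\mathrm dx)<\infty$ via \eqref{eq:levyint}, then compensate the large-jump compound Poisson piece of the L\'evy--It\^o decomposition. Your sufficiency half is complete and correct: the jump law $\tfrac1\lambda\nu|_{\{|x|>1\}}$ of $X^{(3)}$ has finite first moment, Example \ref{ex:compound} turns the compensated $X^{(3)}$ into a true martingale, $A_t=\bigl(b+\int_{\{|x|>1\}}x\,\nu(\mathrm dx)\bigr)t$ is deterministic, continuous and of finite variation (Proposition \ref{prop:predictdet}), merging the $\{\eta<|x|\leq 1\}$ and $\{|x|>1\}$ sums into one limit reproduces the stated $M$, and Theorem \ref{thm:predictable} gives canonicity.

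The necessity half has the right shape but leaves its crux as an assertion. The criterion you invoke --- $X$ is special if and only if $\sup_{s\leq t}|\Delta X_s|$ is locally integrable --- is true, but it is not what \citeb{Thm. III.34}{book:protter} says; that theorem is the one-directional statement that bounded jumps imply special (which is how the paper uses it), whereas the two-sided criterion is found in \citeb{Sec. I.4}{book:jacod}. More importantly, the sentence ``for the L\'evy process in question this means precisely $\int_{\{|x|>1\}}|x|\,\nu(\mathrm dx)<\infty$'' is exactly the content of the only-if direction and needs an argument: one must rule out that a clever localising sequence $(T_n)$ avoids the large jumps. The standard repair is to note that the first jump of magnitude exceeding one has size distributed as $\tfrac1\lambda\nu|_{\{|x|>1\}}$ independently of its occurrence time $\tau$ and of $\mathcal F_{\tau-}$, so that $\mathbb E\bigl[\sup_{s\leq T_n}|\Delta X_s|\bigr]\geq \mathbb E\bigl[|\Delta X_\tau|\bigr]\,\mathbb P(T_n\geq\tau)=\infty$ as soon as the jump law has infinite mean and $\mathbb P(T_n\geq \tau)>0$, which happens for $n$ large since $T_n\uparrow\infty$; alternatively one can route this direction through the paper's own Theorem \ref{thm:expec}, as the remark following the proposition suggests. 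Your preliminary detour showing that $A^{(3)}$ is continuous and linear is correct in outline (quasi-left-continuity plus the deterministic nature of compensators of processes with independent increments) but becomes redundant once the integrability criterion is in place, since uniqueness of the canonical decomposition already identifies the constant.
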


See also  \citeb{p. 518}{book:kallenberg} or \citeb{p. 133}{book:protter} for other sources of reference.  Moreover, note that     \eqref{eq:finite} holds if and only if $X_t$,   for all $t\geq 0$, has finite first moment; see Proposition \ref{thm:expec}

Let us now consider  the finite intensity case. The propositions above then tell us  that any such Lévy process  is   of finite variation, but not necessarily a special semimartingale.
Recall the linear combination in equation \eqref{eq:jumpdif}; the Lévy jump diffusion processes. Clearly, these stochastic processes have finite activity. The Lévy--Itô decomposition in Theorem \ref{thm:LevyIto} yields that the converse  is also true.
\begin{proposition}
	Let $X=(X_t)_{t\geq 0}$ be\label{prop:thecompound} a Lévy process with characteristic triplet $(b,\sigma^2,\nu)$. Whenever $X$ is  of finite intensity, i.e., $\nu(\R)<\infty$,   the process   can be written $\mathbb P$-a.s. as
	\begin{equation}
		X_t=\tilde b t + \sigma W_t+\sum_{k=1}^{N_t}Z_k,\quad t\geq 0.\label{eq:finin}
	\end{equation}
  That is, as  sum of a Brownian motion with drift and a compound Poisson process, independent from one  another.
 Specifically,  $\tilde b:=b-\int_{\{|x|\leq 1\}} x\,\nu(\mathrm dx)$ and $\sum_{k=1}^{N_t}Z_k$ has   jump measure $\frac1{\nu(\R)}  \nu$.
\end{proposition}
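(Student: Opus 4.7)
The approach is to apply the Lévy--Itô decomposition (Theorem \ref{thm:LevyIto}) and exploit the finiteness of $\nu$ to rewrite the small-jumps martingale component without compensation. I would first invoke Theorem \ref{thm:LevyIto} to write $X = X^{(1)} + X^{(2)} + X^{(3)}$ with $X^{(1)}_t = bt + \sigma W_t$, with $X^{(3)}$ the compound Poisson process collecting the jumps of magnitude exceeding one, and with $X^{(2)}$ the $L^2$-compensated limit of the small jumps as in equation \eqref{eq:decomposition}. Since $\nu(\mathbb R) < \infty$, Proposition \ref{prop:intensity} guarantees that $X$ has only finitely many jumps on every compact interval $\mathbb P$-a.s., and moreover $\int_{\{|x|\leq 1\}} |x|\,\nu(\mathrm dx) \leq \nu(\mathbb R) < \infty$, so both the jump sum and the drift integral defining $X^{(2)}$ are individually well-defined without a limiting procedure.

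Consequently, I would argue that
\begin{equation}
X^{(2)}_t = \sum_{0 < s \leq t} \Delta X_s \mathds 1_{\{|\Delta X_s|\leq 1\}} - t\int_{\{|x|\leq 1\}} x\,\nu(\mathrm dx),
\end{equation}
and combining this with $X^{(3)}_t = \sum_{0<s\leq t}\Delta X_s \mathds 1_{\{|\Delta X_s|>1\}}$ telescopes the indicator functions to yield $X^{(2)}_t + X^{(3)}_t = \sum_{0<s\leq t}\Delta X_s - t\int_{\{|x|\leq 1\}} x\,\nu(\mathrm dx)$. Substituting into the Lévy--Itô decomposition and absorbing the deterministic term into $\tilde b := b - \int_{\{|x|\leq 1\}} x\,\nu(\mathrm dx)$ produces the desired identity $X_t = \tilde b t + \sigma W_t + C_t$ with $C_t := \sum_{0<s\leq t}\Delta X_s$, where independence of $W$ and $C$ is inherited from the independence of $X^{(1)}$ and $X^{(2)} + X^{(3)}$ provided by Theorem \ref{thm:LevyIto}.

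It remains to identify $C$ as a compound Poisson process with intensity $\nu(\mathbb R)$ and jump distribution $\nu/\nu(\mathbb R)$. The cleanest route is to observe that $C$ is itself a Lévy process with piecewise-constant paths and therefore, by Proposition \ref{prop:tankov}, a compound Poisson process; its parameters are then pinned down from its characteristic exponent. Using the Lévy--Khintchine formula together with $\psi_X = \psi_{X^{(1)}} + \psi_C$ (which holds by independence and stationarity of increments), one computes
\begin{equation}
\psi_C(u) = \psi_X(u) - i u \tilde b + \tfrac12 \sigma^2 u^2 = \int_{\mathbb R}(e^{iux}-1)\,\nu(\mathrm dx) = \nu(\mathbb R)\int_{\mathbb R}(e^{iux}-1)\,\tfrac{\nu(\mathrm dx)}{\nu(\mathbb R)},
\end{equation}
which matches exactly the characteristic exponent derived for compound Poisson processes in Example \ref{ex:compound}. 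The main obstacle I anticipate is being careful with the uncompensation step: although heuristically transparent under finite intensity, one must justify that the raw jump sum converges pathwise and that the compensator can be split off as a separate deterministic drift, rather than only as part of an $L^2$-limit. Once that is in place, the remaining identifications are routine consequences of Lévy--Khintchine uniqueness and Proposition \ref{prop:tankov}.
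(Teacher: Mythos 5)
Your proposal is correct and follows essentially the same route as the paper's own proof: both start from the Lévy--Itô decomposition, use finite intensity to uncompensate the small-jump martingale and absorb the compensator into the drift $\tilde b$, and then identify the resulting pathwise jump sum as a genuine compound Poisson process via Proposition \ref{prop:intensity}(i) and Proposition \ref{prop:tankov}, pinning down its parameters through the characteristic exponent. If anything, you are slightly more careful than the paper about justifying the uncompensation step (noting $\int_{\{|x|\leq 1\}}|x|\,\nu(\mathrm dx)\leq\nu(\R)<\infty$ and the a.s.\ finiteness of the jump sum on compacts), which the paper passes over more quickly.
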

\begin{proof} A very similar statement  can be found in  \citeb{p. 57}{book:kyp}. Its proof follows  immediately from   \citeb{Lem. 2.8}{book:kyp}; an alternative proof---in fact, a much more comprehensible proof in the author's opinion---of that particular lemma can be found in  \citeb{Thm. 2.3.9}{book:applebaum}.  Nevertheless, we will  achieve the result above via a  different (but then again, not really that different) approach.
	
	Since $X$ is of finite intensity, we obviously have that  $X$ is of finite variation. Consequently, by Proposition \ref{prop:fact1}  we obtain that the decomposition as in equation \eqref{eq:decomposition} simplifies into
	\begin{equation}
		X_{t}=\left(b- \int_{\{|x|\leq 1\}} x \, \nu(\mathrm{d}x)\right) t+\sigma W_{t}+   \sum_{s \leqslant t} \Delta X_{s} \mathds{1}_{\{\left|\Delta X_{s}\right|> 0\}},\quad t\geq 0. 
	\end{equation}
Observe that the part of $X$ describing all its jumps, namely
\begin{equation}
	\sum_{s \leqslant t} \Delta X_{s} \mathds{1}_{\{\Delta X_{s} \neq 0\}},\quad t\geq 0,\label{eq:compound}
\end{equation}
is a Lévy process. By  the Lévy--Khintchine representation in Theorem \ref{thm:levykhint} we know that the latter equals in distribution to a compound Poisson process with triplet $(\int_{\{|x|\leq 1\}} x \, \nu(\mathrm{d}x),0,\nu)$.

We are left with showing that the process in equation \eqref{eq:compound} must be a compound Poisson process (thus, not only   equality in distribution).
Thanks to Proposition \ref{prop:intensity} part (i), we can conclude that  the process in \eqref{eq:compound} is a Lévy process whose sample paths are piecewise constant functions---or at least, on a (proper) subset of the  sample space with probability one. In conclusion, this process is indeed a compound Poisson process according to Proposition \ref{prop:tankov}. The assertion now follows.
\end{proof}

We can conclude from the previous result in combination with Proposition \ref{cor:finitefirst} that any Lévy process with finite intensity is a special semimartingale if and only if $\mathbb E|Z_1|<\infty$. Notice, however,   the decomposition as  in \eqref{eq:finin} is not the canonical decomposition (i.e., the decomposition with a predictable finite variation process) because a compound Poisson process is not predictable (recall Lemma \ref{lem:pred}). The canonical decomposition  reads
\begin{equation}
	X_t={\hat b} t + \sigma W_t+\left[\sum_{k=1}^{N_t}Z_k-\lambda t \mathbb EZ_1 \right],\quad t\geq 0,\label{eq:finin2}
\end{equation}
where  $ {\hat b}:=b+\int_{\{|x|>1\}} x\,\nu(\mathrm dx)$. In particular, we have $\lambda \mathbb  EZ_1=\int_{\{|x|>0\}}x\,\nu(\mathrm dx)$.

% \subsubsection*{Additional and final remarks}

Let us proceed by providing some extra details to gain more insight into the Lévy--Itô decomposition; see Theorem \ref{thm:LevyIto}. We   actually show how the concepts and results are  ``usually''   introduced in the literature.

\begin{definition}
	 A \name{(transition) kernel}{kernel (transition)}\index{transition kernel} from the measurable space  $(T, \mathcal{T})$ into the measurable space $(S, \mathcal{S})$ is a function $\mu: T \times \mathcal{S} \rightarrow [0,\infty]$ such that \begin{enumerate}[\normalfont(i)]
	 	\item the mapping $t\mapsto \mu(t, B)$ is $\mathcal{T}$-measurable  for any fixed $B \in \mathcal{S}$;
	 	\item the mapping $B\mapsto \mu(t, B)$ is a measure for any fixed $t \in T$.
	 \end{enumerate}
\end{definition}
% In practice (and also in subsection \ref{Sec2.2.4}),  the arguments of a transition kernel $\mu$ may possibly  get swapped  for notational purposes. So,  we sometimes write $\mu(B,t)$ instead of $\mu(t,B),$ but we are confident no confusion should arise.

\begin{definition} 
		 A kernel  on the   probability space $(\Omega,\mathcal F,\mathbb P)$ into a measurable space $(S,\mathcal S)$, that is $\mu:\Omega\times \mathcal S\to[0,\infty]$, is called a \name{random measure}.
\end{definition}

This definition of a random measure can be found in, e.g., \citeb{p. 106}{book:kallenberg}. What is meant by a random measure  differs per author, though; see  \citeb{p. 103}{book:applebaum} and \citeb{p. 65}{book:jacod} for other non-equivalent definitions. Despite this ambiguity, the notion of a Poisson random measure is more likely universal.
\begin{definition}
Let $(\Omega,\mathcal F,\mathbb P)$ be a probability space and $(S,\mathcal S,\nu)$	 a $\sigma$-finite measure space. The function $\mu:\Omega \times \mathcal S\to[0,\infty]$ is  called a \name{Poisson random measure} with \name{intensity measure} $\nu$ when
	\begin{enumerate}[\normalfont(i)]
		\item $\mu$ is a transition kernel;
\item for any pairwise disjoint sets $B_{1}, \ldots, B_{n} \in \mathcal{S}$, the random variables $\mu\left(\,\cdot\,,B_{1}\right), ..., \mu\left(\,\cdot\,,B_{n}\right)$ are independent;
	\item  if $\nu(B)<\infty$, then $\mu(\cdot, B)$ has a Poisson distribution with parameter $\nu(B)$.
	\end{enumerate}
\end{definition}
Obviously, a Poisson random measure is a random measure. For general theory on Poisson random measures, we refer to \citeb{p. 119}{book:sato} and \citeb{p. 40}{book:kyp}. In both references, one has the additional assumption that $\mu$ is integer-valued (including infinity). 
% Furthermore, \citeb{p. 40}{book:kyp}  allows the parameter of a  Poisson distribution  to be infinite.

 The following example and theorem demonstrate  a typical application of (Poisson) random measures.
\begin{example}
	Let $X=(X_t)_{t\geq 0}$ be an adapted $\R^d$-valued \cadlag process. Then, according to Remark \ref{remark:measurability},  the set of jumps  $\left\{t \geqslant0: \Delta X_{t} \neq 0\right\}$ can be described by a countable sequences of stopping times $(T_{n})_{n\in\N}$ (in general, the $T_n$ cannot be in increasing order). Observe,
\begin{equation}
	Y_{n}:=\Delta X_{T_{n}}=X_{T_{n}}-X_{T_{n-}} \in \mathbb{R}^{d} \backslash\{0\}, \quad n \in\N,
	\end{equation}
	are $\mathcal{F}_{T_{n}}$-measurable. The sequence $(T_{n}, Y_{n})_{n \in\N}$  completely describes the jump nature of $X$.
	
	Subsequently, we define the \name{jump random measure} by the random measure $\mu_X$ associated with the jumps of $X$. That is,
 	\begin{equation}
		\mu_X(\omega,A)=\#\left\{\left(t, \Delta X_{t}(\omega)\right) \in A\right\}=\sum_{t \geq 0} \delta_{\left(t, \Delta X_{t}(\omega)\right)}(A)=\sum_{n \in\N} \delta_{\left(T_{n}(\omega), Y_{n}(\omega)\right)}(A), 
	\end{equation}
	for all $\omega\in\Omega$ and $A\in\mathcal B(\Rplus)\times \mathcal B(\R^d\backslash \{0\})$. 
	
	Indeed, the jump random measure $\mu_X$ is a random measure into $\Rplus\times \R^d$. Completely conform to Remark \ref{remark:clarify}, we see that the set function $A\mapsto \mu_X(\omega,A)$ defines a counting measure for each $\omega\in\Omega$ fixed (presuming, without loss of generality, that $X$ is \cadlag everywhere). Moreover, we have $\mu_X(\,\cdot\,,[0,t]\times B)=N_t^B$ for any $B\in  \mathcal B(\R^d\backslash \{0\})$, which are measurable functions thanks to the observations in Remark \ref{remark:measurability}. 	By another approximation argument, we obtain that $\omega\mapsto \mu_X(\omega,A)$ are random variables for all $A\in\mathcal B(\Rplus)\times \mathcal B(\R^d)$, hence we conclude $\mu_X$ is indeed a well-defined random measure.
	%	
%	Definition 2.13. Let $X=\left(X_{t}, t \geqslant 0\right)$ be an $\mathbb{R}^{d}$ -valued stochastic process with càdlàg paths, discontinuities/ jump times $\left(T_{n}\right)_{n \geqslant 1}$ and $Y_{n}:=X_{T_{n}}-X_{T_{n}-} \in \mathbb{R}^{d} \backslash\{0\}, n \geqslant 1$. Then the random measure $J_{X}$ on $[0, \infty) \times \mathbb{R}^{d}$ given by
%	$$
%	\begin{aligned}
%		J_{X}(\omega, A) &=\sum_{n \geqslant 1} \delta_{\left(T_{n}(\omega), Y_{n}(\omega)\right)}(A) \\
%		&=\#\left\{\left(t, \Delta X_{t}(\omega)\right) \in A\right\}, \quad \omega \in \Omega, A \in \mathscr{B}_{[0, \infty) \times \mathbb{R}^{d}}
%	\end{aligned}
%	$$
%	is called the jump measure of $X$.
%	For any $t \geqslant 0$ and $A \in \mathscr{B}_{\mathbb{R}^{d}}$ the random variable $J_{X}([0, t] \times A)$ describes the number of jumps of $X$ between 0 and $t$ whose size is in $A$.
%	
\end{example}

In case $X$ is a Lévy process, the example above can be continued with the following result, which essentially follows from $N^B$ being  a Poisson process for all $B\in\mathcal B(\R^d)$, $0\notin \bar B$.

\begin{theorem}[Theorem 19.2 of \cite{book:sato}]
	If $X$ is a Lévy process with   triplet $(b,\Sigma,\nu),$ then    $\mu_X$ is a Poisson random measure with intensity measure $\nu$. 
\end{theorem}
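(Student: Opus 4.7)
The plan is to verify the three defining properties of a Poisson random measure in order, with intensity measure understood as $\mathrm{Leb} \otimes \nu$ on $\Rplus \times (\R^d\backslash\{0\})$ (conform to the cited Sato reference). The transition-kernel requirement was essentially dispatched in the preceding example, since $\omega\mapsto \mu_X(\omega, A)$ is a random variable for every $A\in\mathcal{B}(\Rplus)\otimes \mathcal{B}(\R^d\backslash\{0\})$, and $A\mapsto \mu_X(\omega, A)$ is a (counting) measure for each fixed $\omega$. So I focus on the Poisson distribution and independence properties.

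For the distribution, I would first reduce to rectangles $R=(s,t]\times B$ with $B\in\mathcal{B}(\R^d)$ bounded away from $0$, using that $\mu_X$ is $\sigma$-finite: the domain $\Rplus\times(\R^d\backslash\{0\})$ is the countable union of sets $[0,n]\times \{x:\|x\|>1/n\}$ of finite $\mathrm{Leb}\otimes\nu$-mass. For such a rectangle the key observation has already been made in the notes preceding Definition \ref{def:Levy2}: the process $N^B=(N_t^B)_{t\geq 0}$ is a Poisson process with intensity $\nu(B)$. Consequently
\begin{equation}
\mu_X(\,\cdot\,, (s,t]\times B)=N_t^B-N_s^B\sim \mathrm{Poisson}\big((t-s)\nu(B)\big)=\mathrm{Poisson}\big((\mathrm{Leb}\otimes\nu)((s,t]\times B)\big),
\end{equation}
which verifies the Poisson-distribution property on the semiring of such rectangles; a standard $\pi$-$\lambda$ extension (identifying characteristic functions on both sides) then lifts the statement to all $A$ in $\mathcal{B}(\Rplus)\otimes \mathcal{B}(\R^d\backslash\{0\})$ with $(\mathrm{Leb}\otimes\nu)(A)<\infty$.

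For the independence property, I would first handle the two orthogonal cases separately. \emph{(a)} If rectangles $R_i=(s_i,t_i]\times B$ share the same space component $B$ and have disjoint time intervals, then $\mu_X(\cdot,R_i)=N_{t_i}^B-N_{s_i}^B$, and these are mutually independent since $N^B$ has increments independent of the past by Definition \ref{def:Levy}(iii) applied to the Lévy process $X$ (alternatively by Proposition \ref{prop:strong-M}). \emph{(b)} If rectangles $R_i=(s,t]\times B_i$ share the same time interval but have pairwise disjoint space components $B_i$ (each bounded away from $0$), then $\mu_X(\cdot, R_i)=N_t^{B_i}-N_s^{B_i}$ are independent because the Poisson processes $N^{B_i}$ are themselves independent---this being the crucial non-trivial input, which I would obtain from the Lévy--Itô decomposition (Theorem \ref{thm:LevyIto}) or, more directly, from the computation of the joint characteristic function using Example \ref{ex:complex} together with the fact that jumps into disjoint Borel sets occur at disjoint random times so that the corresponding compound-Poisson-type sums factorise in law. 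Combining (a) and (b) via a product-argument handles arbitrary disjoint rectangles, and a monotone class / $\pi$-$\lambda$ argument then yields independence of $\mu_X(\cdot, A_1),\ldots, \mu_X(\cdot, A_n)$ for all pairwise disjoint $A_i\in\mathcal{B}(\Rplus)\otimes\mathcal{B}(\R^d\backslash\{0\})$.

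The hard part will be step (b): establishing the mutual independence of $N^{B_1},\ldots, N^{B_n}$ for disjoint $B_i$'s bounded away from $0$ is not immediate from stationary and independent increments alone, because independence across \emph{space} components is being asserted. The cleanest route is to compute, for a single $t$, the joint characteristic function of $(N_t^{B_1},\ldots, N_t^{B_n})$ by writing $\exp\big(i\sum_j u_j N_t^{B_j}\big)=\prod_{s\leq t}\exp\big(i\sum_j u_j \mathds{1}_{B_j}(\Delta X_s)\big)$, relating this to the characteristic exponent in the Lévy--Khintchine representation (Theorem \ref{thm:levykhint}) restricted to $\bigsqcup_j B_j$, and observing that it factorises into $\prod_j \mathbb{E}\exp(iu_j N_t^{B_j})$ because $\mathds{1}_{B_j}$'s have disjoint supports and the Lévy measure is additive on them. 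The joint finite-dimensional distributions across several time points then follow from (a). All measurability subtleties are absorbed by the standing \cadlag assumption on $X$ and the discussion in Remark \ref{remark:measurability}.
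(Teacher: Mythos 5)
The paper does not prove this statement at all---it is taken verbatim from Sato (Theorem 19.2), with only the remark that it ``essentially follows from $N^B$ being a Poisson process'' for $B$ bounded away from zero. Your proposal therefore does not diverge from the paper so much as supply the argument the paper delegates to the reference, and the route you take (one-dimensional marginals from the Poisson-process fact, independence across disjoint sets, then a monotone-class extension) is exactly the standard one. Two soft spots deserve attention. First, the ``$\pi$-$\lambda$ extension'' of the Poisson-distribution property is not a bare Dynkin argument: the class of sets $A$ for which $\mu_X(\cdot,A)$ is Poisson with parameter $(\mathrm{Leb}\otimes\nu)(A)$ is stable under disjoint unions only \emph{after} you have the independence property, and the passage to general $A$ of finite mass is cleanest via approximation in measure by finite disjoint unions of rectangles, using $\mathbb E\,\mu_X(\cdot,C)=(\mathrm{Leb}\otimes\nu)(C)$ and Markov's inequality to get convergence in probability. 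So the two defining properties must be proved in tandem on the ring generated by rectangles before either is extended; your sketch acknowledges both but presents the extension of the distributional property as if it were self-contained.

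Second, in step (b) the identity $\mathbb E\exp\bigl(i\sum_j u_j N_t^{B_j}\bigr)=\exp\bigl(t\int(e^{if(x)}-1)\,\nu(\mathrm dx)\bigr)$ with $f=\sum_j u_j\mathds 1_{B_j}$ is essentially equivalent to the statement being proved (it is the exponential formula for the jump measure), and it does not follow from Theorem \ref{thm:levykhint} alone, which describes the law of $X_t$ rather than of functionals of the jumps. To make it non-circular you must either lean on Theorem \ref{thm:LevyIto} as a black box (legitimate inside these notes, where it is cited without proof, but note that in Sato's own development the present theorem \emph{precedes} the L\'evy--It\^o decomposition, so the logical order is inverted), or argue directly that the Poisson processes $N^{B_1},\dots,N^{B_n}$ never jump simultaneously and are therefore independent (Watanabe-type characterisation). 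Finally, when combining (a) and (b) for arbitrary disjoint rectangles you should refine to a common time grid and a common partition of the space sets, obtaining joint independence of all cell counts from (b) applied to each increment together with the renewal property of Proposition \ref{prop:strong-M} across time blocks; with that said, the overall architecture of your proof is sound and matches what the cited source actually does.
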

As usual, we will often suppress the $\omega$ from the notation of a random measure $\mu$. By means of the random jump measure $\mu_X$, we can state the Lévy--Itô decomposition in its concise and  traditional form (with $d=1$, conform Theorem \ref{thm:LevyIto}, but there is no difference for $d>1$).

\begin{theorem}
%%	In this form, it easily extends to $\R^d$.
%	\begin{figure}[!ht]
%		\centering
%		\includegraphics[width=0.7\linewidth]{screenshot009}
%		\caption{}
%		\label{fig:screenshot009}
%	\end{figure}
Let $X$ be a Lévy process with characteristic triplet $(b,\sigma^2,\nu)$. The stochastic process $X$ then admits a decomposition\label{thm:levitocompact} 
\begin{equation}\begin{aligned}
			X_t&=b t+\sigma W_t+ \int_{[0,t]\times \R}x\mathds 1_{\{|x|\leq 1\}}\big(\mu_{X}(\mathrm ds,\mathrm dx)-t\nu (\mathrm dx)\big)\\
			&\hspace{5cm} +\int_{[0,t]\times \R}x\mathds 1_{\{|x|>1\}}\mu_{X}(\mathrm ds,\mathrm dx),\quad t\geq 0, \label{eq:compare}
\end{aligned}
\end{equation}
where $W=(W_t)_{t\geq 0}$ is a standard Brownian motion.
\end{theorem}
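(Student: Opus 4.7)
The plan is to regard Theorem~\ref{thm:levitocompact} as essentially a translation of the already-established Lévy--Itô decomposition (Theorem~\ref{thm:LevyIto}) into the language of the jump random measure $\mu_X$. No genuinely new probabilistic content is required; the work lies entirely in rewriting sums over jump times as integrals against $\mu_X$ and in handling the $L^2$-limit on the small-jump part.

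First I would invoke Theorem~\ref{thm:LevyIto} to write $X = X^{(1)} + X^{(2)} + X^{(3)}$ $\mathbb P$-a.s.. The piece $X^{(1)}_t = bt + \sigma W_t$ matches the first two summands of \eqref{eq:compare} verbatim, so nothing has to be done. For the large-jump compound Poisson term $X^{(3)}_t = \sum_{k=1}^{N_t} Z_k$, the defining property of $\mu_X$ (i.e.\ that $\mu_X(\omega,\cdot)$ is the counting measure supported on the set of jump-time-and-height pairs $(s,\Delta X_s(\omega))$) yields the identity
\begin{equation}
\sum_{0<s\leq t} f(s,\Delta X_s)\mathds{1}_{\{\Delta X_s\neq 0\}} = \int_{[0,t]\times \R} f(s,x)\,\mu_X(\mathrm ds,\mathrm dx)
\end{equation}
for any measurable $f$. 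Applied with $f(s,x) = x\mathds 1_{\{|x|>1\}}$, this converts the relabelling $X^{(3)}_t = \sum_{0<s\leq t}\Delta X_s\mathds 1_{\{|\Delta X_s|>1\}}$ (valid by construction of $X^{(3)}$ in Theorem~\ref{thm:LevyIto}) into precisely the fourth summand of \eqref{eq:compare}.

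For the small-jump martingale part $X^{(2)}$, I would start from the $L^2$-limit representation in \eqref{eq:decomposition}. For each $\eta>0$, the pre-limit term is a \emph{finite} sum of jumps minus its mean, so the same counting-measure identity gives
\begin{equation}
\sum_{0<s\leq t} \Delta X_s \mathds 1_{\{\eta<|\Delta X_s|\leq 1\}} - t\int_{\{\eta<|x|\leq 1\}} x\,\nu(\mathrm dx) = \int_{[0,t]\times\R} x\mathds 1_{\{\eta<|x|\leq 1\}}\bigl(\mu_X(\mathrm ds,\mathrm dx) - \mathrm ds\times \nu(\mathrm dx)\bigr).
\end{equation}
Sending $\eta\downarrow 0$ on the right then defines the compensated integral appearing in the third summand of \eqref{eq:compare}; the left-hand side converges in $L^2$ to $X^{(2)}_t$ by Theorem~\ref{thm:LevyIto}, giving the desired identification.

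The main obstacle is justifying this $L^2$-limit carefully, because when $\int_{\{|x|\leq 1\}}|x|\,\nu(\mathrm dx)=\infty$ neither $\int_{[0,t]\times\R} x\mathds 1_{\{|x|\leq 1\}}\mu_X(\mathrm ds,\mathrm dx)$ nor $t\int_{\{|x|\leq 1\}} x\,\nu(\mathrm dx)$ makes sense individually; only their difference does. The cleanest route is to exploit that for $0<\eta'<\eta\leq 1$ the compensated Poisson integrals over disjoint height-annuli are orthogonal in $L^2$, so that by the Poisson-integral isometry
\begin{equation}
\mathbb E\left|\int_{[0,t]\times\R} x\mathds 1_{\{\eta'<|x|\leq \eta\}}\bigl(\mu_X(\mathrm ds,\mathrm dx)-\mathrm ds\times\nu(\mathrm dx)\bigr)\right|^2 = t\int_{\{\eta'<|x|\leq \eta\}} x^2\,\nu(\mathrm dx),
\end{equation}
which tends to zero as $\eta,\eta'\downarrow 0$ thanks to $\int_\R (1\wedge x^2)\,\nu(\mathrm dx)<\infty$. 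Completeness of $L^2$ then provides the limit, and uniqueness of $L^2$-limits identifies it with $X^{(2)}_t$. Combining the three pieces finishes the proof.
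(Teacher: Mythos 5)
Your proposal is correct and takes essentially the same route as the paper, which presents Theorem~\ref{thm:levitocompact} as a direct translation of the decomposition \eqref{eq:decomposition} (itself Theorem~\ref{thm:LevyIto}) into the language of the jump measure $\mu_X$. You in fact supply more detail than the paper does — the counting-measure identity for the finite-activity pieces and the $L^2$-Cauchy/isometry argument for the compensated small-jump integral — and these details are accurate.
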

 Note that the decomposition in \eqref{eq:decomposition} is a translation of \eqref{eq:compare}. In particular, using the  shorthand notation $\mu_X(t,\,\mathrm dx)=\mu_X([0,t],\mathrm dx)$, we have that 
 \begin{equation}
 	\int_{A}f(x)\mu_X(t,\mathrm dx)=\sum_{0<s\leq t}f(\Delta X_s)\mathds 1_{\{\Delta X_s\in A\}},
 \end{equation}
is a compound Poisson process for every measurable function $f:\R\to\R$ and    Borel set $A\in\mathcal B(\R)$ bounded away from zero    \citeb{p. 108}{book:applebaum}.
 Also,  the product measure of the Lévy measure and Lebesgue measure (on $\Rplus$), sometimes denoted by
\begin{equation}\nu_X(\mathrm d t,\mathrm d x)= \nu(\mathrm d x)\,\mathrm d t,\end{equation}
is a random measure as well. It is
 known as the \name{compensator} of the random jump measure $\mu_X$ for a Lévy process $X$.  We refer to \citeb{p. 66}{book:jacod} and \citeb{p. 76}{book:jacod} for a proper definition suitable for all semimartingales. Lastly, the \name{compensated jump random measure}{compensated!jump random measure} is defined by
 \begin{equation}\tilde \mu_X(\mathrm d t, \mathrm d x):=\mu_X(\mathrm d t, \mathrm d x)-\nu(\mathrm d x)\, \mathrm d t.\end{equation} The stochastic process $(\tilde \mu_X(t,A))_{t\geq 0}$---in line with the previous shorthand notation---is a martingale for any Lévy process $X$ and   all $A\in\mathcal B(\R^d)$ bounded away from zero \citeb{p. 105}{book:applebaum}.
\begin{remark} The Lévy--Itô decomposition can be generalised for all semimartingales. This result can be found in  \citeb{Thm. II.2.34}{book:jacod}.
% See also subsection \ref{Sec2.2.4} for a little bit more information on this.  
\end{remark}

Finally, observe that in case a Lévy process $X$ is of finite variation (see Proposition \ref{prop:fact1}), we can write
\begin{equation}\label{eq:compare2}
	X_t=\left(b- \int_{\{|x|\leq 1\}} x \, \nu(\mathrm{d}x)\right) t  +\int_{[0,t]\times \R}x \mu_{X}(\mathrm ds,\mathrm dx),\quad t\geq 0.
\end{equation}
Likewise, if a Lévy process $X$ is a special semimartingale  (see Proposition \ref{cor:finitefirst}), the canonical decomposition of $X=A+M$ is given by $A$ as in \eqref{eq:old} and
\begin{equation}
	M_t=\sigma W_t+\int_{[0,t]\times \R}x  \big(\mu_{X}(\mathrm ds,\mathrm dx)-t\nu (\mathrm dx)\big),\quad t\geq 0.
\end{equation}

Another important feature of the jump random measure, is that it enables us to extend the idea of  Itô processes. In line with \citeb{p. 5}{book:sulem}, we can write equation \eqref{eq:compare} as
\begin{equation} 	X_t =b t+\sigma W_t+ \int_{[0,t]\times \R}x  \bar \mu_X(\mathrm d s, \mathrm d x)\label{eq:shorth}\end{equation}
where  
\begin{equation}
		\bar \mu_X(\mathrm d t, \mathrm d x)=\begin{cases}
			\mu_X(\mathrm d t, \mathrm d x)-\nu(\mathrm d x) \,\mathrm d t, & \text { for }   |x|\leq 1, \\
			\mu_X(\mathrm d t, \mathrm d x), & \text { for }   |x| >1.
		\end{cases}
	\end{equation}
This is to be understood as a compact way of writing \eqref{eq:compare}.

Conversely, let $W=(W_t)_{t\geq 0}$ be a standard Brownian  and suppose $\mu$  is an independent Poisson  random measure with intensity measure $\nu$; we take $\nu$ to   be a Lévy measure. Then along the same references as above, see also \citeb{p. 46}{book:kyp} and \citeb{p. 53}{book:kyp}, we deduce that
\begin{equation} 	t\mapsto b t+\sigma W_t+ \int_0^t\int_{  \R}x  \bar \mu(\mathrm d s, \mathrm d x)\label{eq:shorth2},\end{equation}
where
\begin{equation}
	\bar \mu(\mathrm d t, \mathrm d x)=\begin{cases}
		\mu(\mathrm d t, \mathrm d x)-\nu(\mathrm d x) \,\mathrm d t, & \text { for }   |x|\leq 1, \\
		\mu(\mathrm d t, \mathrm d x), & \text { for }   |x| >1,
	\end{cases}
\end{equation}
defines a Lévy process (with respect to its natural filtration). This inherently suggests  the more general stochastic integrals of the form
\begin{equation}
X_t=X_0+\int_{0}^{t} \alpha(s, \omega)\,\mathrm d s+\int_{0}^{t} \beta(s, \omega)\,\mathrm d W_s+\int_{0}^{t} \int_{\mathbb{R}} \gamma(s, x, \omega)\, \bar{\mu}(\omega;\mathrm d s,\mathrm  d x),\label{eq:notindiff}
\end{equation}
with $X_0\in\mathcal F_0$.
For $\alpha(s,\omega)=b$, $\beta(s,\omega)=\sigma$ and $\gamma(s,x,\omega)=x$, we get equation \eqref{eq:shorth2} back. If we set the Lévy measure to zero  ($\nu=0$), then $\bar \mu=0$ and we obtain the famous Itô processes.

These stochastic integrals in \eqref{eq:notindiff} are referred to as \name{Itô--Lévy processes} in \citeb{p. 5}{book:sulem}, where the integrands are satisfying   appropriate conditions in order to make the integrals   well-defined. We observe that  Itô--Lévy processes are a subclass of the so-called \name{Lévy-type stochastic integrals}; see \citeb{p. 233}{book:applebaum}. In there, the collection of appropriate integrands is discussed extensively. We are not going into more detail, but it has to be noted that  the largest class  of suitable integrands will  be given by some sort of predictable measurability.

 In differential form, equation \eqref{eq:notindiff} would read as
 \begin{equation}
\mathrm d X_t=\alpha(t )\, \mathrm d t+\beta(t)\, \mathrm d W_t+\int_{\mathbb{R}} \gamma(t, x) \bar{\mu}(\mathrm d t, \mathrm d x).
 \end{equation}
In line with  Corollary \ref{cor:semi!}, we  observe that Itô--Lévy processes are semimartingales (see also the discussion in \citeb{p. 234}{book:applebaum}).
This enables us to invoke all    theory  in  {\S}\ref{Sec1.1}. Specifically, we can use Itô's formula in Theorem \ref{thm:ito} and the discussion succeeding it. 
 \begin{theorem}\namethrm{Itô's formula}\label{thm:ito2} Suppose $X=(X_t)_{t\geq 0}$ is a real-valued Itô--Lévy process;
 \begin{equation}
	\mathrm d X_t=\alpha(t )\, \mathrm d t+\beta(t)\, \mathrm d W_t+\int_{\mathbb{R}} \gamma(t, x) \bar{\mu}(\mathrm d t, \mathrm d x).\label{eq:difficult}
\end{equation}
 Let $f:\R^2\to\R$ be  a sufficiently smooth function and define the stochastic process $Y=(Y_t)_{t\geq 0}$ by $Y_t=f( X_t,t),t\geq 0.$ Then $Y$ is again a real-valued Itô--Lévy process, where
\begin{align}  
	\displaystyle \mathrm d Y_t&= \displaystyle\frac{\partial f}{\partial t}(X_t,t)\,\mathrm   d t+\frac{\partial f}{\partial x}(X_t,t)\big[\alpha(t)\,\mathrm  d t+\beta(t) \,\mathrm d W_t\big]  +\frac{1}{2} \beta^{2}(t) \frac{\partial^{2} f}{\partial x^{2}}(X_t,t)\,\mathrm  d t \nonumber\\[.3cm]
	&\label{eq:ito3}\displaystyle\quad+\int_{\{|z|\leq 1\}}\left\{f\left(X_{t-}+\gamma(t, z),t \right)-f\left(X_{t-},t\right) -\frac{\partial f}{\partial x}\left(X_{t-},t\right) \gamma(t, z)\right\} \nu(\mathrm d z)\mathrm d t \\[.3cm]
	&\displaystyle\quad+\int_{\mathbb{R}}\big\{f\left(X_{t-}+\gamma(t, z),t\right)-f\left(X_{t-},t\right)\big\} \bar{\mu}(\mathrm d t, \mathrm d z).\nonumber
\end{align}
 \end{theorem}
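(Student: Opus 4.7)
My plan is to deduce Itô's formula for Itô--Lévy processes from the general semimartingale version in Theorem \ref{thm:ito}. First I would recognise that $X$ is a semimartingale: recalling Corollary \ref{cor:semi!} (applied to the underlying Lévy driver) together with the linearity of stochastic integration, $X$ decomposes into a continuous drift $\int_0^{\cdot}\alpha(s)\,\mathrm ds$, the continuous $L^2$-local martingale $\int_0^{\cdot}\beta(s)\,\mathrm dW_s$, and two jump terms arising from integration against $\bar\mu$ on $\{|x|\le 1\}$ (a purely discontinuous local martingale) and $\{|x|>1\}$ (a quadratic pure jump finite variation process). Next I would apply Theorem \ref{thm:ito} to the $2$-dimensional semimartingale $(X_t,t)$. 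Since the deterministic component $t$ is continuous and of finite variation, Proposition \ref{prop:quadvarFV} forces $[X,t]=[t,t]=0$, so the only surviving second-order term involves $[X]^c$.

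Then I would compute $[X]^c_t=\int_0^t\beta^2(s)\,\mathrm ds$: by Proposition \ref{thm:import_form} and Example \ref{ex:Brownian} the Brownian integral contributes $\int_0^{\cdot}\beta^2(s)\,\mathrm ds$, whereas all terms of finite variation are quadratic pure jump (see the discussion after \eqref{eq:contpure}) and hence contribute nothing to the continuous bracket. Substituting the Itô--Lévy differential into $\int_{(0,t]}\partial_xf(X_{s-},s)\,\mathrm dX_s$ yields, after splitting $\bar\mu$ into $\mu-\nu(\mathrm dx)\,\mathrm ds$ on $\{|x|\le 1\}$ and $\mu$ on $\{|x|>1\}$,
\begin{align*}
\int_{(0,t]}\partial_xf(X_{s-},s)\,\mathrm dX_s &= \int_0^t\partial_xf(X_{s-},s)\bigl[\alpha(s)\,\mathrm ds+\beta(s)\,\mathrm dW_s\bigr]\\
&\quad+\sum_{0<s\le t}\partial_xf(X_{s-},s)\Delta X_s -\int_0^t\!\!\int_{\{|x|\le 1\}}\partial_xf(X_{s-},s)\gamma(s,x)\,\nu(\mathrm dx)\,\mathrm ds,
\end{align*}
where I have used that $\sum_{0<s\le t}\partial_xf(X_{s-},s)\Delta X_s=\int_{(0,t]}\!\!\int_{\mathbb R}\partial_xf(X_{s-},s)\gamma(s,x)\,\mu(\mathrm ds,\mathrm dx)$, which follows from the fact that $\Delta X_s=\int_{\mathbb R}\gamma(s,x)\mu(\{s\},\mathrm dx)$ for the jump random measure.

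The crucial bookkeeping step is then to merge this with the Itô sum $\sum_{0<s\le t}\{f(X_s,s)-f(X_{s-},s)-\partial_xf(X_{s-},s)\Delta X_s\}$ appearing in \eqref{eq:ito2}. Writing the telescoped sum as $\int_{(0,t]}\!\!\int_{\mathbb R}\{f(X_{s-}+\gamma(s,x),s)-f(X_{s-},s)\}\,\mu(\mathrm ds,\mathrm dx)$ and again splitting $\mu$ into $\bar\mu$ plus the compensator on $\{|x|\le 1\}$, the $\partial_xf$-correction for small jumps combines with the Taylor remainder to produce exactly the $\nu(\mathrm dz)\mathrm dt$-drift displayed in \eqref{eq:ito3}, while on $\{|x|>1\}$ the correction is absent (because there $\bar\mu=\mu$), and both pieces reassemble into a single integral against $\bar\mu$. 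The final assertion that $Y$ is itself an Itô--Lévy process is then read off directly from the structure of the resulting expression.

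The main obstacle is not the algebra, which is essentially bookkeeping, but rather the analytic justification: one must verify that each piece is well-defined in the appropriate sense (the Brownian integral needs $\beta\partial_xf(X_{-},\cdot)$ in $\mathcal P^*(W)$, the $\bar\mu$-integral needs the usual square-integrability against $\nu(\mathrm dx)\,\mathrm ds$ of $f(X_{-}+\gamma,\cdot)-f(X_{-},\cdot)$ on $\{|x|\le 1\}$, and the $\nu(\mathrm dz)\,\mathrm dt$-drift needs the Taylor remainder to be $\nu$-integrable near the origin). These follow from the ``sufficient smoothness'' of $f$ (typically $C^{1,2}$) together with local boundedness of $\partial^2_{xx}f$ on the range of $X_{-}$ and a second-order Taylor estimate $|f(y+h,s)-f(y,s)-\partial_xf(y,s)h|\le C h^2$ on compacts, combined with the defining Lévy--measure integrability \eqref{eq:levyint}. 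The remainder of the argument is then a matter of applying Fubini and the standard convergence theorems for stochastic integrals against Poisson random measures.
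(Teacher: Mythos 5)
Your proposal is correct, but it takes the route the paper only gestures at rather than the one it actually uses: the paper's ``proof'' of Theorem \ref{thm:ito2} is a citation to \O{}ksendal--Sulem (Thm.\ 1.14) and Applebaum (Sec.\ 4.4), with the closing remark that ``with a bit of work, it can also be seen as a corollary of Theorem \ref{thm:ito}''---and that bit of work is precisely what you carry out. Your derivation (apply the semimartingale It\^o formula \eqref{eq:ito2} to the pair $(X_t,t)$, kill $[X,t]$ via Proposition \ref{prop:quadvarFV}, identify $[X]^c_t=\int_0^t\beta^2(s)\,\mathrm ds$, and reshuffle the jump sum against the splitting $\bar\mu=\mu-\nu\,\mathrm ds$ on $\{|x|\le 1\}$ and $\bar\mu=\mu$ on $\{|x|>1\}$) is the standard and correct way to do this, and your closing paragraph correctly identifies where the real analytic content lies (the second-order Taylor estimate against \eqref{eq:levyint}). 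Two small points deserve more care than your sketch gives them. First, in the displayed identity for $\int_{(0,t]}\partial_xf(X_{s-},s)\,\mathrm dX_s$ the two pieces $\sum_{0<s\le t}\partial_xf(X_{s-},s)\Delta X_s$ and $\int_0^t\int_{\{|x|\le 1\}}\partial_xf\,\gamma\,\nu(\mathrm dx)\,\mathrm ds$ are not separately well defined when $\int_{\{|x|\le 1\}}|x|\,\nu(\mathrm dx)=\infty$; only their compensated combination is, so the bookkeeping must be performed at the level of the compensated integral (or after a truncation $\{|x|>\eta\}$ followed by $\eta\downarrow 0$), exactly as in the L\'evy--It\^o decomposition \eqref{eq:decomposition}. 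Second, your justification that the jump terms do not contribute to $[X]^c$ covers the finite-variation large-jump part but not the small-jump part, which is in general \emph{not} of finite variation; there you need the separate fact that a purely discontinuous local martingale has vanishing continuous bracket. Neither issue breaks the argument; they are the points a referee would ask you to spell out. What your approach buys over the paper's is self-containedness within the framework of {\S}\ref{Sec1.1}; what the citation buys is that the sources handle the integrability hypotheses on $\alpha,\beta,\gamma$ (which ``sufficiently smooth $f$'' alone does not settle) once and for all.
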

\begin{proof}
	This exact statement can be found in   \citeb{Thm. 1.14}{book:sulem}. There they refer to several works, but we point out that  \citeb{Sec. 4.4}{book:applebaum} is (one of) the appropriate source(s). With a bit of work, it can also be seen as a corollary of Theorem \ref{thm:ito}. 
\end{proof}
 In case $\nu(\R)<\infty$ holds, we note that the Itô--Lévy procces in  \eqref{eq:ito3} can be rewritten as 
  \begin{equation}
 	\mathrm d X_t=\tilde \alpha(t )\, \mathrm d t+\beta(t)\, \mathrm d W_t+\int_{\mathbb{R}} \gamma(t, x)  {\mu}(\mathrm d t, \mathrm d x)\label{eq:simplified},
 \end{equation}
whence the Itô formula in \eqref{eq:ito3}   then reads
 \begin{equation}
 	\begin{aligned}  
 	\displaystyle \mathrm d Y_t&= \displaystyle\frac{\partial f}{\partial t}(X_t,t)\,\mathrm   d t+\frac{\partial f}{\partial x}(X_t,t)\big[\tilde \alpha(t)\,\mathrm  d t+\beta(t) \,\mathrm d W_t\big]  +\frac{1}{2} \beta^{2}(t) \frac{\partial^{2} f}{\partial x^{2}}(X_t,t)\,\mathrm  d t  \\[.3cm]
 	&\displaystyle\quad+\int_{\mathbb{R}}\big\{f\left(X_{t-}+\gamma(t, z),t\right)-f\left(X_{t-},t\right)\big\}  {\mu}(\mathrm d t, \mathrm d z). \label{eq:ito4}
 \end{aligned}
 \end{equation}
In here, we have set $\tilde \alpha(t):=\alpha(t)-\int_{\{|x|\leq 1\}}\gamma (t,x)\,\nu(\mathrm dx),t\geq 0$ (as usual; compare with the result in equation \eqref{eq:compare2}.)
 Observe   this is   in accordance with  the discussion in   \eqref{eq:short1}--\eqref{eq:short3} in {\S}\ref{Sec1.1.5}. See also \citeb{p. 249}{book:applebaum} (and \citeb{p. 383}{book:applebaum}), where we  claim that our finite intensity result above follows from the latter.
%As in  \citeb{p. 6}{book:sulem}, we state the result in differential form. 
%
%In case of finite variation---for instance, when we have a Lévy process with finite activity---this simplifies into
%$$
%\begin{array}{c}
%	d Y(t)=\frac{\partial f}{\partial t}(t, X(t)) d t+\frac{\partial f}{\partial x}(t, X(t))[\alpha(t, \omega) d t+\beta(t, \omega) d B(t)] \\
%	+\frac{1}{2} \beta^{2}(t, \omega) \frac{\partial^{2} f}{\partial x^{2}}(t, X(t)) d t \\
%	+\int_{\mathbb{R}}\left\{f\left(X_{t-},t+\gamma(t, z)\right)-f\left(X_{t-},t\right)\right\} N(d t, d z)
%\end{array}
%$$
%
%This follows from the somewhat more general statement in Theorem 1.14 of \citeb{p. 6}{book:sulem}, see also Lemma 4.4.5 of \citeb{p. 249}{book:applebaum}.
Making use of Theorem \ref{thm:ito2} instead of Theorem \ref{thm:import_form} turns out favourable  with regard  to the following remark.
\begin{remark}
	Itô's formula as in Theorem \ref{thm:ito2} significantly tells us that any   function (sufficiently smooth) of an Itô--Lévy process is not only a semimartingale, but   again an Itô--Lévy process. 
    % This observation is relevant for the problem that occurs in the introductory part of Chapter \ref{Ch3}. 
    Particularly, one could consider \textit{L\'evy stochastic differential equations} instead of stochastic differential equations driven by semimartingales; see  {\S}\ref{Sec1.3} and \citeb{p. 10}{book:sulem}.
\end{remark}

\subsection{A special class: regulated Lévy martingale processes}\label{Sec1.2.2}
\noindent Let us introduce a specific class of Lévy processes, namely those of finite intensity with symmetric and bounded  jumps. 
% Due to the   fact that these Lévy processes   occur abundantly throughout this thesis, it stands  to reason to give a proper definition.
%\url{https://math.stackexchange.com/questions/2995553/quadratic-variation-and-covariation-of-poisson-processes}
%
%As mentioned previously, we omit local martingales and consider martingales only as much as possible. Therefore, we consider Lévy processes that are in fact martingales. $\mathcal L_{\textnormal{ref}}$
%
%
% 

\begin{definition}\label{def:regulated}
	Let $\mathscr L$ denote the class of $\mathbb R$-valued Lévy   processes $L=(L_t)_{t\geq 0}$ that are 
	\begin{enumerate}[\quad (\text{A}1)]
		\item  integrable and of zero mean, i.e. $\mathbb E |L_t|<\infty$ and $\mathbb E L_t =0$ for all $t\geq 0$.
\end{enumerate}
 Processes in  $\mathscr L$ are  called \name{Lévy martingales}. We denote by $\mathscr L^2$   the subclass of $\mathscr L$  consisting of all square integrable Lévy martingales.
 
 We subsequently write  $\mathscr{L}_{\textnormal{reg}}$ for the subclass of $\mathscr L^2$ of square integrable Lévy martingales satisfying  the additional properties that
\begin{enumerate}[\quad (\text{A}1)]\setcounter{enumi}{1}
		\item  they are of finite intensity, i.e., $\nu(\R)<\infty$ where $\nu$ is the associated Lévy measure, and
	\item  their jumps are $\mathbb P$-a.s. uniformly bounded, i.e., $|L_{t-}-L_t|\leq \zeta$ $\mathbb P$-a.s., for all $t\geq 0$, with some certain jump height $\zeta\geq 0.$
\end{enumerate}
Lévy martingales in \regulated\index{\regulated} are said to be \name{regulated Lévy martingales}.
\end{definition}

 Proposition \ref{cor:finitefirst} tells us that  a Lévy process is a martingale if and only if (A1) is satisfied. This agrees with our   definition of  $\mathscr L$. The previous proposition even yields that the  characteristic triplet of processes in $\mathscr L$ must be of the form
\begin{equation}\left(  \int_{\{|x| \leqslant 1\}} x \, \nu(\mathrm dx), \sigma^2,   \nu\right).\end{equation}
 In addition to   (A1),   assumption (A2) gives us    that a Lévy martingale $L=(L_t)_{t\geq 0}$ can be written as a    sum of two independent processes: a Brownian motion with dispersion coefficient $\sigma^2$ and a compound Poisson process with jump measure $\frac1{\nu(\R)}\nu$, i.e.,
 \begin{equation}
 	L_t=\sigma W_t+\sum_{k=1}^{N_t}Z_k,\quad t\geq 0.\label{eq:form}
 \end{equation}
 This is simply a consequence of Proposition \ref{prop:thecompound}.  
 Assumption (A3)   makes it possible to determine pathwise estimates of the integral. Typically, this assumption gives us
\begin{equation}
	\mathbb E[Z_1]=0\quad\text{and}\quad \mathbb E[Z_1^2]<\infty. \label{eq:square-cpp}
\end{equation}
 We conclude that assumptions (A2)  and  (A3) together yield that   Lévy martingales are square integrable, so assuming square integrability is not restrictive at all (because Brownian motions are square integrable, obviously).

Let us proceed by computing the quadratic variation of   regulated Lévy martingales, i.e., martingales $L=(L_t)_{t\geq 0}$ of the form \eqref{eq:form}. In the  below, we  write $C=(C_t)_{t\geq 0}$ for the compound Poisson part of $L$. Since quadratic covariations are symmetric bilinear forms, we obtain  the following:
\begin{equation}
	[L,L]_t=\sigma^2[ W,  W]_t+2\sigma[ W,C]_t+[C,C]_t=\sigma^2t+[C,C]_t,\quad t\geq 0,
\end{equation}
where it follows from Proposition \ref{prop:quadvarFV} that $[W,C]=0$ holds. 
By appealing to Theorem \ref{thm:ucpquad}  and Example \ref{ex:compound}, one  easily deduces that the quadratic variation process of $C$ and its compensator---that is, the predictable quadratic variation---are given by
	\begin{equation}
		[C,C] _t=\sum_{s \leq t}\left(\Delta C_{s}\right)^{2}=\sum_{k=1}^{N_{t}} Z_{k}^{2}\quad\text{and}\quad \langle C,C\rangle_t=\lambda t \mathbb E Z_1^2,\quad t\geq0.
	\end{equation}
	Hence, a regulated Lévy martingale has  quadratic variations
	\begin{equation}
		[L]_t=\sigma^2 t +\sum_{k=1}^{N_{t}} Z_{k}^{2}\quad\text{and}\quad \langle L\rangle _t=\mu t,
	\end{equation}
where 
\begin{equation}
	\mu:=\sigma^2+\lambda \mathbb E Z_1^2\leq \sigma^2+\lambda \zeta^2.
\end{equation}
It is interesting to note that the quadratic variation $[L]$ of a regulated Lévy martingale is again a Lévy process, namely, it is a compound Poisson process with drift.

	\begin{remark}\label{remark:doleansabs}
		Thanks to the pleasant nature of regulated Lévy martingales $L$, it is possible to consider stochastic integrals 
			\begin{equation}
		 \left(\int_0^tH_s\,\mathrm dL_s\right)_{t\geq 0}\
		\end{equation}
	with  predictable   and even progressively measurable processes $H=(H_t)_{t\geq 0}$ as integrands, recall {\S}\ref{Sec1.1.4}. In particular, the extension to stochastic integrals  with progressively measurable integrands is feasible---in lines with {\S}\ref{Sec1.1.4}---because the Doléans measure $\mu_L$ is absolutely continuous with respect to $\mathrm ds \times \mathbb P$, i.e., $\mu_L\ll \mathrm ds \times \mathbb P$. Indeed, we have \begin{align*}\mu_L((s,t]\times F_s)&=\mathbb E\big[\mathds 1_{F_s}([L]_t-[L]_s)\big]\\
		&=\mathbb E[\mathds 1_{F_s} ]\mathbb E\big[ [L]_t-[L]_s \big]\\
		&=\mathbb P(F_s)(\sigma^2+\lambda \mathbb E Z_1^2)(t-s)\\
		&=(\sigma^2+\lambda \mathbb E Z_1^2)(\mathrm ds \times \mathbb P)((s,t]\times F_s),\end{align*}
	for all predictable rectangles $(s,t]\times F_s \in \mathcal R,$ thus $F_s\in\mathcal F_s.$ Note, 	we used in the second equality the fact that the quadratic variation $[L]$ has increments independent of the past (because it is a Lévy process as well). We conclude
	\begin{equation}
		\mu_L= (\sigma^2+\lambda \mathbb E Z_1^2)(\mathrm ds \times \mathbb P),
	\end{equation}
	because     the Doléans measure on $\mathcal P$  is fully determined by the set of  predictable rectangles $\mathcal R$. This yields the absolute continuity with respect to $\mathrm ds\times \mathbb P$.
	Notice that  for  a standard Brownian motion $W$, the Doléans  measure $\mu_W$ equals the product measure $\mathrm ds\times \mathbb P.$ Consequently, from the perspective of stochastic integration, we see that a Brownian motion and regulated Lévy martingales are not really that different.

	Finally, conform   to the stochastic integration theory with respect to Brownian motion (see {\S}\ref{Sec1.1.4} for more details), we need the integrands   to  satisfy an $L^2$-integrability condition.  More specifically, for  expanding to both the predictable and progressive measrable processes, we require
		\begin{equation}
			\mathbb{E} \int_{0}^{t}|H_s|^{2}\, \mathrm d [L] _s=\mathbb{E} \int_{0}^{t}|H_s|^{2}\,\mathrm  d \langle L\rangle _s=\frac1\mu\mathbb{E} \int_{0}^{t}|H_s|^{2}\,\mathrm  ds<\infty,\quad t\geq 0.
		\end{equation}
We observe,   the $L^2$-integrability condition which we are supposed to impose on our integrands is independent of the   regulated Lévy martingale. For all $L\in$\,\regulated, we need   $\mathbb{E} \int_{0}^{t}|H_s|^{2}\,\mathrm  ds<\infty$ to hold for all $t\geq 0$.
%		
%		obtain that $C$ is a square integrable martingale iff
%		
%		This yields that $L$ is a square integrable martingale, which is merely \cadlag due to the finite jumps on compacts and thus no longer continuous, yet
%		\begin{equation}
%			\label{eq:sq-int-1}\left(\int_0^tb(s-)dL(s)\right)_{t\geq 0}\
%		\end{equation}
%		is also a square-integrable martingale, when function $b$ satisfies the square integrable condition, see page 173 of Protter.
%		
%		Then by Theorem 19 page 120 of Protter, we obtain that \eqref{eq:condition-b} becomes	
	\end{remark}

% Throughout the next two chapters, we mainly work with \caglad integrands (this is in order to remain within a framework of \cite{book:protter},   dealing with stochastic delay differential equations appropriately, see Chapter \ref{Ch2}).
% We only encounter   extended stochastic integrals in section \ref{Sec3.2},  but these results have been formulated more generally  than necessary.  

We conclude that regulated Lévy martingales are not that  different from a Brownian motion, with regard to stochastic integration. 
It needs to be pointed out, once again, that the extension beyond predictably processes is non-trivial and may feel slightly counter-intuitive  (recall Remark \ref{remark:counter} and Example \ref{ex:generalise}). In the specific setting above, we can write
\begin{equation}
	\int_0^tH_s\,\mathrm dL_s=\sigma \int_0^tH_s\,\mathrm dW_s+\int_0^tH_s\,\mathrm dC_s,\quad t\geq 0,\label{eq:integral}
\end{equation}
where   no confusion can arise regarding the first integral in \eqref{eq:integral} on the right hand side. The second integral, on the other hand, can be either understood as a Lebesgue--Stieltjes integral or as a stochastic integral (since the integrator is also of finite variation, unlike Brownian motion). 
Unfortunately, they no longer coincide; see Example \ref{ex:generalise}.
% In section \ref{Sec3.2} we will make use of the stochastic integral interpretation (in some cases, both choices yield the same result), since we need it to be a   martingale in Theorem \ref{thm-2}.
Despite the   possible confusion, recall that restricting ourselves to predictable (or even merely adapted \caglad) processes gives us unambiguity; see Proposition \ref{prop:stieltjes} and Proposition \ref{prop:stieltjes2}.

\section{Stochastic  differential equations and  simulations}\label{Sec1.3}

\noindent  In the theory of (non-random) ordinary differential equations, coefficients are typically assumed to be Lipschitz continuous, since this ensures us the existence and   uniqueness of a solution. Why do we even bother?
Differential equations can model many important physical situations. Therefore,  it is relevant to know when these equations have unique solutions (and thus when solutions exist in the first place). Additionally, the last couple of decades we obtained lots of numerical  tools at our disposal. Whenever we want to solve a differential equation numerically, we should always
check for existence first, because otherwise we just end up with ``a
bunch of garbage''.

The Lipschitz continuity condition can, in fact, be relaxed.
Indeed, we still achieve existence and uniqueness  for locally Lipschitz continuous coefficients. Nevertheless, solutions  are then only  guaranteed to  exist up to a
possible explosion time. Consider the initial value problem
\begin{equation}
\begin{cases}
	\dfrac{\mathrm d x}{\mathrm dt}=x^2,\quad t\geq 0,\\[.1cm]
	x(0)=x_0,
\end{cases}
\end{equation}
with $x_0>0$, for instance. This equation has the unique solution $x({t})=\big(x^{-1}_0-t\big)^{-1}, t\geq 0,$ which explodes at time
$t=x^{-1}_0.$ We refer to \cite{book:hale} for the basic theory on existence and uniqueness of solutions to ordinary  and functional differential equations (e.g., delay differential equations).  

In the field of stochastic differential equations, the statements above   remain valid (possibly under    extra technical conditions). The reader is assumed to be familiar with \name{It\^o stochastic differential equations}{stochastic differential equation!of Itô type}, i.e.,
\begin{equation}
	\mathrm d X_t=a(X_t,t)\,\mathrm dt+b(X_t,t)\,\mathrm dW_t,\label{eq:brw}
\end{equation}
where $W=(W_t)_{t\geq 0}$ is a standard Brownian motion. Suitable references are, e.g., \cite{book:karatzas,book:mao,book:revuz}. An extensive review on Itô stochastic differential equations is also available in \cite{article:platen}.

Subsequently, we now want to allow jumps in equation \eqref{eq:brw}. There is only no universal way of achieving this. In line with \cite{book:protter}, for example, we can consider
\begin{equation}
	\mathrm d X_t=a(X_t,t)\,\mathrm dt+b(X_{t-},t)\,\mathrm dZ_t,\label{eq:sm}
\end{equation}
where we replace Brownian motion with a general semimartingale $Z=(Z_t)_{t\geq 0}$. For sufficient (regularity) conditions on $a,b$, and the initial value $X_0$, guaranteeing  existence and uniqueness of solutions, we refer to \citeb{Thm. V.6}{book:protter}.   
In case one wants to take for example delay into account, we refer to   \citeb{Thm. V.7}{book:protter} (of which \citeb{Thm. V.6}{book:protter} is a trivial corollary). 
% Our framework covers   stochastic differential equations  as in \eqref{eq:sm}, so  there is no need to discuss them further.

Alternatively, in accordance with \cite{book:sulem} and \cite{book:applebaum}, one can consider \name{Lévy stochastic differential equations}{stochastic differential equation!of Lévy type}, i.e.,
\begin{equation}\mathrm d X_t=\alpha(X_t,t) \, \mathrm d t+\beta( X_t,t) \, \mathrm d W_t+\int_{\mathbb{R}} \gamma\left( X_{t{-}}, t, z\right) \bar \mu(\mathrm d t, \mathrm d z).\label{eq:levyeq}\end{equation}
Recall the discussion at the end of {\S}\ref{Sec1.2.1}. Sufficient (regularity) conditions on $\alpha,\beta,$ and $\gamma$ which ensure  existence and uniqueness of solutions to   equation \eqref{eq:levyeq}, can be found  \citeb{Sec. 6.2}{book:applebaum}; more specifically, see     \citeb{Thm. 6.2.9}{book:applebaum} and  \citeb{Thm. 6.2.11}{book:applebaum}, where they  deal with Lipschitz and local Lipschitz continuity, respectively.

% We will however not encounter these type of stochastic differential equations, except for   the discussion in subsection \ref{intro.3} and     Chapter \ref{Ch5}.

In the remainder of this section, we   briefly discuss   numerical schemes for stochastic differential equations as in \eqref{eq:sm}. At the end, we then motivate a numerical scheme for stochastic delay differential equations. Recall that the well-posedness (i.e., existence and uniqueness of solutions) can be assured by means of \cite[Thm. V.7]{book:protter}. Moreover, from now on we  will consistently write any process $X$ as  $(X(t))_{t\geq 0}$ instead of  $ (X_t)_{t\geq 0}$, for notational purposes.

 Before we address general numerical schemes, we test our approach with a stochastic differential equation that has a known explicit solution (analogous to Example \ref{ex:interm}). For instance, let us consider  the famous \name{Black--Scholes equation} \citeb{p. 329}{book:applebaum}:
 \begin{equation}
 \mathrm d Y(t)=\alpha Y(t) \,\mathrm d t+\beta Y(t-)\,\mathrm  d L(t) = Y(t-)\,\mathrm d\tilde L(t),\quad Y(0)=Y_0,\label{eq:BS}
 \end{equation}
for $\alpha,\beta\in\R$ and
   $L$ a Lévy process with triplet $(b,\sigma^2,\nu)$. Note that $\tilde L=\alpha t+\beta L$ is a Lévy process as well; its triplet equals $(\beta(b+\alpha),\beta^2\sigma^2,\beta\nu)$. By means of Theorem \ref{thm:dade}, we obtain that the explicit solution of equation \eqref{eq:BS} is given by\footnote{The solution $Y=(Y(t))_{t\geq 0}$ to the Black--Scholes equation   \eqref{eq:BS} remains to be non-negative  if  the Lévy  measure satisfies $\text{supp}(\nu)\subset [-\beta^{-1},\infty)$. If $\sup_{t\geq 0}|\Delta L(t)|\leq \beta^{-1}$ holds, e.g., then $\text{supp}(\nu)\subset [-\beta^{-1},\beta^{-1}]$.  }
 \begin{equation}
 	Y(t)=Y_0\exp\Big(\big(\alpha-\tfrac12\beta^{2}\sigma^2\big) t+\beta L(t)\Big)\prod_{0<s\leq t}(1+\beta\Delta L(s))\exp\big(-\beta\Delta L(s)\big),\quad t\geq 0.\label{eq:MC}
 \end{equation}
 Indeed, by means of Proposition \ref{prop:quadvarFV} and the Lévy--Itô decomposition in Theorem \ref{thm:LevyIto}, we deduce $[\tilde L]^c_t=\beta^2[L]^c_t=\beta^2\sigma^2$ for all $t\geq 0.$ In particular, we observe   the solution  exists for all  time, thus there is an infinite explosion time. No explosions are actually already guaranteed in advance, because the coefficients in equation \eqref{eq:BS} are Lipschitz continuous.
 
 Recall that \eqref{eq:BS} is shorthand notation for
 \begin{equation}
 	Y(t)=Y_0+\int_0^t\alpha Y(s)\,\mathrm ds+\int_0^t\beta Y(s-)\,\mathrm dL(s),\quad t\geq 0.
 \end{equation}
For practical purposes, we take $Y_0=y_0\in \R$ and consider the stochastic process $Y$ on a finite time horizon $[0,T]$ only, which we partition equidistantly: 
\begin{equation}
	0=\tau _{{0}}<\tau _{{1}}<\cdots <\tau _{{N}}=T\quad\text{and}\quad\Delta t=T/N,\quad N\in\N.
\end{equation} 
 Note $\tau_k=k\Delta t.$ Theorem \ref{thm:approx} then suggests the  recurrence relation
 \begin{equation}
 Y_{n+1}=Y_{n}+    \alpha Y_{n}\Delta t  +\beta Y_{n} \Delta L_{n}, \quad Y_0=y_0,\label{eq:ns1}
 \end{equation}
for all $0\leq n< N$, where $\Delta L_n=L(\tau_{n+1})-L(\tau_n)\sim L(\tau_1).$ As we will discuss after Figure \ref{fig:stability}  and Figure \ref{fig:stability2}, this approach  corresponds to the {Euler-Maruyama method}. More generally, one could introduce the  general discretisation 
  \begin{equation}
 	Y_{n+1}=Y_{n}+(1-\theta)   \alpha Y_{n}\Delta t+\theta   \alpha Y_{n+1}\Delta t+\beta Y_{n} \Delta L_{n},\quad Y_0=y_0,\label{eq:ns2}
 \end{equation}
with $ \theta\in[0,1], $ which resembles the $\theta$-method. For this typical example, an explicit expression for $Y_{n+1}$ is possible and  reads  
 \begin{equation}
	 	Y_{n+1} =
 	(1-\theta   \alpha \Delta t)^{-1}(1+(1-\theta) \alpha \Delta t+ \beta \Delta L_n)Y_n,
 	 \end{equation}
  hence
  \begin{equation}
  	Y_{n+1}=y_0(1-\theta  \alpha \Delta t)^{-n}\prod_{k=1}^n(1+(1-\theta)\alpha\Delta t + \beta \Delta L_k).\label{compare1}
  \end{equation}
 	On the other hand, if we restrict to Lévy jump diffusion process, i.e.,
 	\begin{equation}
 		L(t)=b t + \sigma W(t)+Z(t),\quad Z(t)=\sum_{k=1}^{N({t})} Z_{k},\quad t\geq 0,
 	\end{equation}
 	  a direct Monte Carlo simulation of the explicit solution in \eqref{eq:MC} yields
 	\begin{equation}
 		Y_n^*=y_0\exp\Big(\big(\alpha-\tfrac12\beta^{2}\sigma^2\big) n\Delta t+\beta L(n\Delta t)\Big)\prod_{k=1}^n(1+\beta\Delta Z_k)\exp\big(-\beta\Delta Z_k\big),\label{compare2}
 	\end{equation}
 where $\Delta Z_n=Z(\tau_{n+1})-Z(\tau_n)$. This is conform the numerics in Appendix \ref{matlab-BS}.

% In case of a continuous Lévy process, i.e., $\nu(\R)=0$ and thus $L(t)=bt+\sigma W(t), t\geq 0$, a more suitable approximation would be
% 	\begin{equation}
% 	Y_n^*=y_0\exp\Big(\big(\alpha-\tfrac12\beta^{2}\sigma^2\big) n\Delta t+\beta L(n\Delta t)\Big).\label{compare3}
% \end{equation}
 Based on the comments in {\S}\ref{Sec1.2.1}, it is fine to presume  $Y_n^*\approx Y(\tau_n)$ whenever $\Delta t$ is sufficiently small (but then again not too small,  in order to prevent rounding errors).  Also, for a fixed Lévy process $L$, we observe that $\beta$ cannot be too large either, since this would affect the numerical stability in \eqref{compare1}. It is  to be expected  that too many (large) jumps will cause instabilities as well.
   Figures \ref{fig:stability} and  \ref{fig:stability2} compare  the $Y_n$ in \eqref{compare1} and $Y_n^*$ in \eqref{compare2} for $\theta=0$ (Euler  method), for $\theta=1$ (Euler backwards method) and $\theta=\frac12$, respectively, in which we deliberately  take $\Delta t$ somewhat large.

 \begin{figure}[!t]
 	\centering
 	\includegraphics[width=0.525\linewidth]{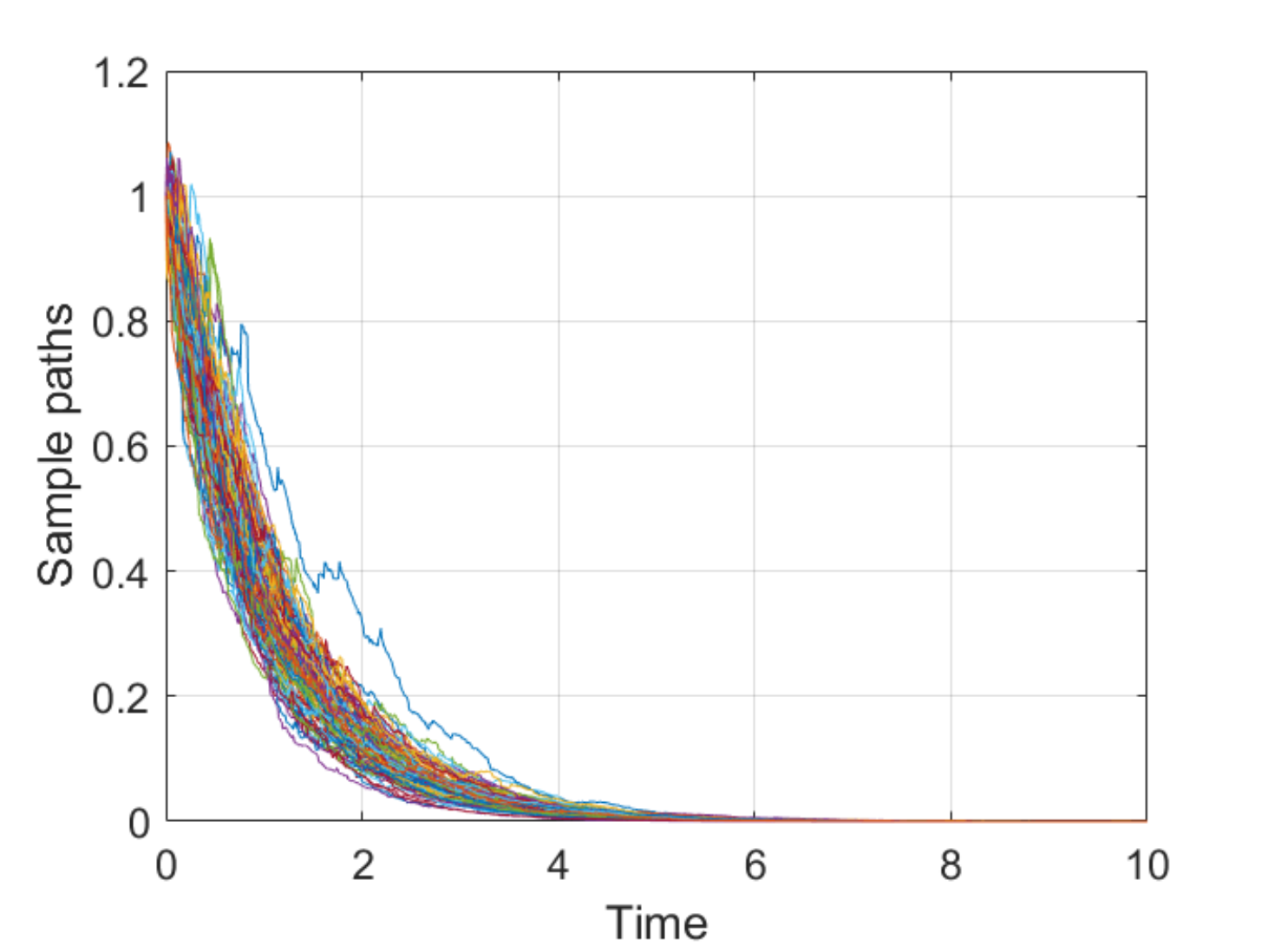}
 	 	\includegraphics[width=0.46\linewidth]{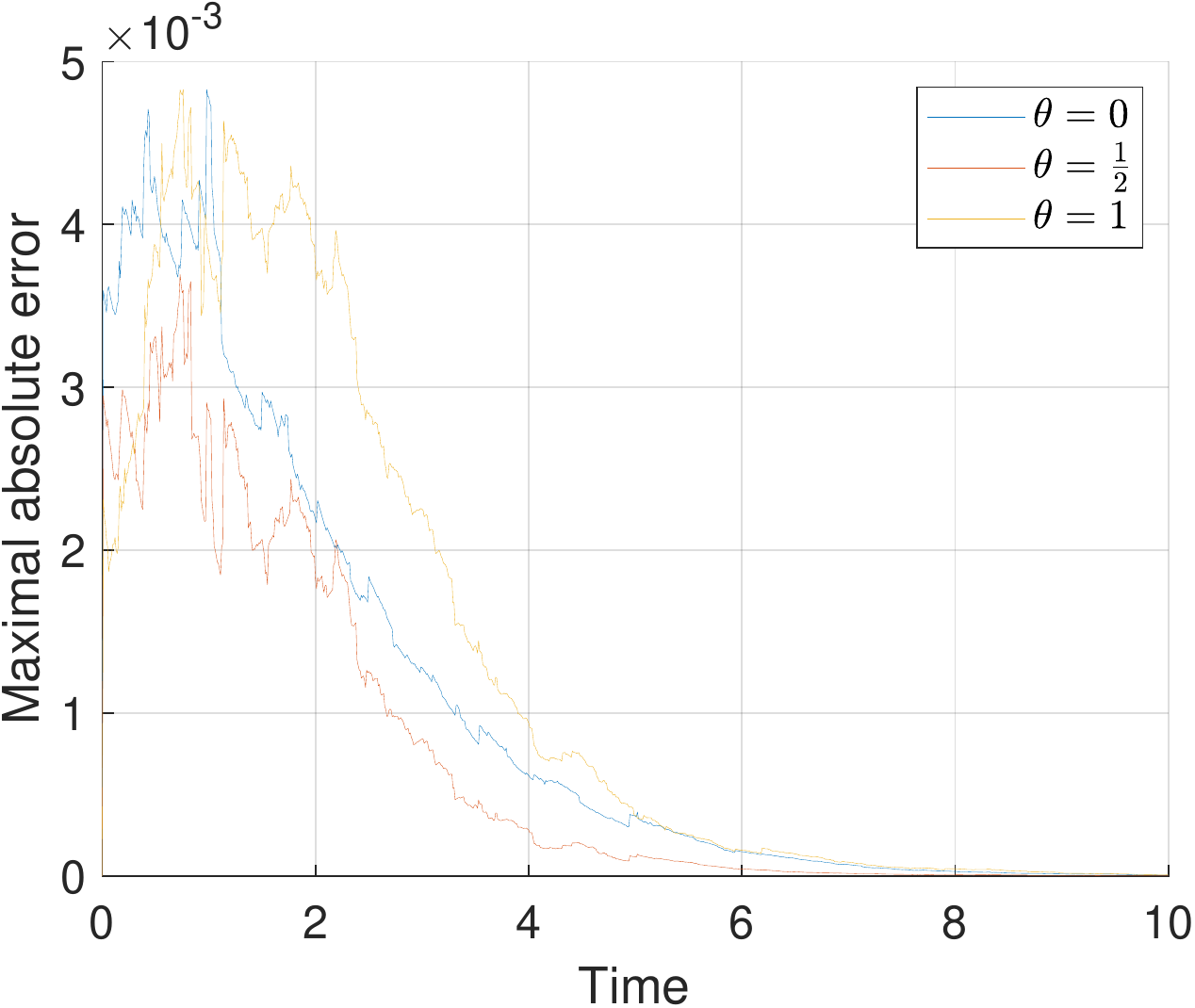}
 	\caption{On the left, a plot of 100 sample paths of the Black-Scholes equation \eqref{eq:BS} with $\alpha=-1$, $\beta=10^{-1},$ and $L=(L(t))_{t\geq 0}$ a Lévy jump diffusion process where $b=0$, $\sigma^2=1$, $Z_1\sim $ Unif $[-1,1]$, and intensity parameter $\lambda=\nu(\R)=\nu([-1,1])=10$ (obtained via the Monte Carlo scheme in \eqref{compare2}). In here, the step size is taken to be $\Delta t=10^{-2}.$ On the right,   the absolute difference $n\mapsto |Y_n-Y_n^*|$  is plotted of the particular realisation attaining the highest error (assuming $Y_n^*\approx Y(\tau_n)$).}
 	\label{fig:stability}  
 \end{figure}
 \begin{figure}[!t]
	\centering
	 	\includegraphics[width=0.52\linewidth]{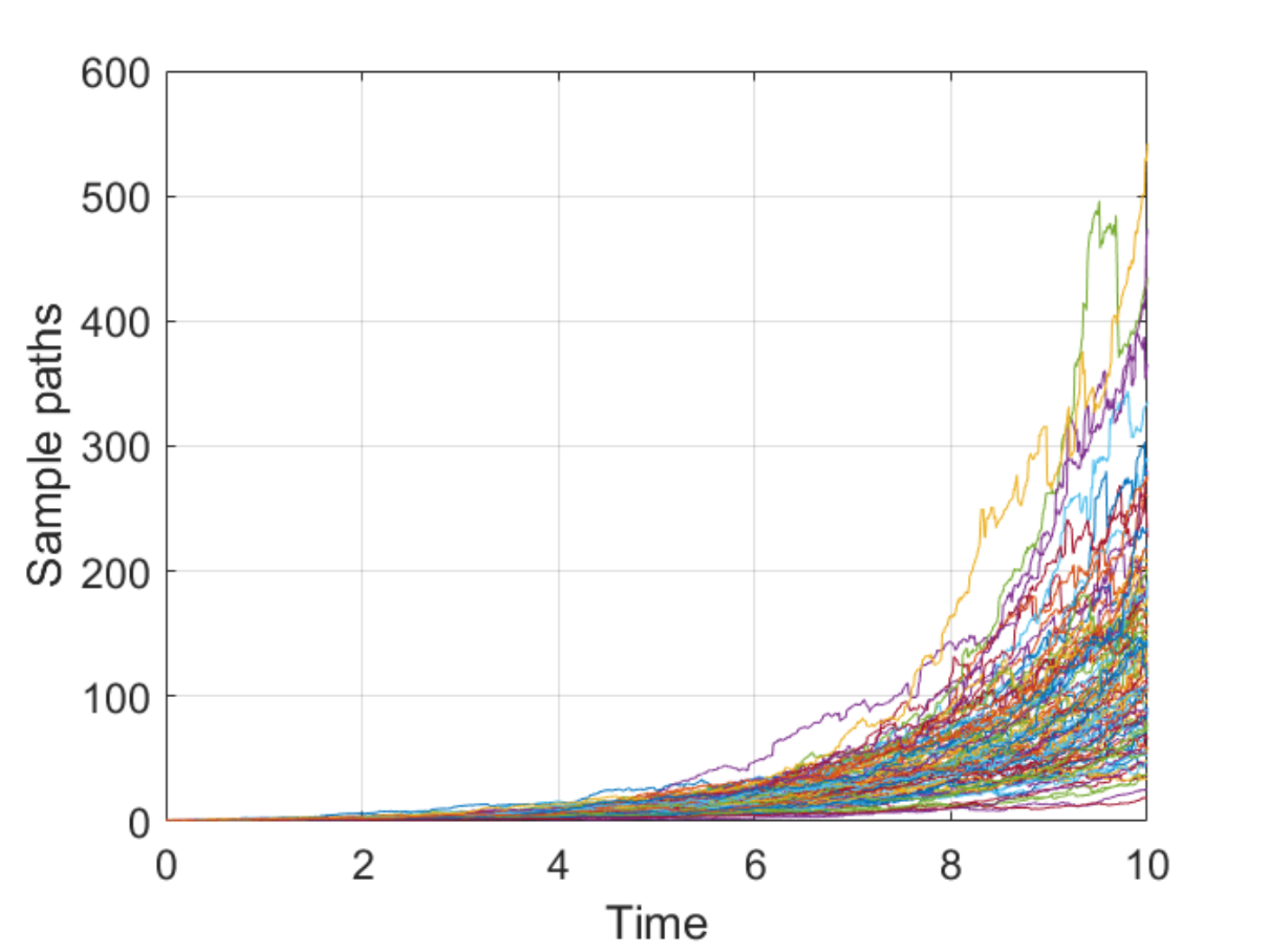}
	\includegraphics[width=0.4675\linewidth]{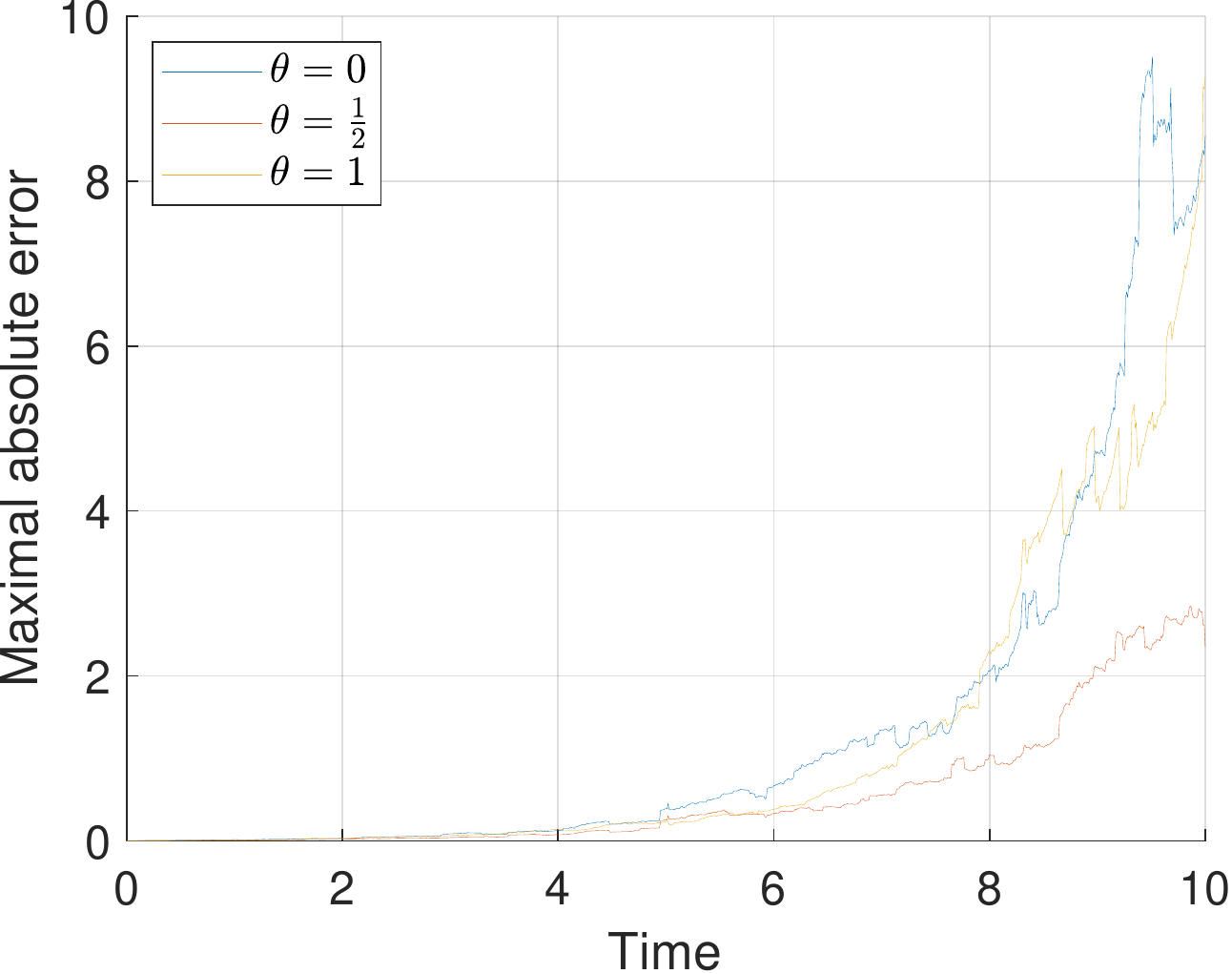}	\caption{The same as in Figure \ref{fig:stability}, but with $\alpha=0.5$ instead of $\alpha=-1.$}
	\label{fig:stability2}
\end{figure}

\pagebreak
 Based on the simulations from above, we are confident that the numerical schemes in  \eqref{eq:ns1} and \eqref{eq:ns2} are reliable.  Further numerical investigation (not presented in here) shows us that when $\Delta t$ decreases, the maximal absolute error decreases also  (to a certain extend of course). However, for $\Delta t$ sufficiently small, there appears to be no  real significant difference  between the $\theta$-methods; already for $\Delta t=10^{-2}$ it is debatable whether $\theta=\frac12$ is ``much better'' or not. We claim that this is due to the presence of noise (with too large volatility).
  In the sequel, it is therefore reasonable to restrict ourselves to $\theta=0$.

Let us now return to the  more general stochastic differential equation
\begin{equation}
	\mathrm d X(t)=a(X(t),t)\,\mathrm dt+b(X(t-),t)\,\mathrm dL(t),
\end{equation}
where $L=(L(t))_{t\geq 0}$ is a Lévy process. Then, completely conform to the specific example  above,  the \name{Euler--Maruyuma method} yields the following recurrence relation:
\begin{equation}
	X_{n+1}=X_n+a(X_n,\tau_n)\Delta t+b(X_n,\tau_n)\Delta L_n.
\end{equation} 
Numerous papers and textbooks have been written on convergence/error analysis of numerical schemes for stochastic differential equations. Within the continuous setting, i.e., where $L$ is a standard Brownian motion, we refer to \cite{book:kloeden}, \cite{article:Giles}, and \cite{article:platen}---which are not limited to Euler discretisation only---to mention only a few references. The latter is a concise review and provides neat lists of references. For   stochastic  differential equations driven by (a certain class of) Lévy processes, one may consult, e.g., \cite{article:talay}, \cite{article:jacod}, \cite{article:Kuhn}, and  \cite{article:FCKS}.  
Partitioning equidistantly is of course not necessary;  in \cite{article:FCKS} one   considers  a Poisson  distributed partition of $[0,T]$.

Ultimately, one may be interested in stochastic \textit{functional} differential equations such as, but not limited to, stochastic delay differential equations. Loosely speaking, a typical   stochastic delay equation would be given by
\begin{equation}
	\mathrm d X(t)=a(X(t),X(t-1),t)\,\mathrm dt+b(X(t-), X((t-1)-),t)\,\mathrm dL(t).
\end{equation}
Suppose there is an $M\in\N$ such that $M\Delta t=1$, then the above suggests the following numerical method:
\begin{equation}
	X_{n+1}=X_n+a(X_n,X_{n-M},\tau_n)\Delta t+b(X_n,X_{n-M},\tau_n)\Delta L_n.
\end{equation}
The corresponding code can be found in  Appendix \ref{matlab-SDDE-Main}.
% At the time of writing, not as much  is known about the stability of numerical methods for stochastic delay differential equations, especially when it is driven by Lévy processes.
We state   an exhaustive list with interesting references  
\cite{article:agrawal,book:asymptotic,article:buckwar,article:Kuske,article:jacob,article:kloeden,phdthesis:kumar,phdthesis:norton,article:tan,article:zhao,article:Hu,article:zong}.
The main focus differs per reference though, where it may be  on locally Lipschitz coefficients,   non-autonomous systems, and/or the presence of jumps.

% Our Mackey-Glass type equations---introduced and analysed in Chapter \ref{Ch3}---are stochastic delay differential equations with locally Lipschitz coefficients, where we also allow the system to be non-autonomous and to permit jumps.  By means of the brief exploration above, we have convinced ourselves of the reliability of the numerics done in Chapter \ref{Ch3}. Differently put, we claim that the scheme in Appendix \ref{matlab-SDDE} is justified. Since the numerical stability of our results is beyond the aim of this thesis, we will not address this any further.

\printindex

\appendix
\section{MATLAB codes}\label{B}
\noindent In this appendix we provide several MATLAB codes. All  figures in these notes are replicable  with the help of these codes below. 
%First the driver; then typical functions. Motivation of the code can be found in ... and ... and ...
%
%\url{https://nl.mathworks.com/matlabcentral/answers/89837-histfit-using-kernel-function-with-a-specific-bandwith?s_tid=mwa_osa_a}

\subsection{Main programme (driver.m)}\label{matlab-driver}
\noindent This code delivers numerical  approximations of realisations of many possible Lévy jump diffusions. It does this  by making use of the elementary functions   in Appendix \ref{elementary}.  
% This so-called driver is moreover a building block, hence its name, for the codes in appendices \ref{matlab-SDDE-Main} and \ref{add}.

\lstinputlisting{3_MATLAB/driver.m}

\subsection{Elementary functions}\label{elementary}
\noindent In this section,  we state  numerical schemes for (drifted) Brownian motions, Poisson processes,  and compound Poisson processes, respectively. Note that a few comments on  these numerical schemes  can be found throughout {\S}\ref{Sec1.2}, including various relevant references.
\subsubsection{Brownian motion with drift}\label{matlab-drift}

\lstinputlisting{3_MATLAB/drifted_Brownian_Motion.m}

\subsubsection{Poisson process}\label{matlab-poisson}
\noindent Just for the sake of completeness, we provide a MATLAB code for   Poisson processes.  One  can namely  use the more general code on compound Poisson processes; see Appendix \ref{matlab-compound} below.

\lstinputlisting{3_MATLAB/Poisson_Process.m}

% \textbf{Disclaimer:} It must   be noted that this code   allows multiple jump occurrences in one time step, as it also should. However, it may   cause  implausible results. For example, there is a possibility that  realisations of a stochastic   differential equation can become negative while actually the solution must remain positive everywhere. 
% We felt no need to contribute a fix, because such\linebreak numerical inaccuracies were simply avoided.  

\subsubsection{Compound Poisson process}\label{matlab-compound}
%For just a Poisson process, set ... \mark{Poisson process heeft meer uitleg! Steel deze uitleg :)}
\lstinputlisting{3_MATLAB/Compound_Poisson_Process.m}

\textbf{Disclaimer:} It must   be noted that this code   allows multiple jump occurrences in one time step, as it also should. However, it may   cause  implausible results. For example, there is a possi-\linebreak bility that  realisations of a stochastic   differential equation can become negative while actually the solution must remain positive everywhere.  We felt no need to contribute a fix, because such\linebreak numerical inaccuracies were simply avoided. 

\

\subsection{Simulating stochastic  delay  differential equations and invariant measures}\label{matlab-SDDE-Main}
\noindent With the help of this code, one  can obtain  path simulations  of the solution to a stochastic delay\linebreak differential equation  of the following type:
\begin{equation}
\label{B1}	\mathrm d X(t)=f(X(t),X(t-1))\,\mathrm dt+g(X(t-), X((t-1)-))\,\mathrm dL(t),
\end{equation}
under sufficient conditions on $f$ and $g$. The function SDDE.m is to be found in Appendix \ref{matlab-SDDE}. In addition, the code below can illustrate a numerical approximation of the invariant measure obtained from   Krylov--Bogoliubov's method (if it exists).

\lstinputlisting{3_MATLAB/SDDE_Main.m}

\subsubsection{Numerical integration scheme for a SDDE}\label{matlab-SDDE}
\noindent Inspired by the Euler--Maruyama method,  this code   approximates the solution of \eqref{B1} via 
\begin{equation}
	X_{n+1}=X_n+f(X_n,X_{n-M})\Delta t+g(X_n,X_{n-M})\Delta L_n,
\end{equation}
where $M\in\N$ such that $M\Delta t=1$. A justification of this numerical integration scheme, which includes many references and comments on the accuracy,  can be found in {\S}\ref{Sec1.3}.

\lstinputlisting{3_MATLAB/SDDE.m}

\subsection{Additional programmes}\label{add}
\subsubsection{Stochastic integrals with a deterministic integrand}\label{matlab-fdW}
\noindent This code gives us a numerical approximation of the stochastic integral   $\int_0^t f(s) \,\mathrm dL(s) $, where $f$ can be any (sufficiently regular) deterministic process and where $L$ is a Lévy process.

\lstinputlisting{3_MATLAB/fdL.m}

\subsubsection{Integrals   with a stochastic integrand: a typical example}\label{matlab-WdW}
\noindent In this code, we approximate the typical stochastic integral $\int_0^t W \,\mathrm dW $ by means of two different approaches, where $W$ is a standard Brownian motion. See also Example \ref{ex:interm}.
\lstinputlisting{3_MATLAB/WdW.m}

\subsubsection{On a more general numerical integration scheme}\label{matlab-BS}
\noindent This particular code was used to obtain the figures in {\S}\ref{Sec1.3}. Observe that one can also use the numerical integration scheme in Appendix  \ref{matlab-SDDE-Main} for the  $\theta=0$ scenario.
\lstinputlisting{3_MATLAB/Black_Scholes_final.m}

% \subsection{Reverse time supremum}
% \noindent This code demonstrates how the reader is able to retrieve Figure \ref{fig:reversal}, to a certain extend.
% \lstinputlisting{3_MATLAB/Stochastic_Integral_Supremum_Difference.m}

\printbibliography
\end{document}